\documentclass[11pt,reqno]{amsart}%,oneside

\usepackage{setspace}
\setlength{\textwidth}{16truecm} 
\setlength{\textheight}{22.2truecm}

\usepackage{color}
\usepackage[all]{xy}
\usepackage{enumitem}
\usepackage{amsmath,amssymb}
\usepackage{amsthm}
\usepackage{mathabx}
\usepackage{tikz}
\usepackage{amsfonts}
\usepackage{pst-node}
\usepackage{tikz-cd} 
\usepackage{graphicx}
 \usepackage{pifont}
 \usepackage[T1]{fontenc}

\setlength{\parskip}{.1em}
 \calclayout

% This paragraph defines the macros for theorems and the like
\newtheorem{theorem}{Theorem}[section]
\newtheorem{proposition}[theorem]{Proposition}
\newtheorem{corollary}[theorem]{Corollary}
\newtheorem{lemma}[theorem]{Lemma}
% This paragraph defines the macros for definitions and the like
\theoremstyle{definition}
\newtheorem{definition}[theorem]{Definition}
\newtheorem{example}[theorem]{Example}
\newtheorem{conjecture}[theorem]{Conjecture}
\newtheorem{remark}[theorem]{Remark}

% The following paragraph writes the equation numbers with two counters,
% the first is the section number and the second resets within the section.
\makeatletter
\@addtoreset{equation}{section}
\makeatother

% The next paragraph defines macros for special roman letters to be used
 % the set of complex numbers
 % the set of natural numbers
 % the set of rational numbers
\newcommand{\bbZ}{{\mathbb Z}} % the set of integer numbers
 % the unit disk
\newcommand{\bbR}{{\mathbb R}} % the set of real numbers

\newcommand{\bbF}{{\mathbb F}} % the unit circle (the one dimensional torus)

%

% The next paragraph defines macros for bold letters
\newcommand{\bA}{{\mathbf A}}

\newcommand{\bC}{{\mathbf C}}  
\newcommand{\bD}{{\mathbf D}} 
\newcommand{\bE}{{\mathbf E}} 

\newcommand{\bG}{{\mathbf G}}
\newcommand{\bH}{{\mathbf H}}

\newcommand{\bO}{{\mathbf O}}

\newcommand{\bX}{{\mathbf X}}

\newcommand{\bZ}{{\mathbf Z}}

% The next paragraph defines macros for caligraphic letters
\newcommand{\cA}{{\mathcal A}}
\newcommand{\cB}{{\mathcal B}}
\newcommand{\cC}{{\mathcal C}}
\newcommand{\cD}{{\mathcal D}}
\newcommand{\cE}{{\mathcal E}}
\newcommand{\cF}{{\mathcal F}}
\newcommand{\cG}{{\mathcal G}}
\newcommand{\cH}{{\mathcal H}}

\newcommand{\cL}{{\mathcal L}}
\newcommand{\cM}{{\mathcal M}}
\newcommand{\cN}{{\mathcal N}}
\newcommand{\cO}{{\mathcal O}}

\newcommand{\cT}{{\mathcal T}}

%%%Special
\newcommand{\Aut}{\mathrm{Aut}}
\newcommand{\Out}{\mathrm{Out}}
\newcommand{\Inn}{\mathrm{Inn}}
\renewcommand{\hom}{\mathrm{Hom}}
\newcommand{\Ind}{\mathrm{Ind}}
\newcommand{\Mor}{\mathrm{Mor}}
\newcommand{\Tot}{\mathrm{Tot}}
\newcommand{\Ext}{\mathrm{Ext}}

\newcommand{\Res}{\mathrm{Res}}
\newcommand{\Ob}{\mathrm{Ob}}
\newcommand{\id}{\mathrm{id}}
\DeclareMathOperator*{\hocolim}{hocolim}
\DeclareMathOperator*{\colim}{colim}
\def\maprt#1{\smash{\,\mathop{\longrightarrow}\limits^{#1}\,}}
\def\maplf#1{\smash{\,\mathop{\longleftarrow}\limits^{#1}\,}}
\newcommand{\leftexp}[2]{{\vphantom{#2}}^{ #1}{\hskip-1pt#2}}%\leftexp

 %%%%%%%

\begin{document}
\title{Higher limits over the fusion orbit category}
 
\author{Erg{\" u}n Yal{\c c}{\i}n}
\address{Department of Mathematics, Bilkent University, 06800 
Bilkent, Ankara, Turkey}

\email{yalcine@fen.bilkent.edu.tr}

\begin{abstract}  
The fusion orbit category $\overline \cF _{\cC} (G)$ of a discrete group $G$ 
over a collection $\cC$ is the category whose objects are the subgroups $H$ in $\cC$, 
and whose morphisms $H \to K$ are given by the $G$-maps $G/H \to G/K$ modulo 
the action of the centralizer group $C_G(H)$. We show that the higher limits over 
$\overline \cF_{\cC} (G)$ can be computed using the hypercohomology 
spectral sequences coming from the  Dwyer $G$-spaces for centralizer and normalizer 
decompositions for $G$.  

If $G$ is the discrete group realizing a saturated fusion system $\cF$, then these 
hypercohomology spectral sequences give two spectral sequences that converge 
to the cohomology  of the centric orbit category $\cO ^c (\cF)$. This allows us to  
apply our results to the sharpness problem for the subgroup decomposition of 
a $p$-local finite group. We prove that the subgroup decomposition for every 
$p$-local finite group is sharp (over $\cF$-centric subgroups) if it is sharp 
for every $p$-local finite group with nontrivial center.  We also show that  
for every $p$-local finite group $(S, \cF, \cL)$, the subgroup decomposition 
is sharp if and only if the normalizer decomposition is sharp. 
\end{abstract}

\thanks{2020 {\it Mathematics Subject Classification.} Primary: 20J06; Secondary: 18G10, 55R40, 20D20}

\keywords{Fusion systems, higher limits, orbit category, $p$-local finite group, cohomology of small categories}

\maketitle

%------------------------

\section{Introduction}\label{sect:Intro}    

A \emph{saturated fusion system $\cF$} over a finite $p$-group $S$ is a category whose objects 
are subgroups of $S$ and whose morphisms are injective group homomorphisms satisfying 
certain axioms (see Definitions \ref{def:FusionSystem} and  \ref{def:Saturated}). The main 
example of a saturated fusion system is the fusion system of a finite group defined 
over one of its Sylow $p$-subgroups whose morphisms are the group homomorphisms induced 
by conjugations in $G$.  However, fusion systems that do not come from finite groups also exist.  

A subgroup $P \leq S$ is called \emph{$\cF$-centric} if $C_S(Q) \leq Q$ for every subgroup $Q$ 
isomorphic to $P$ in $\cF$. We denote by $\cF^c$ the full subcategory  of $\cF$ whose objects 
are the $\cF$-centric subgroups of $S$. The \emph{orbit category} $\cO (\cF )$ of a fusion system 
$\cF$ is the category whose objects are the subgroups in $S$ and whose morphisms are given by 
$$\Mor _{\cO (\cF) } (P, Q) :=\Inn (Q) \backslash \Mor _{\cF} (P, Q ).$$
The orbit category over the collection of $\cF$-centric subgroups is denoted by $\cO ^c (\cF)$ 
and is called the \emph{centric orbit category of $\cF$}.

A \emph{$p$-local finite group} is a triple $(S, \cF, \cL)$ where $S$ is a finite $p$-group, $\cF$ 
is a saturated fusion system over $S$, and $\cL$ is a centric linking system associated to $\cF$. 
A centric linking system $\cL$ associated to a fusion system $\cF$ is a category whose objects are the 
$\cF$-centric subgroups of $S$, and the morphisms are defined such that there is a quotient functor 
$\pi: \cL \to \cF^c$. As part of the structure of $\cL$, there exist also distinguished monomorphisms
$\delta _P : P \to \Aut _{\cL } (P)$ defined for every $P \in \cF^c$ satisfying certain properties 
(see \cite[Def 1.7]{BLO2}). In Section \ref{sect:NormalizerDec} we give a more recent definition of 
a linking system using the transporter category $\cT _S ^{\cF ^c}$ (see Definition \ref{def:LinkingSystem}).
It has been proved by Chermak \cite{Chermak} that for every saturated fusion system $\cF$, there exists 
a unique centric linking system $\cL$ associated to $\cF$ (see \cite[\S III.4]{AshOliver} for details). 

The classifying space of a $p$-local finite group $(S, \cF, \cL)$ is defined to be the  Bousfield-Kan
$p$-completion of the geometric realization $|\cL|_p ^{\wedge}$ of the category $\cL $. 
There is a subgroup homology decomposition for a $p$-local finite group $(S, \cF, \cL)$, 
similar to the subgroup decomposition for finite groups, introduced 
by Broto, Levi, and Oliver \cite[Prop 2.2]{BLO2}. They showed that for every $p$-local finite group 
$(S, \cF, \cL)$, there is a homotopy equivalence 
\begin{equation}\label{eqn:IntroSubgroupDec} |\cL| \simeq  \hocolim _{\cO ^c (\cF) } \widetilde {B} 
\end{equation}
where $\widetilde B : \cO ^c (\cF) \to \mathrm{Top}$ is a functor such that ${\widetilde B} (P)$ is 
homotopy equivalent to the classifying space $BP$ for every $P \in \cF ^c$. The Bousfield-Kan
 spectral sequence associated to the above homology decomposition gives a spectral sequence
$$E_2 ^{s, t} =  \underset{ \cO ^c (\cF )}{\lim {}^s}\  H^t (- ; \bbF _p ) \Rightarrow H^{s+t} ( |\cL| ; \bbF _p )$$ 
where $H^t (-; \bbF_p)$ denotes the contravariant functor $\cO^c (\cF)\to \bbF_p$-Mod that sends 
an $\cF$-centric subgroup $P \leq S$ to its group cohomology $H^t (P; \bbF_p)$ in $\bbF_p$-coefficients.

\begin{definition}\label{def:SharpSub} 
The subgroup decomposition for $(S, \cF, \cL)$  is said to be  \emph{sharp} if the associated Bousfield-Kan 
spectral sequence collapses at the $E_2$-page to the vertical axis, i.e., if $E_2 ^{s, t} =0$ for all $s>0$ and 
for all $t\geq 0$. 
\end{definition}

Note that if the subgroup decomposition is sharp, then the edge homomorphism
$$H^* ( |\cL|; \bbF_p) \to E_2 ^{0, *}= \lim _{P \in \cO ^c (\cF)} H^* (P ; \bbF_p)$$ is an isomorphism. 
The limit term on the right-hand side is called the \emph{cohomology of the fusion system $\cF$} 
and it is denoted by $H^* (\cF ; \bbF_p)$. When this isomorphism holds, we say that the 
Cartan-Eilenberg theorem holds for $(S, \cF, \cL)$.  

Diaz and Park \cite[Thm B]{DiazPark}  proved that the subgroup decomposition is sharp if $\cF$ 
is a fusion system realized by a finite group.  Broto, Levi, and Oliver \cite[Thm 5.8]{BLO2} proved 
that the Cartan-Eilenberg theorem holds for every saturated fusion system. These two results 
suggest that the subgroups decomposition is sharp for every saturated  fusion system.
This is stated as a conjecture  by Diaz and Park \cite{DiazPark} and as a question 
by Ashbacher and Oliver \cite[Ques 7.12]{AshOliver}.   
 
\begin{conjecture}\label{conj:Sharpness} Let $(S, \cF, \cL)$ be a $p$-local finite group. Then for every 
$n\geq 0$ and every $i \geq 1$, $$ \underset{ \cO ^c (\cF )} {\lim {}^i} \ H^n (- ; \bbF _p )=0.$$
\end{conjecture}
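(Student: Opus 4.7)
The plan is to leverage the centralizer hypercohomology spectral sequence established in this paper, which reduces Conjecture~\ref{conj:Sharpness} to the case of $p$-local finite groups $(S, \cF, \cL)$ with $Z(\cF) \neq 1$. I would then complete this remaining nontrivial-center case by an induction on $|S|$ that quotients out a central subgroup of order $p$.

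For the reduction step, the centralizer spectral sequence has entries built from the cohomology of $\cO^c(C_{\cF}(V))$ for $V$ ranging over nontrivial elementary abelian $p$-subgroups of $S$. Since $V \leq Z(C_{\cF}(V))$, each centralizer fusion system automatically has nontrivial center; so if Conjecture~\ref{conj:Sharpness} is known for every $p$-local finite group with nontrivial center, the corresponding higher limits vanish above the zero row and the spectral sequence collapses. What remains at the edge is controlled by the Cartan--Eilenberg isomorphism of Broto--Levi--Oliver, forcing $\lim^i H^n(-;\bbF_p) = 0$ over $\cO^c(\cF)$ for every $i \geq 1$.

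For the nontrivial-center case, I would pick $Z_0 \leq Z(\cF)$ of order $p$ and consider the quotient fusion system $\cF/Z_0$ over $S/Z_0$. Every $\cF$-centric subgroup contains $Z(\cF) \supseteq Z_0$, so the assignment $P \mapsto P/Z_0$ defines a functor $\cO^c(\cF) \to \cO^c(\cF/Z_0)$ whose fibers are controlled by $BZ_0$-type data. Combined with the Lyndon--Hochschild--Serre spectral sequences for the central extensions $1 \to Z_0 \to P \to P/Z_0 \to 1$ and an induction on $|S|$ invoking sharpness for the strictly smaller $\cF/Z_0$, this should propagate sharpness from $\cF/Z_0$ to $\cF$. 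Alternatively, one can move to the normalizer decomposition (using the paper's equivalence between subgroup and normalizer sharpness) and argue in terms of the cohomology of automizers on chains of $\cF$-centric-radical subgroups. The main obstacle is precisely this central-quotient step (or its normalizer-side analogue): one must track the change of higher limits coherently under the quotient functor, and for exotic fusion systems there is no classifying finite group to transfer from, so the argument rests on a delicate assembly of spectral sequences across $\cO^c(\cF)$ that is not known to work in general. It is this obstruction that prevents the conjecture from being settled by the decomposition machinery of this paper alone.
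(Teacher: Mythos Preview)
The statement you are addressing is Conjecture~\ref{conj:Sharpness}, and the paper does \emph{not} prove it; it is presented as an open problem. So there is no ``paper's own proof'' to compare against. What the paper does prove is exactly your first step: the reduction to the case $Z(\cF)\neq 1$ is Theorem~\ref{thm:IntroCentReduction}, obtained by feeding $M=H^n(-;\bbF_p)$ into the centralizer spectral sequence of Theorem~\ref{thm:IntroCentDecSS} and then invoking the sharpness of the centralizer decomposition (Theorem~\ref{thm:SharpCent}) on the surviving $t=0$ row. Your description of that step is accurate and matches the paper's argument.

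Your second step---handling the nontrivial-center case by passing to the quotient fusion system $\cF/Z_0$ and inducting on $|S|$---is not in the paper, and as you yourself conclude at the end, it does not go through. Two concrete obstacles: first, the assignment $P\mapsto P/Z_0$ does not in general define a functor $\cO^c(\cF)\to\cO^c(\cF/Z_0)$, because $\cF$-centricity of $P$ does not force $\cF/Z_0$-centricity of $P/Z_0$ (nor conversely), so the target category is wrong before one even starts; second, even where the functor makes sense, the pointwise Lyndon--Hochschild--Serre spectral sequences for the extensions $1\to Z_0\to P\to P/Z_0\to 1$ do not assemble into a spectral sequence of $\cO^c(\cF)$-modules in any way that lets you transport vanishing of higher limits from $\cF/Z_0$ back to $\cF$. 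The normalizer-side alternative you mention runs into the analogous problem. So your closing sentence is the correct assessment: the paper's machinery yields the reduction (Theorem~\ref{thm:IntroCentReduction}) and the equivalence with normalizer sharpness (Theorem~\ref{thm:IntroNormSharp}), but neither completes the proof of the conjecture.
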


This conjecture is the main motivation for us to study the higher limits over the centric orbit category 
$\cO^c (\cF)$. To compute the higher limits over $\cO ^c (\cF)$, we propose to use an ambient discrete 
group $G$ that realizes the fusion system $\cF$. By theorems of Leary and Stancu \cite{LearyStancu} 
and Robinson \cite{Robinson}, for every (saturated) fusion system $\cF$, there is a discrete group $G$ 
(possibly infinite) with a finite Sylow $p$-subgroup $S$ such that $\cF \cong \cF_S(G)$. For a discrete 
group $G$ and a collection $\cC$ of subgroups of $G$ (always assumed to be closed under conjugation), 
the following categories are defined:

\begin{enumerate} 
\item The \emph{orbit category} $\cO_{\cC} (G)$ of $G$ is the category whose objects are subgroups 
$H \in \cC$, and whose morphisms $\Mor _{\cO _{\cC} (G) } (H, K)$ are given by $G$-maps $G/H \to G/K$.  
\item The  \emph{fusion category} $\cF_{\cC} (G)$ of $G$ is the category whose objects are the subgroups 
$H \in \cC$, and whose morphisms $H \to K$ are given by conjugation maps $c_g : H \to K$ for an element 
in $g \in G$. 
\end{enumerate}
For every $H, K \in \cC$, let $N_G(H, K)  :=\{ g \in G \, |\, gHg^{-1} \leq K \}.$  The category whose objects 
are subgroups $H\in \cC$ and whose morphisms from $H$ to $K$ are given by $N_G(H, K)$ is called the 
\emph{transporter category} of $G$ and  is denoted by $\cT_{\cC} (G)$. Both the orbit category and 
the fusion category can be viewed as the quotient category of the transporter category 
(see Section \ref{sect:Orbit} for details).
 
\begin{definition}\label{def:FusionOrbit} The \emph{fusion orbit category} $\overline \cF _{\cC} (G)$ of  
a discrete group $G$ over a collection $\cC$ is the category whose objects are subgroups $H \in \cC$, 
and whose morphisms are given by $$\Mor _{\overline \cF _{\cC} (G) } (H, K) :=K \backslash 
N_G(H, K) /C_G (H)$$  for every $H, K\in \cC$.
\end{definition}

The fusion orbit category $\overline \cF_{\cC} (G)$ is a quotient category of both the orbit category 
$\cO_{\cC} (G)$ and the fusion category $\cF_{\cC} (G)$.  If $G$ is a discrete group realizing 
a saturated fusion system $\cF$ and if we take $\cC$ to be the collection of all $p$-subgroups 
in $G$ that are conjugate to a $\cF$-centric subgroup in $S$, then $\overline \cF _{\cC} (G)$ 
is equivalent to $\cO ^c (\cF)$ as categories (see Lemma \ref{lem:CatEquivalence}). This allows 
us to calculate the higher limits over $\cO^c (\cF)$ as the higher limits over  $\overline \cF_{\cC} (G)$ 
for a discrete group $G$ realizing $\cF$. The idea of using an ambient discrete group for proving 
theorems for abstract fusion systems was also used by Libman in \cite{Libman-WebbConj}.

Let $G$ be a discrete group and $\cC$ be a collection of subgroups of $G$. To compute 
the higher limits of fusion orbit category $\overline \cF_{\cC} (G)$, we consider the hypercohomology 
spectral sequences coming from certain $G$-spaces. Let $X$ be a $G$-CW-complex and 
$R$ be a commutative ring with unity. Associated with $X$, there is a chain complex of 
$R\cO_{\cC} (G)$-modules $C_* (X^? ; R)$ defined by $$H \to C_* (X^H ; R) \quad \quad 
\text{and} \quad \quad  (f: G/ H \to G/K) \to (f^* : C_* (X^K ; R) \to C_* (X^H ; R))$$
for every $H \in \cC$. When the collection $\cC$ is large enough to include all the isotropy 
subgroups of $X$, the complex $C_* (X^?; R)$ is a chain complex of projective $R\cO _{\cC} (G)$-modules. 
The Bredon cohomology of the space $X$ is defined using this chain complex.

\begin{definition}\label{def:BredonCoh} Let $\cO (G)$ denote the orbit category of $G$ 
over all subgroups of $G$, and let $M$ be an $R\cO(G)$-module. The \emph{(ordinary) 
Bredon cohomology of $X$} with coefficients in $M$ is defined by
$$H^* _{\cO (G)} (X^? ; M):= H^* (\hom _{R\cO(G) } (C_* (X^? ; R) , M)).$$
\end{definition}

If the isotropy subgroups of $X$ do not lie in $\cC$, then it is still possible to define the 
Bredon cohomology using hypercohomology. In this case the Bredon cohomology is defined 
by $$H^* _{\cO _{\cC} (G)} (X^?; M) := H^* ( \hom _{R\cO_{\cC} (G) } ( \Tot ^{\oplus} (P_{*,*} ) ; M))$$
where $P_{*,*}$ is a Cartan-Eilenberg resolution of the complex $C_* ( X^? ; R)$ as a chain 
complex of $R \cO_{\cC} (G)$-modules. This definition of the Bredon cohomology is due
to Symonds \cite{Symonds}.

The definitions above can be modified to obtain chain complexes over the fusion orbit category 
and to define a fusion orbit category version of the Bredon cohomology. For a $G$-CW-complex $X$, 
let $C_* (C_G (?)\backslash X^? ; R)$ denote the chain complex of  $\overline \cF_{\cC} (G)$-modules 
defined by $$H \to C_* (C_G(H)\backslash X^H ; R) $$
$$ (f: G/ H \to G/K) \to (f^* : C_* (C_G(K)\backslash X^K ; R) \to C_* (C_G(H) \backslash X^H ; R))$$
for every $H, K \in \cC$. These chain complexes are introduced  by L\" uck  in \cite{Lueck-HomChern} and 
\cite{Lueck-CohChern} to give a more efficient way to compute equivariant Chern 
characters. 

As in the orbit category case, if the isotropy subgroups of $X$ lie in $\cC$, 
then $C_* (C_G (?)\backslash X^? ; R)$ is a chain complex of projective 
$R\overline \cF _{\cC} (G)$-modules. 
When $\cC$ does not include all the isotropy subgroups of $X$, in general
the complex $C_* (C_G (?)\backslash X^? ; R)$ is not a chain complex of projective $R\overline \cF _{\cC} (G)$-modules. 
In this case we define the fusion orbit category version of the Bredon cohomology using hypercohomology.

\begin{definition}\label{def:FusionBredon} Let $P_{*,*}$ be a Cartan-Eilenberg resolution 
of the chain complex  $C_* (C_G(?) \backslash X^? ; R)$ as a chain complex of 
$R\overline \cF_{\cC} (G)$-modules.  The \emph{fusion Bredon cohomology} of 
a $G$-CW-complex $X$ is defined by  $$H^* _{\overline \cF _{\cC} (G)} (X^?; M) 
:= H^* ( \hom _{R\overline \cF _{\cC} (G) } ( \Tot ^{\oplus} (P_{*,*} ) ; M)).$$ 
\end{definition}

When the family $\cC$ includes all isotropy subgroups of $X$, for every $R\overline \cF _{\cC} (G)$-module $M$,
there is an isomorphism
$$H^* _{\overline \cF _{\cC} (G) } (X^?; M) \cong H^* _{\cO _{\cC} (G)} (X^?; \Res _{pr} M)$$
where $pr: \cO _{\cC} (G) \to \overline \cF _{\cC} (G)$ is the projection functor (see Proposition \ref{pro:TwoBredons}).
However in general the fusion Bredon cohomology is not isomorphic to the Bredon cohomology  
(see Example \ref{ex:BredonDifferent}).

There are two spectral sequences converging to the fusion Bredon 
cohomology of $X$ coming from the two different ways of filtering the double complex 
$\hom _{R\overline \cF _{\cC} (G)} (P_{*, *} , M)$
(see Proposition \ref{pro:HyperCohSS}). One of these spectral sequences can be used 
to prove that if $X$ is a $G$-CW-complex 
such that for every $H \in \cC$, the orbit space $C_G(H)\backslash X^H$ is $R$-acyclic, 
then for any $R\overline \cF _{\cC} (G)$-module $M$,  
$$H^* _{\overline \cF _{\cC} (G)} (X^?; M) \cong H^* (\overline \cF _{\cC} (G); M).$$
In this case, the second spectral sequence gives a spectral sequence that converges to the 
$H^* (\overline \cF _{\cC} (G); M)$ (see Theorem \ref{thm:HyperCohSS}).
We apply this spectral sequence to the Dwyer spaces for the homology decompositions of $G$ 
(see Section \ref{sect:DwyerSpaces} for the definitions of Dwyer spaces). We show that 
for the following choices of $X$ and the collection $\cC$, the condition  on $X$ given 
in Theorem \ref{thm:HyperCohSS} holds (see Propositions \ref{pro:CentFixedPoints} 
and  \ref{pro:NormFixedPoints}):
\begin{enumerate}
\item $X=X_{\cE} ^{\alpha}=E\bA_{\cE}$ is the Dwyer space for the centralizer decomposition over 
the collection $\cE$ of all nontrivial elementary abelian $p$-subgroups in $G$, and $\cC$ is 
any collection of nontrivial $p$-subgroups in $G$.
\item $X=X_{\cC} ^{\delta}= |\cC|$ is the Dwyer space for the normalizer decomposition over $\cC$, and
$\cC$ is any collection of nontrivial $p$-subgroups of $G$ closed under taking products (see Definition 
\ref{def:TakingProducts}).
\end{enumerate}
As a consequence, we obtain two spectral sequences that converge to the cohomology 
of fusion orbit category $\overline \cF_{\cC} (G)$ (see Propositions \ref{pro:CentDecOrb}  
and \ref{pro:NormalizerSS}). These spectral sequences can be considered as the centralizer 
and normalizer decompositions for the cohomology of fusion orbit category.   

If we take $G$ to be a discrete group realizing a saturated fusion system $\cF$, then the
hypercohomology spectral sequences that we constructed give two spectral sequences that converge 
to the cohomology  of the centric orbit category $\cO ^c (\cF)$. We now explain these spectral sequences.

Let $\cF$ be a saturated fusion system over $S$, and let $\cF^e$ denote the full subcategory of $\cF$ 
whose objects are the collection of all nontrivial elementary abelian $p$-subgroups of $S$ which are 
fully $\cF$-centralized. For every  $E\in \cF^e$, let $C_{\cF} (E)$ denote the centralizer fusion 
system over $C_S (E)$ as defined in Definition \ref{def:GenNormalizer}.

\begin{theorem}\label{thm:IntroCentDecSS}  Let $\cF$ be a saturated fusion system over $S$, 
$R$ be a commutative ring with unity, and $M$ be an $R \cO ^c (\cF)$-module. 
For every $j\geq 0$, let $\cH ^j _{M, C_\cF} : \cF^e \to R\text{-Mod}$ denote the functor defined 
in Lemma \ref{lem:HCF} such that for every $E\in \cF^e$,
$$\cH ^j _{M, C_\cF }  (E) =H^j (\cO ^c ( C_{\cF} (E) ) ;  \Res ^{\cO ^c (\cF )} _{\cO ^c (C_{\cF} (E) )} M ).$$
Then, there is a spectral sequence 
$$E_2 ^{s,t} = \underset{\cF^e\ }{\lim{}^s} \ \cH ^t _{M, C_\cF} \Rightarrow 
H^{s+t} ( \cO ^c (\cF); M).$$
\end{theorem}

We call this spectral sequence the centralizer decomposition for the cohomology of the centric 
orbit category. We have a similar spectral sequence involving normalizer fusion systems 
that can be considered as  the normalizer decomposition which we now describe. 

Let $\cF$ be a saturated fusion system over $S$ and let ${\overline s}d (\cF ^c)$ denote 
the poset category of $\cF$-conjugacy classes of chains 
$\sigma =(P_0 < P_1 <\cdots < P_n)$ of $\cF$-centric subgroups $P_i $ of $S$. For every 
fully $\cF$-normalized chain $\sigma$, let $N_{\cF} (\sigma)$ denote the normalizer fusion system 
of $\sigma$ as defined in Definition \ref{def:NormalizerFusSys}.

\begin{theorem}\label{thm:IntroNormDecSS}  Let $\cF$ be a saturated fusion system over $S$, 
and $M$ be an $\bbZ_{(p)} \cO (\cF)$-module. For every $j\geq 0$, 
let $\cH ^j _{M, N_\cF} : \overline{s}d( \cF ^c) ^{op} \to \bbZ_{(p)}\text{-Mod}$ denote the functor defined 
in Lemma \ref{lem:HNF} such that for every $[\sigma] \in \overline{s}d(\cF^c) $,  
$$\cH ^j _{M, N_\cF }  ([\sigma]) =H^j (\cO ^c ( N_{\cF} (\sigma) ) ;  \Res ^{\cO (\cF )} _{\cO ^c (N_{\cF} (\sigma))  } M ).$$
Then, there is a spectral sequence 
$$E_2 ^{s,t} = \underset{\overline{s}d (\cF ^c) }{\lim{}^s} \ \cH ^t _{M, N_\cF} \Rightarrow 
H^{s+t} ( \cO ^c (\cF); \Res ^{\cO (\cF)} _{\cO ^c (\cF) } M).$$
\end{theorem}

We apply these spectral sequences to the sharpness problem stated at the beginning. 
A subgroup  $Q\leq S$ is \emph{central} in $\cF$ if $C_{\cF} (Q)=\cF$. 
 The product of all central subgroups in $\cF$ is called the 
center of $\cF$ and denoted by $Z(\cF)$. 
Using the centralizer spectral sequence constructed in Theorem \ref{thm:IntroCentDecSS}, 
we prove the following:

\begin{theorem}\label{thm:IntroCentReduction}
If the subgroup decomposition is sharp for every $p$-local finite group $(S, \cF, \cL)$ 
with $Z(\cF)\neq 1$, then it is sharp for every $p$-local finite group.
\end{theorem}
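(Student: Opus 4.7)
The plan is to apply the centralizer decomposition spectral sequence of Theorem~\ref{thm:IntroCentDecSS} with $M = H^n(-;\bbF_p)$ the mod~$p$ cohomology functor on $\cO^c(\cF)$, for each fixed $n \geq 0$. Sharpness of the subgroup decomposition for $(S, \cF, \cL)$ is exactly the vanishing
\[
\underset{\cO^c(\cF)}{\lim{}^i}\ H^n(-;\bbF_p) \;=\; H^i(\cO^c(\cF); M) \;=\; 0 \qquad \text{for } i \geq 1,
\]
so it suffices to show that the $E_2$-page of the spectral sequence vanishes for every $(s,t)$ with $s+t \geq 1$.

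For the columns $t \geq 1$, the hypothesis is doing the work. For any nontrivial elementary abelian $E \leq S$, every morphism in the centralizer fusion system $C_\cF(E)$ is the identity on $E$, so $E \leq Z(C_\cF(E))$ and hence $Z(C_\cF(E)) \neq 1$. By the standing hypothesis, the subgroup decomposition is sharp for the $p$-local finite group $(C_S(E), C_\cF(E), C_\cL(E))$, giving $\lim{}^t_{\cO^c(C_\cF(E))}\, H^n(-;\bbF_p) = 0$ for every $t \geq 1$. After checking that the restriction of $M$ along the natural functor $\cO^c(C_\cF(E)) \to \cO^c(\cF)$ is again the corresponding cohomology functor on $\cO^c(C_\cF(E))$, this gives $H^t(\cO^c(C_\cF(-)); M) = 0$ as a functor on $\cF^e$ for every $t \geq 1$, so $E_2^{s,t} = 0$ for $s \geq 0$, $t \geq 1$.

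For the bottom row $t = 0$, $s \geq 1$, one computes $H^0(\cO^c(C_\cF(E)); M) = \lim_{\cO^c(C_\cF(E))} H^n(-;\bbF_p) = H^n(C_\cF(E);\bbF_p)$, the cohomology of the centralizer fusion system, so $E_2^{s,0} = \lim{}^s_{\cF^e}\, H^n(C_\cF(-); \bbF_p)$. This is precisely the $E_2$-page of the Bousfield--Kan spectral sequence for the Broto--Levi--Oliver centralizer decomposition $|\cL|_p^\wedge \simeq \hocolim_{E \in \cF^e} BC_\cF(E)_p^\wedge$, and its sharpness (in the Dwyer/Jackowski--McClure sense, carried over to $p$-local finite groups in the BLO framework) forces $\lim{}^s_{\cF^e}\, H^n(C_\cF(-);\bbF_p) = 0$ for all $s \geq 1$.

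The main obstacle is the naturality bookkeeping rather than any deep new input: one must identify the restriction of $M$ to each $\cO^c(C_\cF(E))$ with the expected cohomology functor there, and match the $t=0$ row of Theorem~\ref{thm:IntroCentDecSS} with the classical centralizer decomposition $E_2$-page of $|\cL|$. Both identifications should follow by unwinding the construction of the spectral sequence via the ambient discrete group $G$ realizing $\cF$ used in the proof of Theorem~\ref{thm:IntroCentDecSS}, where $\overline{\cF}_\cC(G) \simeq \cO^c(\cF)$ and centralizers in $G$ recover $C_\cF(E)$. Granting these identifications, the two vanishings above collapse $E_2$ to $E_2^{0,0}$ in total degree zero, yielding $H^i(\cO^c(\cF); M) = 0$ for all $i \geq 1$ and arbitrary $n$, which is the desired sharpness.
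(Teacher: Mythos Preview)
Your proposal is correct and follows essentially the same route as the paper: apply the centralizer spectral sequence of Theorem~\ref{thm:IntroCentDecSS} with $M=H^n(-;\bbF_p)$, kill the rows $t\geq 1$ using the hypothesis on $(C_S(E),C_\cF(E),C_\cL(E))$ (which has $E\leq Z(C_\cF(E))\neq 1$), and kill the $t=0$ row for $s\geq 1$ by invoking the Broto--Levi--Oliver sharpness of the centralizer decomposition (stated in the paper as Theorem~\ref{thm:SharpCent}). The naturality bookkeeping you flag is handled in the paper via Lemma~\ref{lem:CentricCentric} (so that $C_\cF(E)$-centric subgroups are $\cF$-centric and the restriction of $M$ is again the cohomology functor) and the ambient discrete group construction; these are not genuine obstacles.
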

  
This reduces the sharpness problem for $p$-local finite groups to the ones with nontrivial center. 
The main ingredient for the proof of Theorem \ref{thm:IntroCentReduction} is the sharpness of the centralizer decomposition 
for $p$-local finite groups. The centralizer decomposition for a $p$-local finite group is introduced by  
Broto, Levi, and Oliver in \cite[Thm 2.6]{BLO2} and the proof of the sharpness 
of the centralizer decomposition can be found in the proof of  \cite[Thm 5.8]{BLO2}. 
 
Next we consider the normalizer decomposition of $p$-local finite groups introduced 
by Libman \cite{Libman-NormDec}. Applying the spectral sequence in Theorem 
\ref{thm:IntroNormDecSS},  we show that for every $p$-local finite group $(S, \cF, \cL)$, 
there is an isomorphism $$ \underset{\cO ^c (\cF )}{\lim {}^i}  \ H^n (- ; \bbF _p ) \cong 
\underset{\overline{s}d(\cF^c) } {\lim {}^i} \ H^n ( N_{\cF} (-)  ; \bbF_p)$$
(see Theorem \ref{thm:NormSharp}).
The higher limits on the right are  isomorphic to the $E_2 ^{i,n}$-term in the Bousfield-Kan 
spectral sequence for the normalizer decomposition for $(S, \cF, \cL)$.
As a consequence we obtain the following theorem.

\begin{theorem}\label{thm:IntroNormSharp} For every $p$-local finite group $(S, \cF, \cL)$, 
the subgroup decomposition is sharp  if and only if the normalizer decomposition is sharp 
(over the collection of $\cF$-centric subgroups).
\end{theorem}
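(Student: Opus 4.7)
The plan is to apply the spectral sequence of Theorem \ref{thm:IntroNormDecSS} with $M=H^n(-;\bbF_p)$ and show that it collapses onto its bottom row; this identifies the higher limits of the two decompositions term by term and makes the equivalence of sharpness conditions immediate.

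Fixing $n\geq 0$ and feeding $M=H^n(-;\bbF_p)$, regarded as an $\bbF_p\cO^c(\cF)$-module, into Theorem \ref{thm:IntroNormDecSS} produces the spectral sequence
$$E_2^{s,t}=\underset{\overline{s}d(\cF^c)}{\lim {}^s}\, H^t\bigl(\cO^c(N_\cF(-));\, H^n(-;\bbF_p)\bigr)\ \Rightarrow\ \underset{\cO^c(\cF)}{\lim {}^{s+t}}\, H^n(-;\bbF_p).$$
The heart of the argument is the vanishing of the rows with $t>0$. For a fully normalized chain $\sigma=(P_0<\cdots<P_k)$ of $\cF$-centric subgroups, I would first verify that $P_0$ is normal in $N_\cF(\sigma)$ and still $N_\cF(\sigma)$-centric: every $N_\cF(\sigma)$-conjugate $Q$ of $P_0$ is also an $\cF$-conjugate, so $C_{N_S(\sigma)}(Q)\leq C_S(Q)\leq Q$ because $P_0$ is $\cF$-centric. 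Hence $N_\cF(\sigma)$ is a constrained saturated fusion system and the model theorem for constrained fusion systems realizes it as $\cF_{N_S(\sigma)}(G_\sigma)$ for some finite group $G_\sigma$. The sharpness theorem of Diaz-Park \cite[Thm B]{DiazPark} then forces $\underset{\cO^c(N_\cF(\sigma))}{\lim {}^t}\, H^n(-;\bbF_p)=0$ for every $t>0$, so $E_2^{s,t}=0$ whenever $t>0$.

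Once the spectral sequence has collapsed, reading off the bottom row together with the identification $H^0(\cO^c(N_\cF(\sigma));H^n(-;\bbF_p))=H^n(N_\cF(\sigma);\bbF_p)$, which is the definition of the fusion-system cohomology of $N_\cF(\sigma)$, yields
$$\underset{\cO^c(\cF)}{\lim {}^s}\, H^n(-;\bbF_p)\ \cong\ \underset{\overline{s}d(\cF^c)}{\lim {}^s}\, H^n(N_\cF(-);\bbF_p).$$
The right-hand side is precisely the $E_2^{s,n}$-term of the Bousfield-Kan spectral sequence for Libman's normalizer decomposition of $(S,\cF,\cL)$. Therefore the subgroup decomposition is sharp, meaning the left-hand side vanishes for all $s>0$ and all $n\geq 0$, if and only if the normalizer decomposition is sharp.

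I expect the main obstacle to be the vanishing claim for the rows $t>0$: although its ingredients (that every normalizer of a chain of $\cF$-centrics is constrained, the model theorem for constrained fusion systems, and Diaz-Park sharpness) are each well known, assembling them uniformly in $\sigma$ requires careful bookkeeping of how the $\bbF_p\cO^c(\cF)$-module $H^n(-;\bbF_p)$ restricts to each $\cO^c(N_\cF(\sigma))$. Once this is in hand, the collapse and the comparison of $E_2$-terms are routine.
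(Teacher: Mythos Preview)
Your approach is essentially the paper's: apply Theorem~\ref{thm:IntroNormDecSS} with $M=H^n(-;\bbF_p)$, observe that each $N_\cF(\sigma)$ is constrained (hence realized by a finite group via the model theorem), invoke Diaz--Park to kill the rows $t>0$, and read off the bottom row. The paper uses $P_n$ rather than $P_0$ as the normal centric subgroup, but either works.

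There is one step you assert without justification: that $\lim^s_{\overline{s}d(\cF^c)} H^n(N_\cF(-);\bbF_p)$ is ``precisely'' the $E_2^{s,n}$-term of the Bousfield--Kan spectral sequence for Libman's normalizer decomposition. Libman's decomposition (Theorem~\ref{thm:LibmanNorm}) has pieces $B\Aut_\cL(\sigma)$, so its $E_2$-term is $\lim^s H^n(\Aut_\cL(-);\bbF_p)$, and you need the identification $H^n(N_\cF(\sigma);\bbF_p)\cong H^n(\Aut_\cL(\sigma);\bbF_p)$ naturally in $[\sigma]$. The paper handles this in Lemma~\ref{lem:trivial}: it shows that the finite model group $G_\sigma$ for $N_\cF(\sigma)$ is isomorphic to $\Aut_\cL(\sigma)$, by checking that the normalizer linking system $N_\cL(\sigma)$ is the centric linking system associated to $N_\cF(\sigma)$ and then invoking uniqueness of linking systems together with \cite[Prop~4.3]{5A1}. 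This is not deep but is not automatic either, and your proposal should acknowledge that this identification requires proof.
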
  
 
In the last section of the paper, we consider the Dwyer space $X:=X_{\cC} ^{\beta}$ for the subgroup decomposition.
When $G$ is an infinite group, it is not true in general that for every $H \in \cC$, the orbit space $C_G(H)\backslash X^H$ 
is $R$-acyclic (see Example \ref{ex:Sharpness}). However we show that this holds when $G$ is a finite group, $R=\bbZ_{(p)}$, 
and $\cC$ is a collection of $p$-subgroups of $G$ closed under taking $p$-overgroups (see Proposition \ref{pro:DwyerSubgroup}). 
As a consequence, we obtain the following. 

\begin{theorem}\label{thm:IntroTheSame}
If $G$ is a finite group and $\cC$ is a collection of $p$-subgroups of $G$ closed under taking $p$-overgroups, 
then for every $\bbZ_{(p)} \overline \cF _{\cC}(G)$-module $M$,   $$H^* ( \overline \cF _{\cC} (G) ; M ) \cong 
H ^* ( \cO _{\cC} (G) ;  \Res_{pr} M) $$ where the isomorphism is induced by the projection functor  
$pr: \cO _{\cC} (G) \to \overline \cF _{\cC} (G)$.
\end{theorem}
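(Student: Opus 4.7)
The plan is to apply the fusion Bredon hypercohomology machinery to the Dwyer $G$-space $X = X_{\cC}^{\beta} = E\bO_{\cC}$ for the subgroup decomposition, whose defining property is that $X^H \simeq *$ for every $H \in \cC$. Writing $R = \bbZ_{(p)}$, I would form the double complex $\hom_{R\overline \cF_{\cC}(G)}(P_{*,*}, M)$, where $P_{*,*}$ is a projective bi-resolution of $C_*(C_G(?) \backslash X^?; R)$ as a complex of $R\overline \cF_{\cC}(G)$-modules, and then compare its two filtration spectral sequences.

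The essential new input is the $R$-acyclicity of $C_G(H) \backslash X^H$ for every $H \in \cC$. This is exactly where the $p$-overgroup closure of $\cC$ enters decisively: for every $p$-subgroup $P \leq C_G(H)$ the product $HP$ is a $p$-group containing $H$, so $HP \in \cC$, and hence
$$(X^H)^P \, = \, X^{HP} \, \simeq \, *.$$
Thus $C_G(H)$ acts on the contractible space $X^H$ in such a way that the fixed set of every $p$-subgroup is contractible. A standard equivariant argument (iterated Smith theory for the action of a Sylow $p$-subgroup of $C_G(H)$, combined with a transfer for the residual $p'$-quotient) then yields that $C_G(H) \backslash X^H$ is $R$-acyclic.

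Granted this acyclicity, one filtration of the double complex is, by the argument behind Theorem \ref{thm:IntroHyperCohSS}, the fusion Bredon hypercohomology $H^*_{\overline \cF_{\cC}(G)}(X^?; M)$, which collapses to $H^*(\overline \cF_{\cC}(G); M)$. The other filtration, together with the natural isomorphism
$$\hom_{R\overline \cF_{\cC}(G)}(C_*(C_G(?) \backslash X^?; R), M) \, \cong \, \hom_{R\cO_{\cC}(G)}(C_*(X^?; R), \Res_{pr} M)$$
from Lemma \ref{lem:TwoWays}, computes the orbit-category Bredon hypercohomology $H^*_{\cO_{\cC}(G)}(X^?; \Res_{pr} M)$; since $X^H \simeq *$ for $H \in \cC$, the orbit-category analog of Theorem \ref{thm:IntroHyperCohSS} (due to Symonds) makes this collapse to $H^*(\cO_{\cC}(G); \Res_{pr} M)$. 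Equality of the two abutments produces the asserted isomorphism, which is visibly induced by the projection functor $pr$.

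The hard part is the $R$-acyclicity of $C_G(H) \backslash X^H$, which depends critically on the $p$-overgroup closure of $\cC$; without that hypothesis the fixed sets $(X^H)^P$ need not be contractible for general $p$-subgroups $P \leq C_G(H)$, and the equivariant reduction to a $p'$-group quotient fails. The remaining steps are formal consequences of the double-complex setup and the hom-adjunction already established in the paper.
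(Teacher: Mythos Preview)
Your proposal is correct and follows essentially the same route as the paper: take $X=E\bO_{\cC}$, show $C_G(H)\backslash X^H$ is $\bbZ_{(p)}$-acyclic using the $p$-overgroup closure of $\cC$ (via $(X^H)^Q=X^{HQ}\simeq *$ for $p$-subgroups $Q\leq C_G(H)$, then reduce to a Sylow $p$-subgroup and apply transfer), and connect the two sides through Lemma~\ref{lem:TwoWays}. The only difference is packaging: because the isotropy subgroups of $E\bO_{\cC}$ already lie in $\cC$, the paper bypasses the hypercohomology spectral sequences entirely---$C_*(X^?;R)$ and $C_*(C_G(?)\backslash X^?;R)$ are themselves projective resolutions of $\underline{R}$ over $R\cO_{\cC}(G)$ and $R\overline\cF_{\cC}(G)$ respectively, so Lemma~\ref{lem:TwoWays} gives the isomorphism in one line without any filtration argument.
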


Using Theorem \ref{thm:IntroTheSame}, we also prove the following vanishing result. 

\begin{theorem}\label{thm:Vanishing} Let $\cF$ be a saturated fusion system over $S$ realized by a finite 
group and $\cC$ be a collection of subgroups of $S$ closed under taking $p$-overgroups. 
If $\cC$ includes all $\cF$-centric and $\cF$-radical subgroups in $S$, then for all $n\geq 0$
and $i>0$, $$ \underset{ \cO(\cF_{\cC} )}{\lim {}^i}  H^n (- ; \bbF _p )=0.$$
\end{theorem}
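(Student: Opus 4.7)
The plan is to reduce the vanishing statement to the classical sharpness of the subgroup decomposition for a finite group, using the identification provided by Theorem \ref{thm:IntroTheSame}.

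First I would let $G$ be a finite group realizing $\cF = \cF_S(G)$ and replace $\cC$ by its $G$-conjugation closure $\widetilde{\cC}$ inside the set of $p$-subgroups of $G$. A standard Sylow argument shows that, since $\cC$ is closed under $p$-overgroups in $S$ and under $\cF$-isomorphism, the collection $\widetilde{\cC}$ is closed under $p$-overgroups in $G$. Moreover, for subgroups $P, Q \leq S$ in $\cC$ the definitions give
$$\Mor_{\cO(\cF_{\cC})}(P,Q) = \Inn(Q)\backslash \Mor_{\cF}(P,Q) = Q\backslash N_G(P,Q)/C_G(P) = \Mor_{\overline{\cF}_{\widetilde{\cC}}(G)}(P,Q),$$
so the natural inclusion $\cO(\cF_{\cC}) \hookrightarrow \overline{\cF}_{\widetilde{\cC}}(G)$ is an equivalence of categories; in particular higher limits over the two categories agree.

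Next, I would apply Theorem \ref{thm:IntroTheSame} to $G$ and $\widetilde{\cC}$ with the module $M = H^n(-;\bbF_p)$, viewed as a $\bbZ_{(p)}\overline{\cF}_{\widetilde{\cC}}(G)$-module (note that this functor descends to $\overline{\cF}_{\widetilde{\cC}}(G)$ because inner automorphisms act trivially on cohomology). This yields
$$\underset{\cO(\cF_{\cC})}{\lim{}^i}\, H^n(-;\bbF_p) \;\cong\; H^i(\overline{\cF}_{\widetilde{\cC}}(G); H^n(-;\bbF_p)) \;\cong\; \underset{\cO_{\widetilde{\cC}}(G)}{\lim{}^i}\, H^n(-;\bbF_p)$$
for every $i \geq 0$ and $n \geq 0$. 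The rightmost expression is exactly the $E_2^{i,n}$-term of the Bousfield-Kan spectral sequence for the subgroup decomposition of $BG$ over the collection $\widetilde{\cC}$.

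To finish, I would invoke the classical sharpness theorem for finite groups due to Dwyer: for any finite group $G$ and any collection of $p$-subgroups closed under $p$-overgroups and containing all $p$-centric-$p$-radical subgroups, the higher limits of the functor $H^n(-;\bbF_p)$ over the orbit category vanish for $i > 0$. Since for $\cF = \cF_S(G)$ the $\cF$-centric (resp.\ $\cF$-centric and $\cF$-radical) subgroups of $S$ coincide with the $p$-centric (resp.\ $p$-centric-$p$-radical) subgroups of $G$ contained in $S$, the hypothesis of the theorem guarantees that $\widetilde{\cC}$ meets Dwyer's criterion. The main point to check carefully is the translation between the fusion-theoretic and group-theoretic notions of centric and radical subgroups, but this is standard for fusion systems realized by finite groups, so I expect no serious obstacle.
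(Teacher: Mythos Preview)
Your approach is essentially the same as the paper's: both identify $\cO(\cF_{\cC})$ with the fusion orbit category $\overline{\cF}_{\widetilde{\cC}}(G)$, apply Theorem~\ref{thm:IntroTheSame} to pass to the ordinary orbit category $\cO_{\widetilde{\cC}}(G)$, and then invoke a known vanishing result over that orbit category for finite $G$. The only cosmetic difference is that the paper unpacks the final step as Corollary~\ref{cor:Grodalthm} (Grodal's comparison theorem, reducing to the collection of all $p$-subgroups) followed by the Jackowski--McClure Mackey functor argument, whereas you cite it as a single sharpness result attributed to Dwyer.
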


This generalizes the vanishing result proved by Diaz and Park  in \cite[Thm B]{DiazPark} 
for fusion systems realized by finite groups and for the collection of $\cF$-centric subgroups.  
We end the paper with an explicit example of an infinite group $G$
where Theorem \ref{thm:IntroTheSame} no longer holds for the collection of all $p$-subgroups 
(see Example \ref{ex:Sharpness}). For the infinite group $G$ in this example, the subgroup decomposition 
for $BG$ is not sharp, but Conjecture \ref{conj:Sharpness} still holds for the fusion system $\cF=\cF_S(G)$.
 
{\it  Notation:} Throughout the paper, $G$ is a discrete group and $p$ is a prime number. 
When there exists a Sylow $p$-subgroup of $G$, it is denoted by $S$. 
In homological algebra sections, we work over an arbitrary commutative ring $R$ with unity. 
When we say $\cC$ is a collection of subgroups in $G$, we always assume that $\cC$ is closed 
under conjugation.  The subcategory of the fusion system $\cF$  with object set $\cC$ is denoted 
by $\cF_{\cC}$.  The full subcategory of $\cF$ generated by the collection of $\cF$-centric subgroups in $S$ is denoted 
by $\cF^c$, and the orbit category of $\cF$ defined over $\cF$-centric subgroups is denoted by $\cO^c (\cF)$.

{\bf Acknowledgements:}  
We would like to thank the referee for a careful reading of the paper and for 
many corrections and valuable suggestions.  In particular current versions of Lemma 4.6 and Lemma 8.13
were suggested by the referee along with many other comments that improved the paper
substantially.

\setcounter{tocdepth}{1}
\tableofcontents
%\tableofcontents[hideallsubsections]

%%%%%%%%%%%%%%%

\section{Fusion systems and the orbit category}\label{sect:FusionOrbitCat}

In this section we introduce some basic definitions related to fusion systems and orbit categories. 
For further details on this material, we refer the reader to  \cite{AKO}, \cite{AshOliver},
and \cite{Craven-Book}.

\subsection{Fusion systems} 
\begin{definition}\label{def:FusionSystem}\label{sect:Fusion} 
A \emph{fusion system} $\cF$ over a finite $p$-group $S$ is a category whose objects are subgroups of $S$
and whose set of morphisms $\Mor _{\cF} (P, Q)$ between two subgroups $P, Q \leq S$ satisfies the 
following properties:
\begin{enumerate}
\item $\hom_S(P, Q) \subseteq  \Mor _{\cF} (P, Q) \subseteq \mathrm{inj} (P,Q)$ where $\hom _{S} (P, Q)$ denotes the set of 
all conjugation maps $c_s : P \to Q$ induced by an element in $S$.
\item If $\varphi :  P \to Q$ is a morphism in $\cF$, then $\varphi ^{-1}: \varphi P \to P$ is a morphisms in $\cF$.
\end{enumerate}
\end{definition}

Two subgroups $P$ and $Q$ are \emph{$\cF$-conjugate} if there is an isomorphism $\varphi : P \to Q$ in $\cF$. 
In this case we write $P \sim _{\cF } Q$.  A subgroup $P\leq S$ is \emph{fully $\cF$-normalized}  if $|N_S (P )| 
\geq |N_S (P') |$ for every $P'\sim _{\cF } P$.  A subgroup $P\leq S$ is \emph{fully $\cF$-centralized} 
if $|C_S (P )| \geq |C_S (P') |$ for every $P'\sim _{\cF } P$. We say $P$ is \emph{fully $\cF$-automized} if $\Aut _S (P)$ 
is a Sylow $p$-subgroup of $\Aut _{\cF} (P)$. A subgroup $Q \leq S$ is called \emph{$\cF$-receptive} if every morphism 
$\varphi : P \to Q$ in $\cF$ extends to a morphism $\widetilde \varphi : N_{\varphi } \to S$ where 
$$ N_{\varphi } := \{ x \in N_S (P) \, | \, \exists y \in N_S (\varphi (P) ) \text{ such that} \ c_x 
= \varphi^{-1} \circ c_y \circ \varphi \}.$$ 
 
\begin{definition}{\cite[Part I, Prop 2.5]{AKO}}\label{def:Saturated} A fusion system $\cF$ over $S$ is \emph{saturated} 
if it satisfies the following properties:
\begin{enumerate} 
\item If $P \leq S$ is fully $\cF$-normalized, then $P$ is fully $\cF$-centralized and fully $\cF$-automized.
\item If $P \leq S$ is fully $\cF$-centralized, then $P$ is $\cF$-receptive.
\end{enumerate}
\end{definition}

A subgroup $P \leq S$ is called \emph{$\cF$-centric} if $C_S(Q) \leq Q$ for every  $Q \leq S$ such that 
$Q \sim _{\cF} P$. We denote by $\cF^c$ the full subcategory of $\cF$ whose object set is the set of 
$\cF$-centric subgroups in $S$.

\begin{definition}\label{def:O(F)} The \emph{orbit category $\cO (\cF)$} of the  fusion system $\cF$ is 
the quotient category of $\cF$ whose morphisms are defined by 
$$\Mor _{\cO (\cF)} ( P, Q )= \Inn(Q) \backslash \Mor _{\cF} (P, Q).$$
\end{definition}

One can similarly define the orbit category $\cO (\cF^c)$ of the subcategory $\cF^c$. We define the centric 
orbit category $\cO ^c (\cF)$ to be the full subcategory of $\cO(\cF)$ generated by $\cF$-centric subgroups 
in $S$. Note that  $\cO ^c (\cF) \cong \cO (\cF^c)$.

\subsection{Realization of fusion systems}\label{sect:Realization}

For a discrete group $G$, a finite $p$-subgroup $S$ of $G$ is called a \emph{Sylow $p$-subgroup} of $G$, 
if for every finite $p$-subgroup $P$, there is an element $g\in G$ such that $gPg^{-1} \leq S$.   In general, 
discrete groups do not have Sylow $p$-subgroups even when the orders of finite subgroups are bounded 
from above.  A simple example of such a group is $G=C_2\ast C_2$. If $G$ has a Sylow $p$-subgroup $S$, 
then the fusion system $\mathcal{F}_S(G)$ is defined to be the category whose objects are subgroups of $S$, 
and whose morphisms $P \to Q$ are the group homomorphisms induced by conjugation by elements of $G$.  

If $G$ is a finite group and $S$ is a Sylow $p$-subgroup of $G$, then $\cF_S(G)$ is saturated 
(see \cite[Thm 4.12]{Craven-Book}). In general this is not true for fusion systems induced by infinite groups. 
A fusion system $\cF$ over $S$ is said to be \emph{realized by $G$}, if  $G$ has a Sylow $p$-subgroup 
isomorphic to $S$ and there is an isomorphism of categories $\cF \cong \cF_S(G)$.  We have the following 
realization theorem for fusion systems.
 
\begin{theorem}[Leary-Stancu \cite{LearyStancu}, Robinson \cite{Robinson}]\label{thm:realization} Every fusion 
system $\cF$ is realized by a (possibly infinite) discrete group $G$ constructed as the fundamental group of 
a graph of groups $(\cG, Y)$ with finite vertex and edge groups $G_v$ and $G_e$ over a finite graph $Y$.
\end{theorem}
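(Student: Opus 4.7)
The plan is to realize $\cF$ explicitly as the fusion system of a discrete group $G$ built by successive HNN extensions with base $S$, following the construction of Leary-Stancu. Equivalently, $G$ will be exhibited as the fundamental group of a graph of groups $(\cG, Y)$ having one vertex labeled by $S$ and one edge for each additional generating $\cF$-morphism that is not already induced by conjugation in $S$; all vertex and edge groups will be finite by construction.

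First I would observe that $\cF$ is finitely generated as a category: each $\Mor_{\cF}(P,Q)$ is a finite set of injections between finite groups, and $S$ has only finitely many subgroups. Hence there exists a finite set $\Phi$ of morphisms of $\cF$ such that every morphism of $\cF$ factors as a composition of inclusions, $S$-conjugations, and elements of $\Phi \cup \Phi^{-1}$ (using axiom (2) of Definition \ref{def:FusionSystem}). A convenient choice of $\Phi$ consists of lifts of generators of each $\mathrm{Out}_{\cF}(P)$, with $P$ ranging over fully normalized representatives of the $\cF$-conjugacy classes, together with a fixed $\cF$-isomorphism between any two $\cF$-conjugate subgroups.

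Next, for each $\varphi \in \Phi$ with source $P_\varphi \leq S$ and target $Q_\varphi \leq S$, I would form an HNN extension over $S$ with associated subgroups $P_\varphi$ and $Q_\varphi$ and a stable letter $t_\varphi$ satisfying $t_\varphi x t_\varphi^{-1} = \varphi(x)$ for $x \in P_\varphi$. Iterating over $\Phi$ produces
$$G \;=\; \bigl\langle \, S, \; \{ t_\varphi \}_{\varphi \in \Phi} \;\bigm|\; t_\varphi x t_\varphi^{-1} = \varphi(x) \text{ for } x \in P_\varphi, \; \varphi \in \Phi \, \bigr\rangle,$$
which is the fundamental group of a graph of groups $(\cG, Y)$ with finite vertex and edge groups over the finite graph $Y$ having one vertex and $|\Phi|$ edges. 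By Bass-Serre theory, $G$ acts without inversions on a tree $T$ whose vertex stabilizers are the $G$-conjugates of $S$, and every finite subgroup of $G$ fixes a vertex of $T$; consequently every finite $p$-subgroup of $G$ is conjugate into $S$, so $S$ is a Sylow $p$-subgroup of $G$.

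The main obstacle is to verify $\cF_S(G) = \cF$. The inclusion $\cF \subseteq \cF_S(G)$ is immediate from the construction, since each $\varphi \in \Phi$ is realized by conjugation by $t_\varphi$ and every $S$-conjugation lies in $\cF_S(G)$. For the reverse inclusion, one uses Britton's Lemma: any $g \in G$ with $gPg^{-1} \leq S$ admits a reduced expression whose syllables alternate between elements of $S$ and stable letters $t_\varphi^{\pm 1}$, so the conjugation $c_g$ decomposes as a composition of $S$-conjugations and members of $\Phi \cup \Phi^{-1}$, each of which is a morphism of $\cF$. The delicate point is to ensure that at each intermediate step the running image is small enough to apply the next stable letter, i.e.\ lies inside the relevant $P_\varphi$ or $Q_\varphi$; this is handled by invoking the receptivity clause in Definition \ref{def:Saturated} to extend the intermediate morphisms to the required overgroups. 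Assembling the factors gives $c_g \in \Mor_{\cF}(P,gPg^{-1})$, hence $\cF_S(G) \subseteq \cF$.
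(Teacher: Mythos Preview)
The paper does not prove Theorem~\ref{thm:realization}; it quotes it from Leary--Stancu and Robinson and only remarks (immediately after the statement) that the Leary--Stancu model is an iterated HNN extension with a single vertex, valid for \emph{any} fusion system, while the Robinson model is a generalized amalgamation built from finite groups generating $\cF$. Your sketch follows the Leary--Stancu construction, so structurally you are aligned with one of the two cited proofs.

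There is, however, a genuine gap in your argument for the inclusion $\cF_S(G)\subseteq\cF$. You identify the right issue---that the intermediate images must land inside the relevant edge groups $P_\varphi$ or $Q_\varphi$---but you resolve it by appealing to receptivity, i.e.\ to saturation. That is both unnecessary and incorrect as a mechanism. The containments you need are forced automatically by Britton's Lemma: if $g=s_0 t_{\varphi_1}^{\epsilon_1}s_1\cdots t_{\varphi_n}^{\epsilon_n}s_n$ is reduced and $gpg^{-1}\in S$ for every $p\in P$, then the word $s_0 t_{\varphi_1}^{\epsilon_1}\cdots t_{\varphi_n}^{\epsilon_n}(s_n p s_n^{-1})t_{\varphi_n}^{-\epsilon_n}\cdots t_{\varphi_1}^{-\epsilon_1}s_0^{-1}$ must reduce to an element of $S$, and because $g$ itself is reduced the only possible pinches occur successively at the centre. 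This forces $s_n P s_n^{-1}$ into the edge group associated to $t_{\varphi_n}^{\epsilon_n}$, then the next conjugate into the edge group for $t_{\varphi_{n-1}}^{\epsilon_{n-1}}$, and so on. Each syllable therefore contributes a restriction of some $\varphi\in\Phi\cup\Phi^{-1}$ or an $S$-conjugation, and closure of $\cF$ under restriction and composition finishes the argument. No extension of morphisms---hence no receptivity---is involved.

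This matters: as the paper explicitly notes, the Leary--Stancu construction works for an arbitrary (not necessarily saturated) fusion system, so a proof that invokes Definition~\ref{def:Saturated} cannot be the right one. Replace the receptivity step by the Britton's-Lemma reduction argument and your sketch becomes a faithful outline of the Leary--Stancu proof.
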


The graph $Y$ and the vertex groups $G_v$ are described explicitly in each construction. 
Leary and Stancu \cite{LearyStancu} constructs $G$ as an iterated HNN-extension (with only one vertex) 
and their construction works for any fusion system (even if it is not necessarily saturated). Robinson model 
\cite{Robinson} is a generalized amalgamation with vertex groups given by a family of finite groups $G_i$ 
which generate the fusion system. By this we mean that for each $i$, there is a monomorphism 
$\varphi _i: S_i \to S$ from a Sylow $p$-subgroup $S_i$ of $G_i$ to $S$  such that $\cF$ is generated 
by the images of $\cF_{S_i} (G_i)$ under the map induced by $\varphi_i$. By  Alperin's fusion theorem 
\cite[Thm 4.52]{Craven-Book}  and by the model theorem for constrained fusion systems \cite{5A1}, 
a collection of finite groups $\{G_i\}$ that generates $\cF$ can always be found when $\cF$ is a saturated 
fusion system.

\subsection{Fusion orbit category}\label{sect:Orbit} 
Let $G$ be a discrete group and $\cC$ be a collection of subgroups of $G$.  The transporter category
$\cT _{\cC}(G)$ is the category whose objects are subgroups $H \in \cC$, and whose 
morphisms are given by $$\Mor  _{\cT _{\cC}(G ) } (H, K ):=N_G (H, K)=\{ g \in G \, | \, gHg^{-1} \leq K \}.$$ 
 
The orbit category $\cO _{\cC} (G)$ is the category whose objects are subgroups $H \in \cC$, 
and whose morphisms are given by $G$-maps $G/H \to G/K$. A $G$-map $f: G /H \to G/K$ such that 
$f(H)=g^{-1} K$ can be identified with the coset $Kg$ satisfying $gHg^{-1} \leq K$.  This gives an identification 
of morphism sets $$ \Mor _{\cO _{\cC} (G) } (H, K )= K \backslash N_G (H, K)$$ where the $K$-action 
of $N_G(H, K)$ is defined by the left multiplication. 
 
The fusion category $\cF _{\cC}  (G)$ is the category whose objects are the subgroups $H \in \cC$, and whose 
morphisms are given by the group homomorphisms  $c_g : H \to K$ defined by $c_g (h) =ghg^{-1}$ for some 
$g \in G$. Each conjugation map $c_g: H \to K $ can be identified with the coset $g C_G (H) $ where 
$g \in N_G(H, K)$, hence there is a bijection $$ \Mor _{\cF _{\cC}(G)} ( H, K ) = N_G (H, K)/ C_G (H).$$ 

From these identifications, we see that both $\cO _{\cC} (G)$ and $\cF_{\cC} (G)$ are isomorphic to 
quotient categories of the transporter category $\cT _{\cC}(G)$. The fusion orbit category $\overline \cF_{\cC} (G)$ 
is defined in Definition \ref{def:FusionOrbit} with morphism set 
$$\Mor  _{\overline \cF _{\cC} (G ) } (H, K )= K  \backslash N_G (H, K)/  C_G(H).$$ 
Hence there are bijections 
$$\Mor _{\overline \cF _{\cC} (G)} ( H, K ) = \Mor _{\cO _{\cC} (G) } (H, K)/ C_G (H)$$ 
$$\Mor _{\overline \cF _{\cC} (G)} ( H, K ) = \Inn (K) \backslash \Mor _{\cF _{\cC} (G)} (H, K)$$
where $\mathrm{Inn} (K)$ denotes the set of conjugations $c_k : K \to K$ induced by elements $k\in K$.  
These bijections show that the fusion orbit category is a quotient category of both the orbit category 
and the fusion category. We can summarize these in the following diagram:
$$\xymatrix{  & \cT _{\cC} (G) \ar[dl] \ar[dr] &  \\ 
\cO _{\cC} (G)  \ar[dr] & & \cF _{\cC} (G)  \ar[dl] \\
& \overline \cF_{\cC} (G) & \\} $$
where each arrow is the projection functor to the corresponding quotient category. 

Let $\cF$ be  a fusion system over $S$, and $G$ be a discrete group with a Sylow $p$-subgroup $S$ such that 
$\cF \cong \cF_S(G)$. If we take $\cC$ to be the collection of all $p$-subgroups of $G$, then the fusion 
category $\cF _{\cC} (G)$  is equivalent (as a category) to the fusion system $\cF$. If $\cC$ is the collection 
of all $p$-subgroups of $G$ that are conjugate to an $\cF$-centric subgroup in $S$, then $\cF_{\cC}(G)$ 
is equivalent to $\cF^c$ as categories. We show in Lemma \ref{lem:CentricG} that the collection 
of all $p$-subgroups of $G$ that are conjugate to an $\cF$-centric subgroup in $S$ is equal 
to the collection of all $p$-centric subgroups in $G$. 

\begin{lemma}\label{lem:CatEquivalence} Let $\cF$ be a saturated fusion system over $S$, and $G$ be a discrete 
group with a Sylow $p$-subgroup $S$ realizing $\cF$. Let $\cC$ be the collection of all $p$-subgroups 
of $G$ that are conjugate to an $\cF$-centric subgroup in $S$. Then the orbit category $\cO ^c (\cF)$ 
is equivalent, as categories, to the fusion orbit category $\overline \cF_{\cC} (G)$.
\end{lemma}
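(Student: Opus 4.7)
\medskip
\noindent\textbf{Proof plan.} The strategy is to construct an explicit functor
\[
F : \cO^c(\cF) \longrightarrow \overline{\cF}_{\cC}(G)
\]
which is the identity on objects (noting that an $\cF$-centric subgroup $P \leq S$ is already an object of $\cC$) and to verify that $F$ is fully faithful and essentially surjective.

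\medskip
\noindent\emph{Essential surjectivity.} Let $H \in \cC$. By definition of $\cC$, there exist an element $g \in G$ and an $\cF$-centric subgroup $P \leq S$ such that $gHg^{-1} = P$. The coset $Pg \in K \backslash N_G(H,P)/C_G(H)$ determines a morphism $H \to P$ in $\overline{\cF}_{\cC}(G)$, and the coset $Pg^{-1}$ in $N_G(P,H)$ furnishes a two-sided inverse up to left $H$-multiplication and right $C_G(P)$-multiplication. Hence $H$ is isomorphic in $\overline{\cF}_{\cC}(G)$ to an object of $\cO^c(\cF)$, so $F$ hits every isomorphism class.

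\medskip
\noindent\emph{Fully faithful.} Here I would string together the identifications already unpacked in Section~\ref{sect:Orbit}. For $\cF$-centric subgroups $P, Q \leq S$, the realization $\cF \cong \cF_S(G)$ gives
\[
\Mor_{\cF}(P,Q) \;=\; N_G(P,Q)/C_G(P),
\]
so
\[
\Mor_{\cO^c(\cF)}(P,Q) \;=\; \Inn(Q) \backslash \Mor_{\cF}(P,Q) \;=\; \Inn(Q) \backslash \bigl(N_G(P,Q)/C_G(P)\bigr).
\]
The left action of $c_q \in \Inn(Q)$ on $\Mor_{\cF}(P,Q)$ corresponds to $[g] \mapsto [qg]$, and this action factors through the surjection $Q \twoheadrightarrow \Inn(Q) = Q/Z(Q)$. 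Consequently the $\Inn(Q)$-orbits coincide with the orbits of the left $Q$-action, yielding
\[
\Mor_{\cO^c(\cF)}(P,Q) \;=\; Q \backslash N_G(P,Q)/C_G(P) \;=\; \Mor_{\overline{\cF}_{\cC}(G)}(P,Q),
\]
so the natural map induced by $F$ on morphism sets is a bijection.

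\medskip
\noindent\emph{Main obstacle.} There is no deep obstacle; the only point that requires care is checking that quotienting $\Mor_{\cF}(P,Q)$ by $\Inn(Q)$ really does match quotienting $N_G(P,Q)/C_G(P)$ by left $Q$-multiplication. The potential worry is that the kernel $Z(Q)$ of $Q \twoheadrightarrow \Inn(Q)$ might fail to be absorbed on the right by $C_G(P)$; but since $Z(Q)$ acts trivially on $\Mor_{\cF}(P,Q)$, this absorption is automatic, and $\cF$-centricity of $Q$ is not even needed for this identification (it is only used implicitly to guarantee that the centralizer data is well-behaved, via Lemma~\ref{lem:CentricG}, which identifies $\cC$ with the $p$-centric subgroups of $G$). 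After recording these two verifications, the equivalence follows at once.
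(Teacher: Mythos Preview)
Your proof is correct and follows essentially the same route as the paper: both establish essential surjectivity directly from the definition of $\cC$, and both obtain fully faithfulness from the chain of identifications $\Mor_{\cO^c(\cF)}(P,Q) = \Inn(Q)\backslash \Mor_{\cF}(P,Q) = \Inn(Q)\backslash \Mor_{\cF_{\cC}(G)}(P,Q) = \Mor_{\overline{\cF}_{\cC}(G)}(P,Q)$, the last step being exactly the bijection recorded in Section~\ref{sect:Orbit}. You have simply unpacked the $\Inn(Q)$-versus-$Q$ quotient more explicitly than the paper, which cites that identification without repeating the argument.
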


\begin{proof} By definition of $\cC$, every object in $\overline \cF_{\cC} (G)$ is isomorphic to an $\cF$-centric 
subgroup in $S$. For $\cF$-centric subgroups $P, Q \leq S$, $$\Mor _{\cO (\cF^c)} (P, Q)= \Inn(Q) \backslash 
\Mor _{\cF ^c} (P, Q) =  \Inn (Q) \backslash \Mor _{\cF _{\cC} (G)}  (P, Q) =\Mor _{\overline \cF _{\cC} (G)} (P, Q).$$
Hence these two categories are equivalent.
\end{proof}

By Proposition \ref{pro:EquivCat}, equivalent categories have isomorphic cohomology groups, 
hence the higher limits over $\cO ^c (\cF)$ are isomorphic to the higher limits over $\overline \cF _{\cC} (G)$.

%----------------------------  

\section{Modules over the orbit category and induction}\label{sect:Induction}

In this section, we introduce the preliminaries on modules over small categories 
and discuss the restriction and  induction functors induced by a functor between 
two categories. We refer the reader to \cite[Chp 9]{Lueck-Book}  and \cite{HPY} 
for more details on homological algebra over orbit categories. 

\subsection{Cohomology of a small category} Let $\bC$ be a nonempty small category 
and $R$ be a commutative ring with unity. A (right) $R\bC$-module $M$ is a contravariant 
functor $M : \bC \to R$-mod, and the $R\bC$-module homomorphism $\varphi: M_1\to M_2$  
is defined as a natural transformation of functors. The category of $R\bC$-modules is an abelian 
category so the usual notions of kernel, cokernel, and exact sequence exist and they are defined 
objectwise. For example, a short exact sequence of $R\bC$-modules $M_1\to M_2 \to M_3$ 
is exact if for every $x\in \Ob(\bC)$ the sequence of $R$-modules $$M_1 (x)  \to M_2 (x) \to M_3 (x)$$ 
is exact. For $x\in \Ob (\bC)$, we define the $R\bC$-module $R\Mor _{\bC} (?, x)$ 
as the module which takes $y\in \Ob (\bC)$ to the free $R$-module $R\Mor _{\bC} (y, x)$. 
For any $x\in \bC$ and for any $R\bC$-module $M$, there is an isomorphism 
$$\hom _{R \bC } (R\Mor _{\bC} (? , x) , M)  \cong M(x).$$
This proves that for every $x\in \Ob (\bC)$, the $R\bC$-module $R\Mor_{\bC} (?, x)$ 
is projective. Using these projective modules one can show that for every $R\bC$-module 
$M$, there is a projective module $P$ and a surjective $R\bC$-module homomorphism 
$P \to M$. Hence there are enough projectives in the category of $R\bC$-modules. 
There are also enough injectives in this category (see \cite[p. 43]{Weibel-Book}). 

For $R\bC$-modules $M$ and $N$, we define the $n$-th ext-group to be
$$\Ext _{R\bC} ^n (M, N) := H^n ( \hom _{R\bC} (P_*, N))$$ where $P_* \to M$ 
is a projective resolution of $M$ as an $R\bC$-module. By the balancing theorem 
for ext-groups, the ext-group $\Ext _{R\bC} ^n (M, N)$ can also be calculated 
as the $n$-th cohomology of the cochain complex  $\hom _{R\bC} (M, I^*)$ 
where $I^*$ is an injective co-resolution of $N$ as an $R\bC$-module 
(see \cite[Thm 2.7.6]{Weibel-Book}). 

\begin{definition}
The \emph{constant functor} $\underline R$ over the category $\bC$ is the $R\bC$-module 
that sends every object $x\in \Ob (\bC)$ to $R$ and every morphism in $\bC$ to the  identity map 
$\id_R : R \to R$. If $M$ is an $R\bC$-module, then for every $n \geq 0$, the \emph{$n$-th 
cohomology group of $\bC$} with coefficients in $M$ is defined to be 
$$ H^n (\bC ; M) :=\Ext  ^n _{R \bC} (\underline R, M).$$ 
\end{definition}

For any $R\bC$-module $M$, the limit  of $M$ over $\bC$ is defined by
$$\lim _{x \in \bC } M(x) := \{ (m_x) \in \prod _{x \in \bC} M(x) \, | \,  
M(\alpha) (m_y)= m_x \text{ for every }  \alpha \in \Mor _{\bC} (x, y) \}.$$ 
The functor $M \to \lim _{x\in \bC} M$ is left exact. The $n$-th right derived functor 
of $\lim _{\bC} (-)$ is called the \emph{$n$-th higher limit of $M$}, and it is denoted 
by $\lim _{\bC} ^n M$. Since $$\hom_{R\bC}  (\underline R; M)\cong \lim _{\bC} M,$$ 
there is an isomorphism $$\underset{\bC\ }{\lim {}^n} M \cong H^n (\bC, M)$$ of right 
derived functors for every $n\geq 0$.  Throughout the paper we will replace the higher 
limits $\lim _{\bC} ^n M$ of an $R\bC$-module $M$ with the cohomology groups 
$H^n (\bC; M)$ without further explanation.

When $M: \bC \to R$ is a covariant functor, then we say $M$ is a left $R\bC$-module.
For left $R\bC$-modules, we can define projective resolutions and ext-groups in the same 
way that we defined them for the right $R\bC$-modules. In fact, a left $R\bC$-module 
is a right $R\bC^{op}$-module, so all the definitions above can be repeated 
easily. The $n$-th cohomology $H^n (\bC; M)$ of $\bC$ with coefficients in a left $R\bC$-module 
$M$ is defined to be the ext-group $\Ext ^n _{R\bC} (\underline R, M)$ over the category 
of left $R\bC$-modules.The $n$-th higher limits is defined in a similar way and we have 
$\underset{\bC}{\lim{}^n}  M \cong H^n (\bC; M)$.

\begin{remark} 
In the literature, the cohomology groups of a category $\bC$ are sometimes defined only 
for a left $R\bC$-module and the cohomology groups of $\bC$ with coefficients in a right 
$R\bC$-module $M$ are denoted by $H^* (\bC ^{op} ; M)$. In this paper, most of our cohomology 
groups are with coefficients in a right $R\bC$-module so it becomes inconvenient for us 
to use this convention. We denote both of the cohomology groups of $\bC$ with coefficients 
in a left $R\bC$-module and a right $R\bC$-module by $H^* (\bC; M)$. 
Similarly we will always denote the higher limits of both covariant and contravariant
functors $M: \bC \to R\text{-Mod}$ over $\bC$ by $\underset{\bC}{\lim {}^i } M$.
\end{remark}

\begin{remark}\label{rem:Emptycat} The empty category $\emptyset$ is a category 
with no objects and no morphisms. For every category $\bC$, there is a unique functor 
$F : \emptyset \to \bC$, i.e. the empty category $\emptyset$ is the initial object in the 
category of small categories. If $\bC$ is the empty category then the only $R\bC$-module 
is the zero module which is both injective and projective. Applying the definitions, we see 
that if $\bC=\emptyset$, then $\lim _{\bC} M=0$ and $H^i (\bC, M) = \lim _{\bC} ^i M=0$ 
for all $i$.
\end{remark}

\subsection{Restriction and induction via a functor}\label{sect:ResInd}

Let $F: \bC \to \bD$ be a functor between two nonempty small categories. The restriction functor 
$$\Res_F : R\bD\text{-Mod} \to R\bC\text{-Mod}$$ is defined by composition with $F$, 
i.e., $\Res_F M =M \circ F$. The induction functor $$\Ind _F : R\bC\text{-Mod} \to R\bD\text{-Mod}$$
is defined to be the functor which is left adjoint of the restriction functor. The left adjoint 
of the restriction functor exists because the category of $R$-modules is cocomplete. 
We explain this in detail below using Kan extensions.
 
For an $R\bC$-module $M$,  let $LK_{F^{op}} (M)$ denote  the left Kan extension of 
$M: \bC ^{op} \to R$-Mod along the functor $F ^{op} : \bC ^{op}  \to \bD ^{op} $. 
There is a formula for left Kan extensions using colimits over a comma category. 
We first recall the definition of a comma category.

\begin{definition}\label{def:Comma} Let $F :\bC \to \bD$ be a functor and $d\in \Ob (\bD)$. 
The \emph{comma category}  $d \backslash F$ is the category whose 
objects are the pairs $(c, f)$ where $c\in \Ob (\bC)$ and $f \in \Mor _{\bD} ( d , F(c) )$, and whose 
morphisms $(c, f) \to (c', f')$ are given by the morphisms $\varphi : c\to c'$ in $\bC$ such that 
$f'= F(\varphi) \circ f$. 
\end{definition}

The comma category $F/d$ is defined in a similar way as the category whose 
objects are the pairs $(c, f)$ where $c\in \Ob (\bC)$ and $f \in \Mor _{\bD} ( F(c), d)$.
Note that $F^{op}/d=(d \backslash F)^{op}$, and there is a functor  $\pi_d: F^{op} / d \to \bC ^{op} $ 
defined by $(c, f)\to c$. For every $d\in \Ob(\bD)$, we have
$$LK_{F^{op}} (M) (d) =\colim _{F^{op} /d} (M \circ \pi_d) =\colim _{(d\backslash F) ^{op}} (M \circ \pi_d)$$
(see \cite[Thm 6.2.1]{Reihl-Book}). Since the colimits exists in the category of $R$-modules, the left Kan-extension 
exists (see \cite[Cor 6.2.6]{Reihl-Book}). 
When it exist, the Kan extension functor $LK_{F^{op}} (-)$ is left adjoint to the functor 
defined by precomposing with $F^{op}$, i.e., to the restriction functor $\Res_F (-)$  
(see \cite[Prop 6.1.5]{Reihl-Book}). Hence we can take
$\Ind_F(-)$ to be the functor $LK_{F^{op}} (-)$. 

In \cite[9.15]{Lueck-Book}, the induction functor $\Ind _F (-)$  is defined by using a tensor product with the
$R\bC$-$R\bD$-bimodule $R\Mor _{\bD} (??, F(?)) : \bC \times \bD ^{op} \to R$-Mod 
defined by $$ (c, d) \to R\Mor _{\bD} ( d, F(c) )$$ on objects. 
This gives an explicit formula for $\Ind _F M$ that can be described 
as follows:  for every $x \in \Ob (\bD)$, $$(\Ind _F M) (x) = \Bigl ( \bigoplus _{y \in \Ob (\bC) } 
M(y) \otimes_R R\Mor _{\bD} (x, F(y)) \Bigr ) /J$$ where $J$ is the ideal generated by the elements 
of the form $m \otimes f - m' \otimes f'$ where  $m \in M(y)$, $f \in \Mor _{ \bD} (x , F(y))$, $m'\in M(y')$, 
and $f' \in \Mor _{\bD} (x', F(y') )$ such that there is a morphism $\varphi \in \Mor _{\bC} (y, y')$ satisfying 
$$M(\varphi ) (m')=m \ \ \text{ and }  \ \  f'= F( \varphi ) \circ f.$$ Since the tensor product is adjoint 
to the hom-functor, we have the following:

\begin{lemma}\label{lem:Adjoint} Let $\Ind _F: R\bC\text{-Mod} \to R\bD\text{-Mod}$ be the functor 
defined by the formula given above. Then for every $R\bC$-module $M$ and $R\bD$-module $N$, 
there is a natural isomorphism $$\hom _{R\bC} (\Ind _F M, N ) \cong \hom _{R \bD} (M, \Res_F N).$$
\end{lemma}

\begin{proof} See \cite[9.22]{Lueck-Book}.
\end{proof}

Note that these two different descriptions of $\Ind _F M$ coincide by the uniqueness of the left adjoints.
We can also see this by proving that the two formulas for $(\Ind _F M) (x) $ given by colimits and 
by using the definition of tensor products give isomorphic modules for every $x \in \Ob (\bD)$.

\begin{lemma}\label{lem:TakesProjPorj}
The restriction functor $\Res_F(-)$ preserves exact sequences, hence its left adjoint the induction 
functor $\Ind _F (-)$ takes projectives to projectives. 
\end{lemma}

\begin{proof} This is clear from the definition of the restriction functor. The second statement follows 
from the adjointness of the restriction and induction functor.
\end{proof}

 In general the induction functor $\Ind _F(-)$ is not an exact functor. When $\Ind _F(-)$  is an exact functor, 
there is a version of Shapiro's isomorphism for ext-groups over the corresponding module categories.

\begin{proposition}\label{pro:Shapiro} Let $F:\bC \to \bD$ be a functor such that the associated 
induction functor $\Ind _F : R\bC\text{-Mod} \to R\bD\text{-Mod}$ is exact. Then 
for every $R\bC$-module $M$ and for every $R\bD$-module $N$, there is an isomorphism 
$$\Ext^* _{R\bD} (\Ind _F M, N) \cong \Ext ^*_{R\bC} (M, \Res_F N),$$  called 
\emph{Shapiro's isomorphism} for the functor $F:\bC \to \bD$.
\end{proposition}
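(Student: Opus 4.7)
The plan is the standard adjoint-functor argument for deriving an adjunction, so I would mimic the usual proof of Shapiro's lemma for group cohomology, but with the roles of induction and restriction played by $\Ind_F$ and $\Res_F$ in Lemma \ref{lem:Adjoint}. The two crucial inputs are already at hand: first, $\Ind_F$ sends projectives to projectives (noted immediately after Lemma \ref{lem:Adjoint}, as a formal consequence of the adjunction together with the exactness of $\Res_F$); second, by hypothesis $\Ind_F$ is exact.

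First I would choose a projective resolution $P_* \to M$ of $M$ in $R\bC\text{-Mod}$. Applying $\Ind_F$ termwise yields a complex $\Ind_F P_* \to \Ind_F M$ of $R\bD$-modules. Since $\Ind_F$ is assumed exact, this complex is still exact, and since $\Ind_F$ sends projectives to projectives, each $\Ind_F P_i$ is projective. Hence $\Ind_F P_* \to \Ind_F M$ is a projective resolution of $\Ind_F M$ in $R\bD\text{-Mod}$, so it may be used to compute $\Ext^*_{R\bD}(\Ind_F M, N)$.

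Next I would apply $\hom_{R\bD}(-, N)$ to the resolution $\Ind_F P_*$ and invoke the adjunction of Lemma \ref{lem:Adjoint} levelwise. This produces a natural isomorphism of cochain complexes
\[
\hom_{R\bD}(\Ind_F P_*, N) \;\cong\; \hom_{R\bC}(P_*, \Res_F N),
\]
where naturality of the adjunction ensures that the differentials on the two sides agree. Taking cohomology of both sides gives $\Ext^*_{R\bD}(\Ind_F M, N)$ on the left, and $\Ext^*_{R\bC}(M, \Res_F N)$ on the right (the latter because $P_* \to M$ is a projective resolution of $M$ in $R\bC\text{-Mod}$).

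The argument is essentially formal once the two prerequisites are recognized; the only real substantive point — and the one I would make sure to emphasize — is the verification that $\Ind_F P_* \to \Ind_F M$ is a legitimate projective resolution. This is exactly where the exactness hypothesis on $\Ind_F$ is needed, and is the reason the statement cannot be extended to arbitrary functors $F$ without modification. Everything else is adjunction and naturality.
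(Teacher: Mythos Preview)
Your proposal is correct and follows essentially the same argument as the paper: take a projective resolution $P_* \to M$, use exactness of $\Ind_F$ together with the fact that $\Ind_F$ preserves projectives to conclude that $\Ind_F P_* \to \Ind_F M$ is a projective resolution, then apply the adjunction of Lemma~\ref{lem:Adjoint} levelwise and take cohomology. Your emphasis on the role of the exactness hypothesis matches the paper's reasoning exactly.
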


\begin{proof} Let $P_* \to M$ be a projective resolution of $M$ as an $R\bC$-module. Since the induction 
functor takes projectives to projectives, the induced module $\Ind_F P_i$ is a projective $R\bD$-module 
for every $i$. By assumption the induction functor $\Ind_F(-)$ is an exact functor, therefore we can conclude that 
$\Ind _F P_* \to \Ind_F M$ is a projective resolution of $\Ind_FM$ as an $R\bD$-module. This gives
\begin{align*}
\Ext^* _{R\bD} (\Ind _F M, N) & \cong H^* (\hom _{R\bD} (\Ind_F P_*, N)) \\ & \cong 
H^*(  \hom _{R\bC} (P_*, \Res_F N) ) \cong \Ext ^*_{R\bC} (M, \Res_F N ). 
\end{align*}
\end{proof}

\begin{remark}\label{rem:EmptyFunctor}  If $\emptyset$ is the empty category and  
$F : \emptyset \to \bC$ is the unique functor between $\emptyset$ and a category $\bC$, 
then $\Res _F : R\bC\text{-Mod} \to R\emptyset\text{-Mod}$ is the functor which takes 
every $R\bC$-module to the zero module. $\Ind _F : R\emptyset\text{-Mod}  \to R\bC\text{-Mod}$ 
is the functor which sends the zero module to the zero  module. It is easy to see that  both 
Lemma \ref{lem:Adjoint} and Proposition \ref{pro:Shapiro} hold for this functor $F$.
\end{remark}

We will use the empty category in the paper when we are discussing the restriction and 
induction functors for the (fusion) orbit categories. We also will refer to the following 
well-known result for the equivalent categories.

\begin{proposition}\label{pro:EquivCat} If $F: \bC \to \bD$ is an equivalence of 
two small categories, then for every $R\bD$-module $M$, there is an isomorphism 
$$H^n (\bD ; M) \cong H^n (\bC ; \Res _F M)$$ induced by $F$.
\end{proposition}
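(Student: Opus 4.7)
The plan is to deduce this from Shapiro's isomorphism (Proposition \ref{pro:Shapiro}) applied to the functor $F$. Since $H^n(\bD ; M) = \Ext^n_{R\bD}(\underline{R}_{\bD}, M)$ and $H^n(\bC ; \Res_F M) = \Ext^n_{R\bC}(\underline{R}_{\bC}, \Res_F M)$, Shapiro's isomorphism will yield the result once we check two things: $(i)$ the induction functor $\Ind_F$ is exact, and $(ii)$ $\Ind_F \underline{R}_{\bC} \cong \underline{R}_{\bD}$ as $R\bD$-modules.

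For $(i)$, pick a quasi-inverse $G : \bD \to \bC$ of $F$, together with natural isomorphisms $GF \cong \mathrm{id}_{\bC}$ and $FG \cong \mathrm{id}_{\bD}$. Precomposing with these yields natural isomorphisms $\Res_F \circ \Res_G \cong \mathrm{id}$ and $\Res_G \circ \Res_F \cong \mathrm{id}$ between endofunctors of the module categories, so $\Res_F$ and $\Res_G$ are quasi-inverse equivalences of the abelian categories of modules. In particular each is both a left and a right adjoint to the other, giving an adjunction $\Res_G \dashv \Res_F$. Combined with the adjunction $\Ind_F \dashv \Res_F$ from Lemma \ref{lem:Adjoint}, uniqueness of left adjoints produces a natural isomorphism $\Ind_F \cong \Res_G$. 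Since $\Res_G$ is manifestly exact, so is $\Ind_F$.

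For $(ii)$, the same identification gives $(\Ind_F \underline{R}_{\bC})(d) \cong \underline{R}_{\bC}(G(d)) = R$ for every $d \in \bD$, with every morphism in $\bD$ acting as the identity; hence $\Ind_F \underline{R}_{\bC} \cong \underline{R}_{\bD}$. Applying Proposition \ref{pro:Shapiro} then yields
$$H^n(\bD ; M) = \Ext^n_{R\bD}(\underline{R}_{\bD}, M) \cong \Ext^n_{R\bD}(\Ind_F \underline{R}_{\bC}, M) \cong \Ext^n_{R\bC}(\underline{R}_{\bC}, \Res_F M) = H^n(\bC ; \Res_F M),$$
and tracing through the construction shows that this isomorphism is induced by $F$. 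The only slightly delicate point is the identification $\Ind_F \cong \Res_G$; as an alternative route, one could verify exactness of $\Ind_F$ and the formula $\Ind_F \underline{R}_{\bC} \cong \underline{R}_{\bD}$ directly from the explicit tensor-product formula (or from the comma-category description of Remark \ref{rem:KanExt}), using the fact that for an equivalence $F$ each comma category $d \backslash F$ is ``essentially a point'' by way of the isomorphism $FG \cong \mathrm{id}_{\bD}$.
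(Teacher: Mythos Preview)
Your argument is correct. The paper does not actually prove this proposition; it simply cites \cite[Prop.~5.6]{AKO}. Your route via Proposition~\ref{pro:Shapiro} is therefore a genuine alternative: instead of appealing to an external reference, you deduce the result from the Shapiro machinery already set up in the paper. The identification $\Ind_F \cong \Res_G$ via uniqueness of left adjoints is clean and immediately gives both exactness and the computation of $\Ind_F \underline{R}_{\bC}$, so the argument is self-contained within the paper's framework. The only thing one might tighten is the closing remark that the isomorphism is ``induced by $F$'': this follows because the Shapiro map is built from the adjunction unit/counit, which here come from the equivalence data, but the paper's statement is equally informal on this point.
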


\begin{proof} See \cite[Prop 5.6]{AKO}.
\end{proof}

%-----------------------------------------------------------------

\section{Induction from a subgroup}\label{sect:IndSub}
 
Let $\bG$ denote the category with one object whose set of 
endomorphisms is a group given by the group $G$. Then an $R\bG$-module is the same as 
an $RG$-module. If $H \leq G$ is a subgroup of $G$, then the inclusion map $H \to G$  
induces a functor $F : \bH \to \bG$, and the corresponding induction functor 
$\Ind _F : RH\text{-Mod} \to RG\text{-Mod}$ coincides with the usual induction of a module 
defined by $\Ind _H ^G M=M \otimes _H RG$.  Since $RG$ is free as an $RH$-module, 
$\Ind _H ^G (-)$ is an exact functor. The exactness of the induction gives an isomorphism 
$$\Ext^* _{RG} (\Ind _H ^G N, M) \cong \Ext ^*_{RH} (N, \Res^G _H  M)$$ called \emph{Shapiro's 
isomorphism} for ext-groups. We will show below that Shapiro's isomorphism also holds 
for ext-groups over the (fusion) orbit category.

\subsection{Induction for the orbit category} 

Let  $\cC$ be a collection of subgroups in $G$, and let $H$ be a subgroup of $G$. We will 
consider the orbit category of $H$ over the collection $$\cC |_H := \{K \in \cC \, | \, K \leq H\}.$$  
We denote the orbit category $\cO _{\cC |_H} (H)$ by $\cO _{\cC} (H)$ to simplify the notation. 
Let $$i_H^G  : \cO  _{\cC} (H) \to \cO _{\cC} (G)$$ be the functor which takes a subgroup 
$K \in \cC |_H$ to itself in $\cC$, and takes an $H$-map $f: H/K\to H/L$ to the $G$-map 
$i_H ^G (f) : G/K \to G/L$ defined by $i_H ^G (f) (gK )=gf(K)$.  We denote the induction 
functor associated to $i_H^G$ by $\Ind _{\cO _{\cC} (H)}  ^{\cO _{\cC} (G)}$.  If $\cC|_H$ is the empty set, then 
$\cO_{\cC} (H)$ is the empty category $\emptyset$, and $i_H ^G$ is the unique functor 
$\emptyset \to \cO _{\cC} (G)$. In this case the restriction and induction functors are defined 
as described in Remarks \ref{rem:Emptycat} and \ref{rem:EmptyFunctor}. 

There is an explicit  formula for the induced module 
$\Ind _{\cO _{\cC} (H)}  ^{\cO _{\cC} (G)} M$  which is due to Symonds \cite[p. 266]{Symonds} (see also
Lemma 3.1 in \cite{HPY}).

\begin{proposition}[Symonds \cite{Symonds}]\label{pro:OrbitInd} 
Let $M$ be an $R\cO_\cC (H)$-module. Then for every $K \in \cC$,
$$ (\Ind _{\cO _{\cC} (H)}  ^{\cO _{\cC} (G)} M ) (K) \cong \bigoplus _{x^{-1} H \in (G/H)^K } M({}^x K).$$
\end{proposition}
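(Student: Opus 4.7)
The plan is to use the explicit tensor product formula for $\Ind_H^G M$ given in Section 3.2, combined with a partition of morphism sets indexed by $(G/H)^K$. The key observation is that a morphism $f : K \to L$ in $\cO_\cC(G)$ with $L \in \cC|_H$, represented by some $a \in N_G(K, L)$, automatically carries the extra data of a $K$-fixed coset $a^{-1}H \in (G/H)^K$: since $aKa^{-1} \leq L \leq H$, we indeed have $K a^{-1}H = a^{-1}H$. This datum only depends on the class of $a$ in $L\backslash N_G(K,L)$ (because $L \leq H$), and it is preserved under post-composition by any $\varphi : L \to L'$ in $\cO_\cC(H)$ (because such a $\varphi$ is represented by some $h \in H$, and the composition is then represented by $ha$, with $(ha)^{-1}H = a^{-1}H$).

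First I will use this to obtain a direct sum decomposition
$$(\Ind_H^G M)(K) = \bigoplus_{x^{-1}H \in (G/H)^K} V_{x^{-1}H},$$
where $V_{x^{-1}H}$ is the subspace generated by tensors $m \otimes f$ with $f$ lying over the coset $x^{-1}H$. The compatibilities above show that the defining relations of $\Ind_H^G M$ stay within a single block of the partition, so the decomposition descends to the quotient.

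Next, for each representative $x$ (so that ${}^xK := xKx^{-1} \leq H$), I will identify $V_{x^{-1}H}$ with $M({}^xK)$. The element $x$ itself represents a distinguished morphism $g_x : K \to {}^xK$ in $\cO_\cC(G)$. I claim that every morphism $f : K \to L$ in the $x^{-1}H$-piece factors uniquely as $f = i_H^G(\varphi) \circ g_x$ for some $\varphi \in \Mor_{\cO_\cC(H)}({}^xK, L)$: writing $a = h^{-1}x$ with $h \in H$ (which is possible precisely because $a^{-1}H = x^{-1}H$), the morphism $\varphi$ is represented by $h^{-1}$, and the necessary condition $h^{-1}({}^xK)h \leq L$ is just $aKa^{-1} \leq L$. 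This bijection rewrites $V_{x^{-1}H}$ as a quotient
$$V_{x^{-1}H} \cong \Bigl(\bigoplus_{L \in \cC|_H} M(L) \otimes_R R\Mor_{\cO_\cC(H)}({}^xK, L)\Bigr)/J',$$
where $J'$ is exactly the tensor product relation of $M$ with the representable functor $R\Mor_{\cO_\cC(H)}({}^xK, -)$; by Yoneda every class has a canonical representative $m \otimes \mathrm{id}_{{}^xK}$, so this quotient is $M({}^xK)$.

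The main obstacle is keeping the bookkeeping of the equivalence relations straight: I must check that the partition by $(G/H)^K$ really is compatible with the relations $J$ defining induction, and that the resulting presentation of $V_{x^{-1}H}$ matches the Yoneda colimit for $M({}^xK)$ rather than some larger quotient. A clean way to handle this is to reinterpret the computation through the comma-category description of the Kan extension (Remark \ref{rem:KanExt}): the comma category $K \backslash i_H^G$ decomposes as the disjoint union over $(G/H)^K$ of connected components, each having $({}^xK, g_x)$ as an initial object, and the Kan extension formula then collapses to $\bigoplus_x M({}^xK)$.
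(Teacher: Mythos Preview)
Your argument is correct. The paper does not actually give a proof of this proposition; it attributes the result to Symonds \cite[p.~266]{Symonds} (and also \cite[Lemma~3.1]{HPY}) and only comments on how the induced maps $f^*$ act on the summands. So there is nothing to compare against directly.

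It is worth noting, however, that your approach --- decomposing the comma category $K\backslash i_H^G$ into connected components indexed by $(G/H)^K$ and exhibiting $({}^xK, g_x)$ as an initial object in each --- is precisely the strategy the paper adopts later in the proof of Proposition~\ref{pro:FusionInd} for the fusion orbit category analogue. In that sense your proof is in the spirit of the paper, and in fact Proposition~\ref{pro:FusionInd} could be viewed as the harder sibling of your computation (there the initial object is less immediate and one must chase zigzags of morphisms). Your bookkeeping with representatives $a = h^{-1}x$ and the verification that the partition by cosets $a^{-1}H$ is stable under the relations defining the tensor product is all correct.
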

 
Note that the indexing set $(G/H)^K$ for the above direct sum can be identified with 
$${\rm Map}_G (G/K, G/H)=\Mor _{\cO (G) } (K, H)$$ where $\cO(G)$ denotes the orbit 
category of $G$ over all subgroups of $G$. For every $G$-map $f: G/L \to G/K$, 
the induced $R$-module homomorphism $$f^*: (\Ind _{\cO _{\cC} (H)}  ^{\cO _{\cC} (G)}  M) (K) 
\to (  \Ind _{\cO _{\cC} (H)}  ^{\cO _{\cC} (G)}  M     ) (L)$$  
can be described using the map $\Mor _{\cO (G)} (K, H) \to \Mor _{\cO(G)} (L, H) $ induced 
by $f$. On the summands one uses inclusion and conjugation maps between summands
(see \cite[p. 266]{Symonds} for details). As a consequence, we conclude the following.
 
\begin{proposition}[{\cite[Lemma 2.9]{Symonds}}]\label{pro:OrbitShapiro}
The induction functor $$ \Ind _{\cO _{\cC} (H)}  ^{\cO _{\cC} (G)}  : R\cO _{\cC} (H)\text{-Mod} \to R\cO _{\cC} (G)\text{-Mod}$$ 
is exact. Hence, for every $R\cO _{\cC} (H)$-module $N$ and $R\cO_{\cC} (G)$-module $M$, 
there is an isomorphism $$\Ext^* _{R\cO_{\cC} (G) } (\Ind _{\cO _{\cC} (H)}  ^{\cO _{\cC} (G)} N, M) \cong 
\Ext ^*_{R\cO_{\cC} (H) } (N,   \Res _{\cO _{\cC} (H)}  ^{\cO _{\cC} (G)}     M).$$  
\end{proposition}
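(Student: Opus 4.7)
The plan is to deduce exactness of $\Ind_H^G$ directly from the explicit formula in Proposition \ref{pro:OrbitInd}, and then apply the general Shapiro isomorphism (Proposition \ref{pro:Shapiro}) to obtain the statement about ext-groups.

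First I would recall that exactness of a sequence of $R\cO_{\cC}(G)$-modules is tested objectwise: a sequence $M_1\to M_2\to M_3$ is exact if and only if $M_1(K)\to M_2(K)\to M_3(K)$ is exact in $R$-Mod for every $K\in\cC$. Thus, to prove that $\Ind_H^G$ is exact, it suffices to show that for every fixed $K\in\cC$, the functor $M\mapsto (\Ind_H^G M)(K)$ from $R\cO_{\cC}(H)$-Mod to $R$-Mod is exact. This is where Proposition \ref{pro:OrbitInd} does all the work: by the explicit formula
\[
(\Ind_H^G M)(K) \;\cong\; \bigoplus_{x^{-1}H\in (G/H)^K} M({}^x K),
\]
the evaluation functor at $K$ is a direct sum of evaluations of $M$ at the various conjugates ${}^x K\in \cC|_H$. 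Each evaluation $M\mapsto M({}^x K)$ is exact (again because exactness is objectwise in $R\cO_{\cC}(H)$-Mod), and arbitrary direct sums of exact functors into $R$-Mod are exact. Hence $M\mapsto (\Ind_H^G M)(K)$ is exact for every $K$, which gives exactness of $\Ind_H^G$.

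The only mild subtlety I want to be careful about is that Proposition \ref{pro:OrbitInd} is stated as an isomorphism of $R$-modules at each object $K$, not merely as a natural transformation in $M$; so I would briefly note that the isomorphism is functorial in $M$, which is clear from the construction (it is induced by the bimodule decomposition underlying the definition of $\Ind_F$, and the ${}^x K$-summand functors are clearly additive and functorial in $M$). With functoriality in $M$ in hand, the exactness of each summand passes to exactness of the direct sum.

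Once exactness of $\Ind_H^G$ is established, the isomorphism of ext-groups
\[
\Ext^*_{R\cO_{\cC}(G)}(\Ind_H^G N, M)\;\cong\;\Ext^*_{R\cO_{\cC}(H)}(N,\Res^G_H M)
\]
is an immediate application of Proposition \ref{pro:Shapiro} to the functor $i_H^G:\cO_{\cC}(H)\to \cO_{\cC}(G)$; no further argument is needed. The main obstacle, insofar as there is one, was simply invoking the correct explicit formula for $\Ind_H^G$, since without an object-level description the exactness is not at all manifest from the abstract coend/tensor definition. Note also that when $\cC|_H$ is empty, so that $\cO_{\cC}(H)=\emptyset$, the statement reduces to the trivial case handled by Remark \ref{rem:EmptyFunctor}.
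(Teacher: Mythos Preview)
Your proof is correct and follows exactly the same approach as the paper: exactness of $\Ind_H^G$ is read off from the explicit direct-sum formula of Proposition~\ref{pro:OrbitInd}, and the Ext isomorphism is then a direct application of Proposition~\ref{pro:Shapiro}. The paper's proof is just these two sentences; you have simply spelled out the objectwise reasoning (and the functoriality in $M$) that the paper leaves implicit.
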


\begin{proof} The first sentence follows from Proposition \ref{pro:OrbitInd}. The second part 
is a consequence of Proposition \ref{pro:Shapiro}.
\end{proof}

Let $R[G/H^?]$ denote the $R\cO_{\cC} (G)$-module defined by $K \to R[(G/H)^K]$ for every 
$K\in \cC$. If  $K \in \cC$ such that $(G/H)^K =\emptyset$, then $R[(G/H)^K]=0$. If there is no 
$K \in \cC$ such that $K \leq H$, i.e. if $\cC|_H$ is the empty collection, then $R[G/H^?]=0$ 
as an $R\cO _{\cC} (G)$-module.
As a consequence of Proposition \ref{pro:OrbitInd}, we obtain the following:

\begin{proposition}{\cite[Lemma 2.7]{Symonds}}\label{pro:OrbitIso}
Let $\underline R$ denote the constant functor for $\cO_{\cC} (H)$.  Then, 
there is an isomorphism of $R \cO _\cC (G)$-modules
$$\Ind _{\cO _{\cC} (H)}  ^{\cO _{\cC} (G)} \underline R \cong R[G/ H^?].$$ 
\end{proposition}

\begin{proof} By Proposition \ref{pro:FusionInd}, for every $K \in \cC$, we have  
$$(\Ind _{\cO _{\cC} (H)}  ^{\cO _{\cC} (G)} \underline R) (K) \cong  \bigoplus _{x^{-1} H \in (G/H)^K } R \cong R[(G/ H)^K].$$
For every $G$-map $f: G/L \to G/K$, the induced maps $$f^* : (\Ind _{\cO _{\cC} (H)}  ^{\cO _{\cC} (G)} \underline R) (K) \to
(\Ind _{\cO _{\cC} (H)}  ^{\cO _{\cC} (G)} \underline R) (L)$$ and  $f^*: R[(G/H)^K ]\to R[(G/H)^L]$ commutes with the
isomorphisms given above.  
\end{proof}

As a consequence we have the following.
 
\begin{corollary}\label{cor:OrbitIso}
For every $R\cO_{\cC} (G)$-module $M$, there is an isomorphism 
$$\Ext ^n _{R\cO_{\cC} (G) } (R [G/H^?], M ) \cong H^n ( \cO _{\cC} (H);  \Res _{\cO _{\cC} (H)}  ^{\cO _{\cC} (G)} M).$$ 
\end{corollary}
 
 \begin{proof}  This follows from Propositions \ref{pro:OrbitShapiro} and \ref{pro:OrbitIso}, and from the definition
 of the cohomology of a category $\bC$ as the ext-group over $R\bC$.
 \end{proof}

%--------------
 
\subsection{Induction for the fusion orbit category}

Let $\cC$ be a collection of subgroups in $G$, and  $H$ be a subgroup of $G$.
We denote by $\overline \cF _{\cC} (H)$ the fusion orbit category of $H$ over the collection 
$\cC |_{H}$. Let $$j_H^G:  \overline \cF _{\cC} (H)  \to \overline \cF _{\cC} (G)$$ denote 
the functor which takes a subgroup $K \in \cC |_H$ to itself in $\cC$, and takes a conjugation 
map $c_h: L \to K$ modulo $\Inn(K)$ to itself in $\overline \cF_{\cC} (G)$.  The induction functor 
associated to $j_H ^G$ will be denoted by $\Ind _{\overline \cF _{\cC} (H)}  ^{\overline \cF _{\cC} (G)}$.  
The case where $\cC|_H$ is the empty collection is handled with the empty category 
$\emptyset$ as in the case of the orbit category.
  
\begin{proposition}\label{pro:FusionInd}  For every $R \overline \cF _{\cC} (H)$-module $M$ 
and for every subgroup $K \in \cC$, $$ ( \Ind _{\overline \cF _{\cC} (H)} ^{\overline \cF _{\cC} (G)} M) (K) 
\cong \bigoplus _{[f] \in \Mor _{\overline \cF (G) } (K, H ) } M( f(K))$$ where $\overline \cF(G)$ 
denotes the fusion orbit category defined on all subgroups of $G$. In particular, the induction 
functor $$\Ind _{\overline \cF _{\cC} (H)}  ^{\overline \cF _{\cC} (G)} :  R \overline \cF  _{\cC} (H)\text{-Mod} \to 
R \overline \cF _{\cC} (G)\text{-Mod}$$ is an exact functor.
\end{proposition}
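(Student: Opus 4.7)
The plan is to follow the template of Symonds' proof of Proposition~\ref{pro:OrbitInd}, with the extra bookkeeping required by the source-side $C_G(-)$ quotient that distinguishes the fusion orbit category from the orbit category. I would spell out the explicit tensor-product formula for $\Ind_H^G$ from Section~\ref{sect:ResInd}, match its equivalence relation with the double-coset description of $\Mor_{\overline\cF(G)}(K, H)$, and then read off both the direct-sum decomposition and the exactness of $\Ind_H^G$.

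Applied to $j_H^G: \overline\cF_\cC(H) \to \overline\cF_\cC(G)$ and an $R\overline\cF_\cC(H)$-module $M$, the formula from Section~\ref{sect:ResInd} gives
\[
(\Ind_H^G M)(K) \;=\; \Bigl(\bigoplus_{L \in \cC|_H} M(L) \otimes_R R\Mor_{\overline\cF_\cC(G)}(K, L)\Bigr)\big/J,
\]
where $J$ is generated by elements $m \otimes f - m' \otimes f'$ arising from $\varphi \in \Mor_{\overline\cF_\cC(H)}(L, L')$ with $f' = j_H^G(\varphi) \circ f$ and $M(\varphi)(m') = m$. For each class $[f] \in \Mor_{\overline\cF(G)}(K, H)$ represented by a conjugation $c_x: K \to H$ with $x \in N_G(K, H)$, the image $f(K) = xKx^{-1}$ lies in $\cC|_H$ since $\cC$ is closed under conjugation, so the pair $(m, c_x)$ picks out a canonical summand $M(f(K)) \otimes R\Mor_{\overline\cF_\cC(G)}(K, f(K))$. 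Assembling these for varying $[f]$ gives a candidate map
\[
\Phi:\ \bigoplus_{[f] \in \Mor_{\overline\cF(G)}(K, H)} M(f(K)) \longrightarrow (\Ind_H^G M)(K).
\]

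The bulk of the argument is showing that $\Phi$ is an isomorphism. For surjectivity, given $m \otimes g \in M(L) \otimes R\Mor_{\overline\cF_\cC(G)}(K, L)$, I lift $g$ to a representative $c_x$ with $x \in N_G(K, L)$, factor $c_x$ as the inclusion $\iota: xKx^{-1} \hookrightarrow L$ (a morphism in $\overline\cF_\cC(H)$ since both subgroups sit in $H$) composed with the conjugation $K \to xKx^{-1}$, and apply the $J$-relation with $\varphi = \iota$ to push the generator into the summand indexed by $[c_x] \in \Mor_{\overline\cF(G)}(K, H)$. For well-definedness of $\Phi$, replacing $c_x$ by $c_{hx}$ with $h \in H$ produces an equivalent element modulo $J$ via the $J$-relation with $\varphi = c_{h^{-1}}: hf(K)h^{-1} \to f(K)$ (an isomorphism in $\overline\cF_\cC(H)$), while replacing $x$ by $xz$ with $z \in C_G(K)$ leaves the class of $c_x$ unchanged in $\Mor_{\overline\cF_\cC(G)}(K, f(K))$ by the source-side quotient built into the fusion orbit category. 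The main obstacle is injectivity: one must verify that the $J$-relations collapse the big direct sum to exactly the $H$-orbits on $N_G(K, H)/C_G(K)$, no more and no less. A clean strategy is to decompose each morphism of $\overline\cF_\cC(H)$ as an isomorphism followed by an inclusion and check the two kinds of $J$-relations separately—inclusions cut the indexing set from all $L \in \cC|_H$ down to $L = f(K)$, and isomorphisms collapse the remaining freedom to the $H$-orbits. Once $\Phi$ is identified as an isomorphism, exactness of $\Ind_H^G$ is immediate, since the right-hand side is a direct sum of values of $M$ and both evaluation at an object and arbitrary direct sums preserve short exact sequences.
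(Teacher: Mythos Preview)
Your approach is correct and genuinely different from the paper's. You work directly with the tensor-product/coend formula for $\Ind_H^G$, pushing each generator $m \otimes g$ into a canonical summand via the $J$-relation associated to an inclusion, and then handling injectivity by controlling the remaining $J$-relations. The paper instead uses the left Kan extension description from Remark~\ref{rem:KanExt}: it identifies $(\Ind_H^G M)(K)$ with a colimit over the comma category $K \backslash j_H^G$, decomposes that category into its connected components, and shows that each component has an initial object of the form $(f(K), [f'])$ with $f': K \to f(K)$ an isomorphism. The presence of an initial object in every component simultaneously gives the direct-sum formula (the colimit over each component is evaluation at the initial object) and exactness. The paper also notes, after introducing the projection functor $pr$, that a second proof is available via $\Ind_{j_H^G} = \Ind_{pr}\,\Ind_{i_H^G}\,\Res_{pr}$ together with Proposition~\ref{pro:OrbitInd}; your argument is yet a third route.

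Your approach is more hands-on and keeps the parallel with Symonds' orbit-category argument visible; the paper's comma-category argument is more conceptual and handles injectivity more cleanly, since ``each component has an initial object'' packages your factor-as-iso-then-inclusion manoeuvre into a single categorical statement and bypasses any direct chase through $J$-relations. The one place in your proposal that would need fleshing out in a full proof is injectivity: rather than arguing informally that the $J$-relations ``collapse to $H$-orbits, no more and no less'', you should construct an explicit inverse $\Psi$ to $\Phi$ by sending $m \otimes g \mapsto M(\iota)(m)$ in the summand indexed by $[c_x]$, and verify that $\Psi$ kills every generator of $J$. This is routine once set up, but you do have to check that the $\Inn(L')$-quotient built into $\Mor_{\overline\cF_\cC(H)}(L, L')$ is absorbed by the double-coset indexing on the target.
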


To prove Proposition \ref{pro:FusionInd}, we first prove some lemmas. If $F: \bC\to \bD$ is a
functor and $d\in \Ob (\bD)$, then the comma category $d \backslash F$ is the category whose 
objects are the pairs $(c, f)$ where $c\in \Ob (\bC)$ and $f :d \to F(c) $ is a morphism in $\bD$ 
(see Definition \ref{def:Comma}). If $\bC$ is a subcategory of $\bD$ and $j:\bC \to \bD$ is 
the inclusion functor, then we can assume that the objects of $d \backslash j$ are the morphisms  
$f: d \to c$ in $\bD$ instead of the pairs $(c, f)$. A morphism $f_1 \to f_2$ in $d \backslash j$ 
is a morphism $\varphi : c_1 \to c_2$ in $\bC$ such that $\varphi \circ f_1=f_2$.

\begin{lemma}\label{lem:AbsNon}
Let $\bC$ be a subcategory of a small category $\bD$ and let $j: \bC \to \bD$ be the inclusion functor. 
Assume that
\begin{enumerate}
\item Every morphism $d \to c$ in $\bD$ with $d\in \Ob(\bD)$ and $c\in \Ob(\bC)$ factors as 
$d \maprt{\cong} c' \maprt{C} c$, an isomorphism in $\bD$ followed by a morphism in $\bC$.
\item Given any $c, c', c'' \in \Ob(\bC)$, if $c \maprt{D} c' \maprt{C} c''$ are morphisms 
such that $C \in \bC$, $D \in \bD$, and $C\circ D$ is a morphism in $\bC$ then $D \in \bC$.
\end{enumerate}
Then for any $d \in \Ob(\bD)$, $d \backslash j = \coprod _{i \in I} \bE _i$ where $I$ is a set 
of representatives for the $\bC$-isomorphism classes in the set of all $\bD$-isomorphisms 
$d \to c$, $\bE_i$ is the component of $d \backslash j$ containing $i \in I$ and furthermore 
$i$ is an initial object in $\bE_i$. 
\end{lemma}

\begin{proof} We can write $\bE:=d \backslash j$ as  a disjoint union 
$\coprod _{\alpha \in A} \bE_{\alpha}$ of its connected components 
over some indexing set $A$. By (1), every morphism $f:d \to c$ in $\bD$ factors 
as $d \xrightarrow{i}  c' \xrightarrow{f'} c$ where $i$ is an isomorphism in $\bD$ and 
$f'$ is  a morphism in $\bC$. The morphism $f'$ defines a morphism  $i \to  f$ in $\bE$, 
hence $i$ and $f$ lie in the same component of $\bE$. This shows that in every component 
$\bE_{\alpha}$ there is an object $i: d \to c$ which is a $\bD$-isomorphism. 
Choose a $\bD$-isomorphism $i _{\alpha}: d \to c$ in each component $\bE_{\alpha}$.

If $\varphi : (f_1: d \to c_1) \to (f_2 : d \to c_2) $ is a morphism in $\bE$, then 
there is a commuting diagram 
$$\xymatrix{ & d \ar[dl]_{i_1} \ar[dr]^{i_2} & \\ c_1'  \ar[rr]^{\psi}  
\ar[d]_{f_1'}& & c_2' \ar[d]^{f_2'}  \\  c_1 \ar[rr]^{\varphi} & & c_2}$$
where $\psi:=i_2 \circ i_1^{-1}$ is an isomorphism in $\bD$. Since $\varphi$ is in $\bC$, 
the composition $\varphi \circ f_1'$ is in $\bC$. By condition (2) applied to 
$c_1' \xrightarrow{\psi} c_2' \xrightarrow{f_2'} c_2$, we obtain that 
$\psi$ is in $\bC$. By applying the condition (2) to $c_2' \xrightarrow{\psi ^{-1}} c_1' \xrightarrow{\psi} c_2'$,
we conclude that $\psi ^{-1}$ is also in $\bC$, hence $\psi$ is an isomorphism in $\bC$. 

If there is a zigzag of morphisms between $f_1: d \to c_1$ and $f_n : d \to c_n$  in $\bE$, then applying 
the argument above to each morphism in the zigzag, we obtain an $\bE$-isomorphism between $i_1: d \to c_1'$ 
and $i_n : d \to c_n'$.  Combining this isomorphism with $f_n' $ gives a morphism from 
$i_1$ to $f_n$ in $\bE$. This shows that if $f: d \to c$ lies in the component $\bE_{\alpha}$, then
there is a morphism $\varphi : i_{\alpha} \to f$ in $\bE$. If $\varphi_1, \varphi_2 : i_{\alpha} \to f$
are two such morphisms, then $\varphi _1 \circ i_{\alpha} = f= \varphi _2 \circ i_{\alpha}$
gives $\varphi _1=\varphi_2$. So the morphism $\varphi$ is unique. This proves that $i_{\alpha}$ 
is an initial object in $\bE_{\alpha}$. 

Two $\bD$-isomorphisms $i_1: d \to c_1$ and $i_2: d \to c_2$ lie in the same component of 
$\bE$ if and only if there is a $\bC$-isomorphism $\psi : c_1 \to c_2$ such that $i_2=\psi \circ i_1$. 
So, the index set $A$ can be taken as the $\bC$-isomorphism classes of $\bD$-isomorphisms.
\end{proof}

Using Lemma \ref{lem:AbsNon}, we prove the following.

\begin{lemma}\label{lem:Intermediate} Let $\bC$ be a subcategory of  a small category 
$\bD$ satisfying the conditions in Lemma \ref{lem:AbsNon}. Let 
$$\Ind _{\bC} ^{\bD} : R \bC\text{-Mod} \to R\bD\text{-Mod}$$ denote the induction functor 
induced by the inclusion $j: \bC\to \bD$. Then for every $R\bC$-module $M$ and for every 
$d\in \Ob(\bD)$, we have $$(\Ind _{\bC} ^{\bD} M ) (d) \cong \bigoplus _{(i: d \to c_i)  \in I} M( c_i ) $$
where $I$ is a set of representatives for the $\bC$-isomorphism classes
in the set of all $\bD$-isomorphisms $d \to c$.
\end{lemma}

\begin{proof} By the definition of the induction functor via left Kan extensions given in Section 
\ref{sect:ResInd}, we have
$$(\Ind _{\bC} ^{\bD} M ) (d) = \colim _{(c, f) \in (d \backslash j ) ^{op}} (M \circ \pi_d).$$
By Lemma \ref{lem:AbsNon}, $d \backslash j = \coprod _{i \in I} \bE_i$ where
$\bE_i$ is the component of $d \backslash j$ containing the isomorphism $i:d \to c_i$.  
Since $i$ is an initial object for the $\bE_i$, it is a terminal object for $\bE _i ^{op}$, 
hence  for every $d \in \Ob (\bD)$ we have, 
$$(\Ind _{\bC} ^{\bD} M ) (d) = \bigoplus _{i \in I} \colim _{(c, f) \in \bE_i ^{op} } ( M\circ \pi_d \circ inc_i ) 
\cong \bigoplus _{i \in I} M( c_i )$$ where $inc_i : \bE_i \to d \backslash j$ is the inclusion map. 
This completes the proof.
\end{proof} 

\begin{proof}[Proof of Proposition \ref{pro:FusionInd}]
We claim that if we take $\bC=\overline \cF _{\cC} (H)$ and $\bD=\overline \cF _{\cC} (G)$, 
then the conditions given in Lemma \ref{lem:AbsNon} are satisfied. 

Let $[f]: K \to L $ be a morphism in $\overline \cF _{\cC} (G)$ represented by a morphism 
$f: K \to L$ in $\cF _{\cC} (G)$ such that $L \leq H$. Then $f$ factors as $K \xrightarrow{i} f(K) \xrightarrow{f'} L$ 
where the first map is an isomorphism in $\cF _{\cC} (G)$ and the second map is the inclusion map 
$f': f(K) \hookrightarrow L$ which is a morphism in $\cF _{\cC} (H)$. Then 
$[f]=[f']\circ [i]$ where $[i]$ is an isomorphism in $\overline \cF (G)$ and $[f']$ is in $\overline \cF_{\cC} (H)$.
Hence condition $(1)$ in Lemma \ref{lem:AbsNon} holds for these categories.

Let $U \xrightarrow{[\varphi]} U' \xrightarrow{[\psi]} U''$ be a sequence of morphisms such that 
$U, U', U'' \leq H$, $[\varphi]\in \overline \cF _{\cC} (G)$, and $[\psi] \in \overline \cF _{\cC} (H)$. 
Suppose that $\psi \circ \varphi$ is in $\cF _{\cC} (H)$. Let $g\in G$ and $h_1, h_2\in H$ are such that  
$\varphi =c_g$, $\psi=c_{h_1}$, and $\psi \circ \varphi =c_{h_2}$, then $h_1g=h_2 z$ for some 
$z\in C_G (U)$. This gives $g=h_1^{-1} h_2 z$, hence $\varphi =c_g$ is in $\cF_{\cC} (H)$. 
We conclude that condition (2)  in Lemma \ref{lem:AbsNon} holds for these categories.

Applying Lemma \ref{lem:Intermediate} to $\bC$ and $\bD$, we obtain that for every $K \in \cC$,
$$(\Ind _{\overline \cF _{\cC} (H)} ^{\overline \cF _{\cC} (G)} M ) (K) \cong \bigoplus _{[f] \in I} M( f(K) ) $$
where $I$ is a set of representatives for the $\overline \cF _{\cC} (H)$-isomorphism classes
in the set of all $\overline \cF _{\cC} (G)$-isomorphisms $[f]: K \to f(K)$ with $f(K) \leq H$.
The equivalence relation 
is given by $[f_1]\sim [f_2]$ if there is an isomorphism $[c_h]: f_1 (K) \to f_2(K)$ in $\overline \cF_{\cC} (H)$ such that 
$[f_2]=[c_h] \circ [f_1]$. This shows that the index set $I$ can be taken as the set of morphisms 
$\Mor _{\overline \cF  (G) } (K, H)$ which is the orbit set of $\Mor _{\cF (G)} (K, H) $ under 
the action of $\Inn (H)$.
\end{proof}

The following is an easy consequence of Propositions  \ref{pro:Shapiro}  and  \ref{pro:FusionInd}. 
We call this isomorphism the Shapiro's isomorphism for the fusion orbit category.

\begin{proposition}\label{pro:FusionShapiro}  For every $R\overline \cF_{\cC} (H)$-module 
$N$ and every $R\overline \cF_{\cC} (G)$-module $M$, there is an isomorphism 
$$\Ext^* _{R \overline \cF _{\cC} (G) } (  \Ind _{\overline \cF _{\cC} (H)}  ^{\overline \cF _{\cC} (G)}   N, M) \cong 
\Ext ^*_{R\overline \cF _{\cC} (H) } (N, \Res _{\overline \cF _{\cC} (H)}  ^{\overline \cF _{\cC} (G)} M).$$
\end{proposition}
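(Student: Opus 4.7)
The plan is to apply Proposition \ref{pro:Shapiro} directly to the functor $j_H^G : \overline\cF_\cC(H) \to \overline\cF_\cC(G)$. That general result requires exactness of the corresponding induction functor, and this has already been established in Proposition \ref{pro:FusionInd} via the explicit direct sum description
$$(\Ind_H^G N)(K) \cong \bigoplus_{[f]\in \Mor_{\overline\cF(G)}(K,H)} N(f(K)),$$
which is manifestly functorial and objectwise exact in $N$. So the bulk of the work has been done in the previous proposition, and Proposition \ref{pro:FusionShapiro} is essentially a formal consequence.

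Explicitly, I would proceed as follows. First, I would choose a projective resolution $P_* \to N$ of $N$ as an $R\overline\cF_\cC(H)$-module. Next, I would observe that since the restriction functor $\Res_H^G$ is exact (it is simply precomposition with $j_H^G$), its left adjoint $\Ind_H^G$ must preserve projectives; this is the general fact already noted after Lemma \ref{lem:Adjoint}. Combined with the exactness of $\Ind_H^G$ from Proposition \ref{pro:FusionInd}, this ensures that $\Ind_H^G P_* \to \Ind_H^G N$ is again a projective resolution, now in the category of $R\overline\cF_\cC(G)$-modules.

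Finally, I would apply the adjunction isomorphism from Lemma \ref{lem:Adjoint} degreewise to obtain
$$\hom_{R\overline\cF_\cC(G)}(\Ind_H^G P_*, M) \cong \hom_{R\overline\cF_\cC(H)}(P_*, \Res_H^G M),$$
and then pass to cohomology on both sides to read off
$$\Ext^*_{R\overline\cF_\cC(G)}(\Ind_H^G N, M) \cong \Ext^*_{R\overline\cF_\cC(H)}(N, \Res_H^G M).$$
A minor bookkeeping point worth mentioning is the degenerate case in which $\cC|_H$ is empty, so that $\overline\cF_\cC(H) = \emptyset$; here Remark \ref{rem:EmptyFunctor} handles the statement trivially. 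There is no real obstacle to surmount: once Proposition \ref{pro:FusionInd} is in hand, the argument is the standard Shapiro-type derivation packaged in Proposition \ref{pro:Shapiro}.
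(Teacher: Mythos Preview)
Your proof is correct and follows essentially the same approach as the paper: the paper simply states that Proposition~\ref{pro:FusionShapiro} is an easy consequence of Proposition~\ref{pro:FusionInd}, which is exactly what you do by invoking the exactness of $\Ind_H^G$ from Proposition~\ref{pro:FusionInd} and then applying the general Shapiro isomorphism of Proposition~\ref{pro:Shapiro}. Your writeup is in fact more detailed than the paper's, which gives no explicit proof.
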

 
As a consequence of Proposition \ref{pro:FusionInd}, we also have the following.

\begin{proposition}\label{pro:FusionIso}
There is an isomorphism of $R\overline \cF _\cC (G)$-modules
$$\Ind _{\overline \cF _{\cC} (H)}  ^{\overline \cF _{\cC} (G)} \underline R \cong R[ C_G(?) \backslash (G/ H)^?].$$ 
\end{proposition}

\begin{proof} By Proposition \ref{pro:FusionInd}, we have  
$$\Ind _{\overline \cF _{\cC} (H)}  ^{\overline \cF _{\cC} (G)} 
\underline R \cong R[\Mor_{\overline \cF (G) } (?, H)]\cong R[ C_G (? ) \backslash 
\Mor _{\cO(G)} (?, H) ] \cong R[ C_G(?) \backslash (G/ H)^?] $$ 
as $R\overline \cF _\cC (G)$-modules. The induced maps also coincide
because of the description of the isomorphism.
\end{proof}

We conclude the following.

\begin{corollary}
For every $R \overline \cF_{\cC} (G)$-module $M$,
$$   \Ext^* _{R \overline \cF _{\cC} (G) } ( R[ C_G(?) \backslash (G/ H)^?] , M) 
\cong H ^* ( \overline \cF _{\cC} (H)  ;  \Res _{\overline \cF _{\cC} (H)}  ^{\overline \cF _{\cC} (G)} M).$$
\end{corollary}

\begin{proof} This follows from Propositions \ref{pro:FusionShapiro} and \ref{pro:FusionIso}. 
\end{proof}

We use this isomorphism throughout the paper to replace the ext-groups on the left with the cohomology of
the fusion orbit category $\overline \cF _{\cC} (H)$.
 
%---------------

\subsection{Induction via the projection map}\label{sect:IndProj} 
 
Let $pr : \cO _{\cC} (G) \to \overline \cF _{\cC} (G)$ denote the \emph{projection functor} 
that takes every subgroup $H \in \cC$ to itself and takes a morphism  $f: L \to K$ defined 
by $f(L)=g^{-1} K$ to the morphism  $KgC_G(L)$ in $\overline \cF _{\cC} (G)$. The restriction 
functor $\Res_{pr} $ takes an $R\overline \cF _G$-module $M$ to an $R\cO _{\cC} (G)$-module 
via composition with the functor $pr$. In particular $(\Res _{pr} M) (K) =M(K)$ for every $K \in \cC$. 
If $M'=\Res _{pr} M$ for some   $R\overline \cF _{\cC} (G)$-Module $M$, then for every $K\in \cC$, 
the centralizer $C_G (K)$ acts trivially on $M'(K)$. Such an $R\cO _{\cC} (G)$-module is called 
a \emph{conjugation invariant} $R\cO _{\cC} (G)$-module in \cite{HPY}, and called a \emph{geometric 
coefficient system} in \cite{Symonds}. For the induction functor we have the following observation.

\begin{lemma}\label{lem:IndProj} Let $M$ be an $R\cO _{\cC} (G)$-module. Then for every $K \in \cC$, 
$$(\Ind _{pr} M) (K) \cong M(K)_{C_G(K)} := M / \langle m-c_x m \, | \, m\in M(K),  
x\in C_G(K)\rangle$$ as an $RN_G(K)$-module. 
\end{lemma}

\begin{proof}
By the formula for the induction functor given in Section \ref{sect:ResInd}, for every $K \in \cC$, we have
$$ (\Ind _{pr} M ) (K)=\colim _{(K \backslash pr )^{op}} (M \circ \pi_K).$$
The comma category $\bD:=K \backslash pr$ is the category whose objects are the pairs 
$(L, f)$ where $L \in \cC$ and $f: K \to L$ is a morphism in $\overline \cF _{\cC} (G)$. 
Let $\bC$ be the full subcategory of $\bD$ with one object $(K, \id_K)$.
The automorphisms of $(K, \id_K)$ in $\bD$ are given by the morphisms $Kx: G/K \to G/K$ 
in $\cO _{\cC} (G)$ that are sent to the identity map under the projection functor $pr$. 
So  the automorphism group of $(K, \id_K)$ is the group $KC_G(K)/K=\{ Kx\, |\, x \in C_G(K)\} \leq N_G(K)/K$. 
We claim that the inclusion functor $j: \bC \to \bD$ defines an equivalence  of categories.
To see this, let us fix a morphism $\hat f: K \to L$ in $\cO_{\cC} (G)$ for every $f: K \to L$ in $\overline \cF _{\cC} (G)$.
For every $\varphi : (L_1, f_1) \to (L_2, f_2) $ in $\bD$, there is a unique $\hat \varphi  \in \Aut _{\bD} (K, \id_K)$ such that
$\varphi \circ \hat f_1= \hat f_2 \circ \hat \varphi$.  Define $\pi : \bD \to \bC$ to be the functor that sends each object 
$(L, f)$ in $\bD$ to $(K, \id_K)$ in $\bC$, and each morphism $\varphi : (L_1, f_1) \to (L_2, f_2) $ to $\hat \varphi$ in 
$\Aut _{ \bD} (K, \id _K)$. It is easy to see that $\pi \circ j =\id _{\bC}$ and $j \circ \pi \simeq \id_{\bD}$, so we can conclude that  
$\bC$ and $\bD$ are equivalent categories. This gives
$$ (\Ind _{pr} M ) (K)=\colim _{(K \backslash pr )^{op}} (M \circ \pi_d) \cong \colim _{\bC^{op} } (M \circ \pi_d \circ j) \cong M(K) _{C_G (K)}.$$ 
This completes the proof.
\end{proof}

By Lemma \ref{lem:IndProj}, it is easy to see that $$ \Ind _{pr} \Res_{pr} =\id _{\overline \cF _{\cC}  (G)}.$$ 
For $H \leq G$, there is a commuting diagram of functors 
$$\xymatrix{  \cO _{\cC} (H)  \ar[r]^{pr}  \ar[d]^{i_H ^G}  &  \overline \cF _{\cC} (H) 
\ar[d]^{j_H ^G }\\ \cO _{\cC} (G)  \ar[r]^{pr} & \overline \cF _{\cC} (G)}$$
where $i_H ^G$ and $j_H ^G$ are the functors defined in the previous section. 
For every pair of composable functors $F_1 : \bC \to \bD $ and $F_2: \bD \to \bE$, 
we have $\Ind _{F_2 \circ F_1}= \Ind _{F_2 } \Ind _{F_1} $. In particular, we have
$$\Ind _{j_H^G} \Ind _{pr} =\Ind_{pr} \Ind _{i_H^G}.$$ Thus for every $R\overline 
\cF _{\cC} (H)$-module $M$, $$ \Ind _{j_H^G} M= \Ind _{j_H^G} \Ind _{pr} \Res _{pr} M
=\Ind _{pr} \Ind _{i_H^G}  \Res _{pr} M.$$ This equality can be used to give a second 
proof of Proposition \ref{pro:FusionInd} as a consequence of Proposition \ref{pro:OrbitInd}. 
This explains the similarities between the formulas in Propositions \ref{pro:OrbitInd} and 
\ref{pro:FusionInd}. 

We now state a lemma which also appears in \cite[\S 3]{Lueck-CohChern} 
(for a covariant version, see \cite[\S 3]{Lueck-HomChern}).

\begin{lemma}[{L\" uck \cite[\S 3]{Lueck-CohChern}}]\label{lem:TwoWays}  Let $pr:\cO_{\cC} (G)  
\to \overline \cF_{\cC} (G) $ be the projection functor defined above. Then, for every subgroup 
$H$ of $G$, there is an isomorphism $$\Ind_{pr} (R[G/H^?])\cong R[ C_G(?) \backslash (G/H)^?]$$ 
of $R\overline \cF _{\cC} (G)$-modules.  In particular, for every $G$-set $X$ and for every  
$R\overline \cF _G$-module $N$, $$\hom _{R\overline \cF _{\cC} (G) } (R[C_G(?) 
\backslash X^?], N) \cong \hom _{R \cO _{\cC} (G) } (R[X ^? ] , \Res_{pr } N ).$$
\end{lemma}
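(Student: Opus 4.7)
The plan is to prove the module isomorphism first for transitive $G$-sets $X = G/H$, extend it by additivity to arbitrary $G$-sets, and then deduce the hom isomorphism from the adjunction between $\Ind_{pr}$ and $\Res_{pr}$ provided by Lemma \ref{lem:Adjoint}.

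For the first step, I would fix $H \leq G$ and evaluate both sides of $\Ind_{pr}(R[G/H^?]) \cong R[C_G(?) \backslash (G/H)^?]$ at an arbitrary object $K \in \cC$. Using the explicit pointwise formula $(\Ind_{pr} M)(K) = M(K)_{C_G(K)}$ recorded immediately before the lemma, the left hand side becomes $R[(G/H)^K]_{C_G(K)}$. Since the $N_G(K)$-action on $(G/H)^K$ is simply the restriction of the left $G$-action on $G/H$, taking the free $R$-module commutes with passing to $C_G(K)$-orbits, giving the natural identification $R[(G/H)^K]_{C_G(K)} \cong R[C_G(K) \backslash (G/H)^K]$, which is the value of the right hand side at $K$. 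The real content is naturality in $K$: for a morphism $[g]: L \to K$ in $\overline{\cF}_{\cC}(G)$ with $g \in N_G(L, K)$, any lift to $\cO_{\cC}(G)$ acts on the fixed sets by pullback along left multiplication, and two different lifts $g$ and $gc$ with $c \in C_G(L)$ induce maps differing by the $C_G(L)$-action on the source, hence they agree after passing to $C_G(L)$-orbits. This is the main obstacle and the only place where a direct verification is required.

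For arbitrary $G$-sets, I would write $X = \sqcup_i G/H_i$ and observe that both $X \mapsto R[X^?]$ and $X \mapsto R[C_G(?) \backslash X^?]$ carry disjoint unions to direct sums computed objectwise, while $\Ind_{pr}$ commutes with direct sums as a left adjoint; assembling the orbit-type isomorphisms yields $\Ind_{pr}(R[X^?]) \cong R[C_G(?) \backslash X^?]$ in full generality. The second statement then follows purely formally: applying Lemma \ref{lem:Adjoint} to the functor $F = pr$ and the module $M = R[X^?]$ gives
\begin{equation*}
\hom_{R\overline{\cF}_{\cC}(G)}(\Ind_{pr} R[X^?], N) \cong \hom_{R\cO_{\cC}(G)}(R[X^?], \Res_{pr} N),
\end{equation*}
and rewriting the left hand side via the isomorphism just established produces the required identification $\hom_{R\overline{\cF}_{\cC}(G)}(R[C_G(?) \backslash X^?], N) \cong \hom_{R\cO_{\cC}(G)}(R[X^?], \Res_{pr} N)$.
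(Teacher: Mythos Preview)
Your proof is correct and follows essentially the same approach as the paper: evaluate $\Ind_{pr}(R[G/H^?])$ at $K$ using the coinvariant formula $(\Ind_{pr} M)(K) = M(K)_{C_G(K)}$, identify this with $R[C_G(K)\backslash(G/H)^K]$, and deduce the hom isomorphism from the $(\Ind_{pr},\Res_{pr})$ adjunction. You are simply more explicit than the paper about naturality in $K$ and about the passage from transitive $G$-sets to arbitrary ones, both of which the paper's two-line proof leaves implicit.
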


\begin{proof} For every $K \in \cC$, we have $$\Ind_{pr} (R[G/H^?]) (K) =R[(G/H)^K]_{C_G(K)} 
\cong R[ C_G(K) \backslash (G/H)^K].$$ The second formula follows from the adjointness of 
induction and restriction functors.
\end{proof}

%-------------------

\section{Fusion Bredon cohomology}\label{sect:BredonCohomology}

In \cite[p. 281]{Symonds}, Symonds defines Bredon cohomology over an arbitrary collection 
$\cC$. The main aim of this section is to extend Symonds' definition to chain complexes over 
the fusion orbit category. We also show that there is a hypercohomology spectral sequence 
converging to the fusion Bredon cohomology of a $G$-CW-complex.

Throughout, $\cC$ is an arbitrary collection of subgroups in the group $G$. 
For all the statements that hold for both orbit category $\cO _{\cC} (G)$ and the fusion orbit category
$\overline \cF _{\cC} (G)$, we use $\bO_\cC (G)$ to denote $\cO _{\cC}  (G)$ or $\overline \cF _{\cC} (G)$.
Similarly $\bO (G)$ is used to denote $\cO (G)$ or $\overline \cF (G)$.

\subsection{Yoneda functors}

Given a category $\bD$ and a full subcategory $\bC \subseteq \bD$, define a functor 
$$ \tilde Y_{\bC} ^{\bD} : \bC ^{op} \times \bD \to R\text{-Mod}, \  \ \tilde Y _{\bC} ^{\bD} (c, d)=R[\Mor _{\bD} (c, d) ].$$
This gives a functor 
$$Y _{\bC } ^{\bD} : \bD \to R\bC\text{-Mod}, \ \ Y_{\bC} ^{\bD} (d)= R [\Mor  _{\bD} (- , d)].$$
If $M$ is an $R\bC$-module, then $\Ext ^j _{R\bC} (-, M)$ is a contravariant functor from $R\bC$-mod to $R$-Mod, so
$$\Ext ^j _{R\bC} (Y_{\bC} ^{\bD} , M)$$ is an $R\bD$-module. 
Applying this construction to the categories $\bO _{\cC} (G) \subseteq \bO (G)$, we obtain that 
for every $R\bO _{\cC} (G)$-module 
$M$ and for $j \geq 0$, 
$$
\tilde M ^j:=\Ext ^j _{R \bO _{\cC} (G) } (Y ^{\bO (G) } _{\bO _{\cC} (G) } , M )
$$
is an $R\bO (G)$-module. 
For any collection $\cD$, we can regard $\tilde M^j$ as an $R\bO _{\cD}(G)$-module by
precomposing with the inclusion  $\bO _{\cD} (G) \subseteq \bO (G)$. 

For every $H \leq G$, we have $$Y^{\cO (G) } _{\cO _{\cC} (G)} (H) \cong R[G/H^?]\ \ 
\text{ and } \ \ Y^{\overline \cF (G) } _{\overline \cF _{\cC} (G) } (H) \cong R[C_G(?)\backslash (G/H)^?].$$
By Shapiro's lemma proved in Propositions
\ref{pro:OrbitIso} and \ref{pro:FusionIso}, for every $H \leq G$, we have an isomorphism of $R$-modules
\begin{equation}\label{eqn:Shapiro}
\tilde M^j (H) \cong H^j (\bO _{\cC} (H); \Res ^{\bO _{\cC} (G) } _{\bO _{\cC} (H) } M ).
\end{equation}
We can use these isomorphisms to define an $R\bO (G)$-module as follows:
 
\begin{lemma}\label{lem:HMj}
For every $R\bO _{\cC} (G)$-module $M$, and for every integer $j \geq 0$, there is an $R\bO (G)$-module
$\overline \cH_M^j $ such that for every $H \leq G$, 
$$\overline \cH ^j _M (H)= H^j (\bO _{\cC} (H) ; \Res ^{\bO _{\cC} (G) } _{\bO _{\cC} (H) } M).$$ For every 
$f: G/K \to G/H$, the induced map $\overline \cH _M ^j (f): \overline \cH_M ^j (H) \to \overline \cH_M ^j (K)$ 
is defined in such a way that the Shapiro's isomorphism given in (\ref{eqn:Shapiro}) induces an isomorphism 
of $R\bO(G)$-modules $$\overline \cH _M ^j \cong \tilde M^j.$$
\end{lemma}

\begin{proof}  For each subgroup $H\leq G$, 
let $\varphi_H: \tilde M^j (H) \xrightarrow{\cong} H^j (\bO _{\cC} (H); \Res ^{\bO _{\cC} (G) } _{\bO _{\cC} (H) } M )$
denote the canonical homomorphism that gives the Shapiro's isomorphism.   
 For every $f: G/K \to G/H$, let $\overline \cH _M ^j(f): \overline \cH_M ^j (H) \to \overline \cH_M ^j (K)$ denote the 
unique homomorphism which makes the following diagram commute
$$\xymatrix{  \tilde M ^j (H)  \ar[r]^{\varphi_H \cong} \ar[d]_{\tilde M ^j (f)} & \overline  \cH _M ^j  (H) \ar[d]^{\cH ^j _M (f)} \\ 
\tilde M ^j (K)  \ar[r]^{\varphi_K \cong }  & \overline  \cH _M ^j (K).}$$ It is clear from the commutativity of the above 
diagram that $\overline \cH _M ^j (-)$ defines  a functor $\bO (G)^{op}\to R\text{-Mod}$,
and the $R$-module isomorphisms given in (\ref{eqn:Shapiro}) define an 
isomorphism of $R\bO(G)$-modules  $\tilde M^j \cong \overline \cH ^j _M$.
 \end{proof}

In our applications we use the $\bO(G)$-module $\overline \cH _M ^j$ as coefficients for the ordinary 
Bredon cohomology of a $G$-CW-complex. This is done by considering $\overline \cH _M ^j$ 
as a $R\cO (G)$-module via the restriction functor induced by the projection functor $pr: \cO (G) \to \bO (G)$. 

\begin{definition}\label{def:HMj} For every $R\bO _{\cC} (G)$-module $M$, and for every integer $j \geq 0$,
we denote by $\cH_M ^j$ the $R\cO (G)$-module $\Res_{pr} \overline \cH ^j _M$.
Note that for each $H\leq G$, we have $$\cH _M ^j (H)= H^j (\bO _{\cC} (H) ; \Res ^{\bO _{\cC} (G) } _{\bO _{\cC} (H) } M).$$ 
\end{definition}

%------------------------

\subsection{Fusion Bredon cohomology} 

Let $X$ be a $G$-CW-complex (with a left $G$-action). For each subgroup $K \in \cC$, the fixed point set 
$X^K$ can be identified with the set of $G$-maps from the transitive $G$-set $G/K$ to $X$. 
This gives a contravariant functor
$$\bO_{\cC} (G) \to \text{CW-complexes}$$ defined by  $K \to X^K$ when $\bO _{\cC} (G)= \cO _{\cC} (G)$ and 
by  $K \to C_{G} (K)  \backslash X^K$ when $\bO _{\cC} (G)= \overline \cF _{\cC} (G)$. 
Composing these functors with the functor which takes a CW-complex to its cellular chain complex 
with coefficients in  $R$, we obtain a chain complex $C_* ( X^? ; R)$  of $R\bO _{\cC}  (G)$-modules, 
 
For each $n\geq 0$, we have 
$$C_n (X^? ; R)\cong \bigoplus _{i \in I_n} R[G/ H_i ^?] \cong \bigoplus _{i\in I_n} 
R\Mor _{\cO _{\cC} (G)} ( ?, H_i )$$ where $I_n$ is the set of $G$-orbits of $n$-dimensional 
cells in $X$, and $H_i$ denotes the stabilizer of the $i$-th cell in $X$.  Similarly, 
$$C_n (C_{G} (?) \backslash X^? ; R) \cong \bigoplus _{i\in I_n} R[ C_{G}(?) 
\backslash (G/H_i)^?]\cong \bigoplus _{i\in I_n} R\Mor _{\overline \cF _{\cC} (G)} ( ?, H_i )$$ 
as $R\overline \cF _{\cC} (G)$-modules. Using the Yoneda functors introduced above, we can write this as 
$$C_n (X^? ; R) \cong \bigoplus _{i\in I_n}  Y_{\bO _{\cC} (G) } ^{\bO (G) } (H_i)$$
as an $R\bO _{\cC} (G)$-module. 

\begin{definition} Let $X$ be a CW-complex. The \emph{ordinary (fusion) Bredon cohomology} 
of $X$ with coefficients in an $R\bO(G)$-module $M$ is defined by 
$$H^* _{\bO (G)} (X^?; M) := H^* (\hom _{R\bO (G)} ( C_* (X^?; R) ; M).$$
For an arbitrary collection $\cC$, the \emph{(fusion) Bredon cohomology} 
of $X$  with coefficients in an $R\bO_\cC (G)$-module $M$ is defined by
$$H_{\bO_\cC (G)} ^* (X ^? ; M) := H^* (\hom _{R\bO _\cC (G) } ( \Tot ^{\oplus} (P_{*,*} ) , M) ),$$ 
where $P_{*, *}$ is a Cartan-Eilenberg resolution of $C_*(X^? ; R)$.  
\end{definition}

 When $\bO (G)=\cO (G)$, this gives the ordinary Bredon cohomology  $H^* _{\cO(G)} (X^?; M)$ 
 as defined in Definition \ref{def:BredonCoh}. When $\bO _{\cC} (G)=\overline \cF _{\cC} (G)$, 
 then $H^* _{\bO_{\cC} (G)} (X^?; M)$ is the \emph{fusion
Bredon cohomology} of $X$ as defined in Definition \ref{def:FusionBredon}.

\begin{lemma} If  the isotropy subgroups of $X$ lie in the collection $\cC$, then $C_*(X^? ; R)$ 
is a chain complex of projective $R\bO_{\cC} (G)$-modules. In this case, for every 
$R\bO _{\cC} (G)$-module $M$ we have $$H^* _{\bO (G) } (X^?; M) \cong H^* _{\bO _{\cC} (G) } (X^?; M).$$ 
\end{lemma}

\begin{proof} 
The first sentence follows from the descriptions of $C_n (X^?; R)$ given above. 
The second part follows from the fact that the Cartan-Eilenberg 
resolution of a chain complex of projective modules is itself.
\end{proof}

\begin{proposition}\label{pro:TwoBredons} Let $X$ be a $G$-CW-complex and $\cC$ be a collection of subgroups of $G$
such that the isotropy subgroups of $X$ lie in $\cC$. Then for every 
$R\overline \cF_{\cC} (G)$-module $M$, there is an isomorphism 
$$H^* _{\overline \cF _{\cC} (G)} (X^?; M ) \cong H^* _{\cO _{\cC} (G) } (X^? ; \Res _{pr} M ).$$  
\end{proposition}

\begin{proof} In this case $C_* (X^?; R)$ is a chain complex of projective $R\bO _{\cC} (G)$-modules, hence we have
$$H^* _{\bO _\cC  (G) } (X^?; M) \cong H^* (\hom _{R \bO _{\cC} (G) } (C_* (X^?; R), M).$$
By Lemma \ref{lem:TwoWays}, for every 
$R\overline \cF_{\cC} (G)$-module $M$, there is an isomorphism
$$\hom _{R\cO _{\cC} (G)} (C_* (X^? ; R), \Res_{pr} M) \cong \hom _{R\overline \cF _{\cC} (G)} ( C_* (C_G(?) \backslash X^? ; R), M).$$
Hence we have
\begin{align*}
H^* _{\overline \cF _{\cC} (G)} (X^?; M ) & \cong H^* (\hom _{R \overline \cF _{\cC} (G) } (C_* (X^?; R), M) \\
& \cong H^* (\hom _{R  \cO _{\cC} (G) } (C_* (X^?; R), \Res_{pr} M) \cong H^* _{\cO _{\cC} (G) } (X^? ; \Res _{pr} M ).
\end{align*}  
\end{proof}

In general when $\cC$ does not include all the isotropy subgroups of $X$, then the isomorphism 
in Proposition \ref{pro:TwoBredons} does not hold.
 
\begin{example}\label{ex:BredonDifferent} Let $G$ be a discrete group. If  $\cC=\{ 1 \}$, $X=pt$, 
and $A$ is an abelian group with trivial $G$-action, then  $H_{\cO_{\cC} (G)} ^* (X; A)$ is isomorphic 
to  the group cohomology $H^* (G; A)$ whereas $H_{ \overline \cF_{\cC} (G)} ^i (X ^?; A)=0$ for $i\geq 1$ 
because $\Aut _{\overline \cF _{\cC} (G)} (1)=1$. So if $G$ and $A$ are taken such that 
$H^i (G, A) \neq 0$ for $i\geq 1$, then in this case the fusion Bredon cohomology 
and Bredon cohomology are not isomorphic.
\end{example}

%-------------

\subsection{Hypercohomology spectral sequence}  

There are two hypercohomology spectral sequences converging to the Bredon cohomology 
of a $G$-space introduced by Symonds \cite[p. 279]{Symonds}. We show in this section that these 
spectral sequences exist also for the fusion Bredon cohomology.  
Throughout this section, $\bO_{\cC} (G)$ denotes either $\cO _{\cC} (G)$ or 
$\overline \cF _{\cC} (G)$. For a $G$-CW-complex $X$, 
 $X ^?$ denotes either $X ^?$ or $C_G(?)\backslash X^?$ depending on the category $\bO_\cC (G)$.

\begin{proposition}\label{pro:HyperCohSS} 
Let $X$ be a $G$-CW-complex and $M$ be an $R\bO_\cC (G)$-module.
For every integer $j \geq 0$, let $\cH ^j _M$ denote the $R\cO (G)$-module defined in Definition \ref{def:HMj}.
Then  there are two spectral sequences 
$${}^I E_2 ^{s,t} \cong H^s _{\cO (G) } (X^? ; \cH _M ^t )  \ \ \ \text{   and     }   \ \ \ 
{}^{II}  E_2 ^{s,t} \cong \Ext^s _{R\bO _{\cC} (G) } ( H_t (X^? ; R),  M) $$
converging to the (fusion) Bredon cohomology $H^{*}_{\bO_\cC (G)} (X ^?; M)$.   
\end{proposition}
 
\begin{proof} Let $C_*(X^?; R)$ denote the chain complex of $R\bO_{\cC} (G)$-modules for $X$, and
$P_{*,*}$ denote the Cartan-Eilenberg projective resolution of $C_* (X^?; R)$ (see \cite[Def 5.7.1]{Weibel-Book}). 
 Applying the hom-functor $\hom _{R\bO _{\cC} } (-; M)$ to $P_{*,*}$, we obtain a double (cochain) complex where
 $$C^{s, t} =\hom _{R\bO _{\cC} (G)} (P_{s,t} , M)$$
for every $s,t \geq 0$. The differentials are in two directions
$$d_v ^{s,t} : C^{s,t} \to C^{s, t+1}  \ \ \text{  and } \ \ d_h^{s,t} : C^{s,t} \to C^{s+1, t}$$ satisfying $d_h d_v+d_vd_h=0$.
The total complex of $C^{*,*}$ is defined by
$X^n=Tot^n (C^{*, *} )=\oplus _{s+t=n} C^{s,t}$ with differential $d_v+ d_h$. 
 Note that this is a first quadrant double complex so we have
 $\Tot ^{\oplus}=\Tot ^{\pi}$ in this case. 
 
 We can filter the total complex $X^*$ by taking 
 $$F^s (X^{s+t})= \bigoplus _{\substack{i+j =s+t, \\  i\geq s}} C^{i, j}.$$
So the the layers of the filtration are the columns of $C^{*,*}$. This gives a spectral sequence 
$\{{}^I E_r ^{s,t} \}_{r\geq 0}$ with $${}^I E_2 ^{s,t}= H^s_h (H^t _v ( C^{*,*} ))$$
(see \cite[Thm 3.4.2]{Benson-Book2} and \cite[Def 5.6.1]{Weibel-Book}).
For each $s\geq 0$, let $I_s$ denote the 
set of $G$-orbits of $s$-dimensional cells in $X$, and let $H_i$ denote the stabilizer of the $i$-th cell 
in $G$. Then  $$C_s (X^? ; R)\cong \bigoplus _{i \in I_s} Y _{\bO _{\cC} (G)} ^{ \bO (G)} (H_i) $$
as an $R\bO _{\cC} (G)$-module. By Lemma \ref{lem:HMj}, we obtain
$$H_v ^t (C^{s, *} ) \cong \prod _{i \in I_s}   \Ext ^t _{R\bO_{\cC} (G) } (Y_{\bO _{\cC} (G)} ^{\bO (G)} (H_i ),  
M)\cong \prod _{i\in I_s}  \overline \cH_M ^t (H_i)\cong  \hom _{R\bO (G)} ( C_s (X^? ; R) , \overline \cH ^t _M)
$$ for every $s,t \geq 0$. The horizontal differential $d_h$ is induced by the differentials
of the chain complex $C_* (X^? ; R)$. Hence we have $${}^I E_2 ^{s, t} = H^s_h( H_v ^t (C^{*, *} ) )\cong 
H^s ( \hom _{R\bO (G)} ( C_* (X^? ; R) , \overline \cH ^t _M  ))\cong H^s _{\bO (G) } (X^? ; \overline \cH _M ^t ).$$
When $\bO (G)=\overline \cF (G)$, by Proposition \ref{pro:TwoBredons}, we have
$$H^* _{\overline \cF (G) } (X^? ; \overline \cH _M ^t )\cong H^*_{\cO (G) } (X^? ; \Res _{pr} \overline \cH _M ^t )=
 H^*_{\cO (G) } (X^? ; \cH _M ^t ).$$ 
So for both categories $\cO (G)$ and $\overline \cF (G)$, we have  
$${}^I E_2 ^{s, t} \cong H^s _{\cO (G)} (X^?; \cH ^t _M).$$
 
The second spectral sequence comes from filtering the total complex $X^*$ horizontally by taking 
 $$F^s (X^{s+t})= \bigoplus _{\substack{i+j =s+t, \\ j \geq s}} C^{i, j}.$$
In this case the layers of the filtration are the rows of $C^{*,*}$. We take ${}^{II} E_0 ^{s,t}=C^{t,s}$ and 
obtain a spectral sequence 
$\{{}^{II} E_r ^{s,t} \}$ with $${}^{II} E_2 ^{s,t}= H^s _v (H^t _h ( C^{*,*} ))$$
(see \cite[Def 5.6.2]{Weibel-Book}). By the properties of the Cartan-Eilenberg resolutions, the horizontal 
cohomology group $H^t _h (C^{*, s} ) \cong \hom _{R\bO _{\cC} (G)} (Q_s , M )$ where $Q_*$
is a projective resolution of $H_t (X^? ; R)$ (see \cite[Def 5.7.1]{Weibel-Book}). 
Hence $${}^{II} E_2 ^{s,t} \cong  H^s ( \hom _{R\bO _{\cC} (G)} (Q_* , M ) )   
\cong  \Ext ^s _{R\bO _{\cC} (G) } ( H_t ( X^?; R ), M)$$ for all $s,t\geq 0$.
 \end{proof}

One special case of the above hypercohomology spectral sequence  is the case where $C_* ( X ^H)$ 
(resp. $C_* ( C_G(H)\backslash X^H )$) has a homology of a point for every $H \in \cC$. In this case 
we say $X$ is $R\cO _{\cC} (G)$-acyclic (resp. $R\overline \cF_{\cC} (G)$-acyclic).
 
\begin{theorem}\label{thm:HyperCohSS} 
Let $X$ be an $R\bO _{\cC} (G)$-acyclic $G$-CW-complex and $M$ be an $R\bO_{\cC} (G)$-module.
For every integer $j \geq 0$, let $\cH ^j _M$ denote the $R\cO (G)$-module defined in Definition \ref{def:HMj}.
Then there is a spectral sequence
$$E_2 ^{s,t} \cong H^s _{\cO(G)} (X^?; \cH _M ^t ) \Rightarrow H^{s+t} ( \bO_\cC (G) ; M).$$
\end{theorem}
 
\begin{proof}
By Proposition \ref{pro:HyperCohSS}, there are two spectral sequences which converge 
to the Bredon cohomology $H^* _{\bO_\cC (G) } (X^?; M)$. Since $X$ is $R\bO_\cC (G)$-acyclic, we have
$$H_t (X^? ; R ) \cong  \begin{cases} \underline R  & \text{if }\  t=0 \\ 0 &  \text{if } \ t \neq 0 
\end{cases}$$ as an $R\bO_\cC (G)$-module. This gives that the spectral sequence $^{II} E_2 ^{*,*}$  
collapses at the $E_2$-page to the horizontal line at $t=0$, hence  for each $n \geq 0$, 
there is an isomorphism $$H^n _{\bO_\cC (G) } (X^?; M ) \cong H^n  (\bO_\cC (G) ; M).$$ 
Now the first spectral sequence ${}^I E_2 ^{s,t}$ in Proposition \ref{pro:HyperCohSS} becomes 
a spectral sequence that converges to $H^ *( \bO _{\cC} (G); M)$.
\end{proof}

%---------------

\section{Dwyer spaces for homology decompositions}\label{sect:DwyerSpaces}

The Dwyer spaces $X_{\cC} ^{\alpha}, X_{\cC} ^{\beta}, X_{\cC} ^{\delta}$ for centralizer, subgroup, 
and normalizer decompositions were introduced by Dwyer to give a unified treatment for the homology 
decompositions for classifying spaces of discrete groups. In this section we give the definitions of Dwyer 
spaces and show that the Dwyer spaces for the centralizer and normalizer decompositions satisfy 
the conditions of Theorem \ref{thm:HyperCohSS}. For more details on the homology decompositions 
of classifying spaces, we refer the reader to \cite[\S 7]{Dwyer-Book}, \cite[Chp 5]{BensonSmith-Book}, 
and  \cite{Grodal-Higher}.
 
\subsection{$G$-Categories}  We first recall some definitions on categories with a group action. 
We follow the terminology and notation 
introduced by Grodal in \cite{Grodal-Higher} and \cite{Grodal-Endo}.  Let $G$ be a discrete group. 
A \emph{$G$-category} is a category $\bC$ together with a set of functors $F_g: \bC\to \bC$ for all 
$g\in G$ which satisfy $F_g \circ F_h=F_{gh}$ for all $g, h \in G$, and $F_1=\id_{\bC}$. We denote 
the image of the object $x\in \Ob (\bC)$ under $F_g$ by $gx$. Similarly for a morphism 
$\alpha : x\to y$, we write $F_g(\alpha)=g\alpha$. The nerve $\cN (\bC)$ of a small category $\bC$ 
is a simplicial set whose $n$-simplices are given by a chain of composable maps 
$x_0 \maprt{\alpha_1} x_1 \maprt{\alpha_2}  \cdots \maprt{\alpha_n} x_n$. When $\bC$ is a $G$-category, 
the nerve $\cN (\bC)$ is a $G$-simplicial set where the $G$-action on the simplices 
is given by  $$g( x_0 \maprt{\alpha_1} x_1 \maprt{\alpha_2}  \cdots \maprt{\alpha_n} x_n)
=(gx_0 \maprt{g \alpha_1} gx_1 \maprt{g\alpha_2} \cdots \maprt{g\alpha_n} gx_n)$$
for every $g\in G$.

The geometric realization $|\cN (\bC) |$ of the simplicial set $\cN (\bC)$ is called the geometric 
realization of the category $\bC$, and it is denoted by $|\bC|$. When $\bC$ is a $G$-category, then
the geometric realization $|\bC|$ has a $G$-CW-complex structure. 
For every subgroup $H \leq G$, let $\bC^H$ denote the subcategory of $\bC$ whose objects 
and morphisms are the objects and morphisms of $\bC$ that are fixed under the $H$-action.
We have $$|\bC| ^H =|\cN (\bC) |^H= |\cN (\bC) ^H |=|\cN (\bC^H)|=|\bC^H|.$$ The orbit space 
$|\bC|/G$ is homeomorphic to the topological realization of the simplicial set $\cN (\bC)/G$.

A functor  $F: \bC \to \bD$ between two $G$-categories is called a \emph{$G$-functor} 
if for every $g\in G$, $(i)$ $F(gx)=gF(x)$ for every $x\in \Ob ( \bC)$, and 
$(ii)$ $F(g\alpha)=gF(\alpha)$ for every morphism $\alpha $ in $\bC$. If $F: \bC \to \bD$ 
is a $G$-functor,  then the induced map on the realizations $|F|: |\bC| \to |\bD|$ is a $G$-map. 
A natural transformation $\mu: F \to F'$ between two $G$-functors $F, F': \bC\to \bD$ is called 
a \emph{natural $G$-transformation} if $\mu _{gx} : F(gx) \to F'(gx) $ is equal to 
$g\mu_x : gF(x) \to g F'(x)$ for every $g\in G$ and $x\in \Ob (\bC)$. If $\mu: F \to F'$ is a natural 
$G$-transformation, then  the realizations $|F|, |F'| : |\bC|\to |\bD|$ are $G$-homotopic.  
These give the following:  

\begin{proposition}[{\cite[Prop 5.6]{Dwyer-Book}}]\label{pro:Contractible} 
Let $\bC$ be a $G$-category and $x_0 \in \Ob (\bC^G)$. If the identity functor of $\bC$ 
is connected to the constant functor $c_{x_0} : \bC \to \bC$ by a zigzag of natural 
$G$-transformations, then $|\bC|$ is $G$-equivariantly contractible. In particular, 
the orbit space $|\bC|/G$ is contractible.
\end{proposition}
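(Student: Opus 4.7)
The plan is to chain together $G$-homotopies arising from the zigzag of natural $G$-transformations. The crucial ingredient has already been recorded in the paragraph preceding the proposition: if $\mu : F \to F'$ is a natural $G$-transformation between two $G$-functors $\bC \to \bD$, then the induced maps $|F|, |F'| : |\bC| \to |\bD|$ are $G$-homotopic. The standard way to see this is to package $\mu$ as a single $G$-functor $\widetilde \mu : \bC \times [1] \to \bD$, where $[1]$ denotes the category $(0 \to 1)$ equipped with the trivial $G$-action, and then apply topological realization together with the natural homeomorphism $|\bC \times [1]| \cong |\bC| \times [0,1]$ to produce a $G$-homotopy $|\bC| \times [0,1] \to |\bD|$ from $|F|$ to $|F'|$.

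Given the hypothesized zigzag
\[
id_{\bC} \;=\; F_0 \;\leftrightarrow\; F_1 \;\leftrightarrow\; \cdots \;\leftrightarrow\; F_n \;=\; c_{x_0}
\]
of natural $G$-transformations between $G$-endofunctors of $\bC$, I would apply the preceding fact to each edge of the zigzag to obtain a sequence of $G$-homotopies, and then concatenate them (reparametrizing $[0,1]$) into a single $G$-equivariant homotopy
\[
H : |\bC| \times [0,1] \longrightarrow |\bC|
\]
from $|id_{\bC}| = id_{|\bC|}$ to $|c_{x_0}|$. Since $x_0 \in \Ob(\bC^G)$, the realization $|c_{x_0}|$ is literally the constant map at the fixed point $x_0 \in |\bC|^G$, so $H$ exhibits $|\bC|$ as $G$-equivariantly contractible onto $\{x_0\}$.

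For the statement about the orbit space, I would observe that because $G$ acts trivially on $[0,1]$, the equivariant homotopy $H$ descends along the quotient map $|\bC| \to |\bC|/G$ to a homotopy
\[
\overline H : (|\bC|/G) \times [0,1] \longrightarrow |\bC|/G
\]
from the identity of $|\bC|/G$ to the constant map at the image of $x_0$, proving that $|\bC|/G$ is contractible.

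There is really no substantial obstacle in this argument; the whole proof is a routine concatenation once the natural-$G$-transformation-to-$G$-homotopy correspondence is in hand. The only mild care needed is in verifying that the homotopy built at each step of the zigzag is genuinely $G$-equivariant (which is immediate from the definition of a natural $G$-transformation) and that the concatenation respects $G$-equivariance (which it does, again because $G$ acts trivially on the parameter interval).
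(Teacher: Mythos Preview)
Your proof is correct and follows the standard argument. Note, however, that the paper does not actually supply its own proof of this proposition: it is simply quoted with a citation to \cite[Prop 5.6]{Dwyer-Book}, and the paragraph immediately preceding it already records the key fact you use, namely that a natural $G$-transformation induces a $G$-homotopy on realizations. Your write-up just spells out the remaining routine steps (concatenating the $G$-homotopies along the zigzag and passing to the quotient), which is exactly what one would expect.
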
 
 
Our main example of a $G$-category is a category obtained by applying Grothendieck 
construction to a functor $F : \bD \to G$-sets. In this case the Grothendick construction 
is a category $\bD \wr F$ whose objects are pairs $(d, x)$ where $d\in \Ob (\bD)$ and 
$x\in F(d)$. A morphism $(d, x) \to (d', x')$ is a pair $(\alpha, x)$ such that $\alpha : d \to d'$ 
is a morphism in $\bD$ such that $F(\alpha ) (x)=x'$. The composition of morphisms is defined 
by $(\alpha',x') \circ (\alpha, x)= (\alpha' \circ \alpha, x)$. The $G$-action on $\bD \wr F$
is given by $g(d, x)=(d, gx)$ and $g(\alpha, x)=(\alpha, gx)$ for every $g\in G$. There is  
a natural isomorphism of simplicial sets
$$ \hocolim _{d \in \bD} F \maprt{\cong} \cN ( \bD \wr F )$$
(see \cite[Thm 1.2]{Thomason} and \cite[p.~3]{Ramras} for details).
It is easy to see that this isomorphism an isomorphism of $G$-simplicial sets, so the fixed 
point subspaces and the orbit space of $X=|\hocolim _{d \in \bD} F|$ can be computed using 
the nerve  $\cN (\bD \wr F)$.

\begin{remark} Throughout the paper whenever it makes sense we will work 
in the category of simplicial sets and call a simplicial set a \emph{space}  
and a $G$-simplicial set a \emph{$G$-space}.  
Note that the geometric realization of a $G$-space is a $G$-CW-complex, so we can associate 
to a $G$-space $X$ a chain complex $C_* (X^?; R)$ over (fusion) orbit category using its geometric 
realization. This allows us to apply the theorems proved for $G$-CW-complexes to $G$-spaces.
\end{remark}

In the following subsections, $G$ is a discrete group and $\cC$ denotes an arbitrary collection 
of subgroups in $G$.

%------------------- 

\subsection{The Dwyer space for the subgroup decomposition} \label{sect:SubDecomp} 

Let $\cO _{\cC} (G)$ denote the orbit category of $G$ over the collection $\cC$. 
The collection $\cC$ can be considered as a poset with the order relation given by the inclusion 
of subgroups. There is a $G$-action on  this poset by conjugation. Let $K_{\cC}$ denote the order 
complex of the poset $\cC$. The simplices of $K_{\cC}$ are the chains of subgroups 
$\sigma=(H_0 < H_1 < \cdots < H_n)$ in $\cC$, and an element  $g\in G$ acts on a chain 
by the action defined by 
$$g \sigma = (gH_0 g^{-1} < gH_1 g^{-1} < \cdots < gH_n g^{-1}).$$
There is a simplicial set associated to the simplicial complex  $K_{\cC} $. We denote this simplicial set 
by $\cN \cC$ and its geometric realization by $|\cC|$.
 
\begin{definition} Consider the Borel construction $EG\times _G \cN \cC :=(EG \times \cN \cC)/G$. 
A collection $\cC$ is called \emph{$p$-ample}  if the map induced by projection to the first
coordinate $EG \times _G \cN \cC \to BG$ is a mod-$p$ homology isomorphism.
\end{definition} 

The Dwyer space for the subgroup decomposition is defined as follows.

\begin{definition}\label{def:DwyerSubgroup}
Let $\widetilde \beta : \cO _{\cC} (G) \to G\text{-Sets}$ denote the functor 
that sends $H \in \cC$ to the transitive $G$-set $G/H$.  The $G$-space 
$$X_{\cC} ^{\beta} := \hocolim _{\cO _{\cC} (G)} \widetilde \beta$$ is called 
the \emph{Dwyer space for the subgroup decomposition}. 
\end{definition}

The space $X_{\cC} ^{\beta}$ is a nerve 
of a category $\bX _{\cC} ^{\beta}$ whose objects are pairs $(H, gH)$ with $H \in \cC$ and $gH\in G/H$.
A morphism $(H, xH) \to (K, yK)$ is a $G$-map $f: G/H\to G/K$ such that $f(xH)=yK$.  
There is a $G$-map $X_{\cC} ^{\beta} \to \cN \cC$ which is a weak equivalence (non-equivariantly). 
This gives a homotopy equivalence $EG\times _G X_{\cC} ^{\beta} \to EG\times _G \cN \cC$.
If $\beta : \cO _{\cC } (G) \to Spaces$ is the functor defined by $\beta= EG \times _G \widetilde \beta$, then
there is a natural isomorphism $$EG\times_G X_{\cC} ^{\beta} \cong  \hocolim _{\cO _{\cC} (G) } \beta$$
(see \cite[Prop 4.7.6]{BensonSmith-Book}). Hence if $\cC$ is a $p$-ample collection, then the composition 
$$\hocolim _{\cO _{\cC} (G) } \beta \cong EG\times _G X_{\cC} ^{\beta} \to EG \times _G \cN \cC  \to  BG$$ is a mod-$p$ 
homology isomorphism (see \cite[Prop 7.14]{Dwyer-Book} or \cite[Thm 5.5.4]{BensonSmith-Book}  for details).
This mod-$p$ homology isomorphism is called the \emph{mod-$p$ subgroup decomposition for $BG$}  

The Dwyer space $X=X_{\cC} ^{\beta}$ is also denoted by $E_{\cC} G$ and called the universal space 
for the collection $\cC$. For every $H \in \cC$, the fixed point subspace $X^H$ is homotopy equivalent 
to the realization of the subposet $\cC_{\geq H}=\{ K\in \cC \, |\, K\geq H\}$ of $\cC$, 
hence it is contractible (see \cite{GrodalSmith}). 
However, in general the orbit space $C_G(H)\backslash X^H$ is not $\bbZ _{(p)}$-acyclic (see Example
\ref{ex:Sharpness}). So the Dwyer space $X=X_{\cC}^{\delta}$ does not satisfy the conditions of 
Theorem \ref{thm:HyperCohSS} for the fusion orbit category $\overline \cF _{\cC} (G)$.

%--------------------

\subsection{The Dwyer space for the centralizer decomposition} \label{sect:CentDecomp} 
The \emph{$\cC$-conjugacy category}  $\cA_{\cC} (G)$ is the category whose objects are 
pairs $(H, [i])$ where  $H$ is a group and $[i]$ is a $G$-conjugacy class of  monomorphisms 
$i : H \to G $ such that $i(H)\in \cC$. The $G$-action on a monomorphism $i: H \to G$ is defined 
by $g \cdot i := c_g \circ i$ for $g \in G$ where $c_g:G\to G$ is conjugation map $x\to gxg^{-1}$. 
A morphism $(H, [i])\to (H', [i'])$ is given by a group homomorphism $f: H \to H'$ such that $[i]=[i'\circ f]$.  

\begin{remark} The category $\cA _{\cC} (G)$ is not small. To take homotopy colimits over $\cA _{\cC} (G)$
we replace it with an 
equivalent category which is small. For example we can use the subcategory of $\cA_{\cC} (G)$ where the 
objects are pairs $(H, [i])$ where $H\in \cC$ and $[i]$ is a conjugacy class of monomorphisms 
$i: H \to G$ with $i(H)\in \cC$ as above.
\end{remark}
 
\begin{definition}\label{def:DwyerCent} Let $\widetilde \alpha _{\cC}: \cA _{\cC} (G) ^{op} \to \text{$G$-Sets}$  
denote the functor which takes 
$(H, [i])$ to the $G$-conjugacy class $[i]$.  The \emph{Dwyer space 
for the centralizer decomposition} over the collection $\cC$ is the $G$-space defined by
$$ X^{\alpha} _{\cC} :=\hocolim  _{\cA_{\cC} (G) ^{op} } \widetilde \alpha_{\cC}.$$  
\end{definition} 

The space $X_{\cC} ^{\alpha}$ is also denoted by $E \bA _{\cC}$. For every $(H, [i] ) $  in $\cA _{\cC} (G)$, we have 
$$\widetilde \alpha_{\cC} (H, [i]) \cong  G/C_G(i(H))$$ as $G$-sets. The Dwyer space $X_{\cC} ^{\alpha}$ is isomorphic  
to the nerve of the category $\bZ_{\cC} ^{\alpha} $ whose objects are the pairs $(H, i) $ 
where $H$ is a group and $i : H \to G$ is a monomorphism such that $i(H)\in \cC$. 
There is a unique morphisms from $(H', i')$ to $(H, i)$ if there is a group homomorphisms 
$f: H \to H'$ such that $i= i' \circ f$.  As it often done in the literature,
we will work with the 
opposite category $\bX_{\cC} ^{\alpha} :=(\bZ _{\cC} ^{\alpha} ) ^{op}$
where there is a unique morphisms from $(H, i)$ to $(H', i')$ if there is a group homomorphisms 
$f: H \to H'$ such that $i= i' \circ f$ (see \cite[Prop 7.12]{Dwyer-Book} and \cite[Thm 5.44]{BensonSmith-Book}). 
Note that the realization of the category
$\bX_{\cC} ^{\alpha}$ is $G$-homeomorphic to the realization of the Dwyer space $X_{\cC} ^{\alpha}$, 
so it does not affect the proofs to work with either category.

If $\alpha_{\cC} : \cA_{\cC} (G) ^{op} \to Spaces$ 
denote the functor $EG\times _G \widetilde \alpha _{\cC} $, then there is a natural isomorphism
$$\hocolim _{\cA _{\cC} (G) ^{op} } \alpha _{\cC}  \cong EG\times _G X_{\cC} ^{\alpha}.$$   
There is also a $G$-map $X_{\cC} ^{\alpha} \to \cN \cC$ which is a weak equivalence 
(see proof of \cite[Prop 7.12]{Dwyer-Book}). If $\cC$ is a $p$-ample collection, then the composition
$$\hocolim _{\cA _{\cC} (G) ^{op}} \alpha_{\cC}  \cong EG\times _G X_{\cC} ^{\alpha}  
\to EG\times _G \cN \cC \to BG$$ is a mod-$p$ homology isomorphism. This isomorphism 
is called the \emph{centralizer decomposition} for $G$ (see \cite[Prop 7.12]{Dwyer-Book}).
 For the Dwyer space $X_{\cC} ^{\alpha}$ we prove the following:

\begin{proposition}\label{pro:CentFixedPoints} Let $G$ be  a discrete group and 
$\cE$ denote the collection of all nontrivial elementary abelian $p$-subgroups in $G$. 
If $X=X^{\alpha}_{\cE}$ is the Dwyer space for the centralizer decomposition 
for $G$, then for every nontrivial finite $p$-subgroup $P \leq G$, the fixed point 
subspace $X^P$ and the orbit space $C_G(P)\backslash X^P$ are contractible.
\end{proposition}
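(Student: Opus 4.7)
The plan is to prove that $X^P$ is $C_G(P)$-equivariantly contractible by exhibiting a zigzag of natural $C_G(P)$-transformations on the $P$-fixed subcategory, and then invoke Proposition~\ref{pro:Contractible}; both assertions follow immediately. The first step is to identify the fixed-point category. A simplex of $X=|\bX_{\cE}^{\alpha}|$ is $P$-fixed exactly when each vertex $(H_k,i_k)$ satisfies $c_p\circ i_k = i_k$ for all $p\in P$, i.e., when $i_k(H_k)\leq C_G(P)$; morphisms are automatically fixed because $G$ acts trivially on morphism data. Thus $X^P = |\bA|$, where $\bA\subseteq \bX_{\cE}^{\alpha}$ is the full subcategory on objects $(H,i)$ with $i(H)\leq C_G(P)$, equipped with the restricted $C_G(P)$-action.

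Since $P$ is a nontrivial finite $p$-group, $Z(P)$ contains a nontrivial elementary abelian $p$-subgroup $E_0$. Every $c\in C_G(P)$ centralizes $P\supseteq E_0$, so $E_0\leq Z(C_G(P))$; consequently the object $(E_0,i_{E_0})$, with $i_{E_0}\colon E_0\hookrightarrow G$ the inclusion, is a $C_G(P)$-fixed object of $\bA$. The next step is to define a functor $F\colon\bA\to \bA$ by
$$ F(H,i):=\bigl(H\times E_0/K_{(H,i)},\ \bar{i}\bigr),\qquad K_{(H,i)}:=\{(h,e)\in H\times E_0 : i(h)=e^{-1}\}, $$
where $\bar{i}([h,e]) = i(h)\cdot e$, and $F(\varphi)([h,e]) = [\varphi(h),e]$ on morphisms. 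Since $i(H)\leq C_G(P)$ and $E_0\leq P$, the elements of $i(H)$ commute with those of $E_0$ inside $G$, so $\bar{i}$ is a well-defined monomorphism with image $i(H)\cdot E_0\leq C_G(P)$, a nontrivial elementary abelian subgroup; hence $F(H,i)\in\bA$. The crucial observation is that $F$ is \emph{strictly} $C_G(P)$-equivariant: because $E_0\leq Z(C_G(P))$, conjugation by $c\in C_G(P)$ fixes $E_0$ pointwise, so $c i(h) c^{-1}=e^{-1}$ iff $i(h)=e^{-1}$, giving $K_{(H,c_c\circ i)}=K_{(H,i)}$ and $\overline{c_c\circ i}=c_c\circ \bar{i}$.

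Finally, define natural transformations $\alpha\colon \mathrm{id}_{\bA}\Rightarrow F$ and $\beta\colon c_{(E_0,i_{E_0})}\Rightarrow F$ by $\alpha_{(H,i)}(h)=[h,1]$ and $\beta_{(H,i)}(e)=[1,e]$. Naturality is a direct check, and both are $C_G(P)$-equivariant because the defining formulas make no reference to $i$ itself. The zigzag $\mathrm{id}_{\bA}\Rightarrow F\Leftarrow c_{(E_0,i_{E_0})}$ of natural $C_G(P)$-transformations together with Proposition~\ref{pro:Contractible} gives that $X^P=|\bA|$ is $C_G(P)$-equivariantly contractible, so both $X^P$ and $C_G(P)\backslash X^P$ are contractible. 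The main obstacle is arranging strict equivariance of $F$: the naive assignment $(H,i)\mapsto (i(H)\cdot E_0,\mathrm{incl})$ fails because the underlying abstract subgroup $i(H)\cdot E_0\subseteq G$ changes when $c_c$ conjugates $i(H)$, yielding only pseudo-equivariance. The quotient $H\times E_0/K_{(H,i)}$ keeps the abstract group functorially tied to $H$ (which is preserved by the $G$-action on $\bX_{\cE}^{\alpha}$) and exploits the invariance of $K_{(H,i)}$ afforded by $E_0\leq Z(C_G(P))$.
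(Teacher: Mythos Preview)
Your proof is correct and follows essentially the same strategy as the paper: identify $X^P$ as the nerve of the full subcategory on $(H,i)$ with $i(H)\leq C_G(P)$, pick a central elementary abelian $p$-subgroup of $P$ as a $C_G(P)$-fixed basepoint, and build a zigzag of natural $C_G(P)$-transformations $\mathrm{id}\Rightarrow F\Leftarrow c_{\text{basepoint}}$ to apply Proposition~\ref{pro:Contractible}.

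The one genuine difference is in the construction of $F$. The paper takes $Z$ of order $p$ and defines $F(E,i)$ by a case split according to whether $Z\leq i(E)$ or not (setting $E'=E$ or $E'=E\times Z$), which then forces three separate sub-cases when checking functoriality and naturality. Your quotient $(H\times E_0)/K_{(H,i)}$ handles both situations uniformly and makes the equivariance verification cleaner: once one observes $K_{(H,c_c\circ i)}=K_{(H,i)}$ (using $E_0\leq Z(C_G(P))$), strict $C_G(P)$-equivariance of $F$ and of $\alpha,\beta$ follows immediately without branching. When $|E_0|=p$ the two constructions give isomorphic (though not literally identical) objects; your version also works without change for larger $E_0$.
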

 
\begin{proof} 
 A proof for the contractibility of $X^P$ can be found in \cite[\S 13.3]{Dwyer-Book}.  The argument given 
there for the contractibility of $X^P$ does not give a $C_G(P)$-equivariant contraction. We modify Dwyer's 
argument to obtain a $C_G(P)$-equivariant contraction.

Let $X=X^{\alpha} _{\cE}$ and $P$ be a fixed nontrivial finite $p$-subgroup of $G$. 
The fixed point subspace $|X|^P=|X^P|$ is $G$-homeomorphic to the realization of the category $\bC:=(\bX _{\cE} ^{\alpha} )^P$ 
whose objects are the pairs $(E, i)$ that satisfy $i(E) \leq C_G(P)$. Let $Z$ be a central subgroup of order $p$ in $P$. 
Then $Z$ is a subgroup of $C_G(P)$, and furthermore it is a central subgroup of $C_G(P)$ because $Z\leq P$.
Let $j: Z\to G$ denote the inclusion map and let $c_{(Z, j)} : \bC \to \bC$ denote the constant functor that takes 
every object $(E, i)$ in $\bC$ to $(Z, j)$. We will show that there is a zigzag of natural $C_G(P)$-transformations 
between $\id _{\bC}$ and the constant functor $c_{(Z, j)}$. By Proposition \ref{pro:Contractible}, this will imply 
that $C_G(P) \backslash X^P$ is contractible.

For every $(E, i)\in \bC$, the elementary abelian $p$-subgroups $i(E)$ and $Z$ commute with each other. 
In particular the product $i(E) \cdot Z$ is an elementary abelian $p$-subgroup of $C_G(P)$. Let $(E', i')$ be the pair such that
$$E':= \begin{cases} E&\text{if} \ Z\leq i(E)  \\ E \times Z&\text{if}\ Z \not \leq i(E) \end{cases} \qquad \qquad
i' :=\begin{cases} i&\text{if}\ E'=E, \\  (e, z)\to i(e)z&\text{if}\ E' \neq E. \\
\end{cases}
$$
Let $F: \bC \to \bC$ be the functor that sends the pair $(E, i)$ to $(E', i')$, and sends a morphism 
$f: (E_1, i_1) \to (E_2, i_2)$ to the morphism $f' : (E_1' , i_1')  \to (E_2' , i_2')$ where 
\begin{enumerate}
\item $f'=f$ if $E_1'=E_1$ and $E_2'=E_2$, 
\item $f': E_1 \times Z \to E_2$ is defined by $(e, z) \to f(e)i_2^{-1} (z)$ if $E_1' \neq E_1$ and $E_2'=E_2$,
\item $f'=f \times \id_Z$ if $E_1' \neq E_1$ and $E_2' \neq E_2$.
\end{enumerate}
Note that the case $E_1 '=E_1$ and $E_2'\neq E_2$ can not happen because the morphism 
$f: (E_1 , i_1) \xrightarrow{f} (E_2, i_2)$ is a group homomorphism $f: E_1 \to E_2$ such that 
$i_2 \circ f =i_1$, hence $i_1 (E_1) = i_2 (f(E_1))\leq i_2(E_2)$, so $Z \leq i_1(E_1)$ implies $Z\leq i_2(E_2)$.
 
There is a zigzag of natural transformations $$\id _{\bC}  \maprt{\mu}  F \maplf{\eta} c_{(Z, j)}$$
between the identity functor and the constant functor with value $(Z, j)$. The morphism 
$\mu_{(E, i)} : (E, i) \to (E', i')$ is defined by the group homomorphism
$$( \mu _{(E,i)} : E \to E' )= \begin{cases} \id_E : E \to E \text{  if  } 
E'=E, \\ E\to E\times Z \text{     defined by }  e\to (e,1) \text{  if  } E' \neq E. \\
\end{cases}
$$
It is easy to check that $\mu$ is a natural transformation. The only nontrivial case to check 
is when $E_1' \neq E_1$ and $E_2' =E_2$.
In this case, for every morphism $f: (E_1, i_1) \to (E_2, i_2)$, and for every $e\in E_1$, 
we have $$f' ( \mu _{(E_1 , i_1)} (e))=f' (e, 1) = f(e) i_2^{-1} (1)= f(e)=\mu_{(E_2, i_2)} ( f (e) ).$$
Hence, the equality $ f' \circ \mu _{(E_1, i_1)} = \mu _{(E_2, i_2)} \circ f$ holds in this case.

For each pair $(E, i)$, the morphism $\eta_{(E, i)}$ is defined by
the group homomorphism
$$( \eta _{(E,i)} : Z \to E' )= \begin{cases} i^{-1} |_Z  \text{  if  } E'=E, \\ Z \to E\times Z 
\text{     defined by } z \to (1, z) \text{  if  } E' \neq E. \\
\end{cases}$$
In the case $E_1'=E_1$ and $E_2'=E_2$, for each morphism $f: (E_1, i_1) \to (E_2, i_2)$, and for each $z\in Z$,
we have $f (i_1 ^{-1} (z) ) =i_2 ^{-1} (z)$, hence $f' \circ \eta_{(E_1, i_1)} = \eta _{(E_2, i_2)} \circ \id_{(Z, j)}$ holds.
It is easy to check that this equality holds in the remaining cases and conclude that $\eta$ is a natural transformation.
Hence $X^P$ is contractible.
 
It remains to show that $F$ is a $C_G(P)$-functor, and $\mu$ and $\eta$ are 
$C_G(P)$-equivariant transformations. Since $Z$ is central in $C_G(P)$, for every 
$g\in C_G (P)$, we have $Z\leq i(E) $ if and only if $Z\leq (c_g \circ i) (E)$. So $E'$ for $(E,i)$ 
and $E'$ for $(E, c_g \circ i)$ are equal. We also have $(c_g \circ i)'=c_g \circ i'$. This gives 
$$F(g \cdot (E, i) )=F( E, c_g \circ i )= (E' , (c_g \circ i)' )=(E', c_g \circ i')= g \cdot F(E, i)$$ for every 
$g\in C_G(P)$. This shows that the functor $F$ is $C_G(P)$-invariant on the objects. 
If the morphism $f: (E_1, i_1) \to (E_2, i_2)$ is defined by the group homomorphism $f: E_1 \to E_2$,
then $g\cdot f: (E_1, c_g \circ i_1) \to (E_2, c_g \circ i_2)$ is also defined by  $f: E_1 \to E_2$. 
Then the morphism $(g\cdot f)': (E_1', c_g \circ i')\to (E_2', c_g \circ f_2')$ is given by $f'$. 
This gives that $(g\cdot f)'= g \cdot f'$ for every $g \in C_G(P)$. Hence $F$ is a $C_G(P)$-functor.

For every $g \in C_G(P)$, the morphism $$\mu_{g \cdot (E, i)}= \mu _{(E, c_g \circ i) } : (E, c_g\circ i) \to (E' , (c_g\circ i)')=(E', c_g \circ i')$$ 
is equal to the morphism $$ g \cdot \mu _{(E, i)} : g \cdot (E, i) \to g \cdot (E', i').$$
Hence $\mu$ is a $C_G(P)$-transformation. For the transformation $\eta$, note that for every $g \in C_G(P)$, 
the morphism $$\eta _{g \cdot (E, i) }=\eta _{(E, c_g \circ i)} : (Z, j )\to (E', (c_g \circ i)')= (E', c_g \circ i')$$  is defined by the group  
homomorphism $(c_g \circ i) ^{-1} |_{Z} : Z \to E'$ if $E'=E$. Since $Z$ is central in $C_G(P)$, we have 
$$ (c_g \circ i)^{-1} |_{Z}=i^{-1} \circ c_{g^{-1}} |_{Z} =i^{-1} |_{Z},$$ hence $\eta _{g \cdot (E, i)}$ is equal to 
$$ g \cdot \eta _{(E, i)} : (Z, j) \to g\cdot (E', i')= (E', c_g \circ i').$$
In the case $E' \neq E$, both $\eta _{g \cdot (E, i)}$ and $g\cdot  \eta _{(E, i)}$ are given by 
the group homomorphism $Z\to E\times Z$ defined by $z\to (1, z)$. Hence $\eta$ is $C_G(P)$-equivariant. 

By Proposition \ref{pro:Contractible}, we conclude that the orbit space $C_G(P)\backslash X^P$ 
is contractible.
\end{proof}

%------------------

\subsection{Dwyer space for the normalizer decomposition}

Let $K_{\cC}$ denote the order complex of the poset $\cC$. 
There is a $G$-action on the poset $\cC$ by conjugation, which makes $K_{\cC}$ a 
$G$-simplicial complex. The simplices of $K_{\cC}$ forms a poset with order relation given by
$\tau \leq \sigma$ if $\tau$ is a face of $\sigma$. We denote this poset by $sd (K_{\cC})$, 
and its opposite poset by $Sd_{\cC} (G)$. There is a $G$-action on 
$Sd_{\cC} (G)$, and the stabilizer of the simplex $\sigma := (H_0 < H_1 < \cdots < H_n)$ is the subgroup
$N_G(\sigma)= \bigcap _{i=0} ^n N_G(H_i).$

\begin{definition}\label{def:OrbitSimplices} The category of \emph{orbit simplices}, denoted by $Sd_{\cC} (G)/G$, 
is the category whose objects are the $G$-orbits $[\sigma]$ of the simplices of $K_{\cC}$. 
Between two objects there is a unique morphism $[\sigma] \to [\tau]$ if there is an element 
$g\in G$ such that $\tau $ is a face of $g \sigma$. 
\end{definition}
 
Note that $Sd_{\cC} (G)/G$ is a poset category. There is a functor 
$\widetilde \delta _{\cC} : Sd_{\cC} (G)/G \to G\text{-Sets}$ which takes a $G$-orbit $[\sigma]$ 
to itself as a $G$-set. We can describe the effect of $\widetilde \delta _{\cC} $ on morphisms as follows:
Choose a representative in each $G$-orbit of simplices and write $[\sigma]$ for the $G$-orbit 
whose representative is $\sigma$. The stabilizer of $\sigma =(H_0 < H_1 < \cdots < H_n )$ 
under the $G$-action is $N_G(\sigma)= \bigcap _{i=0} ^n N_G(H_i).$ Given a morphism 
$[\sigma] \to [\tau]$, let $g \in G$ be such that $\tau$ is a face of $g \sigma$. This gives 
that $gN_G(\sigma) g^{-1} = N_G (g\sigma ) \leq N_G (\tau)$. Hence there is a $G$-map 
$f: [\sigma] \to [\tau]$ defined by $f( g' \sigma) =g'g^{-1} \tau$ for all $g'\in G$. 
Note that the $G$-map $f$ does not depend on the group element $g\in G$. 
If $g_1, g_2 \in G$ are such that $\tau$ is a face of $g_1 \sigma$ and $g_2 \sigma$, then
we must have $g_1 ^{-1} \tau =g_2^{-1} \tau$ since they are both subchains of $\sigma$ 
with subgroups in both chains having the same order. Hence the $G$-map
$f : [\sigma] \to [\tau]$ is well-defined.  It is clear that $\widetilde \delta _{\cC}:Sd_{\cC} (G)/G \to G\text{-Sets}$
with these assignments defines a functor.

\begin{definition}\label{def:DwyerNorm} 
The \emph{Dwyer space 
for the normalizer decomposition} is the $G$-space
$$X_{\cC} ^{\delta}:=\hocolim _{ Sd_{\cC}(G)/G} \widetilde \delta.$$ 
\end{definition}

The space $X_{\cC} ^{\delta}$ is  isomorphic to the nerve of the poset category 
$\bX _{\cC} ^{\delta}$ whose objects are simplices of $K_{\cC}$, and there is one morphism 
$\sigma \to \tau$ if $\tau$ is a subchain of $\sigma$. The category $\bX_{\cC} ^\delta$ is the opposite category 
of the simplex category of $K_{\cC}$. This gives that $|X_{\cC} ^{\delta}|$ 
is $G$-homeomorphic to $|sd (K_{\cC} )|$. By a standard result on barycentric subdivisions, $|K_{\cC}|$ is 
$G$-homeomorphic to $|sd (K_{\cC})|$ 
(see \cite[Lemma 1.6.4]{Smith-Book}). Hence $|X_{\cC} ^{\delta}|$ is $G$-homeomorphic to  $|K_{\cC} |$.
 
\begin{lemma}\label{lem:Realization} Let $|\cC|$ denote the realization of the simplicial set $\cN \cC$ of the poset $\cC$.
Then $|X_{\cC} ^{\delta}|$ is $G$-homeomorphic to $|\cC|$.
\end{lemma} 

\begin{proof} $\cN \cC$ is the simplicial set for the simplicial complex $K_{\cC}$, so we have a $G$-homeomorphism
$|\cC|\cong |K_{\cC}|$. Hence by the remarks above $|X_{\cC} ^{\delta}|$ and $|\cC|$ are $G$-homeomorphic.
\end{proof}

If $\delta_{\cC} : Sd_{\cC} (G)/G \to Spaces$ 
denote the functor $EG\times _G \widetilde \delta _{\cC} $, then there is a natural isomorphism
$$\hocolim _{Sd_{\cC} (G) /G } \delta _{\cC}  \cong EG\times _G X_{\cC} ^{\delta}.$$   
If $\cC$ is a $p$-ample collection, then the composition
$$|\hocolim _{Sd_{\cC} (G) /G } \delta _{\cC}|  \cong EG\times _G |X_{\cC} ^{\delta}| \to EG\times _G |\cC| \to BG$$
is a mod-$p$ homology isomorphism  (see \cite[Prop 7.17]{Dwyer-Book}). This mod-$p$ homology isomorphism is 
called the \emph{normalizer decomposition} for $G$.
 
 We are now going to prove an important property of the Dwyer space $X_{\cC} ^{\delta}$. We first give a definition.

\begin{definition}\label{def:TakingProducts} We say a collection $\cC$ of subgroups in $G$ is \emph{closed under taking products} 
if it satisfies the following condition: if $P, Q\in \cC$ such that $PQ$ is a subgroup in $G$, then $PQ\in \cC$.
\end{definition}

We prove the following:
 
\begin{proposition}\label{pro:NormFixedPoints} Let  $\cC$ be a collection of subgroups of $G$ 
closed under taking products, and let $X=X^{\delta}_{\cC}$ be the Dwyer space for the 
normalizer decomposition for $G$. Then for every $P \in \cC$, the fixed point set $X^P$ and 
the orbit space $C_G(P)\backslash X^P$ are contractible.
\end{proposition}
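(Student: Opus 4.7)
The plan is to apply Proposition \ref{pro:Contractible} to the subposet
$\cC^P := \{ Q \in \cC \, | \, P \leq N_G(Q) \}$ regarded as a $C_G(P)$-category.
By Lemma \ref{lem:Realization}, $X_{\cC}^{\delta}$ is $G$-homeomorphic to $|\cC|$,
and since the $P$-action on $\cC$ is by conjugation, the chain
$\sigma = (H_0 < \cdots < H_n)$ is fixed by $P$ exactly when each $H_i$ is
normalized by $P$. Thus $(X_{\cC}^{\delta})^P = |\cC^P|$, and $\cC^P$ inherits
an action of $N_G(P)$, and hence of $C_G(P)$.

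First I would define a $C_G(P)$-functor $F : \cC^P \to \cC^P$ by $F(Q) = PQ$.
Since $P$ normalizes $Q$, the product $PQ = QP$ is a subgroup of $G$, which lies
in $\cC$ by the product-closure hypothesis, and it is normalized by $P$ because
$P \leq PQ$ and $P$ normalizes $Q$. Equivariance follows from the observation that
for every $g \in C_G(P)$ one has $g(PQ)g^{-1} = P \cdot g Q g^{-1}$, so
$F(gQg^{-1}) = g F(Q) g^{-1}$; on morphisms $F$ preserves the relation $\leq$
because $Q \leq Q'$ implies $PQ \leq PQ'$.

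Next I would construct a zigzag of natural $C_G(P)$-transformations
$\mathrm{id}_{\cC^P} \Rightarrow F \Leftarrow c_P$, where $c_P$ denotes the
constant functor with value $P$. The components are the inclusions
$Q \hookrightarrow PQ$ and $P \hookrightarrow PQ$, which are the unique morphisms
between the respective pairs in the poset; naturality and $C_G(P)$-equivariance
are therefore automatic, using that $C_G(P)$ fixes $P$ setwise. Since
$P \in \cC^P$ is a $C_G(P)$-fixed object, Proposition \ref{pro:Contractible}
yields that $|\cC^P|$ is $C_G(P)$-equivariantly contractible; in particular both
$(X_{\cC}^{\delta})^P$ and $C_G(P) \backslash (X_{\cC}^{\delta})^P$ are contractible.

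The only step that really uses a nontrivial hypothesis is verifying that $F$ lands
inside $\cC$, which is exactly where the product-closure assumption on $\cC$ enters;
without it, the standard ``multiply by $P$'' trick would not be available.
Everything else reduces to routine poset bookkeeping, so I do not anticipate a
serious obstacle beyond keeping track of the equivariance for the $C_G(P)$-action.
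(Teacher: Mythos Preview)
Your proposal is correct and follows essentially the same argument as the paper: identify $X_{\cC}^{\delta}$ with $|\cC|$ via Lemma~\ref{lem:Realization}, and use the zigzag $Q \leq PQ \geq P$ of $C_G(P)$-equivariant poset maps on $\cC^P$ to obtain a $C_G(P)$-equivariant contraction. The paper states this more tersely, but your explicit invocation of Proposition~\ref{pro:Contractible} and your check that $F(Q)=PQ$ is $C_G(P)$-equivariant are exactly the content of the paper's phrase ``the poset maps in the above contractions are $C_G(P)$-equivariant maps.''
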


\begin{proof} By Lemma \ref{lem:Realization}, the realization $|X|$ is $G$-homeomorphic 
to $|\cC|$, and hence it is enough to prove the statements for the $G$-space $|\cC|$.
For each $P\in \cC$, we have $|\cC|^P=|\cC^P |$, so we need to show that $\cC^P$ is a contractible poset.  
If $Q\in \cC ^P$, then $P\leq N_G(Q)$, hence $PQ$ is a subgroup in $G$.  Since $\cC$ is closed under taking products, 
$PQ \in \cC$. Since $P$ normalizes  $Q$, it normalizes $PQ$. Hence $PQ \in \cC ^P$. There is a zigzag 
of poset contractions $P \leq PQ \geq Q$ in $\cC ^P$, hence the poset $\cC^P$ is canonically contractible. 
The poset maps in the above contractions are $C_G(P)$-equivariant maps, hence $|\cC|^P$ is 
$C_G(P)$-equivariantly contractible. We conclude that the orbit space $C_G(P) \backslash X^P$ is contractible.
\end{proof}

%-------------------

\section{Hypercohomology spectral sequences for Dwyer spaces}\label{sect:SpecDwyer}

As an immediate consequence of  Theorem \ref{thm:HyperCohSS} 
and  Propositions 
\ref{pro:CentFixedPoints}  and \ref{pro:NormFixedPoints}, we obtain spectral sequences 
which converge to the cohomology of the orbit category and the fusion orbit category. 
We can state this conclusion as a corollary as follows:

\begin{corollary}\label{cor:HyperCohSSDwyer} 
Let $\bO_\cC (G)=\cO _{\cC}  (G)$ or $\overline \cF _{\cC} (G)$, and let $M$ be an $R\bO_\cC (G)$-module.
\begin{enumerate}
\item Let $X=X_{\cE} ^{\alpha}$ be the Dwyer space for the centralizer decomposition over the collection $\cE$
of nontrivial elementary abelian $p$-subgroups, and $\cC$ be any collection of nontrivial $p$-subgroups of $G$, or
\item let $\cC$ be a collection of $p$-subgroups of $G$ that is closed under taking products, and 
$X=X_{\cC} ^{\delta}$ be the Dwyer space for normalizer decomposition over $\cC$.
\end{enumerate}
For $j \geq 0$, let $\cH _M ^j$ denote the $R\cO (G)$-module defined in Definition \ref{def:HMj}.
Then there is a spectral sequence
$$E_2 ^{s,t} = H^s _{\cO(G)} (X^?; \cH^t _M) \Rightarrow H^{s+t} ( \bO_\cC (G) ; M).$$
\end{corollary}

Our aim in this section is to show that the Bredon cohomology groups appearing in the $E_2$-term 
of the above spectral sequences can be expressed as higher limits over the fusion category  
$\cF _{\cE} (G)$ in the first case, and over the category of orbit simplices $Sd _{\cC} (G)/G$ 
in the second case. To show these we need to introduce more definitions on equivariant local 
coefficient systems. We follow the terminology introduced by Grodal in \cite{Grodal-Higher} 
and \cite[\S 2.4]{Grodal-Endo}. 

%------------------

\subsection{Local coefficient systems}

Given a simplicial set $X$, let $\Delta X$ denote the simplex category whose objects are simplices 
of $X$ and morphisms are compositions of face maps $d_i: X_n \to X_{n-1}$ and degeneracy maps 
$s_i: X_n \to X_{n+1}$. If $X$ is a $G$-simplicial set, 
the simplex category $\Delta X$ is a $G$-category. Let $(\Delta X)_G$ denote the transporter category 
of the $G$-category $\Delta X$. This is the category whose objects are simplices of $X$ and morphisms 
from $\sigma$ to $\tau$ are given by pairs  $(g, \varphi: g\sigma \to \tau)$ where $g\in G$ and $\varphi$ 
is a morphism in $\Delta X$. For a commutative ring $R$, a general (contravariant) $G$-local coefficient 
system on $X$ is a functor $\cM :  ( (\Delta X)_G)^{op}  \to R$-mod.  The cochain complex $(C^* (X, \cM), \delta)$ 
is defined by $$C^n (X; \cM )=\prod _{\sigma  \in X_n } \cM (\sigma)   \text{   and  }  (\delta ^n f) (\sigma ) 
= \sum _i (-1) ^i \cM ( 1, d_i ) ( f(d_i \sigma ) )$$ for every $f \in C^n (X ; \cM)$, where $d_i$ denotes 
the $i$-th face map $d_i: X_n \to X_{n-1}$  and $(1, d_i)$ denotes the morphism 
$(1, d_i: \sigma \to d_i \sigma ) $ in $(\Delta X)_G $. Note that in the above formula, 
elements of $C^n(X; \cM)$ are considered as a function $f : X_n \to \prod _{\sigma \in X_n} \cM (\sigma)$ 
such that $f(\sigma )\in \cM(\sigma)$. 
The (right) $G$-action on $f\in C^n (X; \cM)$ is defined by $$(fg) (\sigma)=\cM ( g, \id_{g\sigma}) (f(g\sigma))$$ 
for every $g\in G$, where   $\cM (g, \id _{g\sigma} ) : \cM (g\sigma) \to \cM(\sigma)$ is the $R$-module 
homomorphism induced by the morphism $(g, \id_{g\sigma}): \sigma \to g\sigma$ in $(\Delta X)_G$.  
We can convert this action to a left action by taking $gf= fg^{-1}$. With this definition of $G$-action, 
the chain complex  $C^* (X; \cM)$ becomes a chain complex of left $RG$-modules. The $G$-equivariant 
cohomology $H^* _G (X; \cM)$ of a $G$-space $X$ with coefficients in $\cM$ is defined to be the cohomology 
of the cochain complex $(C^* (X; \cM)^G, \delta ^*)$.

Given an $R \cO(G)$-module $M$, we can define a coefficient system $\cM:  ((\Delta X)_G ) ^{op} \to R$-Mod 
associated to $M$ as follows: Let $F: (\Delta X)_G \to \cO (G)$ be the functor that sends the simplex $\sigma$ 
to its stabilizer $G_{\sigma}$, and sends a morphism $(g, \varphi :g\sigma \to \tau) : \sigma \to  \tau$ to the $G$-map 
$$G/G_{\sigma} \maprt{g^{-1}} G/ G_{g\sigma} \maprt{\varphi} G/G_{\tau}$$  (see \cite[\S 2.4]{Grodal-Endo}).  
We define $\cM$ to be the composition $((\Delta X)_G ) ^{op} \maprt{F^{op} } \cO (G) ^{op}  \maprt{M} R\text{-Mod}$.
The coefficient system obtained this way is called the \emph{$G$-isotropy coefficient system} associated to $M$. 
The following is proved by Bredon  \cite[\S I.9]{Bredon-Equivariant}.

\begin{lemma}\label{lem:EqualBredon} Let $M$ be an $R\cO (G)$-module and $\cM$ be the coefficient system 
associated to $M$. Then for every $G$-simplicial set $X$, the $G$-equivariant cohomology $H^* _G (X; \cM )$ 
is isomorphic to the Bredon cohomology $H^*_{\cO(G)} (X^? ; M)$ defined in Definition \ref{def:BredonCoh}.
\end{lemma}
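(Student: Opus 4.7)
The plan is to construct an explicit natural isomorphism of cochain complexes
$$\Phi_\bullet: C^*(X; \cM)^G \xrightarrow{\cong} \hom_{R\cO(G)}(C_*(X^?; R), M),$$
which immediately yields the claimed isomorphism on cohomology. I would split the argument into a degree-wise identification and a check of compatibility with the differentials.

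For the degree-wise identification, decompose $X_n$ into $G$-orbits with chosen representatives $\sigma_i$, $i\in I_n$, of stabilizers $H_i = G_{\sigma_i}$. On the Bredon side one has $C_n(X^?; R) \cong \bigoplus_{i \in I_n} R[G/H_i^?] \cong \bigoplus_{i \in I_n} R\Mor_{\cO(G)}(?, H_i)$, and the Yoneda lemma gives $\hom_{R\cO(G)}(C_n(X^?; R), M) \cong \bigoplus_{i\in I_n} M(H_i)$. On the equivariant side, observe that whenever $g \in G_\sigma$, the morphism $(g, id_\sigma): \sigma \to \sigma$ in $(\Delta X)_G$ is sent by $F$ to the $G$-map $xG_\sigma \mapsto xg^{-1}G_\sigma$, which is the identity in $\cO(G)$ since $g \in G_\sigma$; hence $\cM(g, id_\sigma) = id_{M(G_\sigma)}$. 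Consequently the $G$-invariance condition on $f \in C^n(X;\cM)$ imposes no constraint on the value $f(\sigma_i)$ at a representative, while forcing $f(g\sigma_i) = \cM(g, id_{g\sigma_i})^{-1}(f(\sigma_i))$ on the rest of the orbit. Restriction to orbit representatives therefore produces an isomorphism $C^n(X; \cM)^G \cong \bigoplus_{i \in I_n} M(H_i)$ matching the Bredon identification, and composing these identifications defines $\Phi_n$.

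For compatibility with the differentials, fix an orbit representative $\sigma_i$ and a face index $k$, and write $d_k\sigma_i = g_{i,k} \tau_{j(i,k)}$ with $\tau_j$ a representative in degree $n-1$ and $g_{i,k} \in G$. Using $G$-invariance of $f$, the summand $(-1)^k \cM(1, d_k)(f(d_k\sigma_i))$ equals $(-1)^k \cM\big((1,d_k)\circ(g_{i,k}^{-1}, id)\big)(f(\tau_{j(i,k)}))$, and the composite $(1, d_k)\circ(g_{i,k}^{-1}, id): \sigma_i \to \tau_{j(i,k)}$ is sent by $F$ to the $G$-map $G/H_i \to G/G_{\tau_j}$ corresponding to the inclusion $g_{i,k}^{-1} H_i g_{i,k} \hookrightarrow G_{\tau_j}$. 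On the Bredon side, the restriction of $\partial_n$ to $R[G/H_i^?]$ is precisely the alternating sum of these same $G$-maps between orbit representatives, so $\hom_{R\cO(G)}(\partial_n, M)$ reproduces the same formula as $\delta^n$ under $\Phi_\bullet$.

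The main obstacle is the bookkeeping in this last step: one must keep careful track of the directions of morphisms in $(\Delta X)_G$ versus their images in $\cO(G)$, of the choice of coset representatives $g_{i,k}$ (well-defined only up to left multiplication by $H_i$ and right multiplication by $G_{\tau_j}$), and of the sign conventions. The invariance of the resulting formulas under these ambiguities mirrors exactly the $G$-equivariance condition on the left-hand side, and functoriality of $\cM = M\circ F^{op}$ promotes $\Phi_\bullet$ to a chain map. Passing to cohomology then yields the stated isomorphism $H^*_G(X; \cM) \cong H^*_{\cO(G)}(X^?; M)$.
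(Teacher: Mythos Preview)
Your proposal is correct and gives the explicit cochain-level argument, whereas the paper does not prove this lemma at all but simply cites Bredon's book \cite[Sect I.9]{Bredon-Equivariant}. What you have written is essentially the argument one finds there: identify both cochain groups with $\prod_{i\in I_n} M(H_i)$ via Yoneda on one side and $G$-invariance on the other, and then check that the differentials agree by tracing a face $d_k\sigma_i$ back to its orbit representative $\tau_{j(i,k)}$. One small slip: you write $\bigoplus_{i\in I_n} M(H_i)$ in both identifications, but since $C^n(X;\cM)=\prod_{\sigma} \cM(\sigma)$ and $\hom$ converts a direct sum in the first variable into a product, the correct target is $\prod_{i\in I_n} M(H_i)$; this matters when $X$ has infinitely many orbits of $n$-simplices, and it is precisely what makes the two sides match.
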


Let $\bC$ be a $G$-category. The transporter category of $\bC$ is a category $\bC_G$ whose objects are 
the same as the objects of $\bC$ are morphisms from $x\to y$ are given by pairs $(g, \varphi : gx\to y)$ 
where $g\in G$, and $\varphi$ is a morphism in $\bC$. If $X=\cN ( \bC)$ is the nerve of the $G$-category $\bC$, 
and $M : ( \bC_G )^{op} \to R$-mod is a functor, then there is a $G$-local coefficient system $\cM$ on $X$ 
defined via the functor $(\Delta X)_G \to \bC_G$ which takes $x_0 \to \cdots \to x_n$ to $x_0$. Similarly 
for a functor $M: \bC_G\to R$-mod, we can define a coefficient system via the functor 
$((\Delta X)_G ) ^{op} \to \bC_G$ which takes $x_0 \to \cdots \to x_n$ to $x_n$.

%---------------------------

\subsection{Spectral sequence for the centralizer decomposition}

Let  $\cA_{\cC}  (G)$ denote the conjugacy category of $G$ over the collection $\cC$ as defined 
in Section \ref{sect:CentDecomp}. Recall that the fusion category $\cF _{\cC} (G)$ of $G$ over $\cC$ 
is the category whose objects are subgroups $H \in \cC$ where morphisms $H \to K$ in $\cF _{\cC} (G)$ 
are given by conjugation maps $c_g : H \to K$ defined by $c_g (h) =ghg^{-1}$ for all $h \in H$.  
The following is well-known.

\begin{lemma} The conjugacy category $\cA _{\cC} (G)$ is equivalent to  $\cF _{\cC} (G)$ as categories. 
\end{lemma}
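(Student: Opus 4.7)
The plan is to exhibit an explicit equivalence $\Phi : \cF_{\cC}(G) \to \cA_{\cC}(G)$ and verify that it is fully faithful and essentially surjective. On objects, I would send a subgroup $H \in \cC$ to the pair $(H, [\iota_H])$, where $\iota_H : H \hookrightarrow G$ is the inclusion. On morphisms, a conjugation map $c_g : H \to K$ (with $gHg^{-1} \leq K$) should be sent to the same group homomorphism $c_g : H \to K$, now viewed as a morphism $(H, [\iota_H]) \to (K, [\iota_K])$ in $\cA_{\cC}(G)$.

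The first step is to check that this prescription gives a well-defined functor. For well-definedness on morphisms, one needs $[\iota_H] = [\iota_K \circ c_g]$ as $G$-conjugacy classes of monomorphisms $H \to G$; this holds because $\iota_H = c_{g^{-1}} \circ (\iota_K \circ c_g)$. Functoriality (compatibility with composition and identities) is then a direct check from the definitions.

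Next I would establish essential surjectivity: given any object $(H, [i]) \in \cA_{\cC}(G)$, let $K := i(H)$, which lies in $\cC$ by assumption. Then $i$ considered as an isomorphism $H \to K$ gives a morphism $(H, [i]) \to (K, [\iota_K])=\Phi(K)$ in $\cA_{\cC}(G)$, since $[\iota_K \circ i] = [i]$. Its inverse $i^{-1}$ provides a two-sided inverse, so $(H, [i]) \cong \Phi(K)$.

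Finally, for full faithfulness, I would unpack the definition: a morphism $\Phi(H) \to \Phi(K)$ in $\cA_{\cC}(G)$ is a group homomorphism $f : H \to K$ with $[\iota_H] = [\iota_K \circ f]$, meaning there exists $g \in G$ with $h = g\, f(h)\, g^{-1}$ for all $h \in H$. Rearranging gives $f = c_{g^{-1}}|_H$, which is precisely a morphism $H \to K$ in $\cF_{\cC}(G)$, and conversely every such conjugation morphism arises this way. Thus $\Phi$ induces a bijection on hom-sets, completing the equivalence. There is no real obstacle here — the lemma is essentially a bookkeeping exercise — the only subtlety to watch is that $\Phi$ is not an isomorphism of categories, because the object set of $\cA_{\cC}(G)$ is strictly larger, so one must genuinely work at the level of an equivalence rather than an isomorphism.
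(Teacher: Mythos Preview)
Your proof is correct and close in spirit to the paper's, but the two verifications of equivalence differ slightly. The paper defines the same functor you call $\Phi$ (sending $H$ to $(H,[\iota_H])$) but also constructs an explicit quasi-inverse $F:\cA_{\cC}(G)\to\cF_{\cC}(G)$ by choosing, for each conjugacy class $[i]$, a representative monomorphism and sending $(H,[i])\mapsto i(H)$; it then checks $F\circ\Phi=\mathrm{id}$ and exhibits a natural isomorphism $\mathrm{id}\Rightarrow\Phi\circ F$. Your route---verifying that $\Phi$ is fully faithful and essentially surjective---avoids the arbitrary choice of representatives and is arguably cleaner; the paper's route has the minor advantage of making the inverse equivalence explicit. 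Both are standard and the underlying content is identical.
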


\begin{proof} For each $G$-conjugacy class $[i]$ with $i(H) \in \cC$, choose a representative monomorphism 
$i: H \to G$. Let $T: \cA _{\cC} (G) \to \cF_{\cC} (G)$ be the functor defined by $(H, [i]) \to i(H)$ on objects. 
For every morphism $f: (H_1, [i_1]) \to (H_2, [i_2])$, we define $T(f) : i_1 (H_1) \to i_2 (H_2)$ to be the 
composition $$i_1 (H_1) \maprt{c_g} i_2 ( f(H_1) ) \to i_2 (H_2)$$ where the first conjugation map $c_g$ 
exists because $[i_2 \circ f]=[i_1]$, and the second map is the inclusion map. There is a functor 
$S: \cF _{\cC} (G) \to \cA _{\cC} (G) $ in the other direction defined by $S(H) = (H, [inc_H])$ for every 
$H \in \cC$. It is clear that $T \circ S = \id _{\cF_{\cC} (G)}$. There is a natural transformation 
$\eta: \id _{\cA_{\cC} (G)} \to S \circ T $ such that the morphism  $$\eta _{(H, [i])}: (H, [i]) \to (i(H), [inc_H])$$ 
is given by the group homomorphism $i: H \to i(H)$. It is straightforward to check that $\eta$ is a natural 
isomorphism. Hence these two categories are equivalent.
\end{proof}

The Dwyer space $X_{\cC} ^{\alpha}$  is the realization of the $G$-category $\bX_{\cC} ^{\alpha }$ 
whose objects are the pairs $(H, i)$ where $i$ is a monomorphism $i : H \to G$ with $i(H) \in \cC$. 
Let $X$ denote the nerve of the category $\bX _{\cC} ^{\alpha}$. The simplex category $\Delta X$ 
of $X$ is the category whose objects are chains of morphisms $(H_0, i_0)  \maprt{\alpha_1} (H_1, i_1)
\to \cdots \maprt{\alpha_n} (H_n, i_n ) $ in $\bX_{\cC } ^\alpha$ and whose morphisms are given 
by face and degeneracy maps of the nerve construction. There is a functor $F: ( (\Delta X) _G )^{op} 
\to \cF_{\cC} (G)$ defined by the composition of functors 
$$( (\Delta X) _G )^{op}  \to ( ( \Delta \cN \cC) _G )^{op} \to \cC_G =\cT _{\cC} (G) \to \cF _{\cC} (G)$$
where the first functor takes a simplex
$(H_0, i_0)  \maprt{\alpha_1} (H_1, i_1)\to \cdots \maprt{\alpha_n} (H_n, i_n )$ in $X$ to the simplex
$i_0(H_0) \leq \cdots \leq i_n(H_n)$ in $\cN \cC $, and the second functor takes this chain of subgroups 
to $i_n (H_n)$ in $\cC$ (see \cite[p. 416]{Grodal-Higher} for more details). The equality 
$\cC_G=\cT _{\cC} (G)$ follows from definitions, and the last functor $\cT_{\cC} (G) \to \cF_{\cC} (G)$ 
is the quotient functor defined in Section \ref{sect:Orbit}. 
The  following is proved by Grodal in \cite{Grodal-Higher}.

\begin{proposition}[{\cite[Prop 2.10]{Grodal-Higher}}]\label{pro:FirstStep}
Let $N: \cF _{\cC } (G) \to R\text{-Mod}$ be a (covariant) functor, and let $\cM$ denote the 
local coefficient system defined on $X$ by the composition
$$((\Delta X) _G )^{op}  \maprt{F} \cF  _{\cC} (G) \maprt{N} R\text{-Mod}.$$
Then there is an isomorphism 
$$ H^* _G ( X^{\alpha}_{\cC} ; \cM ) \cong \underset{\cF_{\cC} (G)}{\lim {}^*} \ N.$$
\end{proposition}

There is a functor $\xi _{\cC}: \cF_{\cC} (G) \to \cO(G) ^{op}$ which sends $H \in \cC$ to its centralizer 
$C_G(H) \leq G$. Given an $R\cO (G)$-module $M$, precomposing $M$ with $\xi _{\cC}$ gives a 
functor $M \circ \xi _{\cC}: \cF _{\cC} (G)\to R\text{-Mod}$.  Combining Proposition \ref{pro:FirstStep} with our earlier 
observations, we obtain the following:

\begin{proposition}\label{pro:Step2} 
Let $M$ be an $R\cO (G)$-module and  let $X=X_{\cC} ^{\alpha}$. Then there 
is an isomorphism $$H^* _{\cO(G)}  ( X^?; M)\cong \underset{\cF_{\cC} (G)}{\lim {}^*} (M \circ \xi _{\cC} ).$$
\end{proposition}
 
\begin{proof} Let $\cM$ be the coefficient system on $X$ by defined by the composition
$$M \circ \xi _{\cC} \circ F:  ( (\Delta X) _G )^{op}  \to ( ( \Delta \cN \cC) _G )^{op} \to \cC_G \to \cF _{\cC} (G) 
\maprt{\xi _{\cC}} \cO (G) ^{op} \maprt{M} R\text{-Mod}.$$
By Proposition \ref{pro:FirstStep}, there is an isomorphism 
\begin{equation}\label{eqn:Isom1}
H^* _G( X ; \cM ) \cong \underset{\cF_{\cC} (G)}{\lim {}^*} (M \circ \xi _{\cC} ).
\end{equation}
The composition $\xi _{\cC} \circ F$ takes the
simplex 
$$\sigma :=(H_0, i_0)  \maprt{\alpha_1} (H_1, i_1)\to \cdots \maprt{\alpha_n} (H_n, i_n )$$ in $X$
to the subgroup $C_G (i_n (H_n ) )$ in $\cO (G)$, which is the stabilizer of $\sigma$ under the 
$G$-action on $\sigma$. This implies that $\cM=M \circ \xi _{\cC} \circ F$ is the $G$-isotropy 
coefficient system for $X$ associated to $M$. Hence by Lemma \ref{lem:EqualBredon}, 
there is an isomorphism $$H^* _{\cO (G)} (X^?; M) \cong H^* _G( X ; \cM ).$$ 
Combining this isomorphism with the isomorphism in $(\ref{eqn:Isom1})$ gives the desired isomorphism.
\end{proof}

Let $\cC$ and $\cD$ be two arbitrary collections in $G$. 
Let $\bO_\cC (G)$ denote either $\cO_{\cC} (G)$ or $\overline \cF_{\cC} (G)$ and $M$ denote 
an $R\bO_\cC (G)$-module. For every integer $j \geq 0$, let
$\cH _M ^j $ denote the $R\cO(G)$-module defined in Definition \ref{def:HMj}.
Let $\xi _{\cD}: \cF_{\cD} (G) \to \cO (G) ^{op}$ be the functor defined above which sends $D\in \cD$ to
the centralizer $C_G(D) \leq G$. Precomposing $\cH ^j _M$ with $\xi_{\cD}$ gives a 
functor $\cF_{\cD} (G) \to R\text{-Mod}$. 

\begin{definition}\label{def:HCG} For an $R\bO_{\cC} (G)$-module $M$ and for $j \geq 0$,
the functor $$\cH ^j _{M, C_G} : \cF_{\cD} (G) \to R\text{-Mod}$$ is defined to be the composition 
$\cH ^j _M \circ \xi_{\cD}$. Note that for every $D \in \cD$, 
$$\cH ^j _{M, C_G} (D)= H^* (\bO _{\cC} (C_G (D)) ; \Res ^{\bO _{\cC} (G)} _{\bO _{\cC} (C_G (D) )} M ).$$
\end{definition} 

We have the following result.
 
\begin{proposition}\label{pro:CentDecOrb}  Let $\cC$ be any collection of all nontrivial $p$-subgroups in $G$, 
and $\cE$ be the collection of all nontrivial elementary abelian $p$-subgroups in $G$.
Let $\bO_{\cC} (G) =\cO _{\cC}  (G)$ or $\overline \cF _{\cC} (G)$, 
and $M$ be an $R\bO_{\cC} (G)$-module. For each integer $j\geq 0$,  let $\cH ^j _{M, C_G} $ denote the
functor defined in Definition \ref{def:HCG}. Then, there is a spectral sequence
$$E_2 ^{s,t} = \underset{\cF_{\cE} (G)}{\lim {}^s} \ \cH _{M, C_G} ^t  \Rightarrow H^{s+t} (\bO_{\cC} (G)  ; M).$$  
\end{proposition} 

\begin{proof}  Since $\cH _{M, C_G} ^t =\cH _M ^t \circ \xi _{\cE}$,  this follows from 
Corollary \ref{cor:HyperCohSSDwyer} and Proposition \ref{pro:Step2}.
\end{proof}

%------------------

\subsection{Spectral sequence for the normalizer decomposition}

Let $G$ be a discrete group and $\cC$ be a collection of subgroups in $G$ such that $\cC$ is closed 
taking products. Let $\bO_{\cC} (G)=\cO _{\cC} (G)$ or $\cF_{\cC} (G)$, 
and  $X:=X_{\cC} ^{\delta}$ denote the Dwyer space for the normalizer decomposition over $\cC$.
By Corollary \ref{cor:HyperCohSSDwyer} there is a spectral sequence
$$E_2 ^{s,t} = H^s _{\cO(G)} (X^?; \cH^t _M) \Rightarrow H^{s+t} ( \bO_\cC (G) ; M)$$
where $\cH _M ^t$ is the $R\cO (G)$-module defined in Definition \ref{def:HMj}. 
By Lemma \ref{lem:Realization},  $X$ is $G$-homeomorphic to the geometric realization $|\cC| $ of $\cC$, 
hence we can replace $X^?$ with $|\cC|^?$ in the above spectral sequence. 
Note that $|\cC|$ is the realization of the simplicial set $\cN \cC$ and there is a functor
$$\eta _{\cC}: (\Delta \cN \cC)_G  \to Sd_{\cC} (G)/G$$ which takes each simplex $\sigma$ in $\cN\cC$ to its $G$-orbit 
$[\sigma]$, and each morphism $$(g, \varphi: g\sigma \to \tau) : \sigma \to \tau$$ in $(\Delta \cN \cC)_G$
to the unique morphism $[\sigma] \to [\tau]$ in $Sd _{\cC} (G)/G$.
For higher limits over the subdivision category we have the following result which is attributed 
to Slomi{\' n}ska \cite{Slominska-SpectralSeq} 
by Grodal (see {\cite[Prop 7.1]{Grodal-Higher}} for a proof).
 
\begin{proposition}\label{pro:Subdiv} Let 
$N: (Sd _{\cC} (G)/G)^{op} \to R\text{-Mod}$ be an arbitrary functor. Then
$$ \underset{Sd _{\cC} (G)/G}{\lim {}^*} N \cong H_G ^* (|\cC| ; \cM )$$
where $\cM$ is the $G$-local coefficient system given via $\eta_{\cC} ^{op}: ( (\Delta \cN \cC)_G )^{op} \to (Sd_{\cC} (G)/G)^{op}$.
\end{proposition}

The definition of the functor $\widetilde \delta _{\cC} : Sd_{\cC} (G) /G \to G\text{-Sets}$ 
can be adjusted to define a functor 
$$\zeta _{\cC} : Sd _{\cC} (G)/G \to \cO (G)$$ which takes a $G$-orbit $[\sigma]$ 
to the normalizer $N_G (\sigma)$. Given a morphism 
$[\sigma] \to [\tau]$, there is a $g \in G$ be such that $\tau$ is a face of $g \sigma$. 
This gives that $gN_G(\sigma) g^{-1} = N_G (g\sigma ) \leq N_G (\tau)$.
Hence there is a $G$-map $f: G/N_G(\sigma)  \to G/N_G (\tau) $ defined by 
$f( g' N_G (\sigma) ) =g'g^{-1} N_G (\tau)$ for all $g'\in G$. The $G$-map $f :[\sigma]\to [\tau]$ 
does not depend on the group element $g \in G$, hence $\zeta _{\cC}$ with these assignments 
defines a functor.  

\begin{proposition}\label{pro:Subdiv2} 
Let $M$ be an $R\cO (G)$-module. 
Then there is an isomorphism
$$H_{\cO (G)}  ^* (|\cC|^? ; M) \cong \underset{Sd _{\cC} (G)/G}{\lim {}^*} (M \circ \zeta_{\cC} ^{op})  .$$ 
\end{proposition}

\begin{proof} By Proposition \ref{pro:Subdiv}, we have 
$$\underset{Sd _{\cC} (G)/G}{\lim {}^*} (M \circ \zeta _{\cC} ^{op})  \cong H^* _G (|\cC|; \cM),$$ where 
the coefficient system $\cM$ on $|\cC|$ is defined by the composition
$$((\Delta \cN \cC )_G ) ^{op} \xrightarrow{\eta _{\cC} ^{op} } (Sd _{\cC} (G) /G) ^{op}  
\xrightarrow{\zeta_{\cC} ^{op}} \cO (G) ^{op} \xrightarrow{M} R\text{-Mod}.$$
Note that $\zeta _{\cC}  \circ \eta _{\cC}$ sends a simplex $\sigma$ to $N_G(\sigma)$ and
a morphism $(g, \varphi :g\sigma \to \tau) : \sigma \to  \tau$ in $((\Delta \cN \cC )_G ) ^{op} $
to the $G$-map 
$$G/N_G(\sigma) \maprt{g^{-1}} G/ N_G (g\sigma) \maprt{\varphi} G/N_G(\tau).$$ 
Hence $\cM$ defines the $G$-isotropy coefficient
system for $|\cC|$. Applying Lemma \ref{lem:EqualBredon}, we obtain 
the desired isomorphism.
\end{proof}

Let $M$ be an $R\bO_\cC (G)$-module $M$. For every integer $j \geq 0$, let 
$\cH _M ^j : \cO (G)^{op} \to R\text{-Mod}$ denote the functor defined in Definition \ref{def:HMj}.
 
\begin{definition}\label{def:HNG} Precomposing $\cH _M ^j$  with the functor 
$\zeta _{\cC} ^{op} : (Sd_{\cC } (G) /G) ^{op} \to \cO (G) ^{op}$, we obtain a functor 
$$\cH ^j _{M, N_G} : (Sd_{\cC } (G) /G) ^{op} \to R\text{-Mod}$$ such that for every $[\sigma] \in Sd_{\cC} (G)/G$, 
we have
$$\cH ^j _{M, N_G} ([\sigma])= H^* (\bO _{\cC} (N_G (\sigma )) ; \Res ^{\bO _{\cC} (G)} _{\bO _{\cC} (N_G (\sigma) )} M ).$$
\end{definition}
 
As a consequence of the results above, we obtain the following spectral sequence.
 
\begin{proposition}\label{pro:NormalizerSS} Let $G$ be a discrete group and $\cC$ be a collection subgroups 
of $G$ closed under taking products. Let $\bO_{\cC} (G)=\cO _{\cC}  (G)$ or $\overline \cF _{\cC} (G)$, and $M$ 
be an $R\bO_{\cC} (G) $-module. For each integer $j\geq 0$,  let $\cH ^j _{M, N_G} $ denote the
functor defined in Definition \ref{def:HNG}. Then there is a spectral sequence
$$E_2 ^{s, t} = \underset{Sd _{\cC} (G) /G }{\lim {}^s}  \cH ^t _{M, N_G}  \Rightarrow H^{s+t} (\bO _{\cC} (G)  ; M).$$
\end{proposition}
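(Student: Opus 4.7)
The proof assembles three ingredients already in place: the acyclicity of the Dwyer space $X_{\cC}^{\delta}$, the hypercohomology spectral sequence of Corollary \ref{cor:HyperCohSS}, and the identification of the Bredon cohomology of $|\cC|$ with a limit over the orbit simplex category provided by Proposition \ref{pro:Subdiv}. Nothing new needs to be built; the work is to match the coefficient systems correctly.

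First I would take $X = X_{\cC}^{\delta}$ and invoke Proposition \ref{pro:NormFixedPoints}: since $\cC$ is closed under taking products, for every $P \in \cC$ both $X^P$ and $C_G(P)\backslash X^P$ are contractible. In particular $X$ is $R\cO_{\cC}(G)$-acyclic and $R\overline \cF_{\cC}(G)$-acyclic, so Corollary \ref{cor:HyperCohSS} (which is just the packaging of Theorem \ref{thm:HyperCohSS} with Dwyer spaces) yields a spectral sequence
$$E_2^{s,t} = H^s_{\cO(G)}\bigl(X^?;\, H^t(\bO_{(-)};M)\bigr) \Rightarrow H^{s+t}(\bO_G;M),$$
where the coefficient system $H \mapsto H^t(\bO_H; \Res^G_H M)$ is an $R\cO(G)$-module by Lemma \ref{lem:OrbitModule} (and in the fusion orbit case by Lemma \ref{lem:FusionModule} followed by restriction along $\cO(G) \to \overline \cF(G)$).

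Next, by Lemma \ref{lem:Realization} there is a $G$-homeomorphism $X_{\cC}^{\delta} \cong |\cC|$, so the Bredon cohomology on the $E_2$-page can be rewritten as $H^s_{\cO(G)}(|\cC|^?; H^t(\bO_{(-)};M))$. Applying Proposition \ref{pro:Subdiv} with the $R\cO(G)$-module $N := H^t(\bO_{(-)};M)$ converts this into the higher limit
$$H^s\bigl(Sd_{\cC}(G)/G;\, F_N\bigr), \qquad F_N([\sigma]) = N(N_G(\sigma)) = H^t(\bO_{N_G(\sigma)}; M),$$
which is exactly the $E_2$-term claimed in the statement.

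The one place that deserves a careful look is the step of transporting the coefficient system: one must verify that Proposition \ref{pro:Subdiv}, applied with $N = H^t(\bO_{(-)};M)$, produces the functor $[\sigma] \mapsto H^t(\bO_{N_G(\sigma)};M)$ with the correct induced maps coming from face inclusions. This is essentially a naturality check built into the construction in Lemmas \ref{lem:OrbitModule} and \ref{lem:FusionModule}: a morphism $[\sigma] \to [\tau]$ in $Sd_{\cC}(G)/G$ corresponds to a $G$-map between the transitive $G$-sets $G/N_G(\sigma)$ and $G/N_G(\tau)$, i.e.\ a morphism in $\cO(G)$, and both lemmas produce their module structure precisely from such morphisms via the appropriate Shapiro isomorphism. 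I expect this bookkeeping to be the only nontrivial point; once it is settled the spectral sequence falls out immediately by composing the three citations.
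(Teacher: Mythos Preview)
Your proposal is correct and follows essentially the same route as the paper, which simply cites Corollary \ref{cor:HyperCohSS} and Proposition \ref{pro:Subdiv}; you have merely unpacked these citations (making explicit the use of Lemma \ref{lem:Realization} and the naturality check for the coefficient system) rather than doing anything different.
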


\begin{proof} This follows from Corollary \ref{cor:HyperCohSSDwyer} and Proposition \ref{pro:Subdiv2}.
\end{proof}

In the next section we consider centralizer and normalizer fusion systems. The results we prove  will allow 
us to express the spectral sequences we obtained in Propositions \ref{pro:CentDecOrb}  and \ref{pro:NormalizerSS} 
in terms of the cohomology of the centric orbit category of the centralizer and normalizer fusion systems.

%------------------

\section{The centralizer and normalizer fusion systems}\label{sect:CentNormFusion}

If $G$ is an infinite group with a Sylow $p$-subgroup $S$, the subgroups of $G$ may not 
have Sylow $p$-subgroups. This makes it difficult to work with centralizers and normalizers 
of $p$-subgroups of $G$ when $G$ is an infinite group. We overcome this difficulty by using 
a result due to Libman  \cite[Prop 3.8]{Libman-WebbConj}  
(see also Parker \cite[Lemma 2.14]{Parker}) which states that if $G$ is a discrete group with 
a Sylow $p$-subgroup $S$ such that $\cF_S(G)$ is a saturated fusion 
system then the normalizer $N_G(P)$ of a $p$-subgroup $P$ always has a Sylow $p$-subgroup. 
In this section we prove a generalization of Libman's theorem to $K$-normalizer subgroups so that 
we can also apply it to centralizer fusion systems and to the normalizers of chains of subgroups in $S$.
  
Let $G$ be a discrete group and $S$ be a Sylow $p$-subgroup of $G$. For every subgroup $Q\leq S$ 
and every subgroup $K \leq \Aut(Q)$, the \emph{$K$-normalizer of $Q$} in $G$ is the subgroup
$$N_G^K(Q):=\{ g\in N_G(Q)\, |\, (c_g)|_Q\in K\}.$$ Let $N_S^K (Q):=S\cap N^K_G(Q)$. 
If $K=\Aut(Q)$, then $N^K_G(Q)=N_G(Q)$ and if $K=1$, then $N_G^K(Q)=C_G(Q)$. Another interesting 
case is where $K$ is the subgroup of automorphisms of $Q$ which stabilizes a chain 
$\sigma:=(Q_0 < Q_1 < \cdots < Q_n)$ of subgroups of $Q$ such that $Q_n=Q$. In this case we write $N_G(\sigma)$ 
for $N_G^K (Q)$.
 
Let $\cF$ be a saturated fusion system over $S$, and let $K \leq \Aut(Q)$. We say $Q$ is \emph{fully $K$-normalized} 
if for any morphism $\varphi : Q \to S$ in $\cF$, $$|N _S ^K (Q)|\geq |N_S ^{ \leftexp{\varphi} K} (\varphi Q)|$$ where  
$\leftexp{\varphi}K:=\{\varphi  k \varphi^{-1} | \, k\in K\} \leq \Aut (\varphi Q)$. 

\begin{definition}\label{def:GenNormalizer}
The \emph{$K$-normalizer fusion system} $N^K_{\cF} (Q)$ is the fusion system over $N^K_S(Q)$ 
whose morphisms $P \to P'$ are the morphisms $\varphi \in \Mor _{\cF} (P, P')$ which extend to a morphism 
$\widetilde \varphi : QP\to QP'$ in $\cF$ in such a way that $\widetilde \varphi |_Q \in K$.
\end{definition} 
 
If $K=\Aut(Q)$, then $N_{\cF} ^K (Q)$ is denoted by $N_{\cF}(Q)$ and called the \emph{normalizer fusion system}.
If $K=1$, then $N_{\cF} ^K (Q)$  is denoted by $C_{\cF} (Q)$ and called the \emph{centralizer fusion system}.
 
\begin{theorem}[Puig \cite{Puig06}]\label{thm:Puig} Let $\cF$ be a saturated fusion system over a finite $p$-group 
$S$, and let $Q\leq S$ and $K \leq \Aut(Q)$. If $Q$ is fully $K$-normalized then $N^K_{\cF} (Q)$ is a saturated 
fusion system.
\end{theorem}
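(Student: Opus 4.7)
The plan is to verify the two axioms of Definition \ref{def:Saturated} for $N^K_{\cF}(Q)$ by systematically translating questions about morphisms in $N^K_{\cF}(Q)$ into questions about morphisms in $\cF$ on the slightly larger subgroups $QP$, and then invoking the saturation of $\cF$ together with the fully $K$-normalized hypothesis on $Q$.

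The first preparatory step would be to establish a \emph{translation principle}: for $P \leq N^K_S(Q)$, morphisms $P \to P'$ in $N^K_{\cF}(Q)$ are exactly the restrictions of morphisms $\widetilde{\varphi} : QP \to QP'$ in $\cF$ whose restriction $\widetilde{\varphi}|_Q$ lies in $K$, modulo the indeterminacy coming from elements of $QP'$ that centralize $P'$. Two extensions differ by an inner automorphism of $QP'$ that fixes $P'$ pointwise. Once this translation is in place, questions about $\cF$-conjugacy classes and automorphism groups in $N^K_{\cF}(Q)$ can be rephrased in terms of $\cF$-morphisms on the overgroups $QP$ constrained to preserve $Q$ and to act on $Q$ through $K$.

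The main technical step is the following \emph{extension lemma}: if $Q$ is fully $K$-normalized in $\cF$, then for every $\cF$-isomorphism $\psi : Q \to Q'$ there exists an $\cF$-morphism $\chi : N_S^{\leftexp{\psi}K}(Q') \to N_S^K(Q)$ with $\chi|_{Q'} = \psi^{-1}$. The argument runs as follows. The assumption that $Q$ is fully $K$-normalized, combined with the observation $C_S(Q) \leq N_S^K(Q)$, forces $Q$ to be fully centralized in $\cF$, so by saturation of $\cF$ the subgroup $Q$ is receptive. Hence $\psi^{-1}$ extends to a morphism $\widetilde{\psi^{-1}} : N_{\psi^{-1}} \to S$; one then checks that $N_S^{\leftexp{\psi}K}(Q') \leq N_{\psi^{-1}}$ using that elements $x \in N_S^{\leftexp{\psi}K}(Q')$ have the property that $\psi^{-1} c_x \psi \in K$, so the required $y = \widetilde{\psi^{-1}}(x)$ exists by the equality of orders $|N_S^K(Q)| = |N_S^{\leftexp{\psi}K}(Q')|$. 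This extension lemma plays the role of a Sylow-type maximality result for $N_S^K(Q)$ inside the $\cF$-class of $Q$.

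With these two tools in hand, the verification of the axioms proceeds as follows. For axiom (I), given $P \leq N_S^K(Q)$ fully normalized in $N^K_{\cF}(Q)$, I would show that one may replace any $\cF$-isomorphism $\psi : P \to P'$ (coming from a morphism in $N^K_{\cF}(Q)$) by a composition using the extension lemma, landing $P$ inside $N_S^K(Q)$ in the most normalized way; this forces both $\Aut_{N^K_S(Q)}(P) \in \mathrm{Syl}_p(\Aut_{N^K_{\cF}(Q)}(P))$ and $|C_{N^K_S(Q)}(P)|$ to be maximal in the $N^K_{\cF}(Q)$-class of $P$. For axiom (II), given $P$ fully centralized in $N^K_{\cF}(Q)$ and a morphism $\varphi : P \to P'$, one lifts $\varphi$ to $\widetilde{\varphi} : QP \to QP'$ in $\cF$ with $\widetilde{\varphi}|_Q \in K$, applies receptivity of a fully centralized $\cF$-conjugate together with the extension lemma, and then argues that $N_\varphi$ in $N^K_{\cF}(Q)$ embeds into $N_{\widetilde{\varphi}}$ in $\cF$, so the $\cF$-extension yields the desired $N^K_{\cF}(Q)$-extension with restriction to $Q$ still in $K$.

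The principal obstacle will be the compatibility between the $N_\varphi$-constructions in $N^K_{\cF}(Q)$ and in $\cF$, together with the maximality argument that converts the fully $K$-normalized hypothesis on $Q$ into the Sylow property for automorphism groups appearing in axiom (I). This is the delicate part of Puig's original argument, and it is where the hypothesis that $Q$ is fully $K$-normalized (as opposed to merely fully centralized or fully normalized) is used in an essential way.
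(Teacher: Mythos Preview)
The paper does not supply its own proof of this theorem: it is stated with attribution to Puig \cite{Puig06} and used as a black box, with the closely related extension property recorded separately as Lemma~\ref{lem:Technical} (quoted from \cite[Lemma~4.36]{Craven-Book}), again without proof. So there is no ``paper's own proof'' to compare against; your proposal goes well beyond what the paper does.

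As a standalone argument, your outline follows the standard textbook route (as in \cite{AKO} or \cite{Craven-Book}): reduce statements about $N^K_{\cF}(Q)$ to statements in $\cF$ about the overgroups $QP$, establish an extension lemma of the type recorded here as Lemma~\ref{lem:Technical}, and then verify the two saturation axioms. One point deserves more care than you give it. You assert that ``$Q$ fully $K$-normalized, combined with $C_S(Q)\leq N_S^K(Q)$, forces $Q$ to be fully centralized,'' and then use receptivity of $Q$ to prove the extension lemma. The inclusion $C_S(Q)\leq N_S^K(Q)$ alone does not yield $|C_S(Q)|\geq |C_S(\varphi Q)|$ for all $\varphi$, since the analogous inclusion holds at every conjugate. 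In the standard treatments this implication is obtained either by first passing to a fully normalized $\cF$-conjugate and using the saturation axioms there, or by proving the equivalence ``$Q$ fully $K$-normalized $\Leftrightarrow$ $Q$ fully centralized and $\Aut_S^K(Q)\in\mathrm{Syl}_p(\Aut_{\cF}^K(Q))$'' as a preliminary lemma. Your sketch of the extension lemma is also slightly circular as written: you invoke $y=\widetilde{\psi^{-1}}(x)$ to show $x\in N_{\psi^{-1}}$, but $\widetilde{\psi^{-1}}$ is only defined on $N_{\psi^{-1}}$, which is what you are trying to establish. The correct argument produces the witness $y$ from a counting/Sylow comparison in $\Aut_{\cF}^K(Q)$, not from $\widetilde{\psi^{-1}}$ itself. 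Apart from these gaps, the strategy is sound and is exactly what the cited references carry out in detail.
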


As special cases of Puig's theorem, we obtain that for every fully $\cF$-normalized subgroup $Q\leq S$, the normalizer 
fusion system $N_{\cF} (Q)$ is saturated, and for every fully $\cF$-centralized subgroup $Q$, the centralizer fusion system 
$C_{\cF} (Q)$ is saturated. We will need the following lemma in our proofs below.

\begin{lemma}[{\cite[Lemma 4.36]{Craven-Book}}]\label{lem:Technical} Let $\cF$ be a saturated fusion system 
over $S$. Let $Q\leq S$ and $K \leq \Aut  (Q)$. If $\phi : Q \to S$ is a morphism in $\cF$ such that  $\phi Q$ is 
fully $\leftexp{\phi} K$-normalized, then there is a morphism $\psi : Q N_S ^K (Q) \to S$ in $\cF$ and $\alpha \in K$ 
such that $\psi |_Q=\phi \circ \alpha$, and $\beta  \in \leftexp{\phi} K$ such that $\beta \circ  \psi |_Q=\phi$.
\end{lemma}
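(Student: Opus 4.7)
The plan is to construct $\alpha\in K$ by a Sylow argument inside $K\cap\Aut_{\cF}(Q)$, and then to obtain $\psi$ by applying the receptivity axiom of $\cF$ to the morphism $\phi\circ\alpha$. First I would record that the two conclusions of the lemma are formally equivalent: if $\psi|_Q=\phi\circ\alpha$ with $\alpha\in K$, then $\beta:=\phi\alpha^{-1}\phi^{-1}\in\leftexp{\phi}K$ satisfies $\beta\circ\psi|_Q=\phi$. So it suffices to produce $\alpha$ and the extension.

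Next, set $K_0:=K\cap\Aut_S(Q)$ and $L:=\phi^{-1}\bigl(\leftexp{\phi}K\cap\Aut_S(\phi Q)\bigr)\phi$. A direct check shows that both $K_0$ and $L$ are $p$-subgroups of $K\cap\Aut_{\cF}(Q)$. The key point is that fully $\leftexp{\phi}K$-normalized, together with the saturation of $\cF$, forces $\leftexp{\phi}K\cap\Aut_S(\phi Q)$ to be a Sylow $p$-subgroup of $\leftexp{\phi}K\cap\Aut_{\cF}(\phi Q)$; conjugating by $\phi^{-1}$ then shows $L$ is a Sylow $p$-subgroup of $K\cap\Aut_{\cF}(Q)$. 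Sylow's theorem in $K\cap\Aut_{\cF}(Q)$ supplies $\alpha\in K\cap\Aut_{\cF}(Q)\subseteq K$ with $\alpha K_0\alpha^{-1}\leq L$.

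With this $\alpha$, for every $x\in N_S^K(Q)$ we have $c_x|_Q\in K_0$, and so
\[
(\phi\alpha)\,c_x\,(\phi\alpha)^{-1}=\phi(\alpha c_x\alpha^{-1})\phi^{-1}\in\phi L\phi^{-1}\leq\Aut_S(\phi Q).
\]
This is exactly the defining condition for $x$ to lie in the extender $N_{\phi\alpha}$, so $N_S^K(Q)\leq N_{\phi\alpha}$. The receptivity axiom of $\cF$ applied to the $\cF$-morphism $\phi\alpha\colon Q\to\phi Q$ then yields an extension $\widetilde{\phi\alpha}\colon N_{\phi\alpha}\to S$; restricting it to $QN_S^K(Q)$ produces the required $\psi$, and by construction $\psi|_Q=\phi\circ\alpha$ with $\alpha\in K$, which also gives the statement involving $\beta$.

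The main obstacle is rigorously justifying the two saturation-based inputs: the Sylow statement that $L$ is Sylow in $K\cap\Aut_{\cF}(Q)$, and the receptivity of $\phi\alpha\colon Q\to\phi Q$ in $\cF$ (which in the formulation of Definition~\ref{def:Saturated} reduces to checking that $\phi Q$ is fully centralized in $\cF$). Both assertions are the Sylow and centralizer consequences of the fully $\leftexp{\phi}K$-normalized hypothesis that underpin Puig's theorem (Theorem~\ref{thm:Puig}); I would either cite Craven's proof of Lemma~4.36 directly or extract the short Sylow/centralizer arguments from Puig's original proof adapted to this $K$-normalizer setting.
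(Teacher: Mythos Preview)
The paper does not give its own proof of this lemma; it simply quotes it from Craven's book as \cite[Lemma 4.36]{Craven-Book} and moves on. Your outline is precisely the standard argument found there: use a Sylow conjugation inside $K\cap\Aut_{\cF}(Q)$ to produce $\alpha$, verify $N_S^K(Q)\leq N_{\phi\alpha}$, and then invoke receptivity at $\phi Q$ to extend. The reduction of the $\beta$-statement to the $\alpha$-statement via $\beta=\phi\alpha^{-1}\phi^{-1}$ is also exactly right.

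The two inputs you isolate are the genuine content, and both are available. In a saturated fusion system one has the characterisation (see e.g.\ \cite[Prop.~I.5.2]{AKO}): $Q$ is fully $K$-normalized if and only if $Q$ is fully centralized \emph{and} $K\cap\Aut_S(Q)\in\mathrm{Syl}_p\bigl(K\cap\Aut_{\cF}(Q)\bigr)$. Applied to $\phi Q$ and $\leftexp{\phi}K$, this gives simultaneously that $\phi Q$ is fully centralized (hence receptive, so the extension step goes through) and that $\leftexp{\phi}K\cap\Aut_S(\phi Q)$ is Sylow in $\leftexp{\phi}K\cap\Aut_{\cF}(\phi Q)$ (hence your $L$ is Sylow in $K\cap\Aut_{\cF}(Q)$). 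So rather than citing the lemma you are trying to prove, cite this characterisation; with it in hand your argument is complete and matches Craven's.
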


The following proposition is  a generalization of Libman's result \cite[Prop 3.8]{Libman-WebbConj} 
(see also \cite[Lemma 2.14]{Parker}).

\begin{proposition}\label{pro:SylowExists} Let $G$ be a discrete group and $S$ be a  Sylow $p$-subgroup 
of $G$. Suppose that $\cF=\cF_S(G)$ is a saturated fusion system. Let $Q \leq S$ and $K \leq \Aut(Q)$. 
Then, $Q$ is fully $K$-normalized if and only if $N^K_S(Q)$ is a Sylow $p$-subgroup of $N^K_G(Q)$.
\end{proposition}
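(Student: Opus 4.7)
The plan is to prove the two implications separately, with the forward direction carrying the substance of the argument through Lemma \ref{lem:Technical}.

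For the direction $(\Leftarrow)$, I would fix any $\cF$-morphism $\varphi=c_g:Q\to S$. A routine calculation shows that conjugation by $g$ inside $G$ identifies $N_G^K(Q)$ with $N_G^{\leftexp{\varphi}K}(\varphi Q)$ and sends $N_S^K(Q)$ onto a Sylow $p$-subgroup of $N_G^{\leftexp{\varphi}K}(\varphi Q)$ of the same order. Since $N_S^{\leftexp{\varphi}K}(\varphi Q)$ is a finite $p$-subgroup of $N_G^{\leftexp{\varphi}K}(\varphi Q)$, the defining property of a Sylow $p$-subgroup in a discrete group yields $|N_S^{\leftexp{\varphi}K}(\varphi Q)|\le|N_S^K(Q)|$, which is exactly the fully-$K$-normalized condition.

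For the direction $(\Rightarrow)$, I would start with an arbitrary finite $p$-subgroup $P$ of $N_G^K(Q)$ and show it is conjugate into $N_S^K(Q)$ by an element of $N_G^K(Q)$. Since $P$ normalizes $Q$, the product $PQ$ is a finite $p$-subgroup of $G$, so there is some $g\in G$ with $g(PQ)g^{-1}\le S$. Setting $\varphi=c_g|_Q$ and using that conjugation by elements of $P$ restricts to elements of $K$ on $Q$, it is immediate that $gPg^{-1}\le N_S^{\leftexp{\varphi}K}(\varphi Q)$. I would then apply Lemma \ref{lem:Technical} to the $\cF$-morphism $\varphi^{-1}:\varphi Q\to S$ with respect to the automorphism group $\leftexp{\varphi}K\le\Aut(\varphi Q)$; the hypothesis of the lemma requires the target $Q$ to be fully $\leftexp{\varphi^{-1}}(\leftexp{\varphi}K)$-normalized, and this twisted subgroup equals $K$, so the hypothesis is precisely our assumption on $Q$. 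The lemma supplies an $\cF$-morphism $\psi:\varphi Q\cdot N_S^{\leftexp{\varphi}K}(\varphi Q)\to S$ with $\psi|_{\varphi Q}=\varphi^{-1}\circ\alpha$ for some $\alpha\in\leftexp{\varphi}K$.

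Realizing $\psi$ as conjugation by an element $h\in G$, the equality $\psi|_{\varphi Q}=\varphi^{-1}\circ\alpha$ gives $c_{hg}|_Q=\varphi^{-1}\circ\alpha\circ\varphi\in K$, so $hg\in N_G^K(Q)$; a parallel calculation using $c_{\psi(x)}|_Q=\psi\circ c_x|_{\varphi Q}\circ\psi^{-1}$ shows that $\psi$ sends $N_S^{\leftexp{\varphi}K}(\varphi Q)$ into $N_S^K(Q)$. Consequently $(hg)P(hg)^{-1}\le N_S^K(Q)$ with $hg\in N_G^K(Q)$, which is the desired conjugacy. The main obstacle here is conceptual bookkeeping rather than technical difficulty: one must carefully track the twisting of $K$ by $\varphi$ and its inverse, and verify that the element $h$ produced by Lemma \ref{lem:Technical} combines with $g$ to yield an element lying in $N_G^K(Q)$ rather than only in $G$. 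Beyond this, the argument is a direct generalization of Libman's proof of the special case $K=\Aut(Q)$.
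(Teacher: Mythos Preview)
Your proposal is correct and follows essentially the same approach as the paper. Both directions match: for $(\Rightarrow)$ you conjugate $PQ$ into $S$, apply Lemma~\ref{lem:Technical} to the resulting $\cF$-morphism $\varphi^{-1}:\varphi Q\to Q$ (the paper writes this map as $\phi=c_{g^{-1}}$), and combine the element realizing $\psi$ with $g$ to land in $N_G^K(Q)$; for $(\Leftarrow)$ both arguments conjugate $N_G^K(Q)$ to $N_G^{\leftexp{\varphi}K}(\varphi Q)$ and compare orders. The only cosmetic difference is that the paper uses the $\beta$ part of Lemma~\ref{lem:Technical}'s conclusion while you use the $\alpha$ part, but these yield the same element of $N_G^K(Q)$.
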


\begin{proof}  Assume that $Q$ is fully $K$-normalized. We will show that for every finite $p$-subgroup 
$P$ of $N_G ^K (Q)$, there is an $x\in N_G^K (Q)$ such that $\leftexp{x} P \leq N_S^K (Q)$. Let $P$ 
be a finite $p$-subgroup of $N^K_G(Q)$, and let $R=QP$.  Since $P\leq N ^K _G (Q)\leq N_G (Q),$ 
we have $R \leq N_G(Q)$.  Let $g\in G$ such that $\leftexp{g} R \leq S$. Then we have 
$\leftexp{g} Q \leq \leftexp{g} R \leq S$. Let $\phi : \leftexp{g} Q \to S$ be the conjugation map 
$c_{g^{-1}} : \leftexp{g} Q \to S$ defined by $c_{g^{-1}} (x)=g ^{-1} xg$. Note that $\phi$ is a 
morphism in $\cF$. Let $L\leq \Aut (\leftexp{g} Q) $ be the subgroup defined by 
$L:= \leftexp{\phi ^{-1} } K$. Since $\phi (\leftexp{g} Q)=Q$, $\leftexp{\phi} L=K$, and $Q$ is fully 
$K$-normalized, by Lemma \ref{lem:Technical}, there is a morphism 
$$\psi :  \leftexp{g} Q N_S ^L ( \leftexp{g}Q) \to S$$ in $\cF$ and an automorphism $\beta \in K$ 
such that $\beta \circ \psi |_{ \leftexp{g} Q} = \phi$.
 
Since $\leftexp{g} P \leq \leftexp{g} R \leq S$ and $\leftexp{g} P \leq \leftexp{g} (N ^K_G (Q) )=
N_G ^L ( \leftexp{g} Q)$, we have $\leftexp{g} P \leq N_S ^L ( \leftexp{g} Q)$. This gives that 
$\leftexp{g} R=\leftexp{g} Q \leftexp{g} P \leq \leftexp{g} Q N_S ^L ( \leftexp{g} Q)$, 
hence $\psi $ is defined on $\leftexp{g} R$. Let $u \in G$ be such that $\psi=c_u$. Let $x=ug$. 
Then $$\leftexp{x} R=\leftexp{u}  ( \leftexp{g} R ) =\psi (\leftexp{g} R)\leq S.$$
We also have 
$$c_{x} |_Q =c_u \circ c_g |_Q = \psi \circ \phi^{-1} |_Q =\psi \circ (\beta \circ \psi |_{\leftexp{g} Q} )^{-1} 
=\beta^{-1}  \in K.$$ This gives that $x\in N_G^K(Q)$. Hence  $\leftexp{x} P \leq N_G ^K (Q)$. Since 
$\leftexp{x} P \leq \leftexp{x} R \leq S$, we have  $$\leftexp{x} P \leq S \cap N_G ^K (Q) =N_S ^K (Q).$$
This completes the proof that $N_S^K(Q)$ is a Sylow $p$-subgroup of $N_G^K(Q)$.
 
For the converse, suppose that $N^K_S(Q)$ is a Sylow $p$-subgroup of $N^K_G(Q)$. Let $\varphi: Q \to S$ 
be a morphism in $\cF$.  Let $g \in G$ such that $\varphi=c_g$. Then we have 
$$N_S ^{ \leftexp{\varphi} K } (\varphi Q) =N ^{ \leftexp{c_g} K} _S ( \leftexp{g} Q)=\leftexp{g} (N^K _{S^g} (Q) ).$$
Note that $N^K _{S^g} (Q)$ is a $p$-subgroup of $N_G ^K (Q)$. Since $N_S^K (G)$ is a Sylow $p$-subgroup 
of $N^K_G(Q)$, we have $|N^K_S(Q)| \geq |N_{S^g}  ^K (Q)|$. This gives 
$|N_S ^K (Q)| \geq |N_S ^{\leftexp{\varphi} K} (\varphi Q)|$, hence $Q$ is fully $K$-normalized.  
\end{proof}
 
The next proposition easily follows from Proposition \ref{pro:SylowExists}.

\begin{proposition}  Let $G$ be a discrete group and $S$ be a  Sylow $p$-subgroup of $G$. 
Let $Q \leq S$ and $K \leq \Aut(Q)$. If $\cF=\cF_S(G)$ is a saturated fusion system, then 
the $K$-normalizer subgroup $N_G^K (Q)$ has a Sylow $p$-subgroup.
\end{proposition}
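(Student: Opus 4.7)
The plan is to reduce to the fully $K$-normalized case already handled by Proposition \ref{pro:SylowExists}, by replacing $Q$ with a suitably chosen $\cF$-conjugate and then conjugating the resulting Sylow $p$-subgroup back inside $G$.

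First I would choose the right $\cF$-conjugate of $Q$. Since $S$ is finite, the set of $\cF$-conjugates of $Q$ in $S$ is finite, and for each morphism $\phi : Q \to S$ in $\cF$ we get a subgroup $\leftexp{\phi} K \leq \Aut(\phi Q)$. Pick a morphism $\phi : Q \to S$ in $\cF$ that maximizes $|N_S^{\leftexp{\phi} K}(\phi Q)|$ over all such morphisms. I would then check that with this choice, $\phi Q$ is fully $\leftexp{\phi}K$-normalized: any $\psi : \phi Q \to S$ in $\cF$ yields another morphism $\psi \circ \phi : Q \to S$ with $\leftexp{(\psi \circ \phi)} K = \leftexp{\psi}(\leftexp{\phi} K)$, so maximality of our choice forces $|N_S^{\leftexp{\psi}(\leftexp{\phi} K)}(\psi \phi Q)| \leq |N_S^{\leftexp{\phi} K}(\phi Q)|$, which is precisely the fully-normalized condition for the subgroup $\leftexp{\phi}K$ of $\Aut (\phi Q)$.

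Next, since $\cF = \cF_S(G)$ is saturated and $\phi Q$ is fully $\leftexp{\phi}K$-normalized, Proposition \ref{pro:SylowExists} applies and gives that $N_S^{\leftexp{\phi}K}(\phi Q)$ is a Sylow $p$-subgroup of $N_G^{\leftexp{\phi}K}(\phi Q)$. To transport this back, write $\phi = c_g$ for some $g \in G$ with $g Q g^{-1} = \phi Q$. A direct verification shows
\[ N_G^{\leftexp{\phi}K}(\phi Q) \;=\; g\, N_G^K (Q)\, g^{-1}, \]
because $c_h \in \leftexp{\phi}K$ for $h \in N_G(\phi Q)$ if and only if $c_{g^{-1} h g} \in K$. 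Therefore $g^{-1}\, N_S^{\leftexp{\phi}K}(\phi Q)\, g$ is a finite $p$-subgroup of $N_G^K(Q)$ into which every finite $p$-subgroup of $N_G^K(Q)$ is $N_G^K(Q)$-conjugate (just transport the defining property across the conjugation by $g$). This finite $p$-subgroup is then a Sylow $p$-subgroup of $N_G^K(Q)$, completing the proof.

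The only nontrivial step is the existence of a morphism $\phi$ making $\phi Q$ fully $\leftexp{\phi}K$-normalized; this is essentially a maximization argument using finiteness of $S$, and poses no serious obstacle once one keeps careful track of how the subgroup of $\Aut(Q)$ transforms under morphisms in $\cF$. Everything else is just unpacking Proposition \ref{pro:SylowExists} and conjugating by $g$.
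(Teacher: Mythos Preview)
Your proof is correct and follows essentially the same approach as the paper: choose $\phi : Q \to S$ in $\cF$ maximizing $|N_S^{\leftexp{\phi}K}(\phi Q)|$, verify that $\phi Q$ is fully $\leftexp{\phi}K$-normalized, apply Proposition~\ref{pro:SylowExists} to obtain a Sylow $p$-subgroup of $N_G^{\leftexp{\phi}K}(\phi Q)$, and then conjugate by $g$ (where $\phi = c_g$) using $N_G^{\leftexp{\phi}K}(\phi Q) = {}^g N_G^K(Q)$ to transport it back. Your write-up merely makes explicit a few verifications that the paper leaves implicit.
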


\begin{proof}  Assume that $\cF$ is saturated. 
Let $\phi : Q \to S$ be a morphism in $\cF$ such that $N_S ^{\leftexp{\phi} K} (\phi Q)$  
has the maximum order among all such normalizers. Let $L=\leftexp{\phi}K$ and $R=\phi Q$.  
Then for every morphism $\varphi : R \to S$ in $\cF$, we have 
$$|N_S ^L (R) |= |N_S ^{\leftexp{\phi } K} ( \phi Q) |  \geq 
|N_S ^{\leftexp{\varphi \phi} K } ( \varphi \phi Q) |=| N_S ^{\leftexp{\varphi} L } (\varphi L )|,$$
hence $R$ is fully $L$-normalized. By Proposition \ref{pro:SylowExists}, we conclude that 
$N_S ^L (R)$ is a Sylow $p$-subgroup of $N_G ^L (R)$. If $g\in G$ such that $\phi =c_g$, then
$$N^L _G (R)=N^{\leftexp{c_g} K} _G ({}^g Q)= {}^g ( N^K _G (Q) ).$$ Since $N^L _G (R)$ has 
a Sylow $p$-subgroup, its conjugate $N^K_G(Q)$ also has a Sylow $p$-subgroup.  
\end{proof}

Another consequence of Proposition \ref{pro:SylowExists} is the following proposition which 
is a generalization of \cite[Prop 3.8]{Libman-WebbConj}. 
 
\begin{proposition}\label{pro:FusionSame} Let $G$ be a discrete group with a Sylow $p$-subgroup 
$S$ such that $\cF=\cF_S(G)$ is a saturated fusion system. Let $Q\leq S$ and $K\leq \Aut(Q)$. 
If $Q$ is fully $K$-normalized, then $N^K_{\cF} (Q)=\cF_{N^K_S(Q)} (N^K_G(Q))$. 
\end{proposition}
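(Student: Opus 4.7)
The plan is to prove the two fusion systems coincide by checking they have the same morphisms on each pair of objects. First, I will invoke Proposition \ref{pro:SylowExists}: since $Q$ is fully $K$-normalized, $N_S^K(Q)$ is a Sylow $p$-subgroup of $N_G^K(Q)$, so $\cF_{N_S^K(Q)}(N_G^K(Q))$ is a genuine fusion system, and by hypothesis $N_\cF^K(Q)$ is also a fusion system over $N_S^K(Q)$. Both sides therefore have the same object set, and only the morphism sets need to be compared.

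For the inclusion $\cF_{N_S^K(Q)}(N_G^K(Q))\subseteq N_\cF^K(Q)$, I will take a typical morphism $\varphi = c_g\colon P\to R$ with $g\in N_G^K(Q)$ and $P,R\leq N_S^K(Q)$, and produce the required extension. Set $P' = gPg^{-1}\leq R\leq N_S^K(Q)$, and consider $\widetilde\varphi := c_g\colon QP\to QP'$. The key observation is that since $gPg^{-1}\leq N_S(Q)$, the subgroup $QP'$ equals $Q\cdot gPg^{-1}$ which is a subgroup of $S$ (as $gPg^{-1}$ normalizes $Q$), so $\widetilde\varphi$ is well-defined as a morphism in $\cF = \cF_S(G)$. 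Moreover $\widetilde\varphi|_Q = c_g|_Q\in K$ because $g\in N_G^K(Q)$, and $\widetilde\varphi|_P = \varphi$. Hence $\varphi$ lies in $N_\cF^K(Q)$.

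Conversely, for $N_\cF^K(Q)\subseteq \cF_{N_S^K(Q)}(N_G^K(Q))$, I will take $\varphi\colon P\to R$ in $N_\cF^K(Q)$ with an extension $\widetilde\varphi\colon QP\to QP'$ in $\cF$ satisfying $\widetilde\varphi|_Q\in K$. Because $\cF = \cF_S(G)$, there exists $g\in G$ with $\widetilde\varphi = c_g$. The requirement $\widetilde\varphi|_Q\in K\leq\Aut(Q)$ forces $gQg^{-1}=Q$ and $c_g|_Q\in K$, so $g\in N_G^K(Q)$. Therefore $\varphi = c_g|_P$ is induced by conjugation by an element of $N_G^K(Q)$, with image $gPg^{-1}\leq R\leq N_S^K(Q)$, proving $\varphi\in \cF_{N_S^K(Q)}(N_G^K(Q))$.

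The proof is essentially an unwinding of definitions once Proposition \ref{pro:SylowExists} is in hand; the only subtlety worth flagging is verifying that the product $Q\cdot gPg^{-1}$ in the first direction really is a subgroup of $S$, which hinges on $gPg^{-1}\leq N_S^K(Q)\leq N_S(Q)$. I do not anticipate a serious obstacle beyond bookkeeping, since the saturation hypothesis enters only through Proposition \ref{pro:SylowExists}, which guarantees the Sylow structure of $N_G^K(Q)$ needed to make the right-hand fusion system even definable.
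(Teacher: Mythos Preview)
Your proposal is correct and follows essentially the same approach as the paper's proof: both invoke Proposition~\ref{pro:SylowExists} to ensure $N_S^K(Q)$ is Sylow in $N_G^K(Q)$, then verify the two inclusions of morphism sets by choosing $g\in G$ realizing the relevant conjugation and checking that $g\in N_G^K(Q)$ in each direction. The paper's version is terser (and contains a small typo, writing $c_x|_Q$ where $c_x|_P$ is meant), but the logical content is identical.
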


\begin{proof} The argument given in \cite[Thm 4.27]{Craven-Book} for finite groups also holds here. 
By Proposition \ref{pro:SylowExists}, $N^K_S(Q)$ is a Sylow $p$-subgroup of $N^K_G(Q)$. 
Let $\varphi: P \to P'$ be  a morphism in $N^K_{\cF} (Q)$.
Then it extends to $\widetilde \varphi : QP \to QP'$ in $\cF$ such that $\widetilde \varphi |_Q \in K$. 
Let $x\in G$ such that $\widetilde \varphi =c_x$. Since $\widetilde \varphi |_Q \in K$, we have 
$x\in N^K_G(Q)$. This implies that $\varphi =c_x |_Q $ is a morphism in 
$\cF _{N^K_S(Q) } (N^K_G(Q))$. Conversely, if $c_x: P \to P'$ is a morphism in  
$\cF _{N^K_S(Q) } (N^K_G(Q))$, then $c_x$ extends to a homomorphism $QP \to QP'$ 
defined also by conjugation with $x$, hence $c_x$ lies in $N^K_{\cF} (Q)$.
\end{proof}

A subgroup $P$ of $S$ is called  $\cF$-centric if $C_S(P') \leq P'$ for every 
$P' \sim _{\cF} P$. Note that if $P$ is $\cF$-centric, then $C_S(P')=Z(P')$ has the same order 
for every $P' \sim _{\cF} P$. Thus if $P$ is an $\cF$-centric subgroup then $P$ and all its 
$\cF$-conjugates are fully $\cF$-centralized.

\begin{lemma}\label{lem:CentricS} Let $G$ be a discrete group with a Sylow $p$-subgroup 
$S$ such that  $\cF=\cF_S(G)$ is a saturated fusion system. Let $P \leq S$. Then 
$P$ is $\cF$-centric if and only if $Z(P)$ is a Sylow $p$-subgroup of $C_G(P)$.
\end{lemma}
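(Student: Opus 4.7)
The plan is to prove both directions using Proposition \ref{pro:SylowExists} (the Libman-type generalization) in the case $K=1$, in which case $N_G^K(Q)=C_G(Q)$, $N_S^K(Q)=C_S(Q)$, and ``fully $K$-normalized'' becomes ``fully centralized.'' Recall also from the discussion just before the lemma that $\cF$-centric subgroups have $|C_S(P')|=|Z(P')|$ for all $\cF$-conjugates $P'$, so in particular every $\cF$-centric subgroup is fully centralized.

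For the forward direction, assume $P$ is $\cF$-centric. Then $C_S(P)\leq P$ and so $C_S(P)=Z(P)$; moreover $P$ is fully centralized by the remark above. Applying Proposition \ref{pro:SylowExists} with $K=1$ to $P$, fully centralized means exactly that $C_S(P)$ is a Sylow $p$-subgroup of $C_G(P)$. Substituting $C_S(P)=Z(P)$ gives that $Z(P)$ is a Sylow $p$-subgroup of $C_G(P)$.

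For the reverse direction, assume $Z(P)$ is a Sylow $p$-subgroup of $C_G(P)$, and let $P'\sim_\cF P$. Since $\cF=\cF_S(G)$, any $\cF$-isomorphism $P\to P'$ is realized by conjugation by some $g\in G$, so $P'=gPg^{-1}$, $C_G(P')=gC_G(P)g^{-1}$, and $Z(P')=gZ(P)g^{-1}$. Conjugation preserves the Sylow property in an overgroup, so $Z(P')$ is a Sylow $p$-subgroup of $C_G(P')$. Now $C_S(P')$ is a finite $p$-subgroup of $C_G(P')$, so by the defining property of Sylow $p$-subgroups (every finite $p$-subgroup conjugates into a Sylow) we obtain $|C_S(P')|\leq |Z(P')|$. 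On the other hand $Z(P')\leq C_S(P')$ is automatic since $Z(P')\leq P'\leq S$ centralizes $P'$. Combining these yields $C_S(P')=Z(P')\leq P'$, so $P$ is $\cF$-centric.

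The only subtle point is ensuring that $Z(P')$ really is a Sylow $p$-subgroup of $C_G(P')$ for every $\cF$-conjugate $P'$ in the reverse direction; this is where realizing $\cF$-isomorphisms by $G$-conjugations (i.e.\ the hypothesis $\cF=\cF_S(G)$) is essential, and without it one would need a separate saturation-type argument. The forward direction hinges entirely on Proposition \ref{pro:SylowExists}, which requires $\cF_S(G)$ to be saturated, so that hypothesis is also being used in an essential way.
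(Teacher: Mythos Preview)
Your proof is correct and follows essentially the same approach as the paper's own proof. Both directions are argued identically: the forward implication invokes Proposition~\ref{pro:SylowExists} with $K=1$ to deduce that $C_S(P)=Z(P)$ is Sylow in $C_G(P)$, and the reverse implication transports the Sylow property of $Z(P)$ to every $G$-conjugate $P'=\leftexp{g}P$ and then squeezes $C_S(P')$ between $Z(P')$ and a Sylow $p$-subgroup of $C_G(P')$ to conclude $C_S(P')=Z(P')\leq P'$.
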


\begin{proof}  If $P$ is $\cF$-centric, then by the argument above, $P$ is fully $\cF$-centralized. Then
by Proposition \ref{pro:SylowExists}, $C_S(P)$ is a Sylow $p$-subgroup of $C_G(P)$.  
Since $P$ is $\cF$-centric, we have $C_S(P)=Z(P)$. Hence $Z(P)$ is a Sylow $p$-subgroup 
of $C_G(P)$. For the converse, assume that $Z(P)$ is a Sylow $p$-subgroup of $C_G(P)$.
For every $g\in G$, we have $C_G (\leftexp{g} P) = \leftexp{g} C_G (P)$ and $Z( \leftexp{g} P)
=\leftexp{g} Z(P)$, hence $Z( \leftexp{g} P)$ is a Sylow $p$-subgroup  of $C_G ( \leftexp{g} P)$. 
If $g\in G$ such that  $\leftexp{g} P \leq S$, we have 
$Z(\leftexp{g} P ) \leq C_S( \leftexp{g} P) \leq C_G( \leftexp{g} P)$.  Since
$Z(\leftexp{g} P )$ is a Sylow $p$-subgroup of $C_G( \leftexp{g} P)$, we obtain that
$C_S ( \leftexp{g} P) =Z( \leftexp{g} P)$. This implies that $P$ is $\cF$-centric. 
\end{proof}

Note that the argument above also proves that if $P$ is an $\cF$-centric subgroup, 
then for every $g \in G$, the center $Z(\leftexp{g} P)$ is  a Sylow $p$-subgroup of $C_G( \leftexp{g} P)$. 
This suggests that we extend the definition of $p$-centric subgroups for discrete groups in the following way.

\begin{definition} Let $G$ be a discrete group with a Sylow $p$-subgroup $G$.  A $p$-subgroup 
in $G$ is called \emph{$p$-centric} if $Z(P)$ is a Sylow $p$-subgroup of $C_G(P)$.
\end{definition}

We have the following observation.

\begin{lemma}\label{lem:CentricG} Let $\cF$ be a saturated fusion system over $S$, and 
$G$ be a discrete group with a Sylow $p$-subgroup $S$ such that $\cF\cong \cF_S(G)$. 
A $p$-subgroup $P \leq G$ is conjugate to an  $\cF$-centric subgroup in $S$ if and only if 
it is a $p$-centric subgroup of $G$. Hence the collection of all $p$-subgroups of $G$ conjugate 
to an $\cF$-centric subgroup in $S$ is equal to the collection of $p$-centric subgroups in $G$.
\end{lemma}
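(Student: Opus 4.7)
The plan is to deduce this characterization directly from Lemma \ref{lem:CentricS}, which identifies $\cF$-centric subgroups of $S$ as precisely those $P\leq S$ for which $Z(P)$ is a Sylow $p$-subgroup of $C_G(P)$, together with the Sylow property of $S$ in $G$.

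First I would verify the routine observation that being $p$-centric is invariant under $G$-conjugation. If $P'=gPg^{-1}$ for some $g\in G$, then $Z(P')=gZ(P)g^{-1}$ and $C_G(P')=gC_G(P)g^{-1}$, so conjugation by $g$ yields an isomorphism of $C_G(P)$ with $C_G(P')$ carrying $Z(P)$ onto $Z(P')$. Thus $Z(P)$ is a Sylow $p$-subgroup of $C_G(P)$ precisely when $Z(P')$ is a Sylow $p$-subgroup of $C_G(P')$ (both finite $p$-subgroup structures are carried by the same isomorphism).

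For the forward direction, suppose $P\leq G$ is conjugate to an $\cF$-centric subgroup $Q\leq S$. By Lemma \ref{lem:CentricS}, $Z(Q)$ is a Sylow $p$-subgroup of $C_G(Q)$. By the conjugation observation above, $Z(P)$ is a Sylow $p$-subgroup of $C_G(P)$, so $P$ is $p$-centric. For the backward direction, assume $P\leq G$ is $p$-centric. Since $S$ is a Sylow $p$-subgroup of $G$ and $P$ is a finite $p$-subgroup (it contains the finite Sylow $Z(P)$ of $C_G(P)$, and is itself assumed a $p$-subgroup), there exists $g\in G$ with $gPg^{-1}\leq S$. By the conjugation observation, $gPg^{-1}$ is still $p$-centric, and now it lies in $S$, so by Lemma \ref{lem:CentricS} it is $\cF$-centric. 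Hence $P$ is $G$-conjugate to an $\cF$-centric subgroup of $S$.

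The final sentence of the lemma is then just a rephrasing: since both classes of subgroups are closed under $G$-conjugation and coincide subgroup-by-subgroup through the equivalence above, the corresponding collections are equal. There is no real obstacle here — the content of the statement is absorbed into Lemma \ref{lem:CentricS} (which uses Proposition \ref{pro:SylowExists}); the present lemma is essentially a translation of that result from subgroups of $S$ to arbitrary $p$-subgroups of $G$ using the Sylow property.
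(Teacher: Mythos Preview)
Your proof is correct and follows essentially the same approach as the paper: the paper's proof simply states that the lemma follows from Lemma \ref{lem:CentricS} together with the fact that being $p$-centric is invariant under $G$-conjugation, and you have spelled out precisely these two ingredients in detail. The only difference is that the paper leaves the conjugation-invariance argument and the use of the Sylow property of $S$ implicit, whereas you make them explicit.
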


\begin{proof} This follows from Lemma \ref{lem:CentricS} and from the fact that for every $g\in G$, 
a subgroup $P \leq G$ is $p$-centric if and only if $\leftexp{g} P$ is $p$-centric. 
\end{proof} 

In Sections \ref{sect:CentralizerDec} and  \ref{sect:NormalizerDec}, we consider $\cF$-centric 
subgroups in centralizer and normalizer subgroups. In our proofs we will need following lemma.

\begin{lemma}\label{lem:NormCentric} Let $\cF$ be a saturated fusion system over a finite 
$p$-group $S$. Let $Q\leq S$ and $K \leq \Aut(Q)$. Assume that $Q$ is a fully $K$-normalized 
subgroup of $S$. Then for every $P \leq N_S^K (Q)$, the following hold: \\
(i) If $P$ is $\cF$-centric, then $P$ is $N_{\cF} ^K (Q)$-centric. \\
(ii) If $P$ is $N_{\cF}^K (Q)$-centric, then $QP$ is $\cF$-centric. 
\end{lemma}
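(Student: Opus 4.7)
For part (i), every morphism in $N_{\cF}^K(Q)$ is by definition a morphism in $\cF$, so an $N_{\cF}^K(Q)$-conjugate $P''$ of $P$ is also an $\cF$-conjugate. Since $P$ is $\cF$-centric we have $C_S(P'') \leq P''$, and hence $C_{N_S^K(Q)}(P'') \leq C_S(P'') \leq P''$.

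For part (ii), I first handle the special case $R = QP$. Elements of $C_S(Q)$ act trivially on $Q$ and so lie in $N_S^K(Q)$, which yields
\[ C_S(QP) = C_S(Q) \cap C_S(P) \leq N_S^K(Q) \cap C_S(P) = C_{N_S^K(Q)}(P) \leq P \leq QP, \]
where the last inclusion uses that $P$ is $N_{\cF}^K(Q)$-centric.

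For an arbitrary $\cF$-conjugate $R$ of $QP$, I reduce to the special case above via Lemma~\ref{lem:Technical}. Pick an $\cF$-isomorphism $\varphi : QP \to R$, set $R_Q = \varphi(Q)$, $R_P = \varphi(P)$, and $K' = \leftexp{\varphi|_Q}{K} \leq \Aut(R_Q)$, so that $\leftexp{(\varphi|_Q)^{-1}}{K'} = K$. Since $Q$ is fully $K$-normalized, Lemma~\ref{lem:Technical} applied to the $\cF$-morphism $(\varphi|_Q)^{-1} : R_Q \to S$ with subgroup $K'$ produces $\psi : R_Q N_S^{K'}(R_Q) \to S$ in $\cF$ and $\alpha \in K'$ with $\psi|_{R_Q} = (\varphi|_Q)^{-1} \circ \alpha$. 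For each $p \in P$, the conjugation action of $\varphi(p)$ on $R_Q$ is $\varphi \circ c_p|_Q \circ \varphi^{-1} \in K'$, so $R_P \leq N_S^{K'}(R_Q)$; likewise $C_S(R) \leq C_S(R_Q) \leq N_S^{K'}(R_Q)$, so $\psi$ is defined both on $R = R_Q R_P$ and on $C_S(R)$. The calculation at the end of the proof of Proposition~\ref{pro:SylowExists} shows that $\psi$ sends $N_S^{K'}(R_Q)$ into $N_S^K(Q)$, so $\psi(R) = QP''$ where $P'' := \psi(R_P) \leq N_S^K(Q)$.

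The composition $\psi \circ \varphi : QP \to QP''$ restricts on $Q$ to $(\varphi|_Q)^{-1} \circ \alpha \circ \varphi|_Q \in K$, so its restriction to $P$ is by definition an isomorphism $P \to P''$ in $N_{\cF}^K(Q)$. Hence $P''$ is $N_{\cF}^K(Q)$-centric, and the special case above applied with $P''$ in place of $P$ gives $C_S(QP'') \leq QP''$. Now $\psi(C_S(R))$ centralizes $\psi(R) = QP''$, so $\psi(C_S(R)) \leq C_S(QP'') \leq QP'' = \psi(R)$, and injectivity of $\psi$ yields $C_S(R) \leq R$. The main obstacle is the careful bookkeeping of how the subgroup $K \leq \Aut(Q)$ transforms under conjugation as one moves $Q$ to $R_Q$ via $\varphi$ and back via $\psi$, and verifying that the resulting $P''$ really lies in the $N_{\cF}^K(Q)$-conjugacy class of $P$ so that the first step of the proof can be reapplied.
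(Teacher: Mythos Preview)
Your proof is correct and follows the same route as the paper's: for (ii) both arguments apply Lemma~\ref{lem:Technical} to $\varphi^{-1}|_{\varphi(Q)}$ to produce $\psi$, observe that $(\psi\circ\varphi)|_P$ is a morphism in $N_{\cF}^K(Q)$, and then use the $N_{\cF}^K(Q)$-centricity of $P$ (equivalently of $P''=\psi\varphi(P)$) to bound $\psi(C_S(\varphi(QP)))$; your separation of the ``special case $R=QP$'' is just a repackaging of the paper's single chain of inclusions $\psi(C_S(\varphi(QP)))\leq C_{N_S^K(Q)}(\psi\varphi(P))\leq\psi\varphi(P)$. One small remark: the appeal to ``the calculation at the end of the proof of Proposition~\ref{pro:SylowExists}'' for $\psi(N_S^{K'}(R_Q))\leq N_S^K(Q)$ is not quite apt since that proposition works inside an ambient group $G$, but the inclusion is immediate from $\psi|_{R_Q}=(\varphi|_Q)^{-1}\circ\alpha$ with $\alpha\in K'={}^{\varphi|_Q}K$, and in any case you only need the weaker fact $P''\leq N_S^K(Q)$, which follows at once from $(\psi\circ\varphi)|_P$ being an $N_{\cF}^K(Q)$-morphism.
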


\begin{proof}  Assume that $P$ is $\cF$-centric. Let $P'\leq N^K_S(Q)$ be such that $P'$ is isomorphic 
to $P$ in $N^K_{\cF} (Q)$. Then $P ' \sim _{\cF} P$, hence $C_S(P') \leq P'$ since $P$ is $\cF$-centric. 
From this we obtain that  $$C_{N^K_S(Q)} (P') \leq C_S(P') \leq P',$$ hence $P$ is $N^K_{\cF} (Q)$-centric. 
This proves (i).

To prove (ii), we use an argument similar to the argument given in the proof of  \cite[Lemma 6.2]{BLO2}. 
Assume that $P \leq N_S^K (Q)$ is $N^K_{\cF}(Q)$-centric.  Let $\varphi : QP \to S$ be a morphism in $\cF$. 
We need to show that $C_S(\varphi(QP))\leq \varphi(QP)$. 
Since $Q$ is fully $K$-normalized, we can apply Lemma \ref{lem:Technical} to $\varphi^{-1} : \varphi Q \to Q$ 
to obtain that there is a morphism $$\psi : \varphi Q \cdot N_S ^{\leftexp{\varphi} K }(\varphi Q) \to  S$$
in $\cF$ and an automorphism $\beta \in K$ such that $\beta  \circ  (\psi |_{\varphi Q} )= \varphi ^{-1}$. We have
$$\varphi (QP)=\varphi Q \cdot \varphi P \leq \varphi Q \cdot N_S ^{\leftexp{\varphi} K}(\varphi Q) $$ 
hence the composition $\psi \circ  \varphi : QP \to S$ is defined. Moreover $(\psi \circ \varphi) |_Q =\beta^{-1} \in K$.

We claim that $(\psi \circ \varphi) |_P : P \to S$ is a morphism in $N_{\cF} ^K (Q)$. We will first show that 
$(\psi \circ \varphi) (P) \leq N_S^K (Q)$. We write $\psi \varphi$ for $\psi \circ \varphi$ to simplify the notation. 
Let $x\in P$ and $y = \psi \varphi  (x) $.  For every $q\in Q$, we have
$$c_y (q) = y qy^{-1} =\psi \varphi (x) \cdot q \cdot \psi \varphi (x ^{-1} ) =  
\psi \varphi (x \cdot \beta (q) \cdot x^{-1} )=\beta^{-1} (c_x ( \beta (q) ))=(\beta ^{-1} c_x \beta ) (q).$$
Hence $c_y=\beta^{-1} c_x \beta$. Since $x\in P \leq N_S ^K (Q)$, we have $c_x \in K$, hence $c_y \in K$. 
Note that $y= \psi \varphi (x) \in S$, hence we can conclude that  $y \in N_S ^K (Q)$. This gives
$\psi  \varphi (P) \leq N_S ^K (Q)$.  Since $\psi  \varphi |_Q : Q \to Q$ 
is equal to $\beta^{-1} \in K$, we conclude that $\psi \varphi |_P : P \to S$ is a morphism in $N_{\cF} ^K (Q)$. 

Note that we have $$C_S(\varphi (QP) )\leq C_S(\varphi Q) \leq N^{\leftexp{\varphi} K } _S(\varphi Q),$$  
so we can apply $\psi$ to obtain $\psi (C_S( \varphi (QP) ) \leq N_S ^K (Q)$.
We also have 
$$\psi (C_S (\varphi (QP) )) \leq C_S (\psi \varphi (QP))= C_S (Q \cdot \psi \varphi(P)) \leq C_S( \psi \varphi (P) ).$$ 
These two inclusions give
$$\psi (C_S (\varphi (QP) )) \leq C_S (\psi \varphi (P) ) \cap N_S^K (Q) =C_{N_S^K (Q)} (\psi \varphi (P) ) \leq \psi \varphi (P)$$
where the last inclusion follows from the fact that $P$ is $N_{\cF} ^K (Q)$-centric and 
$\psi \varphi |_P$ is a morphism in $N_{\cF } ^K (Q)$. We conclude that 
$$C_S(\varphi (QP))\leq \varphi (P) \leq \varphi(QP),$$ hence $QP$ is $\cF$-centric.
\end{proof}

As an easy corollary of Lemma \ref{lem:NormCentric}, we obtain the following.

\begin{lemma}[{\cite[Prop 2.4]{BLO2}}]\label{lem:CentricCentric} 
Let $\cF$ be a saturated fusion system over $S$, and $Q$ be a fully $\cF$-centralized subgroup 
of $S$. Suppose that $Q$ is abelian. Then a subgroup $P \leq C_S(Q)$ is $C_{\cF} (Q)$-centric 
if and only if it is $\cF$-centric. If one of these equivalent conditions hold, then $Q\leq P$.
\end{lemma}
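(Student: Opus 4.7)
The plan is to derive this as a direct specialization of Lemma~\ref{lem:NormCentric} to $K=1$, augmented by a short argument exploiting the abelianness of $Q$. Note that with $K=1$ one has $N_{\cF}^K(Q)=C_{\cF}(Q)$ and $N_S^K(Q)=C_S(Q)$, and the condition ``fully $K$-normalized'' unwinds to ``fully centralized,'' which holds by hypothesis. So both parts of Lemma~\ref{lem:NormCentric} apply with this choice of $K$.

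For the forward implication, suppose $P\le C_S(Q)$ is $\cF$-centric. Then Lemma~\ref{lem:NormCentric}(i) applied to $K=1$ immediately yields that $P$ is $C_{\cF}(Q)$-centric. There is nothing else to check.

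For the reverse implication, suppose $P\le C_S(Q)$ is $C_{\cF}(Q)$-centric. I would first establish the inclusion $Q\le P$, and then invoke Lemma~\ref{lem:NormCentric}(ii). To prove $Q\le P$, use that $Q$ is abelian (so $Q\le C_S(Q)$) and that $P\le C_S(Q)$ forces every element of $Q$ to commute with every element of $P$. Hence $Q\le C_{C_S(Q)}(P)$. But since $P$ is $C_{\cF}(Q)$-centric (taking the $C_{\cF}(Q)$-conjugate to be $P$ itself), $C_{C_S(Q)}(P)\le P$, and therefore $Q\le P$. This gives $QP=P$, and Lemma~\ref{lem:NormCentric}(ii) with $K=1$ then tells us that $QP=P$ is $\cF$-centric. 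The same argument establishes the final assertion $Q\le P$.

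The only nontrivial step is the argument $Q\le P$ via the abelianness of $Q$; the equivalence itself is otherwise a formal corollary of Lemma~\ref{lem:NormCentric}. The abelianness hypothesis is used precisely to ensure $Q\le C_S(Q)$, so that $Q$ sits inside the centralizer group in which the $C_{\cF}(Q)$-centric condition is tested. Without abelianness one would only conclude that $Z(Q)\le P$.
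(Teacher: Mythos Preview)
Your proof is correct and follows essentially the same approach as the paper: both specialize Lemma~\ref{lem:NormCentric} to $K=1$, use part~(i) for the forward direction, and for the reverse direction combine part~(ii) with the observation that abelianness of $Q$ forces $Q\le C_{C_S(Q)}(P)\le P$. The only cosmetic difference is that the paper first invokes Lemma~\ref{lem:NormCentric}(ii) to get $QP$ $\cF$-centric and then shows $Q\le P$, whereas you reverse that order; the paper also notes separately that $Q\le P$ follows directly from $\cF$-centricity via $Q\le C_S(P)\le P$, but this is immaterial since the two conditions are equivalent.
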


\begin{proof} Take $K=1$ in Lemma \ref{lem:NormCentric}. Then $N_S ^K (Q)=C_S(Q)$ and 
$N_{\cF}^K (Q)=C_{\cF} (Q)$. Let $P \leq C_S(Q)$. By Lemma \ref{lem:NormCentric}, if $P$ is 
$\cF$-centric then it is $C_{\cF} (Q)$-centric. For the converse, assume that $P$ is $C_{\cF } (Q)$-centric. 
Then  by Lemma \ref{lem:NormCentric}, $QP$ is $\cF$-centric. Since $Q$ is abelian, we have 
$Q \leq C_S(Q)$. We also have $Q \leq C_S(P)$, hence $Q \leq C_S(P) \cap C_S(Q)= C_{C_S(Q) } (P)$. 
Since $P$ is $C_{\cF} (Q)$-centric, we have $C_{C_S(Q)} (P) \leq P$, hence $Q \leq P$. Thus $P=PQ$ 
is $\cF$-centric. 

If one of these equivalent conditions holds, then $P$ is $\cF$-centric. Since $P \leq C_S(Q)$, 
we obtain $Q\leq C_S(P) \leq P$. 
\end{proof}

A subgroup $Q\leq S$ is called \emph{normal} in $\cF$ if $N_{\cF} (Q)=\cF$. In this case 
we write $Q \lhd \cF$. We say  $Q\leq S$ is \emph{central} in $\cF$ if $C_{\cF} (Q)=\cF$.  
If a subgroup $Q$ is central in $\cF$, then it is also normal in $\cF$. The product of all central 
subgroups in $\cF$ is called the \emph{center} of $\cF$ and denoted 
by $Z(\cF)$. The product of all normal subgroups of $\cF$ is denoted by $O_p (\cF)$.

A subgroup $P\leq S$ is \emph{$\cF$-radical} if $\Out _{\cF} (P):= \Aut _{\cF } (P)/ \Inn (P)$ has 
no normal $p$-subgroups. A subgroup $P\leq S$ is \emph{$\cF$-centric-radical}  if it is
both $\cF$-radical and $\cF$-centric. The full subcategories of $\cF$ generated by subgroups 
which are $\cF$-centric, $\cF$-radical, and $\cF$-centric-radical are denoted by $\cF^c$, $\cF^r$, 
and $\cF^{cr}$, respectively. We recall the following well-known fact on normal subgroups of a fusion system.

\begin{proposition}\label{pro:CravenCentRad} 
Let $\cF$ be a saturated fusion system over $S$, and let $Q$ be a subgroup of $S$.
If $Q$ is normal in $\cF$, then it is included in every $\cF$-centric-radical subgroup of $S$. 
\end{proposition}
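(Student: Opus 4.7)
The plan is to extract from $Q$ a normal $p$-subgroup of $\Out_{\cF}(P)$, kill it using $\cF$-radicality, and then bootstrap the resulting containment up to $Q \leq P$ using the usual normalizer-growth in $p$-groups. Throughout, write $H := Q \cap N_S(P) = N_Q(P)$.

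First I would record that $Q \lhd \cF$ implies $Q \lhd S$: every conjugation $c_s \in \Aut_S(Q)$ with $s \in S$ is a morphism in $\cF = N_{\cF}(Q)$, which by Definition \ref{def:GenNormalizer} (with $K = \Aut(Q)$) forces $sQs^{-1} = Q$. Consequently $QP = PQ$ is a subgroup of $S$ for every $P \leq S$, and $H$ is well-defined.

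Next I would check that $\Aut_H(P) \leq \Aut_{\cF}(P)$ is actually a normal subgroup. Given $\alpha \in \Aut_{\cF}(P)$, the assumption $N_{\cF}(Q)=\cF$ yields an extension $\widetilde{\alpha} \in \Aut_{\cF}(PQ)$ with $\widetilde{\alpha}(Q)=Q$. For $h \in H$ one has $\alpha \circ c_h|_P \circ \alpha^{-1} = c_{\widetilde{\alpha}(h)}|_P$; since $h \in Q$ gives $\widetilde{\alpha}(h) \in Q$, and $h$ normalizes $P$ inside $PQ$ gives $\widetilde{\alpha}(h) \in N_{PQ}(P) \leq N_S(P)$, we conclude $\widetilde{\alpha}(h) \in Q \cap N_S(P) = H$, so $\alpha \Aut_H(P) \alpha^{-1} \leq \Aut_H(P)$. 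Because $H \leq S$ is a $p$-group, the image $\Aut_H(P)\,\Inn(P)/\Inn(P)$ is a normal $p$-subgroup of $\Out_{\cF}(P)$. Radicality of $P$ forces this image to vanish, i.e.\ $\Aut_H(P) \leq \Inn(P)$. Thus for each $h \in H$ there is $p \in P$ with $c_h|_P = c_p|_P$, whence $hp^{-1} \in C_S(P) \leq P$ (using $\cF$-centricity), giving $h \in P$. Therefore $H = Q \cap N_S(P) \leq P$.

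Finally I would promote $H \leq P$ to $Q \leq P$ by the standard $p$-group normalizer argument: if $Q \not\leq P$, then inside the $p$-group $QP$ we have $P \lneq QP$, so $N_{QP}(P) \gneq P$. On the other hand, writing every element of $QP$ as $qp$ with $q \in Q$, $p \in P$, one checks directly that $N_{QP}(P) = (Q \cap N_S(P)) \cdot P = H \cdot P \leq P$, a contradiction. Hence $QP = P$ and $Q \leq P$.

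The only slightly delicate step is the normality check for $\Aut_H(P)$: one must invoke the extension property built into the definition of $N_{\cF}(Q)$ at exactly the right point, and then notice that $\widetilde{\alpha}(h)$ lands back in $H$ because it is simultaneously in $\widetilde{\alpha}(Q)=Q$ and in the normalizer of $\widetilde{\alpha}(P)=P$. Everything else is essentially formal from the definitions of $\cF$-radical, $\cF$-centric, and the fact that proper subgroups of finite $p$-groups have strictly larger normalizers.
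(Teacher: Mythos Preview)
Your proof is correct. The paper does not actually prove this proposition; it simply cites \cite[Proposition 4.46]{Craven-Book}. What you have written is essentially the standard argument behind that citation: build a normal $p$-subgroup of $\Out_{\cF}(P)$ out of $N_Q(P)$ using the extension property encoded in $N_{\cF}(Q)=\cF$, kill it with radicality, use centricity to get $N_Q(P)\leq P$, and finish with normalizer growth in the $p$-group $QP$. All the delicate points---in particular that the extension $\widetilde{\alpha}$ carries $H=Q\cap N_S(P)$ back into itself, and that $N_{QP}(P)=HP$---are handled correctly.
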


\begin{proof} See Proposition 4.46 in \cite{Craven-Book}.
\end{proof}

As a consequence we obtain the following.

\begin{lemma}\label{lem:CentricRadical} Let $\cF$ be a saturated fusion system over $S$. Let
$Q \leq S$ and $K \leq \Aut(Q)$. Suppose that $Q$ is fully $K$-normalized, and assume 
that $N_S ^K (Q)$ contains some $Q_0$ such that $Q_0$ is normal in $N_\cF ^K (Q)$ 
and $Q_0 \in \cF^c$. Then, if $P \leq N_S^K (Q)$ is an $N_{\cF}^K (Q)$-centric-radical 
subgroup, then $P$ is  $\cF$-centric. 
\end{lemma}

\begin{proof} Suppose that $P \leq N_S ^K (Q)$ is $N_{\cF} ^K (Q)$-centric-radical. Since
$Q_0$ is normal in $N_{\cF} ^K (Q)$, by Proposition \ref{pro:CravenCentRad} applied to the 
fusion system $N_{\cF} ^K (Q)$, we obtain $Q_0 \leq P$. Since $Q_0$ is $\cF$-centric,  
$P$ is $\cF$-centric. 
\end{proof}

%-------------

\section{The centralizer decomposition for  $\cO^c (\cF)$}\label{sect:CentralizerDec}

Let $\cF$ be a saturated fusion system over $S$, and let $\cF^e$ denote the full 
subcategory of $\cF$ whose objects are the nontrivial elementary abelian $p$-subgroups 
of $S$ which are fully $\cF$-centralized. 
By Theorem \ref{thm:realization}, there is a discrete group $G$ with a Sylow subgroup 
isomorphic to $S$ such that $\cF_S (G) \cong \cF$. By Lemmas \ref{lem:CatEquivalence} 
and \ref{lem:CentricG}, if we take $\cC$ to be the collection of all $p$-centric subgroups 
in $G$, then there is an equivalence of categories $\overline \cF _{\cC} (G) \simeq \cO ^c (\cF ) $. 
Let $\cE$ denote the collection of all finite nontrivial elementary abelian $p$-subgroups of $G$.
 
\begin{lemma}\label{lem:CentSame} Let $G$ be a discrete group that realizes the fusion system $\cF$, and
$\cC$ be the collection of $p$-centric subgroups in $G$.
For every $E \in \cF^e$, the fusion orbit category 
$\overline \cF _{\cC} (C_G (E))$ over the collection $\cC |_{C_G(E)}$ is equivalent 
the centric orbit category $\cO ^c ( C_{\cF} (E))$. 
\end{lemma}

\begin{proof} 
By Proposition \ref{pro:FusionSame}, we have $\cF _{C_S( E) } (C_G (E) )=C_{\cF } (E)$. 
This gives that $\cF _{\cC} (C_G (E))$ is equivalent to the full subcategory of the fusion system 
$C_{\cF} (E)$ whose objects are subgroups of $C_S(E)$ which are $\cF$-centric. 
By Lemma \ref{lem:CentricCentric}, a subgroup $P \leq C_S(E)$ is $\cF$-centric if and only if 
it is $C_{\cF } (E)$-centric. Hence $\cF _{\cC} (C_G(E))$ is equivalent 
to the fusion system $C_{\cF} (E)^c$ as categories. We conclude that 
$\overline \cF _{\cC} (C_G(E))$ is equivalent 
to the centric orbit category $\cO ^c ( C_{\cF} (E))$. 
\end{proof}

We introduce the following notation.

\begin{definition}\label{def:MBar} Let $\theta : \overline \cF _{\cC} (G) \to\cO^c (\cF)$ denote the 
functor that gives the equivalence of categories proved in Lemma \ref{lem:CatEquivalence}. 
For every $R\cO ^c (\cF)$-module $M$, we denote the $R\overline \cF _{\cC} (G)$-module 
$M\circ \theta$ by $\overline M$.
\end{definition}

We have the following.

\begin{lemma}\label{lem:HCF}
Let $M$ be an $R\cO ^c (\cF)$-module. For every integer $j\geq 0$, there is 
a functor $$\cH ^j _{M, C_{\cF}} : \cF ^e  \to R\text{-Mod}$$ such  that for every $E \in \cF^e$, 
$$\cH ^j _{M, C_\cF} (E)= H^j (\cO ^c (C_\cF (E)) ; \Res ^{\cO ^c (\cF)} _{\cO ^c  (C_\cF (E) )} M ).$$
\end{lemma}

\begin{proof} Let $G$ be a discrete group with Sylow $p$-subgroup $S$ such that $\cF \cong \cF_S (G)$, 
and let $\cC$ be the collection of all $p$-centric subgroups in $G$. Consider the functor 
$\cH ^j _{M, C_G} : \cF_{\cD} (G) \to R\text{-Mod}$ defined in Definition \ref{def:HCG} 
for an arbitrary collection $\cD$. Take $\cD=\cE$ and $\bO _{\cC} (C_G(E))=\overline \cF _{\cC} (C_G(E))$. 
Then for every $j \geq 0$, we obtain a functor $$\cH^j _{M, C_G} : \cF_{\cE} (G) \to R\text{-Mod}$$ 
such  that for every $E \in \cE$, 
$$
\cH ^j _{M, C_G} (E)= H^* (\overline \cF_{\cC} (C_G (E)) ; \Res ^{ \overline \cF_{\cC} (G)} _{\overline \cF _{\cC} (C_G (E) )}  \overline M )
$$
where $\overline M$ is the $R\overline \cF _{\cC} (G)$-module associated to $M$ as defined in Definition \ref{def:MBar}.
By Lemma \ref{lem:CentSame}, for every $E \in \cF^e$, 
the fusion orbit category 
$\overline \cF _{\cC} (C_G (E))$ over the collection $\cC |_{C_G(E)}$ is equivalent 
the centric orbit category $\cO ^c ( C_{\cF} (E))$.  Hence by
Proposition \ref{pro:EquivCat}, there is an isomorphism
$$H^* (\overline \cF _{\cC} (C_G (E)) ; \Res ^{\overline \cF  _{\cC} (G)} _{\overline \cF _{\cC} (C_G (E) )} 
\overline M )\cong H^* (\cO ^c (C_\cF (E)) ; \Res ^{\cO ^c (\cF)} _{\cO ^c  (C_\cF (E) )} M )$$
induced by the equivalence of categories. 
Using these isomorphisms, as it is done in the proof of Lemma \ref{lem:HMj},
we obtain a functor $\cH ^j _{M, C_{\cF} }: \cF ^e \to R\text{-Mod}$ such that
$$\cH ^j _{M, C_{\cF}} (E) = H^* (\cO ^c (C_\cF (E)) ; \Res ^{\cO ^c (\cF) } _{\cO ^c  (C_{\cF} (E) )} M ).$$  
\end{proof}
 
The following is stated as Theorem \ref{thm:IntroCentDecSS} in the introduction.

\begin{theorem}\label{thm:CentDecSS}  Let $R$ be a commutative ring with unity, $M$ 
be an $R\cO ^c (\cF)$-module, and $\cH ^j _{M, C_{\cF}} $ denote the 
functor defined in Lemma \ref{lem:HCF}. Then there is a spectral sequence 
$$E_2 ^{s,t} = \underset{\cF^e \ }{\lim {}^s} \ \cH ^t _{M , C_{\cF}} \Rightarrow 
H^* (\cO ^c (\cF ) ; M).$$
\end{theorem}

\begin{proof}
Let $G$ be a discrete group that realizes $\cF$, and
$\cC$ be the collection of all $p$-centric subgroups in $G$.
Let $\overline M$ denote the $\overline \cF _{\cC} (G)$-module that corresponds 
to $M$ as defined in Definition \ref{def:MBar}.
By Proposition \ref{pro:CentDecOrb},  there is a spectral sequence 
$$E_2 ^{s,t} = \underset{\cF _{\cE} (G)}{\lim {}^s} \ \cH^t _{\overline M, C_G} 
\Rightarrow H^{s+t} (\overline \cF _{\cC} (G) ; \overline M).$$ 
By Lemmas \ref{lem:CatEquivalence} and 
\ref{lem:CentSame}, we have
the equivalence of categories $\overline \cF _{\cC} (G) \cong \cO ^c (\cF)$ 
and $\overline \cF _{\cC} (C_G(E) ) \cong \cO ^c ( C_{\cF } (E))$ for every $E \in \cF^e$. 
Since every elementary abelian $p$-subgroup in $\cE$ is 
$G$-conjugate to a subgroup $E \in \cF^e$, we also have $\cF _{\cE} (G) \cong \cF ^e$. Hence 
by Proposition \ref{pro:EquivCat}, we can replace 
$\cF _{\cE} (G)$ with $\cF^e$ and $\cH ^t _{\overline M, C_G}$ with $\cH ^t _{M, C_\cF }$.
\end{proof}
 
In \cite{BLO2}, Broto, Levi, and Oliver introduced the centralizer decomposition for $p$-local 
finite groups. Let $(S, \cF, \cL)$ be a $p$-local finite group.  This means that $\cF$ is a saturated 
fusion system over $S$, and $\cL$ is a centric linking system associated to $\cF$ 
(see Definition \ref{def:LinkingSystem}).
 
\begin{definition}[{\cite[Def 2.4]{BLO2}}]\label{def:CentLinking} 
For each $E\in \cF^e$, the centralizer linking system $C_{\cL} (E)$ is the category 
whose objects are the $C_{\cF} (E)$-centric subgroups $P \leq  C_S (E)$, and whose morphisms
$\Mor _{C_{\cL} (E)} (P,P') $ are the set of morphisms $\varphi  \in \Mor_{\cL} (PE ,P' E)$ 
whose underlying homomorphisms are the identity on $E$ and send $P$ into $P'$.
\end{definition}

By \cite[Prop 2.5]{BLO2}, the category $C_{\cL} (E)$ is the centric linking system associated 
to $C_{\cF} (E)$. For every $E\in \cF ^e$, let $\widetilde C_{\cL} (E)$ denote the category whose 
objects are the pairs $(P, \alpha)$ with $P\in \cF^c$ and $\alpha \in \Mor_{\cF} (E, Z(P) )$. 
The morphisms in $\widetilde C_{\cL} (E)$ are defined by
$$\Mor _{\widetilde C_{\cL} (E) } ((P, \alpha), (Q, \beta) ) :=\{ \varphi \in \Mor_{\cL } (P, Q) \ | 
\ \pi (\varphi ) \circ \alpha =\beta\}$$ where $\pi : \cL \to \cF^c$ denotes the canonical projection 
functor.  There is a natural map 
$$ f: \hocolim _{E \in (\cF ^e )^{op}} |\widetilde C_{\cL } (E) | \maprt{} |\cL |$$
induced by the forgetful functors $(P, \alpha ) \to P$. It is proved by Broto, Levi, and Oliver 
\cite[Thm 2.6]{BLO2} that $f$ is a homotopy equivalence, and that there is a homotopy 
equivalence $|C_{\cL } (E)| \to | \widetilde C_{\cL } (E) |$ induced by the functor $P \to (P, incl) $.
This homotopy equivalence given by $f$ is called \emph{the centralizer decomposition} 
for $(S, \cF, \cL )$ over the collection of all nontrivial elementary abelian $p$-subgroups of $S$.
 
Associated to the centralizer decomposition, there is a Bousfield-Kan spectral sequence 
$$E_2 ^{s,t } = \underset{\cF^e\ }{\lim {}^s} \ H^t ( |C_{\cL } (-)|; \bbF _p ) \Rightarrow 
H^{s+t} (|\cL |; \bbF_p ).$$ We say the centralizer decomposition is \emph{sharp} if $E_2 ^{s, t}=0$ 
for $s\geq 1$. When the centralizer decomposition is sharp, then there is an isomorphism 
$$ H^* (|\cL | ; \bbF_p) \cong \underset{E \in \cF ^e}{\lim} H^* (|C_{\cL } (E) | ; \bbF_p ).$$ 
Broto, Levi, and Oliver \cite{BLO2} proved that  the centralizer decomposition for every 
$p$-local finite group is sharp.  We can also state their result  in terms of the cohomology 
of the centralizer fusion systems.
 
\begin{theorem}[Broto-Levi-Oliver \cite{BLO2}, p. 821]\label{thm:SharpCent}
Let $\cF$ be a saturated fusion system over $S$. Then, for every $n \geq 0$, 
and every $i \geq 1$, $$\underset{\cF ^e\ }{\lim {}^i} \ H^n ( C_{\cF} (-); \bbF_p ) =0.$$  
\end{theorem}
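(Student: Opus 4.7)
The plan is to reduce this algebraic vanishing statement to the topological sharpness of the Broto-Levi-Oliver centralizer decomposition of the classifying space $|\cL|$, and then appeal to a Jackowski-McClure-Oliver-style vanishing theorem for atomic functors on the elementary abelian subgroup category.

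First, by Chermak's theorem the fusion system $\cF$ admits a centric linking system $\cL$, so we may work with the $p$-local finite group $(S, \cF, \cL)$. For each fully centralized $E \in \cF^e$, Puig's theorem (Theorem \ref{thm:Puig}) guarantees that $C_{\cF}(E)$ is saturated, and \cite[Prop 2.5]{BLO2} produces the associated centric linking system $C_{\cL}(E)$. Applying the Cartan-Eilenberg theorem of \cite[Thm 5.8]{BLO2} to each saturated fusion system $C_{\cF}(E)$ yields a natural isomorphism
\begin{equation*}
H^n(C_{\cF}(E); \bbF_p) \cong H^n(|C_{\cL}(E)|; \bbF_p),
\end{equation*}
which converts the functor $H^n(C_{\cF}(-); \bbF_p)$ on $(\cF^e)^{op}$ into the functor $H^n(|C_{\cL}(-)|; \bbF_p)$.

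Second, the centralizer decomposition \cite[Thm 2.6]{BLO2} provides a homotopy equivalence $|\cL| \simeq \hocolim_{E \in (\cF^e)^{op}} |\widetilde C_{\cL}(E)|$ together with natural equivalences $|\widetilde C_{\cL}(E)| \simeq |C_{\cL}(E)|$. The associated Bousfield-Kan spectral sequence reads
\begin{equation*}
E_2^{s,t} = \underset{(\cF^e)^{op}}{\lim{}^s}\, H^t(|C_{\cL}(-)|; \bbF_p) \Rightarrow H^{s+t}(|\cL|; \bbF_p).
\end{equation*}
After the identification from the first step, the $E_2^{s,t}$-entry becomes the higher limit in the theorem (with $s = i$, $t = n$), so the desired vanishing is precisely the sharpness of this spectral sequence on the vertical axis, that is $E_2^{s,t} = 0$ for all $s \geq 1$.

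Third, the heart of the argument is establishing this sharpness. The strategy (due to Broto-Levi-Oliver, adapting Jackowski-McClure-Oliver's method for finite groups) is to filter the coefficient functor $H^t(|C_{\cL}(-)|; \bbF_p)$ so that its associated graded pieces become \emph{atomic functors} on $\cF^e$ in the sense of \cite{JMO}, and then invoke the vanishing of higher limits of atomic functors over the elementary abelian subgroup category. The main obstacle is that in the abstract fusion-system setting one cannot simply import JMO's finite-group vanishing verbatim: the atomic decomposition and the limit computation must be carried out intrinsically in terms of $\cF$, typically by induction on $|S|$ combined with a separate treatment of the case $Z(\cF) \neq 1$ (where a central subgroup of order $p$ can be exploited to reduce to strictly smaller fusion data), bootstrapping from the smaller centralizer fusion systems $C_{\cF}(E)$ back to $\cF$ itself.
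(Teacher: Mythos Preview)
The paper does not give its own proof of this theorem; it is stated with attribution to Broto--Levi--Oliver and a pointer to \cite[p.~821]{BLO2}, where it appears inside the proof of their Theorem~5.8. So there is no in-paper argument to compare against; what you have written is a sketch of how one might reconstruct the BLO2 argument.

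As such a reconstruction, your outline has the right shape but two real problems. First, there is a logical circularity in the way you have ordered the steps. In Step~1 you invoke \cite[Thm~5.8]{BLO2} (the Cartan--Eilenberg isomorphism) for each $C_{\cF}(E)$ in order to translate $H^n(C_{\cF}(E);\bbF_p)$ into $H^n(|C_{\cL}(E)|;\bbF_p)$, but in \cite{BLO2} that isomorphism is \emph{proved} precisely by establishing the sharpness of the centralizer decomposition. You only mention an induction on $|S|$ in passing in Step~3; for the argument to be non-circular the induction must be set up from the outset, and even then you must deal separately with the subgroups $E\leq Z(S)$ for which $C_S(E)=S$ and the inductive hypothesis does not apply.

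Second, Step~3 is where the entire content lies, and your description there (``filter into atomic functors, invoke JMO-style vanishing, handle $Z(\cF)\neq 1$ separately'') is an outline of a strategy rather than a proof. The JMO vanishing results you cite are for orbit categories of finite groups; the adaptation to $(\cF^e)^{op}$ for an abstract saturated fusion system is exactly the new work that \cite{BLO2} carries out on p.~821, and none of that work is reproduced in your sketch. In short, your proposal correctly identifies the BLO2 framework but does not supply the argument.
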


\begin{remark}\label{rem:Stable}
In the above theorem $H^n (C_{\cF} (-); \bbF_p)$ denotes the functor which sends
$E \in \cF^e$ to 
$$ H^n (C_{\cF} (E); \bbF_p) := \underset{P \in \cO ^c (C_{\cF} ( E) ) }{\lim} H^n ( P; \bbF_p).$$
Note that $H^n (C_{\cF} (E); \bbF_p)$ is the subring of $C_{\cF} (E)$-stable elements 
in $H^n (C_S (E) ; \bbF_p)$.
\end{remark}

We now recall the definition of the sharpness of the subgroup decomposition.
For every $p$-local finite group 
$(S, \cF, \cL)$, there is a homotopy equivalence 
$$|\cL| \simeq  \hocolim _{\cO ^c (\cF) } \widetilde {B} $$
where $\widetilde B : \cO ^c (\cF) \to \mathrm{Top}$ is a functor such that ${\widetilde B} (P)$ is 
homotopy equivalent to the classifying space $BP$ for every $P \in \cF^c$ (see \cite[Prop 2.2]{BLO2}). 
This homotopy equivalence is called the subgroup decomposition for the 
$p$-local finite group $(S, \cF, \cL)$ over $\cF$-centric subgroups. The subgroup 
decomposition for $(S, \cF, \cL)$ is sharp if for every $n\geq 0$ and every 
$i \geq 1$, $$ \underset{ \cO ^c (\cF )} {\lim {}^i} \ H^n (-; \bbF _p )=0.$$

Now we are ready to prove the main theorem of this section (stated as Theorem  \ref{thm:IntroCentReduction} 
in the introduction). Recall that a subgroup  $Q\leq S$ is \emph{central} in $\cF$ if $C_{\cF} (Q)=\cF$. 
The product of all central subgroups in $\cF$ is called the center of $\cF$ and denoted by $Z(\cF)$.

\begin{theorem}\label{thm:CentReduction}
If the subgroup decomposition is sharp for every $p$-local finite group $(S, \cF, \cL)$ 
with $Z(\cF)\neq 1$, then it is sharp for every $p$-local finite group.
\end{theorem}

\begin{proof}
Let $n\geq 0$ be a fixed integer, and let $M:=H^n (-; \bbF_p)$ denote the group cohomology 
functor considered as a module over $\cO^c (\cF)$. By Theorem \ref{thm:CentDecSS}, 
there is a spectral sequence
$$E_2 ^{s,t} = \underset{\cF^e\ }{\lim {}^s} \ \cH ^t _{M, C_\cF}
\Rightarrow H^{s+t} ( \cO ^c (\cF ) ;  M)$$
where for each $E\in \cF^e$, we have
$$\cH ^t _{M, C_\cF} (E)= H^t  (\cO ^c (C_\cF (E)) ; \Res ^{\cO ^c (\cF)} _{\cO ^c  (C_\cF (E) )} M ). $$
For every $E\in \cF^e$, the centralizer fusion system $C_{\cF} (E)$ has a nontrivial center, 
hence by assumption, the subgroup decomposition for $(C_S(E), C_{\cF} (E), C_{\cL } (E))$ is sharp. 
This gives that
 for every $E\in \cF^e$, 
$$\cH ^t _{M, C_\cF} (E)= H^t  (\cO ^c (C_\cF (E)) ; \Res ^{\cO ^c (\cF)} _{\cO ^c  (C_\cF (E) )} M )= 
H^t  (\cO ^c (C_\cF (E)) ; H^n (-; \bbF_p))=0$$
for $t \geq 1$. Therefore $E_2 ^{s,t} =0$ for all $t\geq 1$. By Remark \ref{rem:Stable}, we have
$$\cH ^0 _{M, C_\cF } ( E)= H^0 ( \cO ^c (C_{\cF } (E) ; \Res ^{\cO ^c (\cF)} _{\cO ^c  (C_\cF (E) )} M)
\cong \lim _{\cO ^c (C_\cF (E) ) } H^n (-; \bbF_p) = H^n (C_{\cF } (E); \bbF_p).$$
Hence we have $$E_2 ^{s, 0} =\underset{\cF^e\ } {\lim {}^s} \  \cH ^0 _{M, C_\cF}  
\cong \underset{\cF^e\ }{\lim {}^s }  \ H^n ( C_{\cF} (-); \bbF_p).$$ 
By Theorem \ref{thm:SharpCent}, these higher limits vanish for $s \geq 1$, hence 
$H^i ( \cO ^c (\cF ); M)=0$ for all $i \geq 1$.
\end{proof}
  
Another consequence of the spectral sequence argument given above is the following: Given 
a $p$-local group $(S, \cF, \cL)$, if we know that 
\begin{enumerate} 
\item the subgroup decomposition for $(S, \cF, \cL)$ is sharp, and 
\item for every $E\in \cF^e$, the subgroup decomposition for $(C_S(E), C_{\cF} (E), C_{\cL } (L))$ 
is sharp,
\end{enumerate}
then we can conclude that the centralizer decomposition for $(S, \cF, \cL)$ over the nontrivial elementary 
abelian $p$-subgroups is sharp. 
We can consider this as a propagation result between two different types of homology decompositions 
in the sense given by Grodal and Smith in \cite{GrodalSmith}.

%----------------------

\section{The normalizer decomposition for  $\cO^c (\cF)$}\label{sect:NormalizerDec}

Let $\cF$ be a saturated fusion system over $S$, and let $sd(\cF^c)$ denote the poset 
category of all chains $\sigma =(P_0 < P_1 < \cdots < P_n)$  with $P_i \in \cF^c$ 
where there is a unique morphism $\sigma \to \tau$ in $sd(\cF^c)$ if $\tau$ is a subchain 
of $\sigma$. Two chains $\sigma =(P_0 < P_1 < \cdots < P_n)$ and
$\tau =(Q_0 < Q_1 < \cdots < Q_n)$ are $\cF$-conjugate if there is an isomorphism 
$\varphi : P_n \to Q_n$ in $\cF$ such that $\varphi (P_i)=Q_i$ for all $i$. 
In this case we write  $\sigma \sim _{\cF} \tau$. The $\cF$-conjugacy class of a chain 
$\sigma$ in $sd(\cF^c)$ is denoted by $[\sigma]$. 

\begin{definition}\label{def:CatConjClasses} The category of \emph{$\cF$-conjugacy 
classes of chains in $\cF ^c$}, denoted by ${\overline s}d (\cF ^c)$, is the poset category 
whose objects are $\cF$-conjugacy classes $[\sigma]$  of chains of subgroups 
in $\cF^c$ where there is a unique morphism 
$[\sigma] \to [\tau]$ in ${\overline s}d (\cF ^c)$  if $\tau$ is $\cF$-conjugate to a subchain 
of $\sigma$. 
\end{definition}

Let $G$ be a discrete group with a Sylow $p$-subgroup $S$ such that
$\cF \cong \cF _S (G)$. If $\cC$ is the collection of all $p$-centric subgroups in $G$, then 
by Lemma \ref{lem:CentricG} we have 
$\cF^c \cong \cF_{\cC} (G)$. We defined the category $Sd_{\cC} (G)/G$ 
to be the category whose objects are the $G$-orbits 
$[\sigma]$ of the chains of subgroups in $\cC$ and such that there is a unique morphism 
$[\sigma] \to [\tau]$ in $Sd_{\cC} (G)/G$ if there is an element $g\in G$ such that $\tau $ 
is a face of $g \sigma$ (see Definition \ref{def:OrbitSimplices}). It is easy to see that since 
$\cF ^c \cong \cF_{\cC} (G)$, the poset categories $Sd_{\cC} (G)/G$ and 
$\overline{s}d(\cF ^c )$ are isomorphic.

If  $\sigma=(P_0 < \cdots < P_n)$ is a chain of subgroups of a discrete group $G$, 
then the stabilizer of $\sigma$ under the $G$-action defined by conjugation  is the 
subgroup $N_G(\sigma)=\bigcap _i N_G(P_i)$. Note that $C_G (P_n) \leq N_G (\sigma)$ 
is a normal subgroup and the quotient group $N_G(\sigma) /C_G(P_n)$ is a subgroup 
of $N_G(P_n) /C_G(P_n) =\Aut _G (P_n) \leq \Aut (P_n)$. We define the $G$-automorphism 
group of $\sigma$ to be the subgroup  
$$\Aut _G (\sigma) :=N_G(\sigma )/C_G (P_n)\leq \Aut(P_n).$$
For a chain of $p$-groups $\sigma =(P_0< \cdots < P_n)$ in $S$, the automorphism group 
of $\sigma$ is defined by
$$\Aut(\sigma):=\{\alpha \in \Aut (P_n ) \, |\, \alpha (P_i)=P_i \text{ for all } i\} \leq \Aut(P_n).$$

In Section \ref{sect:CentNormFusion}, for every subgroup $K \leq \Aut (Q)$, we defined 
the $K$-normalizer group $N_G^K (Q)$ and the $K$-normalizer fusion system $N_{\cF} ^K (Q)$ 
(see Definition \ref{def:GenNormalizer}). It is easy to see that if we take 
$K=\Aut (\sigma) \leq \Aut (P_n)$, then $N_G^K (P_n)=N_G(\sigma)$.  
For the $K$-normalizer fusion system $N_{\cF} ^K (P_n)$, we have the following alternative description.

\begin{definition}\label{def:NormalizerFusSys}
Let $\cF$ be a saturated fusion system over $S$, and let $\sigma=(P_0 < \cdots < P_n)$
be a chain of subgroups in $S$. The \emph{normalizer fusion system} $N_{\cF} (\sigma)$ is 
the fusion system over $N_S(\sigma)$ whose morphisms $\Mor _{N_{\cF} (\sigma) } (P, R)$ 
are all the morphisms $\varphi \in \Mor _{\cF} (P, R)$ which extend  to a morphism 
$\widetilde \varphi : P_nP\to P_nR$ in $\cF$ in such a way that 
$\widetilde \varphi (P_i) =P_i$ for all $i\in \{ 1, \cdots, n\}$.
\end{definition}

We say a chain $\sigma=(P_0 < \cdots < P_n)$ in $S$ is \emph{fully $\cF$-normalized} 
if $|N_S(\sigma) | \geq |N_S (\tau) |$ for every $\tau\sim _{\cF} \sigma$. Note that this 
is equivalent to saying that $P_n$ is fully $K$-normalized
with $K=\Aut(\sigma) \leq \Aut(P_n)$. Using the results of 
Section \ref{sect:CentNormFusion} we conclude the following proposition which 
was also proved in \cite[Prop 3.9]{Libman-WebbConj} using a different approach.

\begin{proposition}\label{pro:NormTheSame} Let $\cF$ be a saturated fusion system 
over $S$, and let $\sigma=(P_0 < \cdots < P_n)$ be a chain of subgroups in $S$ 
which is fully $\cF$-normalized. Then
\begin{enumerate}
\item the normalizer fusion system $N_{\cF} (\sigma)$ is saturated, and 
\item if $\cF=\cF_S(G)$ for a discrete group $G$ with a Sylow $p$-subgroup $S$, then 
$N_S(\sigma)$ is a Sylow $p$-subgroup of 
$N_G(\sigma)$ and  $\cF _{N_S(\sigma) } (N_G(\sigma) )= N_{\cF} (\sigma)$.
\end{enumerate}
\end{proposition}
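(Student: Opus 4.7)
The plan is to reduce both parts of the proposition to the general $K$-normalizer framework from Section \ref{sect:CentNormFusion}, and then invoke Puig's theorem together with Propositions \ref{pro:SylowExists} and \ref{pro:FusionSame}. Specifically, I would set $Q := P_n$ and $K := \Aut(\sigma) \leq \Aut(P_n)$. Unwinding definitions, one checks at once that $N_G(\sigma) = N_G^K(Q)$ and $N_S(\sigma) = N_S^K(Q)$, and moreover that the normalizer fusion system $N_{\cF}(\sigma)$ of Definition \ref{def:NormalizerFusSys} agrees with the $K$-normalizer fusion system $N_{\cF}^K(Q)$ of Definition \ref{def:GenNormalizer}: in both, a morphism $\varphi: P\to R$ is admitted exactly when it extends to $\widetilde\varphi: QP\to QR$ in $\cF$ with $\widetilde\varphi|_Q \in K$, i.e.\ with $\widetilde\varphi(P_i)=P_i$ for all $i$.

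Next I would translate the ``fully $\cF$-normalized'' hypothesis on $\sigma$ into the ``fully $K$-normalized'' hypothesis on $Q$. Any $\cF$-conjugate chain $\tau\sim_{\cF}\sigma$ has the form $\tau=(\varphi P_0 < \cdots < \varphi P_n)$ for some isomorphism $\varphi: P_n\to Q_n$ in $\cF$, and conversely any morphism $\varphi: P_n\to S$ in $\cF$ produces such a chain. A direct computation gives $\Aut(\tau) = \leftexp{\varphi}K$, and consequently $N_S(\tau) = N_S^{\leftexp{\varphi}K}(\varphi Q)$. Hence the inequality $|N_S(\sigma)|\ge |N_S(\tau)|$ for all $\tau\sim_{\cF}\sigma$ is precisely the inequality $|N_S^K(Q)|\ge |N_S^{\leftexp{\varphi}K}(\varphi Q)|$ for all morphisms $\varphi: Q\to S$ in $\cF$; that is, $\sigma$ is fully $\cF$-normalized if and only if $Q$ is fully $K$-normalized.

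With these identifications in place, part (1) is immediate from Puig's theorem (Theorem \ref{thm:Puig}): since $Q$ is fully $K$-normalized, $N_{\cF}^K(Q) = N_{\cF}(\sigma)$ is a saturated fusion system over $N_S^K(Q) = N_S(\sigma)$. For part (2), assuming $\cF=\cF_S(G)$, Proposition \ref{pro:SylowExists} yields that $N_S(\sigma) = N_S^K(Q)$ is a Sylow $p$-subgroup of $N_G(\sigma) = N_G^K(Q)$, and Proposition \ref{pro:FusionSame} then gives the fusion system identity $\cF_{N_S(\sigma)}(N_G(\sigma)) = N_{\cF}^K(Q) = N_{\cF}(\sigma)$.

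The only step that requires genuine care is the translation in the second paragraph, where one must verify that $\cF$-conjugacy of chains captures exactly the morphisms $\varphi: Q\to S$ in $\cF$ that appear in the definition of fully $K$-normalized, and that $\Aut(\tau)$ conjugates correctly to $\leftexp{\varphi}K$. Beyond this bookkeeping no new obstacle arises; all substantive content has been carried out in Section \ref{sect:CentNormFusion}.
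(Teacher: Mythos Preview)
Your proposal is correct and follows exactly the approach of the paper: the paper's proof is simply the two sentences ``The first statement follows from Theorem \ref{thm:Puig} and the second part follows from Propositions \ref{pro:SylowExists} and \ref{pro:FusionSame},'' with the identifications $Q=P_n$, $K=\Aut(\sigma)$, $N_{\cF}(\sigma)=N_{\cF}^K(Q)$, and the equivalence of ``fully $\cF$-normalized chain'' with ``fully $K$-normalized subgroup'' already recorded in the text immediately preceding the proposition. Your second paragraph spells out the bookkeeping the paper leaves implicit, but there is no difference in strategy.
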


\begin{proof} The first statement follows from Theorem \ref{thm:Puig} and the second 
part follows from Propositions \ref{pro:SylowExists} and \ref{pro:FusionSame}.
\end{proof}

Let $\sigma$ be a chain of subgroups in $\cF^c$ which is fully $\cF$-normalized.
Let $\cN:= N_{\cF} (\sigma)$, and let $\cN^c$ denote the full subcategory of $\cN$ 
generated by $\cN$-centric subgroups in $N_S (\sigma)$.
We denote by $\cN^{cn}$ the full subcategory of $\cN$ generated by subgroups 
of $N_S(\sigma)$ which are $\cF$-centric. Let $\cN ^{cr}$ denote the full subcategory 
of $\cN$ generated by $\cN$-centric-radical subgroups of $N_S(\sigma)$.   

\begin{lemma}\label{lem:Inclusions} Let $\sigma=(P_0< \cdots < P_n)$ be a chain of subgroups in $\cF^c$.
Suppose that $\sigma$  is fully $\cF$-normalized.
Let $\cN:= N_{\cF} (\sigma)$, and $\cN ^{cr}$, $\cN^{cn}$, $\cN ^c$ be the subcategories defined above. Then 
$$ \cN ^{cr} \subseteq \cN ^{cn} \subseteq \cN ^c.$$
\end{lemma} 

\begin{proof} Note that $N_S (\sigma)$ contains $P_0 \in \cF^c$ as a subgroup and 
$P_0$ is normal in $N_{\cF } (\sigma)$. Applying Lemma \ref{lem:CentricRadical}, 
we obtain that if $P \leq N_S (\sigma) $ is an $\cN$-centric-radical subgroup, 
 then $P$ is $\cF$-centric. Hence $ \cN ^{cr} \subseteq \cN ^{cn}.$ 
For the second inequality, note that by Lemma \ref{lem:NormCentric}(i),  if $P \leq N_S(\sigma)$ is $\cF$-centric 
then it is $\cN$-centric, hence $\cN ^{cn} $ is a full subcategory of $\cN^c$. 
\end{proof}

For higher limits over different collections of subgroups, we will now prove a proposition.
Although it is not stated in the following way, we think this statement is essentially what was proved in
\cite[Cor 3.6]{BLO2}. 

\begin{proposition}\label{pro:HighLimitReduction} 
Let $\cF$ be a saturated fusion system over $S$, and $\cC$ be the collection of subgroups 
of $S$ closed under taking overgroups. Assume that
$\Ob (\cF^{cr})  \subseteq \cC \subseteq \Ob( \cF ^c)$, and let $\nu : \cO (\cF _{\cC} ) \to \cO ^c (\cF)$ 
denote the inclusion functor for the corresponding orbit categories. Then for every 
$ \bbZ_{(p)} \cO^c (\cF)$-module $M$,  there is an isomorphism 
$$\underset{\cO (\cF _{\cC})}{\lim {}^*}  (M \circ \nu ) \cong \underset{ \cO ^c (\cF) }{\lim {}^*} M.$$
\end{proposition}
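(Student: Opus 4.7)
The plan is to follow the inductive filtration argument of Broto, Levi, and Oliver in \cite[Cor~3.6]{BLO2}. By induction on $|\Ob(\cF^c)\setminus\cC|$, it suffices to prove the following local statement: if $\cC \subsetneq \cC' \subseteq \Ob(\cF^c)$ are two overgroup-closed collections containing $\Ob(\cF^{cr})$ which differ by a single $\cF$-conjugacy class $[P]$, then the restriction induces an isomorphism $\underset{\cO(\cF_{\cC'})}{\lim{}^i}\, M \cong \underset{\cO(\cF_{\cC})}{\lim{}^i}\, M$ for every $i \geq 0$. The overgroup-closure of $\cC'$ forces $[P]$ to be maximal in $\Ob(\cF^c)\setminus\cC$, so all proper $\cF$-centric overgroups of $P$ already lie in $\cC$.

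Since $\cC$ already contains every $\cF$-centric-radical subgroup, the class $[P]$ being adjoined consists of $\cF$-centric subgroups which are not $\cF$-radical. Choosing $P$ fully $\cF$-normalized within its class, the saturation axioms imply that ${\rm Out}_{\cF}(P) = \Aut_{\cF}(P)/\Inn(P)$ admits a nontrivial normal $p$-subgroup, and this normal $p$-subgroup lifts to a subgroup $P^* \leq N_S(P)$ with $P \lneq P^*$ and $P^* \in \Ob(\cF^c)$. By the maximality observation of the first paragraph, $P^* \in \cC$. The pair $(P, P^*)$ is what drives the vanishing in the comparison.

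To convert the existence of $P^*$ into the required isomorphism, I would apply Shapiro's isomorphism (Proposition~\ref{pro:OrbitShapiro}) to the representable module associated to $[P]$, reducing the comparison to the vanishing of $\Ext^*$-groups over $\bbZ_{(p)}[{\rm Out}_{\cF}(P)]$ with coefficients in a functor built from the poset of nontrivial $p$-subgroups of ${\rm Out}_{\cF}(P)$. The image of $P^*/P$ inside ${\rm Out}_{\cF}(P)$ supplies a nontrivial normal $p$-subgroup, which makes the relevant poset $\bbZ_{(p)}$-acyclic via a Quillen-style conical contraction, and this in turn forces the $\Ext$-groups to vanish.

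The main obstacle, I expect, will be the careful bookkeeping: identifying precisely the relative cohomology that arises when one adjoins a single $\cF$-conjugacy class, and translating the Shapiro isomorphism at $[P]$ into an $\Ext$-computation over $\bbZ_{(p)}[{\rm Out}_{\cF}(P)]$. This infrastructure, originating with Jackowski-McClure-Oliver and refined in \cite{BLO2}, is standard in the area but technically delicate; the strategy itself, however, is well-established and proceeds as outlined above.
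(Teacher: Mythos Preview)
Your proposal is correct and relies on the same core vanishing input as the paper, but the bookkeeping is organized dually. You filter the \emph{collection}, adjoining one $\cF$-conjugacy class $[P]$ at a time and showing the comparison map is an isomorphism at each step; the paper instead filters the \emph{module} $M$ by atomic functors (functors supported on a single $\cF$-conjugacy class $[Q]$), reducing to the case where $M$ is atomic. Once reduced, both arguments invoke the same fact: if $Q\in\Ob(\cF^c)\setminus\cC$ then $Q$ is not $\cF$-radical, so $\mathrm{Out}_{\cF}(Q)$ has a nontrivial normal $p$-subgroup, and \cite[Prop~6.1(ii)]{JMO} gives $\Lambda^*(\mathrm{Out}_{\cF}(Q);M(Q))=0$, which via \cite[Prop~3.2]{BLO2} is exactly $\underset{\cO^c(\cF)}{\lim{}^*}M$. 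Your Quillen-style contraction of the $p$-subgroup poset is precisely the content of that JMO result. The paper's organization is marginally cleaner because the atomic filtration immediately isolates the relevant $\Lambda^*$-term without needing to identify a relative term between $\cC$ and $\cC'$; your detour through the overgroup $P^*$ is not actually needed beyond witnessing that the normal $p$-subgroup of $\mathrm{Out}_{\cF}(P)$ is nontrivial.
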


\begin{proof} We can filter the module $M$ with atomic functors, i.e. with functors 
that vanish except on a single $\cF$-conjugacy class, and prove the result by induction 
using long exact sequences. Hence it is enough to prove the above isomorphism 
when $M$ is an atomic functor.

Let $Q \in  \cF^c$ be such that $M$ vanishes on subgroups outside of the 
$\cF$-conjugacy class of $Q$. If $Q \in \cC$, then by \cite[Lemma 1.5(a)]{Oliver-Linking}, 
the isomorphism in the proposition holds. So assume that $Q \not \in \cC$. 
In this case the restriction of $M$ to the subcategory $\cO (\cF _{\cC} )$ is zero. 
Therefore in this case we need to show that $\underset{ \cO ^c (\cF) }{\lim {}^*} M=0$.

Let $\Gamma$ be a finite group and $N$ be a $\bbZ_{(p)} G$-module. Let $F_N$ denote 
the $\cO _p (\Gamma)$-module such that $F_N (1)=N $ and $F(P) =0$ for all $p$-subgroups 
$1 \neq P \leq \Gamma$. We define $$ \Lambda ^* (\Gamma ; N) :=\underset{\cO _p
(\Gamma) }{\lim {}^*} F_N.$$  By \cite[Prop 3.2]{BLO2}, there is an isomorphism 
$$\underset{\cO^c (\cF)}{\lim {}^*}  M \cong \Lambda ^* (\mathrm{Out} _{\cF}  (Q); M(Q) ).$$
Since $Q$ is not $\cF$-radical, the group $\mathrm{Out} _{\cF} (Q)$ has a normal $p$-subgroup. 
Then by \cite[Prop 6.1(ii)]{JMO} we have $\Lambda ^* (\mathrm{Out} _{\cF}  (Q); M(Q) )=0$. 
Hence the proof is complete.
\end{proof}

Now, we are ready to prove  the following.

\begin{proposition}\label{pro:CoefEqu}
 Let $\cF$ be a saturated fusion system over $S$, $G$ be a discrete group realizing $\cF$, and $\cC$ be the 
collection of all $p$-centric subgroups in $G$. Let $M$ denote an $\bbZ_{(p)} \cO (\cF)$-module 
and $\overline M$ denote the corresponding $\bbZ_{(p)} \overline \cF (G)$-module. 
For every fully $\cF$-normalized chain of subgroups $\sigma=(P_0 < \cdots < P_n)$ 
with $P_i \in \cF^c$, there is an isomorphism
$$H^* ( \overline \cF _{\cC} (N_G(\sigma)); \Res ^{\overline \cF  (G)} _{\overline \cF _{\cC} (N_G (\sigma ) )} \overline M) 
\cong H^* (\cO ^c ( N_{\cF } (\sigma ) ); \Res ^{\cO (\cF) } _{\cO ^c ( N_{\cF} (\sigma) ) } M).$$ 
\end{proposition}

\begin{proof} Let $\cN =\cN _{\cF} (\sigma)$. By Proposition \ref{pro:NormTheSame}, 
$\cF _{N_S(\sigma) } (N_G(\sigma) )= \cN$, hence the fusion orbit category 
$\overline \cF _{\cC} ( N_G (\sigma ) )$ is equivalent to the orbit category $\cO ^{cn} ( \cN)$. 
This gives an isomorphism
$$H^* ( \overline \cF _{\cC} (N_G(\sigma)); \Res ^{\overline \cF  (G)} _{\overline \cF _{\cC} (N_G (\sigma ) )} \overline M) 
\cong H^* (\cO (\cN ^{cn} ) ; \Res ^{\cO (\cF) } _{\cO ( \cN ^{cn}  ) } M).$$ 
By Lemma \ref{lem:Inclusions}, we have  $\cN ^{cr} \subseteq \cN ^{cn} \subseteq \cN ^c$.
Applying Proposition \ref{pro:HighLimitReduction} to the fusion system $\cN$, we obtain 
the desired isomorphism.
\end{proof}

Using the isomorphism in Proposition \ref{pro:CoefEqu}, we define the following  functor.

\begin{lemma}\label{lem:HNF}
Let $M$ be an $\bbZ _{(p)} \cO (\cF)$-module. For every integer $j\geq 0$, there is 
a functor $$\cH ^j _{M, N_{\cF}} : \overline{s}d (\cF ^c)   \to \bbZ_{(p)}\text{-Mod}$$ such that 
for every $[ \sigma] \in \overline{s}d (\cF ^c )$, 
$$\cH ^j _{M, N_\cF} ( [\sigma])= H^j (\cO ^c (N_\cF (\sigma )) ; \Res ^{\cO (\cF)} _{\cO ^c  (N_\cF (\sigma ) )} M )$$
where $\sigma$ is a representative for $[\sigma]$ such that $\sigma$ is a fully $\cF$-normalized chain of subgroups of $S$.
\end{lemma}

\begin{proof} The proof follows from the  same argument given in the proof of Lemma \ref{lem:HCF}. 
In this case we use the isomorphisms proved in Proposition \ref{pro:CoefEqu}.
\end{proof}

The following is stated as Theorem \ref{thm:IntroNormDecSS} in the introduction.
 
\begin{theorem}\label{thm:NormDecSS}  Let $M$ be an $\bbZ _{(p)} \cO (\cF)$-module, 
and $\cH ^j _{M, N_{\cF}} $ denote the functor defined in Lemma \ref{lem:HNF}. 
Then there is a spectral sequence 
$$E_2 ^{s,t} = \underset{\overline{s}d (\cF^c )}{\lim {}^s} \ \cH ^t _{M , N_{\cF} }   \Rightarrow 
H^* (\cO ^c (\cF ) ;  \Res ^{\cO (\cF)} _{\cO ^c (\cF ) } M).$$
\end{theorem}

\begin{proof}
Let $G$ be a discrete group with a Sylow $p$-subgroup $S$ such that 
$\cF \cong \cF_S (G)$. Let $\cC$ be the collection of all $p$-centric subgroups in $G$.
The collection $\cC$ is closed under taking overgroups, hence it is closed under taking 
products. Let $\overline M$ denote the $\overline \cF (G)$-module corresponding to $M$.
By Proposition \ref{pro:NormalizerSS}, there is a spectral sequence
$$E_2 ^{s, t} = \underset{Sd _{\cC} (G) /G }{\lim {}^s}  \cH ^t _{\overline M, N_G}  
\Rightarrow H^{s+t} (\overline \cF _{\cC} (G)  ; \Res ^{\overline \cF (G) } _{\overline \cF _{\cC} (G)} 
\overline M)$$ where $\cH ^j _{\overline M, N_G} $ is the functor defined in Definition \ref{def:HNG}. 
Since $\cF ^c \cong \cF_{\cC} (G)$, the poset 
categories $Sd_{\cC} (G)/G$ and $\overline{s}d(\cF ^c )$ are isomorphic. In each $\cF$-conjugacy class 
$[\sigma] \in \overline{s}d (\cF ^c )$, we choose the chain $\sigma$ which is fully $\cF$-normalized.
By Proposition \ref{pro:CoefEqu}, there is an isomorphism of functors $$\cH ^t _{\overline M, N_G} 
\cong \cH ^t _{M, N_{\cF} }$$ once we identify $Sd _{\cC} (G)/G$ with $\overline{s}d(\cF)$. We also have 
$\overline \cF _{\cC} (G) \cong \cO ^c (\cF ) $. Hence applying Proposition \ref{pro:EquivCat}, we obtain 
the desired spectral sequence. 
\end{proof}
 
In the rest of the section we consider the normalizer decomposition for $p$-local finite groups introduced 
by Libman \cite{Libman-NormDec}.  A $p$-local finite group $(S, \cF, \cL)$ is a triple where $S$ is a finite 
$p$-group, $\cF$ is a saturated fusion system over $S$, and $\cL$ 
is a centric linking system associated to $\cF$. We recall the definition of a centric linking system.

\begin{definition}[{\cite[Def 3.2]{AshOliver}}]\label{def:LinkingSystem}
Let $\cF$ be a fusion system over the $p$-group $S$. A centric linking system
associated to $\cF$ is the category $\cL$ whose objects are the $\cF$-centric subgroups 
of $S$, together with a pair of functors $$\cT _S ^{\cF^c} (S) \xrightarrow{\epsilon}  \cL \xrightarrow{\pi} \cF^c$$
satisfying the following conditions:
\begin{enumerate}
\item The functor $\epsilon$ is the identity on objects and injective on morphisms, while $\pi$ is the inclusion 
on objects and is surjective on each morphism set.
\item For each $P, Q \in \Ob (\cL)$, $\epsilon _P (P) \leq \Aut _{\cL } (P)$ acts freely on the set 
$\Mor _{\cL } (P, Q)$ by precomposition and $\pi _{P, Q}$ induces a bijection from 
$\Mor _{\cL} (P, Q)/ \epsilon_P (Z(P))$ onto $\Mor _{\cF } (P, Q)$.
\item For each $P, Q \in \Ob (\cL)$ and $g \in \cT _S (P, Q)$, the composite functor $\pi \circ \epsilon$ 
sends $g \in \Mor _{\cT} (P, Q)$ to $c_g \in \Mor _{\cF} (P, Q)$.
\item For $P, Q \in \Ob (\cL)$, $\psi \in \Mor _{\cL} (P, Q)$, and $g \in P$, the following square commutes in $\cL$.
$$\xymatrix{P \ar[r]^{\psi} \ar[d]_{\epsilon _P (g)} & Q \ar[d]^{\epsilon _Q (\pi (\psi ) (g)) } \\ P \ar[r]^{\psi} & Q}$$
\end{enumerate}
\end{definition}

For every $P, Q \in \cF^c$ such that $P\leq Q$, let $i _P ^Q : P \to Q$ in $\cL$ denote 
the image of the inclusion map $P \to Q$ under $\epsilon _P$. For each chain 
$\sigma=(P_0<\dots < P_n)$ of $\cF$-centric subgroups in $S$, let $\Aut _{\cL } (\sigma)$ denote the subgroup of $
\prod _{i=1} ^n  \Aut _{\cL} (P_i)$ formed by tuples $(\alpha _0, \dots, \alpha _n )$ of automorphisms 
$\alpha _i \in \Aut _{\cL } ( P_i)$ such that 
the following diagram commutes
$$\xymatrix{P_0   \ar[r]^{i^{P_1}_{P_0}}  \ar[d]^{\alpha_0}  &  P_1 \ar[r]^{i^{P_2} _{P_1}} \ar[d]^{\alpha_1}  
& \cdots  \ar[r] &  P_{n-1} \ar[d]^{\alpha_{n-1}}     \ar[r]^{i^{P_n} _{P_{n-1}}}  & P_n \ar[d]^{\alpha_n} \\  
P_0   \ar[r]_{i^{P_1}_{P_0}}  &  P_1 \ar[r]_{i^{P_2} _{P_1}} & \cdots  \ar[r] & P_{n-1} \ar[r]_{i ^{P_n} _{P_{n-1}} }  & P_n}$$
(see \cite[Def 1.4]{Libman-NormDec} for details).
 
In $\cL$, every morphism is a monomorphism and an epimorphism in the categorical sense 
(see  \cite[Remark 2.10 and Prop 2.11]{Libman-NormDec}). As a consequence, for each $j=0,\dots,n$, the map 
$$\pi _j: \Aut _{\cL } (\sigma )\to \Aut _{\cL} (P_j )$$ defined by $(\alpha_0, \dots, \alpha_n )\to \alpha_j$ 
is a monomorphism, so the automorphism group $\Aut _{\cL}(\sigma) $ can be considered as a subgroup 
of $\Aut _{\cL} (P_j)$ via the projection map $\pi_j$. This allows us to identify $\cB ( \Aut_{\cL} (\sigma))$ 
with a subcategory of $\cL$. Note that if $\tau$ is a face of $\sigma$, then there is group homomorphism 
$\Aut_{\cL } (\sigma)\to \Aut _{\cL } (\tau) $ defined by restriction.

\begin{theorem}[{\cite[Thm A]{Libman-NormDec}, \cite[Thm 5.36]{AKO}}]\label{thm:LibmanNorm} 
Let $(S, \cF, \cL)$ be a $p$-local finite group. Then there is a functor $\delta : \overline{s}d (\cF ^c ) \to Top$ 
such that the following hold: 
\begin{enumerate}
\item There is a homotopy equivalence 
$$\hocolim _{[\sigma]\in \overline{s}d (\cF^c)} \delta ([\sigma]) \maprt{\simeq} |\cL|.$$ 
\item For each $\sigma \in sd (\cF^c ) $, there is a natural homotopy equivalence 
$$ B\Aut _{\cL } (\sigma ) \maprt{\simeq} \delta ([\sigma]).$$
\item For each $\sigma \in sd (\cF^c)$, the map from $B\Aut _{\cL} (\sigma)$ to $|\cL |$ induced 
by the equivalences in (1) and (2) is equal to the map induced by the inclusion of 
$\cB(\Aut _{\cL } (\sigma))$ into $\cL$.
\end{enumerate}
\end{theorem}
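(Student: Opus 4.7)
The plan is to prove Theorem \ref{thm:LibmanNorm} via an auxiliary category $\widetilde \cL$ fibered over $\overline{s}d(\cF^c)$ with total space homotopy equivalent to $|\cL|$, then to apply Thomason's theorem and Quillen's Theorem A in tandem.

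First I would define $\widetilde \cL$ to have as objects the chains $\sigma = (P_0 < \cdots < P_n)$ of $\cF$-centric subgroups (each marked by its top member $P_n$), with morphisms $\sigma \to \sigma'$ given by those $\varphi \in \Mor_{\cL}(P_n, P_n')$ whose underlying fusion map sends $\sigma$ bijectively onto a subchain of $\sigma'$. There are canonical projections $\pi_1 \colon \widetilde \cL \to \overline{s}d(\cF^c)$, sending $\sigma \mapsto [\sigma]$, and $\pi_2 \colon \widetilde \cL \to \cL$, sending $\sigma \mapsto P_n$. To establish part (1) I would show $|\pi_2|$ is a homotopy equivalence by Quillen's Theorem A: for each $P \in \cF^c$ the over-category $\pi_2 \downarrow P$ carries a zigzag of natural transformations to the constant functor at $((P), \mathrm{id}_P)$, obtained by first shrinking $\sigma$ to the length-zero chain $(P_n)$ along face inclusions and then transporting $P_n$ to $P$ along the structure map. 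At the same time, $\pi_1$ presents $\widetilde \cL$ as a Grothendieck construction over $\overline{s}d(\cF^c)$, so Thomason's theorem identifies $|\widetilde \cL|$ with $\hocolim_{[\sigma]} |\pi_1^{-1}([\sigma])|$; setting $\delta([\sigma]) := |\pi_1^{-1}([\sigma])|$ yields (1).

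For (2), fix a fully $\cF$-normalized representative $\sigma$ of $[\sigma]$ and construct a functor $j \colon \cB(\Aut_{\cL}(\sigma)) \to \pi_1^{-1}([\sigma])$ sending the unique object to $\sigma$ and a tuple $(\alpha_0, \dots, \alpha_n) \in \Aut_{\cL}(\sigma)$ to $\alpha_n$. A further application of Theorem A, using Proposition \ref{pro:NormTheSame} to guarantee that every chain in $[\sigma]$ is $\cL$-isomorphic to $\sigma$, should show $|j|$ is a homotopy equivalence. Part (3) then follows tautologically, because the composite $\cB(\Aut_{\cL}(\sigma)) \xrightarrow{j} \widetilde \cL \xrightarrow{\pi_2} \cL$ coincides by construction with the canonical inclusion from the statement of the theorem.

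The main obstacle I anticipate is the fiber identification in (2). Lifts of $\cF$-isomorphisms between chains to $\cL$ exist only up to the $Z(P_n)$-action coming from $\delta_{P_n}$, and one must check that this indeterminacy is exactly compensated by the face-compatibility condition on $\widetilde \cL$-morphisms. Because each $P_i$ is $\cF$-centric and $P_i \leq P_n$, one gets $Z(P_n) \leq C_{P_n}(P_i) \leq P_i$, so the distinguished morphism $\delta_{P_n}(z)$ for $z \in Z(P_n)$ restricts compatibly along the chain via the linking-system axioms; this is precisely what forces Libman's tuple automorphisms $\Aut_{\cL}(\sigma)$ to agree with $\Aut_{\widetilde \cL}((\sigma, P_n))$ and makes the Theorem A over-categories connected.
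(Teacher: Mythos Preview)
The paper does not prove Theorem~\ref{thm:LibmanNorm}; it is stated there as a quotation of Libman's result \cite[Thm~A]{Libman-NormDec} (see also \cite[Thm~5.36]{AKO}) and is used as a black box to set up the Bousfield--Kan spectral sequence for the normalizer decomposition. So there is no ``paper's own proof'' to compare your proposal against.

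That said, your outline is essentially the strategy Libman uses: build an auxiliary category over $\cL$ whose objects are chains, project on one side to $\overline{s}d(\cF^c)$ and on the other to $\cL$, invoke Thomason's theorem for the hocolim identification, and use Quillen's Theorem~A for the equivalence with $|\cL|$ and for the fiber identification with $B\Aut_{\cL}(\sigma)$. One point to watch is the variance: in this paper's convention a morphism $[\sigma]\to[\tau]$ in $\overline{s}d(\cF^c)$ exists when $\tau$ is $\cF$-conjugate to a \emph{subchain} of $\sigma$, so your description of $\Mor_{\widetilde\cL}(\sigma,\sigma')$ (with $\sigma$ mapping onto a subchain of $\sigma'$) makes $\pi_1$ contravariant rather than covariant; you will need to reverse the role of source and target, or work with the opposite category, for $\pi_1$ to land in $\overline{s}d(\cF^c)$ as defined here. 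Apart from this bookkeeping, the argument you sketch---including the $Z(P_n)$-indeterminacy analysis for the fiber---is the right one and matches the cited references.
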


Associated to the normalizer decomposition, there is a Bousfield-Kan spectral sequence 
$$E_2 ^{s,t} =\underset{\overline{s}d (\cF^c)}{\lim {}^s} H^t (\Aut _{\cL } (-) ; \bbF_p ) 
\Rightarrow H^{s+t} ( |\cL | ; \bbF_p).$$

\begin{definition}\label{def:SharpNorm} We say the normalizer decomposition 
for $(S, \cF, \cL)$ is \emph{sharp} if $E_2^{s,t}=0$ for all $s>0$ and $t\geq 0$.  
\end{definition}

We prove below (Theorem \ref{thm:NormSharp}) that 
the sharpness of the normalizer decomposition is equivalent to the sharpness 
of the subgroup decomposition. For this, we need to introduce more definitions 
and prove some lemmas. 

Let $(S, \cF, \cL)$ be a $p$-local finite group and 
$\sigma =(P_0<\cdots< P_n)$ be a fully $\cF$-normalized chain of $\cF$-centric 
subgroups in $S$. The normalizer fusion system $N_{\cF} (\sigma)$ is defined 
in Definition  \ref{def:NormalizerFusSys} as the $K$-normalizer fusion system 
$N_{\cF} ^K (P_n)$ where $K=\Aut (\sigma) \leq \Aut (P_n)$.
By Proposition \ref{pro:NormTheSame}, $N_{\cF} (\sigma)$ is saturated. By 
Lemma \ref{lem:NormCentric}, for every $N_{\cF} (\sigma)$-centric subgroup 
$P$, the subgroup $P_nP$ is $\cF$-centric. 

\begin{definition} 
Let $\sigma$ be a fully $\cF$-normalized chain of subgroups in $\cF^c$ and $\cN=N_{\cF} (\sigma)$. 
The \emph{normalizer linking system} $N_{\cL} (\sigma)$ is the category whose objects 
are the $\cN$-centric subgroups in $N_S(\sigma)$, and 
whose morphisms for $P, P' \in \cN ^c$ are defined by
$$\Mor _{N_{\cL} (\sigma) } (P, P')= \{ \varphi \in \Mor _{\cL } ( P_n P , P_n P')\ |\ \pi (\varphi ) 
(P)\leq P', \pi (\varphi ) (P_i )\leq P_i \text{ for all } i \}.$$  
\end{definition}

We have the following lemma.

\begin{lemma}\label{lem:AssociatedLinkingSystem} Let $(S, \cF, \cL)$ be a $p$-local finite 
group and $\sigma$ be a fully $\cF$-normalized chain of subgroups in $\cF^c$. Then the normalizer 
linking system $N_{\cL} (\sigma)$ is the linking system associated to the normalizer fusion system 
$N_{\cF} (\sigma)$. 
\end{lemma}

\begin{proof} When $\sigma=(P_0)$ is a chain with one subgroup this statement is proved in \cite[Lemma 6.2]{BLO2}.
We modify the argument given there to a chain of subgroups. Let $\cN:=N_{\cF} (\sigma)$ and $\cN^c$ denote the full
subcategory of $\cN$-centric subgroups of $N_S (\sigma)$. The projection functor 
$\pi ^{\sigma} : N_{\cL } (\sigma) \to \cN$ is defined to be the inclusion map 
on objects and it sends a morphism $\varphi \in \Mor _{N_{\cL } (\sigma) } (P, P')$ to $\pi (\varphi ) |_P$. 
Let $$\epsilon ^{\sigma} : \cT _{N_S (\sigma ) } ^{\cN ^c} \to N_{\cL } (\sigma)$$ 
be the functor which is the identity map on objects and such that for $P, P' \in \cN^c$,  
the map $\epsilon ^{\sigma} _{P, P'} : N_{N_S (\sigma ) } (P, P') \to \Mor _{N_{\cL } (\delta) } (P, P')$ is defined
by restricting $\epsilon _{PP_n, P'P_n} : N_{S} (PP_n, P' P_n) \to \Aut _{\cL} (PP_n, P' P_n)$ to the subset
$N_{N_S (\sigma ) } (P, P')$.  It is clear that with these definitions the
condition (1) of Definition \ref{def:LinkingSystem} holds.

The subgroup $PP_n$ acts freely on $\Mor _{\cL} (PP_n, P'P_n) $ via $\epsilon _{PP_n}$. Hence 
$Z(P)$ acts freely on $\Mor _{N_{\cL}(\sigma)} (P, P')$. 
If $\varphi _1, \varphi _2$ are  two morphisms in $\Mor _{N_{\cL} (\sigma) } (P, P') $
such that $\pi ^{\sigma} (\varphi_1)=\pi ^{\sigma} (\varphi _2)$, then $\pi(\varphi _1)|_P=\pi (\varphi _2)|_P$. 
Since $P$ is $\cN$-centric, by \cite[Prop A.8]{BLO2} applied to the fusion system $\cN$, there is some $g \in Z(P)$ such that
$\pi (\varphi _2)=\pi (\varphi ) \circ c_g$. By condition $(2)$, there is an $h \in Z(PP_n)\leq Z(P) $ such that
$$\varphi _2= \varphi _1 \circ \epsilon _{PP_n} (g) \circ \epsilon _{PP_n} (h) =\varphi _1 \circ \epsilon _P ^{\sigma} (gh)$$
holds in $\Mor _{\cL} (PP_n, P'P_n)$. Hence the condition (2) holds for the category $N_{\cL } (\sigma)$.
Conditions (3) and (4) hold for $N_{\cL} (\sigma)$ since they hold for $\cL$. We conclude that
$N_{\cL} (\sigma)$ is the linking system associated to the normalizer fusion system 
$N_{\cF} (\sigma)$.  
\end{proof}

Recall that the cohomology of the normalizer fusion system $\cN=N_{\cF} (\sigma)$ is defined by 
$$ H^n (\cN ; \bbF_p) := \underset{P \in \cO ^c (\cN ) }{\lim} H^n ( P; \bbF_p)$$
which is equal to the subring of $\cN$-stable elements in $H^n (N_S (\sigma) ; \bbF_p)$.

\begin{lemma}\label{lem:trivial} Let $(S, \cF, \cL)$ be a $p$-local finite group 
and  let $\sigma $ be a chain of $\cF$-centric subgroups in $S$ which is fully 
$\cF$-normalized. Let $\cN:=N _{\cF} (\sigma)$ denote the normalizer 
fusion system of $\sigma$. Then for every $n\geq 0$, we have 
$$H^t ( \cO ^{c} ( \cN ) ; H^n(- ; \bbF_p))=0$$ for $t >0$, and for $t=0$, we have 
$$H^0 ( \cO ^{c} ( \cN ) ; H^n(-; \bbF_p)) = H^n ( \cN ; \bbF_p)\cong H^n (\Aut _{\cL } (\sigma) ; \bbF_p ).$$
\end{lemma}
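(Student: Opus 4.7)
The plan is to prove that $\cN := N_{\cF}(\sigma)$ is a \emph{constrained} saturated fusion system whose unique model is the finite group $\Aut_{\cL}(\sigma)$, and then to apply the sharpness theorem of Diaz and Park \cite{DiazPark} to $\cN$.

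First I would verify that the smallest subgroup $P_0$ in the chain $\sigma$ is a normal $\cN$-centric subgroup of $\cN$. Since $P_0 \leq P_i$ forces $P_0 \leq N_S(P_i)$ for every $i$, we have $P_0 \leq N_S(\sigma)$, and $P_0$ is normal in $N_S(\sigma)$ because elements of $N_S(\sigma)$ normalize each $P_i$ by definition. For any morphism $\varphi \in \Mor_{\cN}(Q,R)$, the extension $\tilde\varphi: P_n Q \to P_n R$ guaranteed by Definition \ref{def:NormalizerFusSys} satisfies $\tilde\varphi(P_0) = P_0$, so its restriction to $P_0 Q \leq P_n Q$ provides an extension of $\varphi$ in $\cN$ that preserves $P_0$; the same argument shows $P_0$ is strongly closed in $\cN$, hence $P_0 \lhd \cN$. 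Taking $Q = P_n$ and $K = \Aut(\sigma)$, Lemma \ref{lem:NormCentric}(i) applies (since $P_0 \leq N_S^K(P_n) = N_S(\sigma)$ and $P_0 \in \Ob(\cF^c)$) to yield that $P_0$ is $\cN$-centric.

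Hence $\cN$ is a constrained saturated fusion system. By the model theorem for constrained fusion systems \cite{5A1}, there is a finite group $M$ with Sylow $p$-subgroup $N_S(\sigma)$ and $P_0 \lhd M$ such that $\cN \cong \cF_{N_S(\sigma)}(M)$. Applying Diaz--Park \cite[Thm B]{DiazPark} to $\cN$, we obtain $\lim^t_{\cO^c(\cN)} H^n(-; \bbF_p) = 0$ for all $t > 0$, which is the first claim of the lemma. For $t = 0$, the $E_2^{0,n}$-term equals $\lim_{\cO^c(\cN)} H^n(-;\bbF_p) = H^n(\cN; \bbF_p)$ by the definition of fusion cohomology, and by the classical Cartan--Eilenberg stable elements theorem applied to the finite group $M$, this further equals $H^n(M; \bbF_p)$.

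It remains to identify $M$ with $\Aut_{\cL}(\sigma)$. Libman's analysis of the normalizer decomposition \cite{Libman-NormDec} (reproduced in \cite[Thm 5.36]{AKO}) shows that $\Aut_{\cL}(\sigma)$, embedded in $\Aut_{\cL}(P_0)$ via $\pi_0$, is a finite group containing $\delta_{P_0}(N_S(\sigma))$ as a Sylow $p$-subgroup and realizing $N_{\cF}(\sigma)$ over $N_S(\sigma)$; together with the centricity axioms of $\cL$ (which force $C_{\Aut_{\cL}(\sigma)}(P_0) = \delta_{P_0}(Z(P_0))$), these properties characterize $\Aut_{\cL}(\sigma)$ as a model of $\cN$, so the uniqueness clause of the model theorem yields $M \cong \Aut_{\cL}(\sigma)$ and the desired isomorphism. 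The main obstacle is this last identification: although it follows essentially from Libman's work, one must carefully unwind the axioms of centric linking systems and the definition of $\Aut_{\cL}(\sigma)$ to verify that $\Aut_{\cL}(\sigma)/\delta_{P_0}(Z(P_0))$ maps isomorphically onto $\Aut_{\cN}(P_0)$ and that the resulting finite group satisfies all the defining properties of a model of $\cN$.
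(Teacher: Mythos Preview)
Your proposal is correct and follows essentially the same strategy as the paper: show that $\cN=N_{\cF}(\sigma)$ is constrained, invoke the model theorem from \cite{5A1} to realize it by a finite group, apply Diaz--Park for the vanishing, and identify the model with $\Aut_{\cL}(\sigma)$. Two minor differences are worth noting. First, you use $P_0$ as the normal $\cN$-centric subgroup witnessing constrainedness, whereas the paper uses $P_n$; both work, and the argument via Lemma~\ref{lem:NormCentric}(i) is identical. Second, for the identification $M\cong\Aut_{\cL}(\sigma)$ you appeal directly to Libman's analysis and the uniqueness clause of the model theorem, while the paper instead constructs the normalizer linking system $N_{\cL}(\sigma)$ explicitly, argues it is the centric linking system for $\cN$, and then reads off
\[
H\cong\Aut_{\cL'}(P_n)\cong\Aut_{N_{\cL}(\sigma)}(P_n)=\{\varphi\in\Aut_{\cL}(P_n)\mid \pi(\varphi)(P_i)\leq P_i\text{ for all }i\}=\Aut_{\cL}(\sigma)
\]
using uniqueness of linking systems. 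The paper's route through $P_n$ makes this last identification a one-line computation, whereas your route through $P_0$ (and the embedding $\pi_0$) requires, as you correctly flag, unwinding more of the linking-system axioms to verify that $\Aut_{\cL}(\sigma)$ really is a model of $\cN$.
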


\begin{proof}  Let $\sigma=(P_0 < \cdots < P_n)$. The subgroup $P_0$ is $\cF$-centric  
and normal in $\cN=N_{\cF} (\sigma)$. Hence by \cite[Prop 4.3]{5A1}, there is a finite group 
$H$ with a Sylow $p$-subgroup $T$ such that $\cN \cong \cF_T(H)$. By Theorem \cite[Thm B]{DiazPark}, 
the subgroup decomposition for a $p$-local finite group $(S, \cF, \cL)$ is sharp if the fusion system $\cF$ 
is realized by a finite group. Hence we have $$H^t ( \cO ^{c} ( \cN  ) ; H^n(- ; \bbF_p ))=0$$ for every $t>0$, 
and for $t=0$, we have $$H^0 ( \cO ^{c} ( \cN ) ; H^n(-; \bbF_p)) \cong  \underset{\cO ^c (\cN) }{\lim} 
H^n (-; \bbF_p ) = H^n ( \cN ; \bbF_p).$$ 

For the last isomorphism in the statement of the lemma, we claim that $\cN \cong \cF _{N_S (\sigma)} (\Aut _{\cL} (\sigma))$.
By Lemma \ref{lem:AssociatedLinkingSystem}, $\cN _{\cL } (\sigma)$ is the linking system 
associated to  $\cN=\cN_{\cF} (\sigma)$. Then by \cite[Prop 4.3 (a), (b)]{5A1}, $H \cong \Aut _{N_{\cL}(\sigma)} (P_0)$ and 
$N_{\cL} (\sigma)  \cong \cL _T  ^c (H)$. 
This gives $$H \cong \Aut _{N_{\cL} (\sigma) }(P_0)= \{ \varphi \in 
\Aut _{\cL } (P_n) \, |\,  \pi (\varphi ) (P_i )\leq P_i \text{ for all } i \} \cong \Aut _{\cL } (\sigma).$$
Hence $\cN \cong \cF_{N_S(\sigma) } (\Aut _{L} (\sigma) )$. This gives
$$H^n ( \cN  ; \bbF_p) \cong H^* ( \Aut _{\cL } (\sigma) ; \bbF_p)$$
by the Cartan-Eilenberg theorem for finite groups. Hence the proof is complete.
\end{proof}

Now we prove our main theorem in this section.

\begin{theorem}\label{thm:NormSharpIso} Let $\cF$ be a saturated fusion system over $S$. 
For every $n \geq 0$, and for every $i \geq 0$, there is an isomorphism
$$ \underset{\cO ^c (\cF )}{\lim {}^i}  H^n (-; \bbF _p ) \cong 
\underset{\overline{s}d(\cF^c)}{\lim {}^i} H^n ( \Aut _{\cL } (-)  ; \bbF_p).$$
\end{theorem} 

\begin{proof}
By Theorem  \ref{thm:NormDecSS}, for every $\bbZ _{(p)} \cO (\cF)$-module $M$, 
there is a spectral sequence 
$$E_2 ^{s,t} = \underset{\overline{s}d (\cF^c )}{\lim {}^s} \ \cH ^t _{M , N_{\cF} }   \Rightarrow 
H^* (\cO ^c (\cF ) ;  \Res ^{\cO (\cF)} _{\cO ^c (\cF ) } M)$$
where $\cH ^j _{M, N_{\cF}} $ denotes the 
functor defined in Lemma \ref{lem:HNF}. Fix $n \geq 0$ and take $M=H^n (-;\bbF_p)$.
By Lemma \ref{lem:trivial}, for every $[\sigma] \in \overline{s}d(\cF^c )$ we have
 $$\cH ^t _{M, N_{\cF}} ( [\sigma] )= H^t (\cO ^c (N_{\cF} (\sigma) ) ; H^n (-; \bbF_p ))=0$$ 
 for every $t>0$, and  for $t=0$, we have
$$\cH^0 _{M, N_{\cF}} ([\sigma]) =H^0 ( \cO ^c (N_{\cF} (\sigma) ) ; H^n (-; \bbF_p ))\cong H^n (\Aut _{\cL } (\sigma); \bbF_p).$$
This gives $E_2^{s,t}=0$ for $t>0$, and for $t=0$ we obtain 
$$E_2 ^{s, 0} \cong \underset{\overline{s}d(\cF^c)}{\lim {}^s} H^n( \Aut _{\cL } (-) ; \bbF_p).$$
Hence we conclude that
$$\underset{\cO ^c (\cF)}{\lim {}^i} H^n(-; \bbF_p )  \cong E_2 ^{i,0} \cong 
\underset{\overline{s}d(\cF^c)}{\lim {}^i}  H^n( \Aut _{\cL } (-)  ;\bbF_p  )$$
for every $i \geq 0$.
\end{proof}

As an immediate corollary of Theorem \ref{thm:NormSharpIso}, we obtain the following theorem 
which was stated as Theorem \ref{thm:IntroNormSharp} in the introduction.
 
\begin{theorem}\label{thm:NormSharp} For every $p$-local finite group $(S, \cF, \cL)$, 
the subgroup decomposition is sharp if and only if the normalizer decomposition is sharp.
\end{theorem}

\begin{proof} This follows from Theorem \ref{thm:NormSharpIso} and Definitions
\ref{def:SharpSub} and \ref{def:SharpNorm}.
\end{proof}

This reduces the problem of showing the sharpness of subgroup decomposition to a problem 
of showing the sharpness of the normalizer decomposition. Of course this could be an equivalently 
difficult problem to solve.

%------------------------------

\section{Fusion systems realized by finite groups}\label{sect:FiniteGroups}

Let $G$ be a discrete group and $\cC$ be a collection of subgroups of $G$. 
Let  $X_{\cC} ^{\beta}=E_{\cC} G$ denote the Dwyer space for the subgroup decomposition 
as defined in Section \ref{sect:SubDecomp}. 
For every $H \in \cC$, the fixed point subspace $(E_{\cC} G )^H$ is homotopy equivalent  
to the realization of the subposet $\cC_{\geq H}$  
hence it is contractible (see \cite{GrodalSmith}). 
We show below that if $G$ is a finite group and $\cC$ is a collection 
of $p$-subgroups in $G$ closed under taking $p$-overgroups, then for every $P\in \cC$, the orbit 
space $C_G(P)\backslash (E_{\cC} G)^P$ is $\bbZ _{(p)}$-acyclic.
We start with a lemma.
 
\begin{lemma}\label{lem:OrbitSpace} Let $G$ be a finite group and $X$ be a 
$G$-CW-complex such that for every $p$-subgroup $P\leq G$, the fixed point subspace 
$X^P$ is contractible. Then the orbit space $X/G$ is $\bbZ _{(p)}$-acyclic.
\end{lemma}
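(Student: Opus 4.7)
The plan is to prove by induction on $|G|$ the slightly stronger assertion that \emph{if $X^P$ is $\bbZ_{(p)}$-acyclic for every $p$-subgroup $P\leq G$, then $X/G$ is $\bbZ_{(p)}$-acyclic}. This implies the lemma, and the strengthening is necessary because the reductions below produce $\bbZ_{(p)}$-acyclic, but not necessarily contractible, intermediate spaces. The base case $|G|=1$ is immediate since $X/G=X=X^1$.

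The heart of the argument is the case $G\cong\bbZ/p$, which I will handle via the short exact sequence of cellular chain complexes
\begin{equation*}
0 \longrightarrow C_*(X^G;\bbZ_{(p)}) \longrightarrow C_*(X/G;\bbZ_{(p)}) \longrightarrow C_*(X/G,X^G;\bbZ_{(p)}) \longrightarrow 0.
\end{equation*}
Since $G$ acts freely on $X\setminus X^G$, the relative complex $C_*(X,X^G;\bbZ_{(p)})$ is a complex of free $\bbZ_{(p)}[G]$-modules, so its ordinary $G$-coinvariants (which yield $C_*(X/G,X^G;\bbZ_{(p)})$) agree with its derived coinvariants; this gives $H_*(X/G,X^G;\bbZ_{(p)}) \cong H_*(X_{hG},X^G_{hG};\bbZ_{(p)})$. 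Both $X$ and $X^G$ are $\bbZ_{(p)}$-acyclic by hypothesis, so the equivariant inclusion $X^G\hookrightarrow X$ is a $\bbZ_{(p)}$-homology equivalence on fibers of the two Borel fibrations over $BG$, hence a $\bbZ_{(p)}$-homology equivalence of total spaces by the Serre spectral sequence. Therefore the relative homology vanishes, and the long exact sequence forces $H_*(X/G;\bbZ_{(p)})\cong H_*(X^G;\bbZ_{(p)})$, which is $\bbZ_{(p)}$ concentrated in degree zero.

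For the inductive step with $|G|>1$ I split into two cases. If $G$ is not a $p$-group, let $S$ be a Sylow $p$-subgroup: $|S|<|G|$ and every subgroup of $S$ is a $p$-subgroup of $G$, so by induction $X/S$ is $\bbZ_{(p)}$-acyclic, and the averaging operator $N=\tfrac{1}{[G:S]}\sum_{gS\in G/S}g$, well-defined because $[G:S]$ is a unit in $\bbZ_{(p)}$, retracts the inclusion of cellular cochain complexes $C^*(X;\bbZ_{(p)})^G\hookrightarrow C^*(X;\bbZ_{(p)})^S$, exhibiting $H^*(X/G;\bbZ_{(p)})$ as a $\bbZ_{(p)}$-direct summand of $H^*(X/S;\bbZ_{(p)})$. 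If instead $G$ is a $p$-group of order at least $p^2$, pick a central subgroup $Z\leq Z(G)$ of order $p$: for a $p$-subgroup $Z\leq Q\leq G$, a direct analysis identifies $(X/Z)^{Q/Z}$ with $U/Z$ where $U:=\bigcup_{H\leq Q,\,HZ=Q}X^H$; every such $H$ is a $p$-subgroup since $H/(H\cap Z)\cong Q/Z$, and every finite intersection $\bigcap_i X^{H_i}=X^{\langle H_1,\ldots,H_k\rangle}$ is the fixed-point set of a $p$-subgroup of $Q$, hence contractible and nonempty. The nerve theorem then forces $U$ and $U^Z$ to be contractible, so applying the already-established $\bbZ/p$ case to $Z$ acting on $U$ shows $(X/Z)^{Q/Z}$ is $\bbZ_{(p)}$-acyclic; the inductive hypothesis applied to $G/Z$ acting on $X/Z$ then concludes that $X/G\cong(X/Z)/(G/Z)$ is $\bbZ_{(p)}$-acyclic. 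The main obstacle will be the nerve-theoretic identification of $(X/Z)^{Q/Z}$ with $U/Z$, together with the verification that every relevant finite intersection is the fixed set of a single $p$-subgroup of $G$; once these are in place the rest of the argument is bookkeeping.
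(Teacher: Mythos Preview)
Your argument is correct and even proves a mildly stronger statement (acyclic fixed points suffice, not just contractible ones), but the paper's route is far shorter. The paper first reduces to a Sylow $p$-subgroup $S$ by exactly the transfer argument you give, and then observes that since $X^P\to pt$ is a homotopy equivalence for every $P\leq S$, the map $X\to pt$ is an $S$-homotopy equivalence by the equivariant Whitehead theorem for $G$-CW-complexes (cited as \cite[Prop~2.5.8]{BensonSmith}); passing to $S$-orbits shows $X/S$ is actually contractible. Your induction on $|G|$ via a central $\bbZ/p$, with the Borel-construction argument for $\bbZ/p$ and the nerve-theoretic identification of $(X/Z)^{Q/Z}$, trades one citation of equivariant Whitehead for a substantial amount of extra bookkeeping; what it buys is that you never need the fixed-point sets to be genuinely contractible. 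Two small wording points: in the strengthened inductive hypothesis the nerve argument only yields that $U$ is $\bbZ_{(p)}$-acyclic (not contractible), which is all you need; and $U^Z=\bigcup_H X^{HZ}=X^Q$ directly, so no nerve argument is required there. Also, your transfer is phrased on cochains while the conclusion concerns homology---the identical averaging works on cellular chains, so this is only cosmetic.
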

 
\begin{proof} Let $S\leq G$ be a Sylow $p$-subgroup of $G$. By a transfer argument 
it is enough to show that $X/S$ is $\bbZ_{(p)}$-acyclic (see \cite[\S III.2]{Bredon-Compact}). 
Consider the fixed point map $f: X\to pt$. Since for every $P\leq S$, the map $X^P \to pt$ 
is a homotopy equivalence, we can conclude that $f$ is an $S$-homotopy equivalence 
(see \cite[Prop 2.5.8]{BensonSmith-Book}). This gives that the orbit space $X/S$ is  contractible, 
hence it is $\bbZ_{(p)}$-acyclic.
\end{proof}
 
As a consequence of the above lemma we obtain the following:
 
\begin{proposition}\label{pro:DwyerSubgroup} Let $G$ be a finite group and $\cC$ be a collection 
of $p$-subgroups in $G$ closed under taking $p$-overgroups. Then for every $P\in \cC$, the orbit 
space $C_G(P)\backslash (E_{\cC} G)^P$ is $\bbZ _{(p)}$-acyclic.
\end{proposition}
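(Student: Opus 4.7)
The plan is to apply Lemma \ref{lem:OrbitSpace} to the $C_G(P)$-CW-complex $X := (E\bO_{\cC})^P$. Since $C_G(P)$ is a subgroup of the finite group $G$, and the fixed-point subspace $(E\bO_{\cC})^P$ inherits a natural $C_G(P)$-action, this is a legitimate $C_G(P)$-CW-complex, and the orbit space in question is exactly $X/C_G(P) = C_G(P)\backslash X$.

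To invoke Lemma \ref{lem:OrbitSpace}, I need to verify that for every $p$-subgroup $Q \leq C_G(P)$, the fixed-point subspace $X^Q$ is contractible. First I would observe
\[
X^Q = \bigl((E\bO_{\cC})^P\bigr)^Q = (E\bO_{\cC})^{\langle P, Q\rangle}.
\]
Since $Q \leq C_G(P)$, the elements of $Q$ commute with those of $P$, so $PQ = QP$ is a subgroup of $G$, and as both $P$ and $Q$ are $p$-groups that commute with each other, $PQ$ is itself a $p$-group containing $P$. By hypothesis $P \in \cC$ and $\cC$ is closed under taking $p$-overgroups, so $PQ \in \cC$.

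Next I would use the standard identification of the fixed points of the Dwyer space for the subgroup decomposition: as recalled just before Lemma \ref{lem:OrbitSpace}, for every $H \in \cC$ the fixed-point space $(E\bO_{\cC})^H$ is homotopy equivalent to the realization of the subposet $\cC_{\geq H} = \{K \in \cC \mid K \geq H\}$, which is contractible because $H$ itself is a minimum element. Applying this with $H = PQ \in \cC$ shows that $X^Q = (E\bO_{\cC})^{PQ}$ is contractible.

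With the hypothesis of Lemma \ref{lem:OrbitSpace} verified for the finite group $C_G(P)$ acting on $X$, the conclusion is that the orbit space $C_G(P)\backslash X = C_G(P)\backslash (E\bO_{\cC})^P$ is $\bbZ_{(p)}$-acyclic. The potential obstacle is really just verifying that the closure hypothesis on $\cC$ applies in the right way, i.e.\ that $PQ$ qualifies as a $p$-overgroup of $P$; once this is in hand, the argument is essentially a one-line application of Lemma \ref{lem:OrbitSpace}.
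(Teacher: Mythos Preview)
Your proposal is correct and follows essentially the same argument as the paper: apply Lemma~\ref{lem:OrbitSpace} to the $C_G(P)$-space $(E\bO_{\cC})^P$, noting that for any $p$-subgroup $Q\leq C_G(P)$ one has $((E\bO_{\cC})^P)^Q=(E\bO_{\cC})^{PQ}$, which is contractible since $PQ\in\cC$ by the $p$-overgroup closure hypothesis. The only cosmetic difference is that the paper mentions the $N_G(P)$-action restricting to $C_G(P)$, while you go directly to the $C_G(P)$-action.
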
 
 
\begin{proof} Let $X=E_{\cC} G$ and $P\in \cC$. The fixed point set $X^P$ 
has a natural $N_G(P)$-action on it, hence it has an action of $C_G(P)$ via restriction. For every 
$p$-subgroup $Q\leq C_G(P)$, we have $(X^P)^Q = X^{PQ}$. Since $\cC$ is closed under taking 
$p$-overgroups, we have $PQ\in \cC$, hence $X^{PQ}$ is contractible. This allows us to apply 
Lemma \ref{lem:OrbitSpace} and conclude that  $C_G(P)\backslash X^P$ is $\bbZ_{(p)}$-acyclic.
\end{proof}

An interesting consequence of this proposition is the following:

\begin{theorem}\label{thm:TheSame}
Let $G$ be a finite group and $\cC$ be a collection of $p$-subgroups of $G$ closed under taking 
$p$-overgroups. Then for every $\bbZ_{(p)} \overline \cF _{\cC}(G)$-module $M$, 
there is an isomorphism 
$$H^* ( \overline \cF _{\cC} (G) ; M ) \cong H ^* ( \cO _{\cC} (G) ;  \Res_{pr} M) $$
induced by the projection functor  $pr: \cO _{\cC} (G) \to \overline \cF _{\cC} (G) $.
\end{theorem}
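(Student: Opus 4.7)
The plan is to exploit the Dwyer space $X = E\bO_\cC$ for the subgroup decomposition as a common acyclic model for both categories, and to chain together three natural isomorphisms.

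First, I would observe that all isotropy subgroups of $X = E\bO_\cC$ lie in $\cC$. Consequently $C_*(X^?; \bbZ_{(p)})$ is a chain complex of projective $\bbZ_{(p)} \cO_\cC(G)$-modules and $C_*(C_G(?) \backslash X^?; \bbZ_{(p)})$ is a chain complex of projective $\bbZ_{(p)} \overline \cF_\cC(G)$-modules, so the (fusion) Bredon cohomologies of $X$ are computed directly from the corresponding $\hom$-complexes, without passing to projective resolutions. The adjunction formula \eqref{eqn:Convertibility} (Lemma \ref{lem:TwoWays}) then produces a natural isomorphism of these two $\hom$-complexes, induced by $pr : \cO_\cC(G) \to \overline \cF_\cC(G)$, yielding
$$H^*_{\cO_\cC(G)}(X^?;\, \Res_{pr} M) \cong H^*_{\overline \cF_\cC(G)}(X^?;\, M).$$

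Second, I would identify each of these Bredon cohomologies with the cohomology of the underlying category. For every $P \in \cC$, the subposet $\cC_{\geq P} = \{ K \in \cC \mid K \geq P \}$ has the minimum element $P$, so $X^P$ is contractible; hence $X$ is $\bbZ_{(p)} \cO_\cC(G)$-acyclic and the collapse of the $^{II}E$ spectral sequence used in the proof of Theorem \ref{thm:HyperCohSS} gives
$$H^*_{\cO_\cC(G)}(X^?;\, \Res_{pr} M) \cong H^*(\cO_\cC(G);\, \Res_{pr} M).$$
On the fusion orbit side, Proposition \ref{pro:DwyerSubgroup} is custom-made for the hypothesis that $\cC$ is closed under taking $p$-overgroups: it delivers the $\bbZ_{(p)}$-acyclicity of $C_G(P) \backslash X^P$ for every $P \in \cC$. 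Therefore $X$ is $\bbZ_{(p)} \overline \cF_\cC(G)$-acyclic and the same collapse argument produces
$$H^*_{\overline \cF_\cC(G)}(X^?;\, M) \cong H^*(\overline \cF_\cC(G);\, M).$$

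Chaining the three isomorphisms delivers $H^*(\overline \cF_\cC(G); M) \cong H^*(\cO_\cC(G); \Res_{pr} M)$, and by construction the composite is induced by $pr$, since the only non-formal step in the middle of the chain is already induced by $\Res_{pr}$ via \eqref{eqn:Convertibility}. The main point where something could have gone wrong is the acyclicity of the orbit spaces $C_G(P) \backslash X^P$, and this is precisely the place where both the finiteness of $G$ (so Lemma \ref{lem:OrbitSpace}'s transfer argument is available) and the overgroup-closure of $\cC$ are used; everything else in the argument is formal homological algebra over small categories.
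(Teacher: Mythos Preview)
Your proof is correct and follows the same approach as the paper: both use $X=E\bO_{\cC}$, the projectivity of the two chain complexes coming from the fact that all isotropy lies in $\cC$, the acyclicity of $X^P$ and of $C_G(P)\backslash X^P$ (the latter via Proposition~\ref{pro:DwyerSubgroup}), and the adjunction isomorphism of Lemma~\ref{lem:TwoWays}. The only cosmetic difference is that the paper phrases the acyclicity step as saying that $C_*(X^?;R)$ and $C_*(C_G(?)\backslash X^?;R)$ are each projective resolutions of $\underline{R}$, and then computes $\Ext^*(\underline{R},-)$ directly from those resolutions, rather than routing through the Bredon cohomology language and the collapse of the ${}^{II}E$ spectral sequence---but these are the same argument.
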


\begin{proof} Let $X=E_{\cC} G$ and $R=\bbZ_{(p)}$. The isotropy subgroups of $X$ 
are in $\cC$ and for every $P\in \cC$, the fixed point set $X^P$ is contractible. This implies 
that the chain complex $C_* (X^?; R)$ is a projective resolution of the constant functor 
$\underline R$ as an $R\cO _{\cC} (G)$-module. By Proposition \ref{pro:DwyerSubgroup}, 
the chain complex of $R\overline \cF _{\cC} (G)$-modules $C_* (C_G (?) \backslash X^?; R)$ 
is also $R$-acyclic, hence it gives a projective resolution of 
$\underline R$ as an $R\overline \cF_{\cC} (G)$-module. Combining these we obtain
\begin{align*} 
H^* (\overline \cF _{\cC} (G); M) & \cong H^* (\hom _{R\overline \cF _{\cC} (G) } (C_* (C_G (?) 
\backslash X^?; R) , M ))  \\
& \cong H^* (\hom _{R\cO _{\cC} (G) } (C_* (X^?; R) ,  \Res _{pr} M )) \\
& \cong H^* ( \cO _{\cC} (G); \Res _{pr} M).
\end{align*}
The isomorphism in the middle follows from Lemma \ref{lem:TwoWays}. 
\end{proof}

For a finite group $G$, let $O_p(G)$ denote the (unique) maximal normal subgroup 
in $G$. A $p$-subgroup $P \leq G$ is called a \emph{principal $p$-radical subgroup} 
if it is $p$-centric and $O_p (N_G(P) / P C_G (P) )$ is trivial. The collection of principal 
$p$-radical subgroups is denoted by $\cD_p (G)$. 
Now as a corollary of Theorem \ref{thm:TheSame}, a theorem of Grodal  \cite[Thm 1.2]{Grodal-Higher} 
gives the following.

\begin{corollary}\label{cor:Grodalthm}
Let $G$ be a finite group, and $\cC$ and $\cC'$ be two collections of $p$-subgroups 
of $G$ closed under taking $p$-overgroups satisfying 
$\cC ' \cap D_p (G) \subseteq \cC \subseteq \cC'$. Then for any 
$\bbZ_{(p)}\overline \cF_{\cC'} (G)$-module $M$, there is an isomorphism
$$H^* (\overline \cF _{\cC'} (G); M) \cong H^* (\overline \cF _{\cC} (G); \Res _{\cC} M).$$  
\end{corollary}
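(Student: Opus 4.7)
The plan is to apply Theorem \ref{thm:TheSame} to both collections $\cC$ and $\cC'$, thereby converting each side of the claimed isomorphism into cohomology over the ordinary orbit category, and then invoke Grodal's theorem \cite[Thm 1.2]{Grodal-Higher} on the orbit category side.

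First I would record the commuting square of natural functors
\[
\xymatrix{ \cO_{\cC}(G) \ar[r]^{\iota} \ar[d]_{pr} & \cO_{\cC'}(G) \ar[d]^{pr} \\ \overline \cF_{\cC}(G) \ar[r]^{\iota} & \overline \cF_{\cC'}(G) }
\]
induced by the inclusion $\cC \subseteq \cC'$, where $\iota$ denotes the inclusions of categories and $pr$ denotes the projection functors. Since $\cC$ and $\cC'$ are both collections of $p$-subgroups closed under taking $p$-overgroups, Theorem \ref{thm:TheSame} applies to each and gives natural isomorphisms
\[
H^*(\overline \cF_{\cC'}(G); M) \cong H^*(\cO_{\cC'}(G); \Res_{pr} M)
\quad\text{and}\quad
H^*(\overline \cF_{\cC}(G); \Res_{\iota} M) \cong H^*(\cO_{\cC}(G); \Res_{pr}\Res_{\iota} M),
\]
both induced by the projection functors. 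The commutativity of the square above identifies $\Res_{pr}\Res_{\iota}$ with $\Res_{\iota}\Res_{pr}$ as functors from $\overline \cF_{\cC'}(G)\text{-Mod}$ to $\cO_{\cC}(G)\text{-Mod}$.

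Next I would invoke Grodal's theorem \cite[Thm 1.2]{Grodal-Higher} on the orbit category side. The key observation is that $\Res_{pr} M$ is a conjugation invariant $\cO_{\cC'}(G)$-module, i.e.\ a geometric coefficient system in the sense of Symonds. Under our hypothesis $\cC' \cap \cD_p(G) \subseteq \cC \subseteq \cC'$, with both collections closed under $p$-overgroups, Grodal's theorem asserts that the inclusion $\iota: \cO_{\cC}(G) \hookrightarrow \cO_{\cC'}(G)$ induces an isomorphism
\[
H^*(\cO_{\cC'}(G); \Res_{pr} M) \cong H^*(\cO_{\cC}(G); \Res_{\iota}\Res_{pr} M).
\]
Splicing this with the two isomorphisms from Theorem \ref{thm:TheSame} produces the desired identification at the level of fusion orbit categories.

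The main obstacle is verifying that Grodal's theorem, which is originally phrased for $G$-equivariant cohomology of the classifying-space decomposition built from the Dwyer space $X_{\cC}^{\beta} = E\bO_{\cC}$, applies in the higher-limit form used here. This requires combining Grodal's statement with the identification of equivariant cohomology and higher limits provided by Lemma \ref{lem:EqualBredon}, and recognizing that $\Res_{pr} M$ is automatically a geometric/conjugation invariant coefficient system so that his reduction to the principal $p$-radical collection is in force. Once these translations are made, the combination of the three isomorphisms above is purely formal.
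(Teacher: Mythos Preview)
Your proposal is correct and follows essentially the same route as the paper's proof: apply Theorem~\ref{thm:TheSame} to both collections to reduce to cohomology over the ordinary orbit categories, then invoke \cite[Thm~1.2]{Grodal-Higher} using that $\Res_{pr} M$ is conjugation invariant (i.e.\ $C_G(P)$ acts trivially on $(\Res_{pr} M)(P)$). Your additional remarks about the commuting square and the translation via Lemma~\ref{lem:EqualBredon} make explicit what the paper leaves implicit, but the argument is the same.
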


\begin{proof} By Theorem \ref{thm:TheSame}, it is enough to prove that 
$$H^* (\cO _{\cC'} (G); \Res _{pr} M)\cong H^* (\cO _{\cC} (G) ;  \Res _{pr} \Res _{\cC} M).$$ 
Since $C_G(P)$ acts trivially on $(\Res _{pr} M) (P)$ for every $P \in \cC$, 
the result follows from \cite[Thm 1.2]{Grodal-Higher}.
\end{proof}

As a consequence we obtain a generalization of the vanishing result due to Diaz and Park 
\cite{DiazPark}.

\begin{theorem}\label{thm:DiazPark}
Let $\cF=\cF_S(G) $ for a finite group $G$ with a Sylow $p$-subgroup $S$. Let $\cC$ 
be a collection of subgroups of $S$ closed under taking $p$-overgroups such that $\cC$ 
includes all $\cF$-centric-radical subgroups in $S$. Then for every $n\geq 0$ and for every $i\geq 1$, 
$$\underset{\cO (\cF _{\cC}) }{\lim {}^i}   H^n (-; \bbF_p) =0.$$ 
\end{theorem}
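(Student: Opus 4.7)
The plan is to translate the statement into the discrete group setting, apply Grodal's collection comparison via Corollary \ref{cor:Grodalthm}, and then invoke the Diaz--Park vanishing theorem over $\cO^c(\cF)$.

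First, let $\widetilde{\cC}$ denote the $G$-conjugation closure of $\cC$, viewed as a collection of $p$-subgroups of $G$. Since $\cC$ is closed under $\cF$-conjugation and under taking $p$-overgroups inside $S$, a Sylow argument shows $\widetilde{\cC}$ is closed under $p$-overgroups in $G$: given $P \in \widetilde{\cC}$ and a finite $p$-subgroup $P \leq Q \leq G$, choose $g \in G$ with $g^{-1}Pg \in \cC$ and $h \in G$ with $h^{-1}g^{-1}Qgh \leq S$; then $h^{-1}g^{-1}Pgh \in \cC$ by $\cF$-conjugation closedness, so $h^{-1}g^{-1}Qgh \in \cC$ by the $p$-overgroup hypothesis in $S$, hence $Q \in \widetilde{\cC}$. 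The same identification used in Lemma \ref{lem:CatEquivalence} then yields an equivalence of categories $\cO(\cF_{\cC}) \simeq \overline{\cF}_{\widetilde{\cC}}(G)$, so by Proposition \ref{pro:EquivCat} it suffices to prove $H^i(\overline{\cF}_{\widetilde{\cC}}(G); H^n(-;\bbF_p)) = 0$ for all $i \geq 1$.

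Let $\cC^*$ denote the collection of all $p$-centric subgroups of $G$. Using that $Z(P)$ is a normal (hence unique) Sylow $p$-subgroup of $C_G(P)$ whenever $P$ is $p$-centric, one checks that $\cC^*$ is closed under $p$-overgroups in $G$: if $P \leq Q$ with $Q$ a $p$-subgroup and $P$ $p$-centric, any Sylow $p$-subgroup of $C_G(Q) \leq C_G(P)$ lies inside $Z(P) \leq Q$, hence inside $Q \cap C_G(Q) = Z(Q)$, so $Z(Q)$ is Sylow in $C_G(Q)$. By Lemma \ref{lem:CentricG}, $\cC^*$ is precisely the $G$-conjugation closure of $\Ob(\cF^c)$, and Lemma \ref{lem:CatEquivalence} gives $\overline{\cF}_{\cC^*}(G) \simeq \cO^c(\cF)$. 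Set $\cC'' := \widetilde{\cC} \cup \cC^*$, which is again closed under $p$-overgroups. Because a $p$-subgroup of $G$ belongs to $D_p(G)$ exactly when it is $G$-conjugate to an $\cF$-centric-radical subgroup of $S$, the hypothesis $\Ob(\cF^{cr}) \subseteq \cC$ forces $D_p(G) \subseteq \widetilde{\cC}$, while $D_p(G) \subseteq \cC^*$ is immediate. Hence both pairs $(\widetilde{\cC}, \cC'')$ and $(\cC^*, \cC'')$ satisfy the hypothesis of Corollary \ref{cor:Grodalthm}, yielding
\[
H^i(\overline{\cF}_{\widetilde{\cC}}(G); H^n(-;\bbF_p)) \cong H^i(\overline{\cF}_{\cC''}(G); H^n(-;\bbF_p)) \cong H^i(\overline{\cF}_{\cC^*}(G); H^n(-;\bbF_p))
\]
for all $i$. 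The right-hand side is isomorphic to $H^i(\cO^c(\cF); H^n(-;\bbF_p))$, which vanishes for $i \geq 1$ by the sharpness result of Diaz and Park \cite[Thm B]{DiazPark}.

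The main technical content is the two closure checks (for $\widetilde{\cC}$ and for $\cC^*$ inside $G$) together with the identification $D_p(G) \cap S = \Ob(\cF^{cr})$; once these are in place, the rest is formal bookkeeping with Corollary \ref{cor:Grodalthm} and the category equivalences. The coefficient functor $H^n(-;\bbF_p)$ is defined on all $p$-subgroups of $G$ and is automatically conjugation invariant (inner automorphisms and centralizer elements act trivially on group cohomology), so it restricts unambiguously between $\widetilde{\cC}$, $\cC''$ and $\cC^*$, and no extension-of-coefficients issue arises.
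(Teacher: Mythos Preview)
Your proof is correct but takes a genuinely different route from the paper. You reduce to the $p$-centric collection: starting from the $G$-closure $\widetilde\cC$, you pass through $\cC'' = \widetilde\cC \cup \cC^*$ to $\cC^*$ by two applications of Corollary~\ref{cor:Grodalthm}, identify $\overline\cF_{\cC^*}(G)\simeq\cO^c(\cF)$, and then invoke Diaz--Park \cite[Thm~B]{DiazPark} as a black box. The paper instead enlarges $\cC$ to the collection $\cC'$ of \emph{all} $p$-subgroups via a single use of Corollary~\ref{cor:Grodalthm}, then applies Theorem~\ref{thm:TheSame} to replace $\overline\cF_{\cC'}(G)$ by the ordinary orbit category $\cO_p(G)$, and finishes with the Jackowski--McClure vanishing \cite[Prop~5.14]{Jackowski-McClure-HomDec} for cohomological Mackey functors. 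Your argument is tidier in that it stays entirely inside fusion orbit categories and avoids the passage to $\cO_p(G)$; the price is that you use Diaz--Park's theorem to prove what is advertised as a \emph{generalization} of it, whereas the paper's route gives an independent proof that recovers the special case $\cC=\Ob(\cF^c)$ from Jackowski--McClure directly.
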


\begin{proof}
By Lemma \ref{lem:CentricG}, a subgroup $P \leq S$ is $\cF$-centric 
if and only if it is $p$-centric in $G$.
For $P \leq S$, we have $$\Out_{\cF} (P):= \Aut _{\cF } (P)/ \Inn (P) \cong N_G(P)/ P C_G(P).$$
This means $P\leq S$ is $\cF$-radical if and only if $O_p (N_G (P)/PC_G (P))=1$. Hence $P\leq S$ is  
$\cF$-centric-radical if and only if $P$ is a principal $p$-radical subgroup. 

Let $\cC'$ denote the collection of all $p$-subgroups in $G$. We denote the orbit category 
over the collection $\cC'$ by $\cO _p (G)$. By the assumption on $\cC$ we have
$$\cC ' \cap \cD_p(G)=\cD_p(G) \subseteq \cC \subseteq \cC'.$$
Applying Theorem \ref{thm:TheSame} and Corollary \ref{cor:Grodalthm} to these collections, 
we obtain $$\underset{\cO (\cF _{\cC})}{\lim {}^i}   
H^n (-; \bbF_p ) \cong     \underset{\cO (\cF _{\cC'})}{\lim {}^i}   
H^n (-; \bbF_p ) \cong       \underset{\cO _p (G)}{\lim {}^i } H^n (- ; \bbF_p ).$$  
Since $H^n (-; \bbF_p)$ is a cohomological Mackey functor,
by  \cite[Prop 5.14]{Jackowski-McClure-HomDec}, 
$$\underset{\cO _p (G)}{\lim {}^i } H^n (- ; \bbF_p )=0.$$ Hence the proof is complete.
\end{proof}

It is interesting to ask whether Proposition \ref{pro:DwyerSubgroup} and 
Theorem \ref{thm:TheSame} still hold when $G$ is an infinite group. 
The following example illustrates that these results do not hold for infinite groups 
in general.

\begin{example}\label{ex:Sharpness} Let $\cF$ be the fusion system of the symmetric 
group $S_3$ at prime $3$. If we apply the Leary-Stancu construction to $\cF$, 
we find the infinite group
$$G =\langle b, a \, | \, b^3=1, aba^{-1}=b^2 \rangle \cong C_3 \rtimes \bbZ$$
realizing $\cF$ (see \cite[Ex 4.3]{GundoganY}). The subgroup $\langle a^2 \rangle \cong \bbZ$ 
is central in $G$ and $G/ \langle a^2 \rangle$ is isomorphic to  the symmetric group $S_3$.
Hence we have an extension of groups $1 \to \bbZ \to G \to S_3 \to 1$ whose extension 
class in $\bbF_3$-coefficients is zero. This gives that for every $n \geq 0$,
$$H^n(G; \bbF_3) \cong H^n (S_3; \bbF_3) \oplus H^{n-1} (S_3;\bbF_3).$$
Note that $$H^*( S_3 ; \bbF_3) \cong H^* (C_3; \bbF_3 ) ^{C_2} \cong (\Lambda (y) \otimes \bbF_3 [x] ) ^{C_2}$$ 
where $|y|=1$, $|x|=2$, and $C_2$ acts on $x$ and $y$ via $x\to -x$ and $y\to -y$. Hence 
$H^i (S_3; \bbF_3)\cong \bbF_3$ for $i=0, 3$ mod $4$,  and zero otherwise. We have 
$$H^* (\cF ; \bbF_3) =\lim _{P\in \cO(\cF)} H^* (P; \bbF_3)\cong H^* (S_3 ; \bbF_3) \neq H^* (G; \bbF_3),$$
since $H^{n-1} (S_3; \bbF_3) \neq 0$ for $n=0,1$ mod $4$. Hence this is an example 
of the situation where the cohomology of the infinite group $G$ realizing the fusion system 
$\cF$ is not isomorphic to the cohomology of $\cF$ (see \cite{GundoganY}).

Let $\cC$ be the collection of all finite $3$-subgroups in $G$. We can take $X=E_{\cC} G$ 
to be the real line with $G$-action via the homomorphism $G \to \bbZ$.  For any commutative 
ring $R$, the  chain complex $C_* (X^?; R)$ gives a sequence 
$$ 0 \to R[G/C_3^?]\maprt{1-a} R[G/C_3 ^?] \to \underline R \to 0$$ which is a projective 
resolution of $\underline R$ as an $R\cO _{\cC} (G)$-module. For every $R\cO_{\cC} (G)$-module 
$M$, the higher limits $\lim ^* _{\cO_{\cC} (G)} M$ are the cohomology modules of the cochain 
complex $$ 0 \to M(C_3) \maprt{1-a} M(C_3) \to 0.$$ Hence 
$\lim^0 _{\cO_{\cC} (G) }  M \cong \ker (1-a)$ and $\lim ^1 _{\cO_{\cC} (G)} M \cong \mathrm{coker} (1-a)$. 
If we take $M= H^n (-; \bbF_p)$ with $n=0,3$ mod $4$, then the action of the element $a \in G$ 
on $M$ is trivial, hence we have $$\underset{\cO_{\cC} (G)}{\lim {}^i} H^n (-; \bbF_p) \cong \bbF_3 \neq 0$$
for $i=0,1$ and  $n=0,3$ mod $4$. 

The orbit space $C_G (C_3) \backslash X^{C_3} \cong \bbZ \backslash \bbR \cong S^1$ 
is not $\bbZ_{(3) }$-acyclic. This shows that Proposition \ref{pro:DwyerSubgroup} does not 
hold for this group. If we take the orbit spaces of $C_G(P)$-actions on $X^P$ for all $P \in \cC$, 
then we obtain a chain complex
$$ 0 \to H_1 (?) \to R \Mor _{\cF_{\cC} (G) } (?, C_3)  \to R\Mor _{\cF_{\cC} (G) } (?, C_3)  \to \underline R \to 0$$ 
of $R\overline \cF_{\cC} (G)$-modules. Note that $H_1 (P)\cong R$ for $P=1$ and $P=C_3$, 
and the restriction map $H_1 (C_3) \to H_1 (1)$ is given by multiplication by 2. Thus if we take 
$R=\bbZ _{(3)}$, then $H_1 (?)\cong \underline R$, and we can splice the sequences to get a projective 
resolution for  $\underline R$ as an $R\overline \cF_{\cC} (G)$-module. Using this projective resolution, 
we obtain that for every $\bbZ _{(3)} \overline \cF_{\cC} (G)$-module $M$,  
$$\underset{\overline \cF _{\cC} (G) }{\lim {}^i} M \cong H^i (C_2 ; M (C_3))=0$$
for every $i\geq 1$. In particular, $$\lim {}^i _{\overline \cF_{\cC} (G)} H^n (-; \bbF_3)=0$$ 
for all $i\geq 1$. Hence Conjecture \ref{conj:Sharpness} holds for the fusion system 
$\cF=\cF_{C_3} (S_3)$. 

To summarize, for the infinite group $G=C_3 \rtimes \bbZ$ realizing the fusion system 
$\cF=\cF_{C_3} (S_3)$, we conclude that
\begin{enumerate}
\item the statements of Proposition \ref{pro:DwyerSubgroup}  and Theorem \ref{thm:TheSame} 
do not hold for $G$  when we take 
$\cC$ as the collection of all $3$-subgroups in $G$,
\item the mod-$3$ subgroup decomposition for $BG$ over the collection of all finite 
$3$-subgroups is not sharp for  $G$, and
\item Conjecture \ref{conj:Sharpness} holds for the fusion system $\cF$.
\end{enumerate}
\end{example}


\begin{thebibliography}{99}

\bibitem{AKO}
M.~Aschbacher, R.~Kessar, and B.~Oliver, \emph{Fusion Systems in Algebra and Topology},
London Mathematical Society Lecture Note Series, 391. Cambridge Univ. Press, Cambridge, 2011.

\bibitem{AshOliver}
M.~Aschbacher and B.~Oliver, \emph{Fusion Systems}, Bulletin of the Amer. Math. Soc. 
\textbf{53} (2016), 555-615.

\bibitem{Benson-Book2} 
D.~J.~Benson, \emph{Representations and Cohomology II}, Cambridge Studies in Adv.
Math. \textbf{30}, Cambridge University Press, 1998.

\bibitem{BensonSmith-Book}
D.~J.~Benson and S.~D.~Smith, \emph{Classifying Spaces of Sporadic Groups}, 
volume 147 of Mathematical Surveys and Monographs, American Mathematical Society,
Providence, RI, 2008.
 
\bibitem{5A1}
C.~Broto, N.~Castellana, J.~Grodal, R.~Levi, and B.~Oliver, \emph{Subgroup families controlling 
$p$-local finite groups}, Proc. London Math. Soc. \textbf{91} (2005), 325-354.

\bibitem{BLO2}
C.~Broto, R.~Levi, and B.~Oliver, \emph{The homotopy theory of fusion systems}, J. Amer. Math. 
Soc. \textbf{16} (2003), 779-856.

\bibitem{Bredon-Compact}
G.~E.~Bredon, \emph{Introduction to compact transformation groups}, New York, Academic Press, 1972.

\bibitem{Bredon-Equivariant}
G.~E.~Bredon, \emph{Equivariant Cohomology Theories}, Lecture Notes in 
Mathematics \textbf{34}, Springer-Verlag, 1967.
 
\bibitem{Chermak}
A.~Chermak, \emph{Fusion systems and localities}, Acta Math. \textbf{211} (2013), no. 1, 47-139. 
 
\bibitem{Craven-Book}
D.~A.~Craven, \emph{The Theory of Fusion Systems}, Cambridge University Press, 2011. 

\bibitem{DiazPark} 
A.~Diaz and S.~Park, \emph{Mackey functors and sharpness for fusion systems}, Homology, 
Homotopy and Applications, \textbf{17} (2015), 147-164
 
\bibitem{Dwyer-Book}
W.~G.~Dwyer and H.-W.~Henn, \emph{Homotopy theoretic methods in group cohomology}, 
Advanced Courses in Mathematics--CRM Barcelona, Birkhauser Verlag, Basel, 2001.
 
\bibitem{Grodal-Higher} 
J.~Grodal, \emph{Higher limits via subgroup complexes}, Annals of Math. \textbf{155} (2002), 405-457.

\bibitem{Grodal-Endo} 
J.~Grodal, \emph{Endotrivial modules for finite groups via homotopy theory}, Preprint 2020 (arXiv:1608.00499).

\bibitem{GrodalSmith} 
J.~Grodal and S.~D.~Smith, \emph{Propagating sharp group homology decompositions}, 
Adv. in Math. \textbf{200} (2006), 525-538.
 
\bibitem{GundoganY}
M.~S.~G\" undo\u gan and E.~Yal\c c\i n, \emph{Cohomology of infinite groups 
realizing fusion systems}, Journal of Homotopy and Related Structures \textbf{14} (2019), 1103--1130.

\bibitem{HPY}
I.~Hambleton, S.~Pamuk, and E.~Yal{\c{c}}{\i}n, \emph{Equivariant
{CW}-complexes and the orbit category}, Comment. Math. Helv. \textbf{88}
(2013), 369--425.

\bibitem{Jackowski-McClure-HomDec}
S.~Jackowski and J.~McClure, \emph{Homotopy decomposition of classifying spaces 
via elementary abelian subgroups}, Topology \textbf{31} (1992) 113--132.

\bibitem{JMO}
S.~Jackowski, J.~McClure, and B.~Oliver, \emph{Homotopy classification of self-maps 
of BG via G-actions}, Annals of Math. \textbf{135} (1992), 183–270.

\bibitem{LearyStancu} 
I.~J.~Leary and R.~Stancu, \emph{Realising fusion systems}, Algebra \& Number 
Theory \textbf{1} (2007), 17-34.
 
\bibitem{Libman-NormDec}
A.~Libman, \emph{The normalizer decomposition for $p$-local finite groups}, Alg. Geom. 
Topology \textbf{6} (2006), 1267-1288. 

\bibitem{Libman-WebbConj}
A.~Libman, \emph{Webb's conjecture for fusion systems}, Israel J. Math. \textbf{167} (2008), 141-154. 

\bibitem{Lueck-Book}
W.~L{\"u}ck, \emph{Transformation groups and algebraic {$K$}-theory}, Lecture
Notes in Mathematics, vol. 1408, Springer-Verlag, Berlin, 1989, Mathematica
Gottingensis.

\bibitem{Lueck-HomChern}
W.~L\" uck, \emph{Chern characters for proper equivariant homology theories and applications 
to K- and L-theory}, J. Reine Angew. Math. \textbf{543} (2002) 193-234.

\bibitem{Lueck-CohChern}
W.~L\" uck, \emph{Equivariant cohomological Chern characters}, International Journal of Algebra 
and Computation \textbf{15} (2005), 1025-1052.

\bibitem{Oliver-Linking}
B.~Oliver. \emph{Existence and uniqueness of linking systems: Chermak’s proof via obstruction theory},
Acta Math. \textbf{211} (2013), 141-175.
 
\bibitem{Parker}
C.~Parker, \emph{Fusion systems and amalgams}, The International Workshop  
Algorithmic problems in group theory and related areas, Nonosibirsk, Russia, 2014.

\bibitem{Puig06}
L.~Puig, \emph{Frobenius categories}, J. Algebra \textbf{303} (2006), 309-357.

\bibitem{Reihl-Book}
E.~Reihl, \emph{Category Theory in Context}, Courier Dover Publications, 2017

\bibitem{Robinson} 
G.~R.~Robinson, \emph{Amalgams, blocks, weights, fusion systems
and finite simple groups}, J. Algebra, \textbf{314} (2007), 912-923.

\bibitem{Ramras} 
D.~A.~Ramras, \emph{A note on orbit categories, classifying spaces, 
and generalized homotopy fixed points},
J. Homotopy Relat. Struct., \textbf{13} (2018), 237-249
 
\bibitem{Slominska-SpectralSeq}
J.~Slomi{\' n}ska, \emph{Some spectral sequences in Bredon cohomology}, Cahiers Topologie G{\' e}om. 
Diff{\' e}rentielle Cat{\' e}goriques \textbf{33} (1992), 99--133.

\bibitem{Smith-Book}
S.~D.~Smith, \emph{Subgroup complexes}, Mathematical Surveys and Monographs \textbf{179}, 
Amer. Math. Soc., Providence, RI. 2011.

\bibitem{Symonds}
P.~Symonds, \emph{The Bredon cohomology of subgroup complexes}, J. Pure and Appl. Algebra, 
\textbf{199} (2005), 261-298. 
 
\bibitem{Thomason}
R.~W.~Thomason, \emph{Homotopy colimits in the category of small categories}, Math. Proc. Cambridge 
Philas. Soc., \textbf{85} (1979), 91-109.
 
\bibitem{Weibel-Book}
C.~A.~Weibel, \emph{An Introduction to Homological Algebra}, Cambridge Studies in Advanced 
Mathematics {\bf 38}, Cambridge University Press, 1994. 

\end{thebibliography}
\end{document}